\numberwithin{equation}{section}
\newcommand{\eps}{\varepsilon}
\newcommand{\bbC}{\mathbb{C}}
\newcommand{\bbE}{\mathbb{E}}
\newcommand{\E}{\mathbb{E}} 
\newcommand{\bbN}{\mathbb{N}}
\newcommand{\bbP}{\mathbb{P}}
\newcommand{\bbR}{\mathbb{R}}
\newcommand{\R}{\mathbb{R}}
\newcommand{\cB}{\mathcal{B}}
\newcommand{\cC}{\mathcal{C}}
\newcommand{\cD}{\mathcal{D}}
\newcommand{\cE}{\mathcal{E}}
\newcommand{\cF}{\mathcal{F}}
\newcommand{\cG}{\mathcal{G}}
\newcommand{\cI}{\mathcal{I}}
\newcommand{\cL}{\mathcal{L}}
\newcommand{\cN}{\mathcal{N}}
\newcommand{\cP}{\mathcal{P}}
\newcommand{\cQ}{\mathcal{Q}}
\newcommand{\cT}{\mathcal{T}}
\newcommand{\cW}{\mathcal{W}}
\newcommand{\cZ}{\mathcal{Z}}
\newcommand{\scrD}{\mathscr{D}}
\newcommand{\bvec}{\mathbf{b}}
\newcommand{\zvec}{\mathbf{z}}
\newcommand{\zerovec}{\mathbf{0}} 
\newcommand{\GPvec}{\mathbf{Z}}
\newcommand{\Cmat}{\mathbf{C}}
\newcommand{\Lmat}{\mathbf{L}}
\newcommand{\Mmat}{\mathbf{M}}
\newcommand{\Qmat}{\mathbf{Q}}
\newcommand{\T}{\top} 
\newcommand{\norm}[2]{     \| #1       \|_{ #2 }}
\newcommand{\seminorm}[2]{  | #1        |_{ #2 }}
\newcommand{\scalar}[2]{     ( #1       )_{ #2 }}
\newcommand{\duality}[2]{     \langle #1       \rangle_{ #2 }}
\newcommand{\white}{\cW}
\newcommand{\tr}{\operatorname{tr}}
\newcommand{\rd}{\mathrm{d}}
\newcommand{\from}{\colon}
\newcommand{\GP}{\cZ}
\newcommand{\dirac}[1]{\text{\dh}_{#1}}
\newcommand{\nbeta}{n_\beta}
\newcommand{\betafrac}{\beta_{\star}}
\newcommand{\clos}[1]{\overline{ #1 }}
\newcommand{\pitilde}{\widetilde{\Pi}_h}
\newcommand{\dual}[1]{#1^*} 
\newcommand{\Hdot}[1]{\dot{H}^{#1}_L}
\newcommand{\Hdoth}[1]{\dot{H}^{#1}_{h}}
\newtheorem{lemma}{Lemma}[section]
\newtheorem{proposition}[lemma]{Proposition}
\newtheorem{theorem}[lemma]{Theorem}
\newtheorem{corollary}[lemma]{Corollary}
\theoremstyle{remark}
\newtheorem{remark}[lemma]{Remark}
\theoremstyle{definition}
\newtheorem{definition}[lemma]{Definition}
\newtheorem{assumption}[lemma]{Assumption}
\begin{document}


\title[Generalized Whittle--Mat\'ern fields: regularity and approximation]
{Regularity and convergence analysis 
	in Sobolev and H\"older spaces  
	for generalized Whittle--Mat\'ern fields}


\author{Sonja G.\ Cox \and Kristin Kirchner}

\address[Sonja G.\ Cox]{Korteweg-de Vries Institute for Mathematics \\
	University of Amsterdam \\
	Postbus 94248 \\ 
	NL--1090 GE Amsterdam \\
	The Netherlands.}

\email{S.G.Cox@uva.nl}

\address[Kristin Kirchner]{Seminar f\"ur Angewandte Mathematik \\
	ETH Z\"urich \\
	R\"amistrasse 101 \\ 
	CH--8092 Z\"urich, Switzerland.}

\email{kristin.kirchner@sam.math.ethz.ch}


\thanks{Acknowledgment.
The authors thank Christoph Schwab and Mark Veraar 
for helpful and valuable comments.}

\begin{abstract}
We analyze several Galerkin approximations 
of a Gaussian random field~$\mathcal{Z}\colon\mathcal{D}\times\Omega\to\mathbb{R}$ 
indexed by a Euclidean domain $\mathcal{D}\subset\mathbb{R}^d$
whose covariance structure 
is determined by a negative 
fractional power $L^{-2\beta}$   
of a second-order
elliptic differential operator 
$L:= -\nabla\cdot(A\nabla) + \kappa^2$.  
Under minimal assumptions 
on the domain $\mathcal{D}$,  
the coefficients 
$A\colon\mathcal{D}\to\mathbb{R}^{d\times d}$, 
$\kappa\colon\mathcal{D}\to\mathbb{R}$, 
and the fractional exponent $\beta>0$, 
we prove convergence in $L_q(\Omega; H^\sigma(\mathcal{D}))$ 
and in $L_q(\Omega; C^\delta(\overline{\mathcal{D}}))$ 
at (essentially) optimal rates 
for (i) spectral Galerkin methods and (ii) 
finite element approximations. 
Specifically, 
our analysis  
is solely based on  
$H^{1+\alpha}(\mathcal{D})$-regularity of 
the differential operator $L$, 
where $0<\alpha\leq 1$. 
For this setting, 
we furthermore provide rigorous estimates 
for the error in the covariance function 
of these approximations 
in $L_{\infty}(\mathcal{D}\times\mathcal{D})$ 
and in the mixed Sobolev space 
$H^{\sigma,\sigma}(\mathcal{D}\times\mathcal{D})$, 
showing convergence which is 
more than twice as fast 
compared to the corresponding 
$L_q(\Omega; H^\sigma(\mathcal{D}))$-rate.  

For the well-known example of such 
Gaussian random fields, the 
original Whittle--Mat\'ern class, 
where $L=-\Delta + \kappa^2$ 
and $\kappa \equiv \operatorname{const.}$,  
we perform several numerical experiments 
which validate our theoretical results.  
\end{abstract}

\keywords{Gaussian random fields,
          Mat\'{e}rn covariance,
          fractional operators, 
          H\"older continuity, 
          Galerkin approximations, 
          finite element method.}

\subjclass[2010]{Primary: 35S15, 65C30, 65C60, 65N12, 65N30.} 

\date{\today}

\maketitle



\section{Introduction}\label{section:intro}

\subsection{Motivation and background}\label{subsec:intro:background}

By virtue of their practicality owing to  
the full characterization 
by their mean and covariance structure, 
Gaussian random fields (GRFs for short) 
are popular models  
for many applications 
in spatial statistics and uncertainty quantification, 
see, e.g., 
\cite{Bolin:2011,Cameletti:2013,Lindgren:2011,Penny:2005,Sain:2011}. 
As a result, several methodologies 
in these disciplines require 
the efficient simulation of GRFs 
at unstructured locations 
in various possibly non-convex 
Euclidean domains, 
and this topic has been 
intensively discussed 
in both areas, 
spatial statistics and 
computational mathematics, 
see, e.g., \cite{bkk-strong, 	
	Chan:1997,Chen:2017,Dietrich:1997,Feischl:2018,
	Graham:2018,Latz:2019,Osborn:2017}.
In particular, sampling from 
\emph{non-stationary} 
GRFs, for which methods based on 
circulant embedding are inapplicable, 
has become a central topic 
of current research, see, e.g.,  
\cite{bkk-strong,Chen:2017,Feischl:2018}. 

In order to capture both  
stationary and non-stationary GRFs, 
a new class of random fields has been introduced 
in~\cite{Lindgren:2011}, which is 
based on the following observation made 
by P.~Whittle~\cite{Whittle:1963}: 
A GRF $\GP$ on $\cD:=\bbR^d$ 
with covariance function of Mat\'ern type 
solves the fractional-order 
stochastic partial differential equation (SPDE for short) 
\begin{equation}\label{eq:intro:matern}
L^{\beta}\GP = \rd \white , 
\quad \text{in }\cD, 
\qquad 
L:= -\Delta + \kappa^2, 
\end{equation}
where $\Delta$ denotes the Laplacian, 
$\rd \white$ is white noise on $\bbR^d$, 
and $\kappa>0$, $\beta>\nicefrac{d}{4}$ 
are constants which determine the practical 
correlation length and the smoothness of the field. 
In~\cite{Lindgren:2011} this relation has been exploited 
to formulate generalizations of Mat\'ern fields, 
the \emph{generalized Whittle--Mat\'ern fields}, 
by considering the SPDE \eqref{eq:intro:matern} 
for non-stationary differential operators $L$ 
(e.g., by allowing for a spatially varying coefficient 
$\kappa\from\cD\to\bbR$)
on bounded domains $\cD\subset\bbR^d$, $d\in\{1,2,3\}$. 
Note that the covariance structure of a GRF is uniquely 
determined by its covariance operator, in this case 
given by the negative 
fractional-order differential operator~$L^{-2\beta}$. 
Furthermore, for the case $2\beta\in\bbN$, 
approximations based on a finite element discretization 
have been proposed in~\cite{Lindgren:2011}. 

Subsequently, 
a computational approach which allows for  
arbitrary fractional exponents $\beta>\nicefrac{d}{4}$ 
has been suggested 
in~\cite{bkk-strong,bkk-weak}.  
To this end, a sinc quadrature 
combined with a Galerkin 
discretization of the 
differential operator $L$
is applied to the Balakrishnan integral representation of
the fractional-order inverse $L^{-\beta}$. 

In this work, we investigate 
Sobolev and H\"older regularity of 
generalized Whittle--Mat\'ern fields 
and we perform a rigorous 
error analysis in these norms for 
several Galerkin approximations, 
including the sinc-Galerkin approximations 
of~\cite{bkk-strong,bkk-weak}.  
Specifically, we consider a 
GRF~$\GP^\beta \from\cD\times\Omega\to\bbR$, 
indexed by a Euclidean domain $\cD\subset\bbR^d$,
whose covariance operator  
is given by the negative 
fractional power $L^{-2\beta}$ 
of a second-order
elliptic differential operator 
$L\colon \scrD(L)\subseteq L_2(\cD) \rightarrow L_2(\cD)$ 
in divergence form with Dirichlet boundary conditions, 
formally given by 
\begin{equation}\label{eq:intro:L}
Lu = - \nabla \cdot ( A \nabla u) + \kappa^2 u, 
\qquad 
u \in \scrD(L)\subseteq L_2(\cD). 
\end{equation}
Here, we solely assume that 
$\cD\subset\bbR^d$ has a Lipschitz boundary, 
$\kappa\in L_{\infty}(\cD)$,  
and that 
$A\in L_\infty\bigl(\cD; \bbR^{d\times d}\bigr)$ 
is symmetric and
uniformly positive definite. 

For a sequence $\bigl( \GP_N^\beta \bigr)_{N\in\bbN}$ 
of Galerkin 
approximations for $\GP^\beta$ 
(namely, spectral Galerkin approximations  
in Section~\ref{section:matern:spectral} 
and sinc-Galerkin approximations in
Section~\ref{section:matern:galerkin})
defined with respect to family 
$(V_N)_{N\in\bbN}$ of 
subspaces $V_N\subset H^1_0(\cD)$ 
of finite dimension 
$\dim(V_N)=N<\infty$, 
we prove 
convergence at 
(essentially) optimal rates. 
More precisely, 
under minimal regularity conditions 
on the operator $L$ in \eqref{eq:intro:L} 
and for $0\leq \sigma < 2\beta -\nicefrac{d}{2}$, 
$\delta \in (0,\sigma)$, 
within a suitable parameter range we show 
that for all $\eps,q>0$ 
there exists a constant $C>0$ such that, 
for all $N\in\bbN$, 
\begin{align}
\left(\bbE\Bigl[
\bigl\| \GP^{\beta} - \GP^\beta_{N} \bigr\|_{H^{\sigma}(\cD)}^q
\Bigr]\right)^{\nicefrac{1}{q}} 
&\leq C
N^{-\nicefrac{1}{d} \, 
	\left(2\beta - \sigma 
	- \nicefrac{d}{2} -\eps\right)}, 
\label{eq:intro:conv:field:sobolev} 
\\ 
\left( \bbE 
\Bigl[ 
\bigl\| \GP^\beta - \GP^{\beta}_{N} \bigr\|_{C^{\delta}(\clos{\cD})}^q 
\Bigr] 
\right)^{\nicefrac{1}{q}} 
& \leq C 
N^{-\nicefrac{1}{d} \, 
	\left(2\beta - \sigma  
	- \nicefrac{d}{2}-\eps\right)},
\label{eq:intro:conv:field:hoelder} 
\\  
\bigl\| 
\varrho^\beta - \varrho^\beta_{N} 
\bigr\|_{H^{\sigma,\sigma}(\cD\times\cD)}
&\leq C    
N^{-\nicefrac{1}{d} \, 
	\left(4\beta - 2\sigma 
	- \nicefrac{d}{2}-\eps\right)},  
\label{eq:intro:conv:cov:sobolev} 
\\
\sup_{x,y\in\clos{\cD}} 
\bigl| \varrho^\beta(x,y) - \varrho^\beta_{N}(x,y) \bigr| 
& \leq C
N^{-\nicefrac{1}{d} \, 
	\left(4\beta - d -\eps\right)}.  
\label{eq:intro:conv:cov:Linfty} 
\end{align}
Here, $\varrho^\beta,\varrho_N^\beta\from\cD\times\cD\to\bbR$ 
denote the covariance functions 
of the Whittle--Mat\'ern 
field $\GP^\beta$ and of 
the Galerkin approximation $\GP_N^\beta$, 
respectively. 
For details, see 
Corollaries~\ref{cor:spectral:conv:hdot}--\ref{cor:spectral:conv:hoelder} 
for spectral Galerkin approximations, 
and 
Theorems~\ref{thm:fem:smooth:sobolev},~\ref{thm:fem:non-smooth} 
for the sinc-Galerkin approach. 
``Suitable parameter range'' refers to 
the observations that  
\begin{enumerate*} 
	\item if a finite element method 
	of polynomial degree $p\in\bbN$ 
	is used to define the sinc-Galerkin approximation 
	or 
	\item  if $L$ in \eqref{eq:intro:L} 
	is \emph{$H^{1+\alpha}(\cD )$-regular}
	for $0<\alpha\leq 1$ maximal 
	(see Definition~\ref{def:H1alpha-regular}), 
\end{enumerate*} 
then the convergence rates  
of the sinc-Galerkin approximation cannot exceed 
$p+1-\sigma$ 
or 
$\min\{ 1+\alpha - \sigma , 2\alpha \}$, 
where $0\leq\sigma\leq 1$.  

We point out that due to the low regularity 
of white noise, 
$\rd\white\in H^{-\nicefrac{d}{2}-\eps}(\cD)$, 
which holds $\bbP$-almost surely and in $L_q(\Omega)$  
(cf.~\cite[Prop.~2.3]{bkk-strong})
the convergence results 
\eqref{eq:intro:conv:field:sobolev}--\eqref{eq:intro:conv:cov:Linfty}
are (essentially, up to $\eps>0$) optimal 
and they are also reflected in our 
numerical experiments, 
see Section~\ref{section:numexp}    
and the discussion 
in Section~\ref{section:conclusion}. 
Note furthermore that the convergence rates 
in \eqref{eq:intro:conv:field:hoelder}, \eqref{eq:intro:conv:cov:Linfty} 
of the field with respect to $L_q(\Omega;C^\delta(\clos{\cD}))$ 
and of the covariance function in the 
$C(\clos{\cD\times\cD})$-norm, 
which we obtain via a Kolmogorov--Chentsov argument,  
are by $\nicefrac{d}{2}$ better than 
applying the results 
\eqref{eq:intro:conv:field:sobolev}, \eqref{eq:intro:conv:cov:sobolev} 
combined with the Sobolev embeddings 
$H^{\delta+\nicefrac{d}{2}}(\cD)\hookrightarrow 
C^\delta(\clos{\cD})$ 
and 
$H^{\eps+\nicefrac{d}{2},\,\eps+\nicefrac{d}{2}}(\cD\times\cD)
\hookrightarrow C(\clos{\cD\times\cD})$,  
respectively. 
We remark that strong convergence 
of the sinc-Galerkin approximation 
with respect to the $L_2(\Omega;L_2(\cD))$-norm, 
i.e., \eqref{eq:intro:conv:field:sobolev} 
for $\sigma=0$,  
at the rate $2\beta-\nicefrac{d}{2}$ has  
already been proven in~\cite[Thm.~2.10]{bkk-strong}. 
However, the assumptions made 
in~\cite[Ass.~2.6 and Eq.~(2.19)]{bkk-strong}
require the differential operator $L$ 
to be at least $H^2(\cD)$-regular. 
Thus, our results do not only generalize 
the analysis of~\cite{bkk-strong} 
for the strong error 
to different norms, but also 
to less regular differential operators. 
This is of relevance for several practical 
applications, since the spatial domain,  
where the GRF is simulated, may be 
non-convex or the coefficient $A$ 
may have jumps. 
For this reason, in Subsection~\ref{subsubsec:fem:non-smooth} 
we work under the 
assumption that $L$ is $H^{1+\alpha}(\cD)$-regular for some 
$0< \alpha \leq 1$ (for instance, 
$\alpha<\nicefrac{\pi}{\omega}$ 
if $\cD$ is a non-convex domain 
with largest interior angle 
$\omega > \pi$). 

As an interim result while deriving 
the error 
bounds~\eqref{eq:intro:conv:field:sobolev}--\eqref{eq:intro:conv:cov:Linfty} 
for the sinc-Galerkin approximation,   
we prove a non-trivial extension 
of one of the main results in~\cite{BonitoPasciak:2015}.
Namely, we show that 
for all 
\[
\beta > 0, 
\quad 
0\leq \sigma \leq \min\{1,2\beta\},
\quad  
-1\leq \delta \leq 1+\alpha, 
\ 
\delta\neq\nicefrac{1}{2}   
\quad 
\text{with}
\quad 
2\beta + \delta - \sigma > 0, 
\]
and for all $\eps>0$, 
there exists a constant $C > 0$ such that, 
for $N\in\bbN$ and $g\in H^\delta(\cD)$,  
\begin{align*}
\bigl\| L^{-\beta} g - \widetilde{L}_{N}^{-\beta} g
\bigr\|_{H^\sigma(\cD)}
&\leq C 
N^{-\nicefrac{1}{d}
	\min\{ 2\beta + \delta - \sigma - \varepsilon, \,
	1+\alpha-\sigma,\, 1+\alpha+\delta,\, 2\alpha\}}
\| g \|_{H^\delta(\cD)}. 
\end{align*} 
Here, $\widetilde{L}_N^{-1} \from H^{-1}(\cD) \to V_N$ denotes 
the approximation of the data-to-solution map 
$L^{-1} \from H^{-1}(\cD) \to H^1_0(\cD)$ with 
respect to the Galerkin space $V_N\subset H^1_0(\cD)$. 
For details see Theorem~\ref{thm:fem-error}, 
Remark~\ref{rem:fem-error-sobolev} 
and Lemmata~\ref{lem:hdot-sobolev-alpha}--\ref{lem:scott-zhang}. 
This error estimate
was proven 
in~\cite[Thm.~4.3 \& Rem.~4.1]{BonitoPasciak:2015} 
only for $\beta \in (0,1)$, 
$\sigma =0$, and $\delta \geq 0$, 
see also the comparison in Remark~\ref{rem:bonito-comparison}.  

\subsection{Outline}\label{subsec:intro:ouline}

After specifying the mathematical setting 
as well as our notation in 
Subsections~\ref{subsec:intro:setting}--\ref{subsec:intro:notation}, 
we rigorously define the 
second-order elliptic differential 
operator $L$ from \eqref{eq:intro:L} under minimal 
assumptions on the coefficients $A,\kappa$ 
and the domain $\cD\subset\bbR^d$ 
in Section~\ref{section:2odo}; 
thereby collecting several auxiliary 
results for this type of operators. 
Section~\ref{section:grfs}  
is devoted to the regularity 
analysis of a GRF \emph{colored}  
by a linear operator $T$ 
which is bounded on $L_2(\cD)$. 
These results are subsequently applied 
in Section~\ref{section:matern-regularity} 
to the class of generalized Whittle--Mat\'ern 
fields, where $T:=L^{-\beta}$  
with $L$ defined as in Section~\ref{section:2odo}  
and $\beta > \nicefrac{d}{4}$.  
In Section~\ref{section:matern:spectral} 
we derive the convergence 
results 
\eqref{eq:intro:conv:field:sobolev}--\eqref{eq:intro:conv:cov:Linfty}
for spectral Galerkin approximations 
where the finite-dimensional subspace $V_N$ 
is generated by the eigenvectors 
of the operator $L$ 
corresponding to the $N$ smallest 
eigenvalues.  
We then investigate sinc-Galerkin 
approximations in Section~\ref{section:matern:galerkin},  
where we first let $V_N$ be an  
abstract Galerkin space 
satisfying certain 
approximation properties, 
see 
Subsections~\ref{subsec:matern:sinc-galerkin}--\ref{subsec:matern:galerkin-error}. 
Subsequently, in Subsection~\ref{subsec:matern:fem} 
we show that these 
properties are indeed satisfied 
if the Galerkin spaces originate 
from a quasi-uniform family of 
finite element discretizations  
of polynomial degree $p\in\bbN$,  
and we discuss the convergence 
behavior for two cases in detail: 
\begin{enumerate*} 
	\item the coefficients $A,\kappa$ 
		and the domain $\cD$ in~\eqref{eq:intro:L} 
		are smooth, and 
	\item $A,\kappa,\cD$ are such that 
		the differential operator $L$ in \eqref{eq:intro:L} 
		is only 
		$H^{1+\alpha}(\cD)$-regular for some 
		$0<\alpha\leq 1$. 
\end{enumerate*} 
In Section~\ref{section:numexp} 
we perform several numerical 
experiments for the   
model example \eqref{eq:intro:matern}, 
$d=1$, and sinc-Galerkin discretizations 
generated with a finite element method  
of polynomial degree $p\in\{1,2\}$. 
In Section~\ref{section:conclusion} 
we reflect on our outcomes. 

\subsection{Setting}\label{subsec:intro:setting}

Throughout this article, 
we let $(\Omega,\cF,\bbP)$ 
be a complete probability space
with expectation operator $\bbE$, 
and $\cD$ be a bounded, connected and 
open subset of $\bbR^d$, $d\in\bbN$, 
with closure~$\clos{\cD}$. 
 
In addition, we let 
$\white\colon L_2(\cD) \rightarrow L_2(\Omega)$
be an $L_2(\cD)$-isonormal Gaussian process 
in the sense of~\cite[Def.~1.1.1]{Nualart:2006}.

\subsection{Notation}\label{subsec:intro:notation}

For $B \subseteq \R^d$, $\cB(B)$ denotes the 
Borel $\sigma$-algebra on $B$ (i.e., the 
$\sigma$-algebra generated 
by the sets that are relatively open in $B$). 
For two $\sigma$-algebras $\cF$ and $\cG$, 
$\cF\otimes \cG$ is the 
$\sigma$-algebra generated by $\cF \times \cG$. 

If $(E,\norm{\,\cdot\,}{E})$ is a Banach space, 
then 
$(\dual{E},\norm{\,\cdot\,}{\dual{E}})$ 
denotes 
its dual, 
$\duality{\,\cdot\,, \,\cdot\,}{\dual{E}\times E}$ 
the duality pairing 
on $\dual{E}\times E$,  
$\operatorname{Id}_E$ 
the identity on $E$, 
and $\cL(E;F)$ the space of bounded linear operators 
from $(E,\norm{\,\cdot\,}{E})$ to 
another Banach space 
$(F,\norm{\,\cdot\,}{F})$. 
For $T\in \cL(E;F)$ we write  
$\dual{T} \in \cL(\dual{F}; \dual{E})$
for the adjoint of $T$.
If $V$ is a vector space such that $E,F \subseteq V$ 
and if, in addition, $\operatorname{Id}_V|_{E} \in \cL(E;F)$, 
then 
we write $(E,\norm{\,\cdot\,}{E} ) 
\hookrightarrow (F, \norm{\,\cdot\,}{F})$. 
Moreover, the notation 
$(E, \norm{\,\cdot\,}{E} ) \cong (F, \norm{\,\cdot\,}{F})$ 
indicates that  
$(E, \norm{\,\cdot\,}{E} ) \hookrightarrow 
(F, \norm{\,\cdot\,}{F}) 
\hookrightarrow (E, \norm{\,\cdot\,}{E})$. 

If not specified otherwise,
$\scalar{\,\cdot\,,\,\cdot\,}{H}$
is the inner product
on a Hilbert space~$H$ and
$\cL_2(H;U)\subseteq\cL(H;U)$ 
denotes the Hilbert space 
of Hilbert--Schmidt operators 
between two Hilbert spaces~$H$ 
and~$U$. 
The adjoint of $T\in\cL(H;U)$ 
is identified with 
$\dual{T}\in\cL(U;H)$ 
(via the Riesz maps on~$H$ and on~$U$). 
We write $\cL(E)$ and $\cL_2(H)$
whenever $E=F$ and $H=U$. 
The domain of a possibly 
unbounded operator~$L$
is denoted by $\scrD(L)$. 

For $1\leq q < \infty$,  
$L_q(\cD;E)$ is the space of 
(equivalence classes of) 
$E$-valued, 
Bochner measurable,
$q$-integrable   
functions on $\cD$ 
and $L_q(\Omega;E)$ 
denotes the space 
of (equivalence classes of) $E$-valued random variables 
with finite $q$-th moment, i.e., 
\begin{align*}
	\norm{f}{L_q(\cD;E)} 
	&:= 
	\left( 
	\int_\cD \norm{f(x)}{E}^q \, \rd x 
	\right)^{\nicefrac{1}{q}},  
	&& 
	f \in L_q(\cD;E), \\ 
	\norm{X}{L_q(\Omega;E)}  
	&:=  
	\left( 
	\bbE \bigl[ \norm{X}{E}^q \bigr] 
	\right)^{\nicefrac{1}{q}}, 
	&& 
	X \in L_q(\Omega;E). 
\end{align*} 
The space $L_{\infty}(\cD; E)$ 
consists of all equivalence classes of $E$-valued,  
Bochner measurable functions 
which are essentially bounded on $\cD$, i.e., 
\[
	\norm{f}{L_{\infty}(\cD;E)} 
	:= 
	\operatorname{ess} \sup_{x\in\cD} 
	\norm{f(x)}{E}, 
	\qquad 
	f\in L_{\infty}(\cD;E).
\]
For $\gamma \in (0,1)$, we furthermore 
define the mappings 
\[
	\seminorm{\,\cdot\,}{C^\gamma(\clos{\cD};E)}, \,
	\norm{\,\cdot\,}{C^\gamma(\clos{\cD};E)}
	\from C(\clos{\cD};E) \to [0,\infty] 
\]
on the Banach space 
\[
	\bigl( C(\clos{\cD};E), \norm{\,\cdot\,}{C(\clos{\cD};E)} \bigr), 
	\qquad 
	\norm{f}{C(\clos{\cD};E)} 
	:= 
	\sup_{x \in \clos{\cD}} \norm{f(x)}{E}, 
\]
of continuous 
functions from $\clos{\cD}$ to 
$(E,\norm{\,\cdot\,}{E})$
via 
\begin{align}
	\seminorm{f}{C^\gamma(\clos{\cD};E)}
	&:=  
	\sup_{\substack{x,y\in \clos{\cD} \\ x\neq y}} \frac{\norm{f(x)-f(y)}{E}}{|x-y|^{\gamma}}, 
	\label{eq:def:hoelderseminorm} \\
	\norm{f}{C^\gamma(\clos{\cD};E)}
	&:=  
	\sup_{x \in \clos{\cD}} \norm{f(x)}{E} 
	+ 
	\seminorm{f}{C^\gamma(\clos{\cD};E)} . 
	\label{eq:def:hoeldernorm} 
\end{align}
Note that the norm 
$\norm{\,\cdot\,}{C^\gamma(\clos{\cD};E)}$ 
renders the subspace 
\begin{align}\label{eq:def:Cgamma} 
	C^{\gamma}(\clos{\cD};E) 
	&= 
	\left\{ f \in C(\clos{\cD};E) : 
	\norm{f}{C^{\gamma}(\clos{\cD};E)} < \infty \right\}
	\subset 
	C(\clos{\cD};E) 
\end{align}
of $\gamma$-H\"older continuous 
functions a Banach space. 
Whenever the functions or
random variables are real-valued, 
we omit the image space and write 
$C(\clos{\cD})$, 
$C^\gamma(\clos{\cD})$, $L_q(\cD)$,  
and $L_q(\Omega)$, respectively. 
For $\sigma > 0$, the  
(integer- or fractional-order)
Sobolev space is denoted by 
$H^\sigma(\cD)$ (see~\cite[Sec.~2]{NezzaEtAl:2012},  
see also~\cite[Sec.~1.11.4/5]{Yagi:2010}), 
and $H^1_0(\cD) \subset H^1(\cD)$  
is the closure of 
the space $C^{\infty}_{\operatorname{c}}(\cD)$ 
of compactly supported smooth functions 
in $\bigl( H^1(\cD), \norm{\,\cdot\,}{H^1(\cD)} \bigr)$. 

We mark
equations which hold 
almost everywhere or 
$\bbP$-almost surely 
with 
a.e.\ and 
$\bbP$-a.s., respectively. 
For two random variables 
$X, Y$, we write 
$X \overset{d}{=} Y$ 
whenever $X$ and $Y$ have 
the same probability distribution. 
The Dirac measure 
at $x\in\clos{\cD}$ 
is denoted 
by $\dirac{x}$.  
Given a parameter set $\cP$ 
and mappings $A,B\colon \cP \to \R$, 
we let $A(p) \lesssim B(p)$ 
denote the relation that 
there exists a constant $C>0$, 
independent of $p\in\cP$, 
such that 
$A(p) \leq C B(p)$ 
for all $p\in\cP$.  
For a further parameter set 
$\cQ$ and mappings 
$A,B\colon \cP\times \cQ \rightarrow \bbR$, 
we write $A(p,q) \lesssim_{q} B(p,q)$ 
if, for all $q\in\cQ$,  
there exists a constant 
$C_q > 0$, 
independent of $p\in\cP$, 
such that  
$A(p,q) \leq C_q B(p,q)$
for all $p\in\cP$ and $q\in\cQ$.
Finally, $A(p)\eqsim B(p)$ 
indicates that both relations, $A(p) \lesssim B(p)$ and $B(p) \lesssim A(p)$, 
hold simultaneously; 
and similarly for $A(p,q) \eqsim_q B(p,q)$.

\section{Auxiliary results on second-order elliptic differential operators}\label{section:2odo}

As outlined in Subsection~\ref{subsec:intro:background}, 
the overall objective of this article is to 
study (generalized) Whittle--Mat\'ern fields 
and Galerkin approximations for them. 
Here, we call a Gaussian random field 
a \emph{generalized Whittle--Mat\'ern field} 
if its covariance operator 
is given by a negative fractional power 
of a second-order elliptic differential operator.
The purpose of this section is to present 
preliminary results on second-order   
differential operators which will be of importance 
for the regularity and error analysis of these fields.

Firstly, we specify 
the class of differential operators 
that we consider. 
We start by formulating 
assumptions on the 
coefficients  
of the operator. 

\begin{assumption}[on the coefficients $A$ and $\kappa$]\label{ass:coeff} 
	Throughout this article we assume: 
	\begin{enumerate}[label=\Roman*.]
		\item\label{ass:coeff:A} 
			$A\in L_\infty\bigl(\cD; \bbR^{d\times d}\bigr)$ is symmetric and
			uniformly positive definite, i.e.,
			\begin{equation}\label{eq:A:elliptic}
				\exists a_0 > 0 :
				\quad
				\operatorname{ess} \inf_{x\in\cD} \xi^\T A(x) \xi \geq a_0 \seminorm{\xi}{}^2
				\quad
				\forall \xi \in \bbR^d;  
			\end{equation}	 
		\item\label{ass:coeff:c} 
			$\kappa\in L_{\infty}(\cD)$. 
    \end{enumerate} 
    Where explicitly specified, we require in addition:
    \begin{enumerate}[label=\Roman*.] \setcounter{enumi}{2} 
		\item\label{ass:coeff:ALip} 
			$A\from\clos{\cD}\to\bbR^{d\times d}$ 
			is Lipschitz continuous on the closure $\clos{\cD}$, i.e.,
			\[
				\exists a_{\operatorname{Lip}} > 0 : 
				\quad 
				\seminorm{A_{ij}(x) - A_{ij}(y)}{}  
				\leq 
				a_{\operatorname{Lip}} \seminorm{x-y}{}  
				\quad 
				\forall x,y\in\clos{\cD}, 
			\]
			for all $i,j\in\{1,\ldots,d\}$. 
	\end{enumerate}
\end{assumption} 

Under Assumptions~\ref{ass:coeff}.I--II 
we let $L \colon \scrD(L) \subset L_2(\cD) \rightarrow L_2(\cD)$ 
denote the maximal accretive operator 
on $L_2(\cD)$ associated with $A$ and $\kappa^2$ 
with largest domain $\scrD(L) \subset H^1_0(\cD)$. 
By this we mean that $\scrD(L)$ 
consists of precisely those $u\in H^1_0(\cD)$
for which there exists a constant $C \geq 0$ such that 
\[
    \left| \int_{\cD} 
        \bigl[ 
        \scalar{ A(x) \nabla u(x), \nabla v(x)}{\R^d} +
        \kappa^2(x) u(x) v(x) 
        \bigr] 
        \,\rd x \right|
    \leq 
    C \norm{v}{L_2(\cD)} 
    \quad 
    \forall v\in H^1_0(\cD),
\]
and, for $u\in \scrD(L)$, 
$Lu$ is the unique element of $L_2(\cD)$ 
which, for all $v\in H^1_0(\cD)$, satisfies
\begin{align}\label{eq:def:L}  
	\int_{\cD} 
	\bigl[ 
	\scalar{ A(x) \nabla u(x), \nabla v(x)}{\R^d} +
	\kappa^2(x)u(x)v(x) 
	\bigr] 
	\,\rd x
	= \scalar{Lu,v}{L_2(\cD)}. 
\end{align}
It is well-known that the operator 
$L\from\scrD(L) \to L_2(\cD)$ 
defined via \eqref{eq:def:L} 
is densely defined and self-adjoint 
(e.g., \cite[Prop.~1.22 and Prop.~1.24]{Ouhabaz:2005}). 
Furthermore, by the Lax--Milgram lemma, 
its inverse exists and extends to a bounded linear operator 
$L^{-1} \from \dual{H^1_0(\cD)} \to H^1_0(\cD)$ 
(e.g., \cite[Lem.~1.3]{Ouhabaz:2005}). 
By the  
Kondrachov compactness theorem 
$L^{-1} \from L_2(\cD) \to L_2(\cD)$ is compact 
(e.g.,~\cite[Thm.~7.22]{GilbargTrudinger:2001}).  

For this reason, the spectrum of $L$ 
consists of a system of only positive 
eigenvalues~$(\lambda_j)_{j\in \bbN}$ 
with no accumulation point, 
whence we can assume them to be in 
nondecreasing order. 
The following 
asymptotic spectral behavior, 
known as \emph{Weyl's law}   
(see, e.g., \cite[Thm.~6.3.1]{Davies:1995}), 
will be exploited 
several times
in our analysis. 

\begin{lemma}\label{lem:spectral-behav}
	Let $L$ be the second-order 
	differential operator in~\eqref{eq:def:L}, 
	defined with respect to 
	the bounded open  
	domain $\cD\subset\bbR^d$, and 
	with coefficients $A$ and $\kappa$ 
	fulfilling Assumptions~\ref{ass:coeff}.I--II. 
	Then, the eigenvalues 
	of $L$ (in nondecreasing order) 
	satisfy 
	\begin{equation}\label{eq:lem:spectral-behav} 
		\lambda_j \eqsim_{(A,\kappa,\cD)} j^{\nicefrac{2}{d}} , 
		\qquad 
		j\in\bbN. 
	\end{equation}
\end{lemma}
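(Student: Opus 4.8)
The plan is to reduce the eigenvalue asymptotics of $L$ to the classical Weyl asymptotics for the Dirichlet Laplacian by applying the min--max principle to the symmetric bilinear form associated with $L$. Writing
\[
a(u,v) := \int_{\cD} \bigl[ \scalar{A(x)\nabla u(x), \nabla v(x)}{\R^d} + \kappa^2(x)\,u(x)\,v(x) \bigr]\,\rd x, \qquad u,v\in H^1_0(\cD),
\]
for the form appearing in \eqref{eq:def:L}, the self-adjointness and compact resolvent of $L$ established above yield the Courant--Fischer characterization
\[
\lambda_j = \min_{\substack{V\subseteq H^1_0(\cD)\\ \dim V = j}} \ \max_{\substack{v\in V\\ v\neq 0}} \frac{a(v,v)}{\norm{v}{L_2(\cD)}^2}, \qquad j\in\bbN.
\]
The strategy is to sandwich $a$ between multiples of the Dirichlet energy and then transfer the known asymptotics through this variational formula.

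First I would bound $a$ from above and below. Assumption~\ref{ass:coeff}.\ref{ass:coeff:A} gives the lower bound $a(v,v)\geq a_0\,\norm{\nabla v}{L_2(\cD;\bbR^d)}^2$, while $A\in L_\infty(\cD;\bbR^{d\times d})$ furnishes a constant $a_1\geq a_0$ with $\xi^\T A(x)\xi \leq a_1\seminorm{\xi}{}^2$ for a.e.\ $x\in\cD$ and all $\xi\in\bbR^d$, whence, using $\kappa\in L_\infty(\cD)$,
\[
a(v,v)\leq a_1\,\norm{\nabla v}{L_2(\cD;\bbR^d)}^2 + \norm{\kappa}{L_\infty(\cD)}^2\,\norm{v}{L_2(\cD)}^2, \qquad v\in H^1_0(\cD).
\]
Denoting by $(\mu_j)_{j\in\bbN}$ the eigenvalues (in nondecreasing order) of the Dirichlet Laplacian on $\cD$, inserting these two bounds into the min--max formula and noting that the zeroth-order term merely shifts the Rayleigh quotient by the constant $\norm{\kappa}{L_\infty(\cD)}^2$ yields
\[
a_0\,\mu_j \ \leq\ \lambda_j \ \leq\ a_1\,\mu_j + \norm{\kappa}{L_\infty(\cD)}^2, \qquad j\in\bbN.
\]

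Finally I would invoke Weyl's law for the Dirichlet Laplacian on the bounded open set $\cD$, namely $\mu_j\eqsim_{\cD} j^{\nicefrac{2}{d}}$ (e.g., \cite[Thm.~6.3.1]{Davies:1995}). Since $\mu_j\to\infty$, the additive constant in the upper bound is absorbed into the equivalence for large $j$, and the finitely many remaining indices are controlled by the strict positivity of the $\lambda_j$ and $\mu_j$; combining this with the sandwich estimate gives $\lambda_j\eqsim_{(A,\kappa,\cD)} j^{\nicefrac{2}{d}}$, with implied constants depending only on $a_0$, $a_1$, $\norm{\kappa}{L_\infty(\cD)}$ and on $\cD$ through the Weyl constant, as asserted in \eqref{eq:lem:spectral-behav}. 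The one point requiring care is that $\cD$ is only assumed bounded with Lipschitz boundary rather than smooth, so the reference asymptotics $\mu_j\eqsim_\cD j^{\nicefrac{2}{d}}$ should be justified not via a pseudodifferential parametrix but by the classical Dirichlet--Neumann bracketing argument, which covers $\cD$ by cubes and compares with the explicitly known spectra of the Dirichlet and Neumann problems on those cubes; inverting the resulting two-sided bound $N(\lambda):=\#\{j:\mu_j\leq\lambda\}\eqsim_\cD \lambda^{d/2}$ then produces the stated power law and constitutes the main (though standard) obstacle of the argument.
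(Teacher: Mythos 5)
Your argument is correct, but note that the paper does not actually prove this lemma: it is quoted as ``Weyl's law'' with a citation to \cite[Thm.~6.3.1]{Davies:1995}, so what you have written is essentially a self-contained reconstruction of the cited result, by exactly the mechanism (min--max plus form comparison) that underlies it. Two remarks on your execution. First, the sandwich step is clean and complete: $a_0\,\mu_j \leq \lambda_j \leq a_1\,\mu_j + \norm{\kappa}{L_\infty(\cD)}^2$ follows from Courant--Fischer over the common form domain $H^1_0(\cD)$, and the additive constant is absorbed uniformly in $j$ since $\mu_j \geq \mu_1 > 0$, giving $\lambda_j \leq \bigl(a_1 + \norm{\kappa}{L_\infty(\cD)}^2/\mu_1\bigr)\mu_j$. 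Second, your final step is more elaborate than necessary: because the lemma only claims the two-sided equivalence $\mu_j \eqsim_{\cD} j^{\nicefrac{2}{d}}$ and not the sharp Weyl asymptotics with the volume constant, you do not need Dirichlet--Neumann bracketing at all. Dirichlet domain monotonicity alone suffices: pick an open cube $Q' \subset \cD$ and a cube $Q \supset \cD$; extension by zero gives $H^1_0(Q') \subset H^1_0(\cD) \subset H^1_0(Q)$, hence $\mu_j(Q) \leq \mu_j(\cD) \leq \mu_j(Q')$ by min--max, and the cube eigenvalues are explicit. This removes the boundary-regularity discussion entirely (it works for any nonempty bounded open set), whereas Neumann-type bracketing is only needed if one wants the leading Weyl constant.
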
 

%
 
We let $\cE := \{e_j\}_{j\in\bbN}$ denote 
a system of eigenvectors 
of the operator $L$ in~\eqref{eq:def:L} 
which corresponds to the 
eigenvalues~$(\lambda_j)_{j\in\bbN}$ 
and which is orthonormal in $L_2(\cD)$.
Note that, for $\sigma > 0$, 
the fractional power operator 
$L^{\sigma}\colon 
\scrD(L^{\sigma}) \subset L_2(\cD) \rightarrow L_2(\cD)$ 
is well-defined. 
Indeed, on the domain 
\[
	\scrD(L^{\sigma})
	:=
	\Biggl\{
	\psi\in L_2(\cD) :
	\sum_{j\in\bbN}
	\lambda_j^{2\sigma} \scalar{\psi,e_j}{L_2(\cD)}^2
	< \infty
	\Biggr\} 
\]
the action of $L^\sigma$  
is given via  
the spectral representation 
\[
	L^{\sigma} \psi := \sum_{j\in\bbN}
	\lambda_j^{\sigma} \scalar{\psi,e_j}{L_2(\cD)} e_j, 
	\qquad 
	\psi\in\scrD(L^{\sigma}). 
\]
The subspace 
\begin{align}\label{eq:def:Hdot}
	\bigl( \Hdot{\sigma}, \scalar{\,\cdot\,, \,\cdot\,}{\sigma} \bigr), 
	\qquad 
	\Hdot{\sigma} := \scrD\bigl(L^{\nicefrac{\sigma}{2}}\bigr) 
	\subset L_2(\cD), 
\end{align}
is itself a Hilbert space 
with respect to the inner product 
\[
	\scalar{\phi,\psi}{\sigma} 
	:= 
	\bigl( L^{\nicefrac{\sigma}{2}} \phi, 
	L^{\nicefrac{\sigma}{2}} \psi \bigr)_{L_2(\cD)}
	= \sum_{j\in\bbN} \lambda_j^{\sigma}  
	\scalar{\phi,e_j}{L_2(\cD)} 
	\scalar{\psi,e_j}{L_2(\cD)}, 
\]
and the corresponding 
induced norm $\norm{\,\cdot\,}{\sigma}$. 
In what follows, we let 
$\Hdot{0} := L_2(\cD)$ 
and, for $\sigma>0$, 
$\Hdot{-\sigma}$ denotes  
the dual space $\dual{(\Hdot{\sigma})}$ 
after identification via the 
inner product 
$\scalar{\,\cdot\,, \,\cdot\,}{L_2(\cD)}$  
on $L_2(\cD)$ 
which is continuously extended 
to a duality pairing. 

In order to derive regularity and convergence 
results with respect to the  
Sobolev space $H^\sigma(\cD)$ 
and the space $C^{\gamma}(\clos{\cD})$ 
of $\gamma$-H\"older continuous 
functions in~\eqref{eq:def:Cgamma}, 
we wish to
relate the various norms involved 
by well-known results from interpolation theory and 
by Sobolev embeddings. 
To this end, we need to consider various assumptions on 
the spatial domain~$\cD$, specified below.

\begin{assumption}[on the domain $\cD$]\label{ass:dom} 
Throughout this article, we assume that 
	\begin{enumerate}[label=\Roman*.]
		\item\label{ass:dom:Lipschitz} $\cD$ has a Lipschitz continuous boundary $\partial\cD$. 
    \end{enumerate}
Where explicitly specified, we additionally suppose 
one or both of the following:
    \begin{enumerate}[label=\Roman*.]\setcounter{enumi}{1}
		\item\label{ass:dom:convex} $\cD$ is convex; 
		\item\label{ass:dom:polytope} $\cD$ is a polytope. 
	\end{enumerate}
\end{assumption} 

Note that~\ref{ass:dom:convex} implies~\ref{ass:dom:Lipschitz} 
(see, e.g.,~\cite[Cor.~1.2.2.3]{Grisvard:2011}). 

In the following lemma 
we specify the relationship  
between the spaces~$\Hdot{\sigma}$ 
in~\eqref{eq:def:Hdot}
and the Sobolev space~$H^{\sigma}(\cD)$,   
under two sets of 
assumptions 
on the spatial domain~$\cD$ 
and on the coefficients~$A$,~$\kappa$ 
of the differential operator $L$ 
in~\eqref{eq:def:L}. 
We recall that $[E,F]_{\sigma}$ 
denotes the complex interpolation space 
between $(E,\norm{\,\cdot\,}{E})$ 
and $(F,\norm{\,\cdot\,}{F})$ 
with parameter $\sigma \in [0,1]$,
see, e.g., \cite[Ch.~2]{Lunardi:2018}. 

\begin{lemma}\label{lem:hdot-sobolev}  
	Let 
	Assumptions~\ref{ass:coeff}.I--II 
	and~\ref{ass:dom}.I 
	be satisfied.  
	Then    
	\begin{equation}\label{eq:hdot-inter}
		\bigl(
	 		\Hdot{\sigma}, \norm{\,\cdot\,}{\sigma}
	 	\bigr) 
		\cong 
	 	\bigl( 
	 		\left[
        		L_2(\cD), H^1_0(\cD)
        	\right]_{\sigma}, 
        	\norm{\,\cdot\,}{
            	[L_2(\cD), H^1_0(\cD)]_{\sigma} }
		\bigr),  
		\qquad 
		0\leq \sigma \leq 1, 
	\end{equation}
	holds for the space 
	$\bigl(\Hdot{\sigma}, \norm{\,\cdot\,}{\sigma} 
	\bigr)$ from~\eqref{eq:def:Hdot}. 
	Furthermore, 
	\begin{equation}\label{eq:hdot-sobolev1}
		\bigl(
			\Hdot{\sigma}, 
			\norm{\,\cdot\,}{\sigma} 
		\bigr) 
        \hookrightarrow
        \bigl(
            H^{\sigma}(\cD), 
			\norm{\,\cdot\,}{H^{\sigma}(\cD)}
		\bigr), 
		\qquad 
		0\leq \sigma \leq 1,  
	\end{equation}
	and the norms 
	$\norm{\,\cdot\,}{\sigma}$, 
	$\norm{\,\cdot\,}{H^{\sigma}(\cD)}$ 
	are equivalent on $\Hdot{\sigma}$ 
	for $0\leq\sigma\leq1$ 
	and $\sigma\neq \nicefrac{1}{2}$. 
	If, in addition, 
	Assumptions~\ref{ass:coeff}.III 
	and~\ref{ass:dom}.II  
	hold, then    
	\begin{equation}\label{eq:hdot-sobolev2}
	 	\bigl(
	 		\Hdot{\sigma}, 
	 		\norm{\,\cdot\,}{\sigma}
	 	\bigr) 
	 	\cong 
	 	\bigl(
	 		H^{\sigma}(\cD)\cap H^1_0(\cD), 
	 		\norm{\,\cdot\,}{H^{\sigma}(\cD)} 
	 	\bigr), 
		\qquad 
		1\leq \sigma \leq 2. 
	\end{equation}
\end{lemma}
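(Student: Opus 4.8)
The plan is to reduce all three claims to the single fundamental identification of the form domain $\Hdot{1} = \scrD\bigl(L^{\nicefrac{1}{2}}\bigr)$ with $H^1_0(\cD)$, and then to transport everything through complex interpolation. First I would verify that $\bigl(\Hdot{1}, \norm{\,\cdot\,}{1}\bigr) \cong \bigl(H^1_0(\cD), \norm{\,\cdot\,}{H^1(\cD)}\bigr)$. Since $L$ is, by \eqref{eq:def:L}, the self-adjoint operator associated with the symmetric sesquilinear form $a(u,v) := \int_\cD \bigl[\scalar{A\nabla u, \nabla v}{\bbR^d} + \kappa^2 uv\bigr]\,\rd x$ on $H^1_0(\cD)$, the theory of sesquilinear forms (cf.\ \cite{Ouhabaz:2005}) gives $\scrD\bigl(L^{\nicefrac{1}{2}}\bigr) = H^1_0(\cD)$ together with $\norm{u}{1}^2 = \norm{L^{\nicefrac{1}{2}}u}{L_2(\cD)}^2 = a(u,u)$. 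Ellipticity \eqref{eq:A:elliptic} combined with the Poincaré inequality on $H^1_0(\cD)$ yields $a(u,u) \gtrsim \norm{u}{H^1(\cD)}^2$, while $A,\kappa \in L_\infty(\cD)$ give $a(u,u) \lesssim \norm{u}{H^1(\cD)}^2$, so the norms are equivalent. The claim \eqref{eq:hdot-inter} then follows from the standard fractional-power formula $[L_2(\cD), \scrD(L^{\nicefrac{1}{2}})]_\sigma \cong \scrD\bigl(L^{\nicefrac{\sigma}{2}}\bigr) = \Hdot{\sigma}$ for a positive self-adjoint operator (see, e.g., \cite{Lunardi:2018}).

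For \eqref{eq:hdot-sobolev1} I would exploit functoriality of the complex interpolation functor: the continuous embedding $H^1_0(\cD) \hookrightarrow H^1(\cD)$ interpolates with $L_2(\cD)$ to give $\Hdot{\sigma} \cong [L_2(\cD), H^1_0(\cD)]_\sigma \hookrightarrow [L_2(\cD), H^1(\cD)]_\sigma \cong H^\sigma(\cD)$, the last identity being the classical interpolation characterization of fractional Sobolev spaces on a Lipschitz domain. The norm equivalence for $\sigma \neq \nicefrac{1}{2}$ I would establish by distinguishing two regimes. For $0 \le \sigma < \nicefrac{1}{2}$ there is no trace, so $[L_2(\cD), H^1_0(\cD)]_\sigma \cong [L_2(\cD), H^1(\cD)]_\sigma \cong H^\sigma(\cD)$ with equivalent norms. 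For $\nicefrac{1}{2} < \sigma \le 1$ the space $[L_2(\cD), H^1_0(\cD)]_\sigma$ coincides with the closed subspace $\{u \in H^\sigma(\cD) : u|_{\partial\cD} = 0\}$ of $H^\sigma(\cD)$, on which the interpolation norm and $\norm{\,\cdot\,}{H^\sigma(\cD)}$ are equivalent. The value $\sigma = \nicefrac{1}{2}$ is genuinely excluded, since there $[L_2(\cD), H^1_0(\cD)]_{\nicefrac{1}{2}}$ is the Lions--Magenes space $H^{\nicefrac{1}{2}}_{00}(\cD)$, whose norm is strictly stronger than $\norm{\,\cdot\,}{H^{\nicefrac{1}{2}}(\cD)}$.

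For \eqref{eq:hdot-sobolev2} under Assumptions~\ref{ass:coeff}.III and~\ref{ass:dom}.II, I would first invoke elliptic $H^2$-regularity: on a convex domain with Lipschitz coefficient $A$ the Dirichlet problem for $L$ satisfies $\scrD(L) = H^2(\cD) \cap H^1_0(\cD)$ with $\norm{Lu}{L_2(\cD)} \eqsim \norm{u}{H^2(\cD)}$ (cf.\ \cite{Grisvard:2011}), that is, $\Hdot{2} \cong H^2(\cD) \cap H^1_0(\cD)$. Combined with $\Hdot{1} \cong H^1_0(\cD)$, interpolation of both endpoints yields, on the one hand, $[\Hdot{1}, \Hdot{2}]_{\sigma-1} \cong \scrD\bigl(L^{\nicefrac{\sigma}{2}}\bigr) = \Hdot{\sigma}$ via the fractional-power formula, and on the other hand $[H^1_0(\cD), H^2(\cD) \cap H^1_0(\cD)]_{\sigma-1} \cong H^\sigma(\cD) \cap H^1_0(\cD)$; since $\sigma > \nicefrac{1}{2}$ throughout $[1,2]$ and $H^1_0(\cD)$ constrains only the zeroth-order trace, the zero-trace subspaces interpolate without an exceptional parameter, which gives \eqref{eq:hdot-sobolev2}.

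I expect the main obstacle to lie in the Sobolev-side interpolation identities on a merely Lipschitz domain, in particular the precise identification of $[L_2(\cD), H^1_0(\cD)]_\sigma$ with (a subspace of) $H^\sigma(\cD)$ and the careful treatment of the trace around the exceptional point $\sigma = \nicefrac{1}{2}$, as well as the interpolation of the zero-trace subspaces in \eqref{eq:hdot-sobolev2}. By contrast, the fractional-power domain side is routine once $\Hdot{1} \cong H^1_0(\cD)$ and the $H^2$-regularity of $L$ are in place.
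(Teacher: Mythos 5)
Your proposal is correct and follows essentially the same route as the paper: identification of $\Hdot{\sigma}$ with the complex interpolation scale via fractional-power theory for the self-adjoint operator $L$, the Grisvard-type characterization of $[L_2(\cD),H^1_0(\cD)]_\sigma$ for the norm equivalence away from $\sigma=\nicefrac{1}{2}$, and reduction of \eqref{eq:hdot-sobolev2} to the endpoint $\sigma=2$ by interpolating the two scales $\Hdot{1}\cong H^1_0(\cD)$ and $\Hdot{2}\cong H^2(\cD)\cap H^1_0(\cD)$. The only place the paper does more work is that endpoint itself: what you cite as a single known fact, $\scrD(L)=H^2(\cD)\cap H^1_0(\cD)$, is proved there in two directions---the forward embedding via \cite[Thm.~3.2.1.2]{Grisvard:2011} together with a bootstrap for $\kappa\not\equiv 0$, and the reverse embedding via a.e.\ differentiability of the Lipschitz coefficient $A$---each completed with the closed graph theorem.
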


\begin{proof} 
	First, note that \cite[Cor.~2.4]{Yagi:2010} 
	implies~\eqref{eq:hdot-inter}.
	If $(E,\norm{\,\cdot\,}{E})$, 
	$(F, \norm{\,\cdot\,}{F})$, 
	and $(G, \norm{\,\cdot\,}{G})$ are Banach spaces 
	such that 
	$(F, \norm{\,\cdot\,}{F}) 
	\hookrightarrow 
	(G, \norm{\,\cdot\,}{G})$, 
	then by definition of complex 
	interpolation we have 
	$\bigl([E,F]_{\sigma}, \norm{\,\cdot\,}{[E,F]_{\sigma}}\bigr) 
	\hookrightarrow 
	 \bigl([E,G]_{\sigma}, \norm{\,\cdot\,}{[E,G]_{\sigma}}\bigr)$.
	This observation in connection   
	with~\cite[Thm.~1.35]{Yagi:2010} 
	(which collects several results from~\cite{Triebel:1978})
	shows~\eqref{eq:hdot-sobolev1}. 
	Equivalence of 
	$\norm{\,\cdot\,}{\sigma}$, 
	$\norm{\,\cdot\,}{H^{\sigma}(\cD)}$ 
	on $\Hdot{\sigma}$ for $0\leq\sigma\leq 1$, 
	$\sigma \neq \nicefrac{1}{2}$,  
	is proven in \cite[Thm.~8.1]{Grisvard:1967}. 
	By combining~\eqref{eq:hdot-inter} 
	for $\sigma=1$,~\cite[Thm.~4.36]{Lunardi:2018} 
	and~\cite[Lem.~A2]{Guermond:2009} 
	(recalling Assumption~\ref{ass:dom}.II)
	we find that~\eqref{eq:hdot-sobolev2} 
	for $\sigma \in (1,2)$
	follows once~\eqref{eq:hdot-sobolev2} 
	is established for the case $\sigma =2$. 
	
	It thus remains to 
	prove~\eqref{eq:hdot-sobolev2} 
	for $\sigma=2$. 
	To this end, 
	we first observe that, 
	for a vanishing coefficient $\kappa\equiv 0$ 
	of the operator $L$ in~\eqref{eq:def:L}, 
	we have, e.g., by~\cite[Thm.~3.2.1.2]{Grisvard:2011} 
	the regularity result 
	\begin{equation}\label{eq:regularity-smooth} 
	    f \in L_2(\cD) 
	    \quad 
	    \Rightarrow 
	    \quad 
	    u:=L^{-1}f \in H^2(\cD)\cap H^1_0(\cD).
	\end{equation}  
	If $\kappa\not\equiv 0$, then 
	$u \in H^1_0(\cD)$  
	satisfies the equality 
	$-\nabla\cdot(A\nabla u) = f - \kappa^2 u$ 
	in the weak sense 
	so that~\cite[Thm.~3.2.1.2]{Grisvard:2011}  
	applied to 
	$\widetilde{f} := f - \kappa^2 u \in L_2(\cD)$
	again yields~\eqref{eq:regularity-smooth}. 
	By the
	closed graph theorem, 
	$\bigl(\Hdot{2}, \norm{\,\cdot\,}{2}\bigr) 
	\hookrightarrow 
	\bigl(H^2(\cD)\cap H^1_0(\cD), \norm{\,\cdot\,}{H^2(\cD)}\bigr)$.
	 
	We now establish the reverse embedding. 
	By Assumption~\ref{ass:coeff}.III  
	and, e.g.,~\cite[Thm.~4 in Ch.~5.8]{Evans:2010} 
	(note that the assumptions on 
	the boundary posed therein 
	can be circumvented by exploiting an extension argument as, 
	e.g., in~\cite[Sec.~VI.2.3 Thm.~3]{Stein:1970}, 
	see also the remark below~\cite[Thm.~4 in Ch.~5.8]{Evans:2010}),
	$A_{ij}$ is differentiable a.e.~in~$\cD$ 
	with essentially bounded weak derivatives 
	$\partial_{x_k} A_{ij} \in L_{\infty}(\cD)$, 
	$1\leq i,j,k\leq d$. 
	Thus (by first approximating 
	$A_{ij}$ in $H^1(\cD)$ with a sequence in 
	$C^{\infty}(\cD)$ to obtain 
	that $A_{ij} \partial_{x_j} u$
	is weakly differentiable with  
	$\partial_{x_k}(A_{ij} \partial_{x_j} u)
	= \partial_{x_k} A_{ij} \partial_{x_j} u 
	+ A \partial_{x_k x_j} u$), 
	we conclude that 
	$A \nabla u \in H^1(\cD)^d$  
	whenever $u\in H^2(\cD)\cap H^1_0(\cD)$.
	Finally, by the closed graph theorem  
	$\bigl(H^2(\cD)\cap H^1_0(\cD), \norm{\,\cdot\,}{H^2(\cD)} \bigr) 
	\hookrightarrow 
	\bigl( \Hdot{2}, \norm{\,\cdot\,}{2} \bigr)$ 
	follows. 
\end{proof}

\section{General results on Gaussian Random Fields (GRFs)}\label{section:grfs}

In this section we address different notions 
of regularity (H\"older and Sobolev) for 
Gaussian random fields (GRFs) 
and their covariance functions. 
We first recall the definition 
of a GRF and specify 
then what we mean by a \emph{colored} GRF.  
As usually, we work in the setting 
formulated in Subsection~\ref{subsec:intro:setting}. 

\begin{definition}\label{def:GRF}
	Let $B\subseteq \bbR^d$. 
	A family of $\cF$-measurable 
	$\bbR$-valued random variables $(\GP(x))_{x\in B}$
	is called a \emph{random field (indexed by $B$)}. 
	It is called \emph{Gaussian} if the random 
	vector~$(\GP(x_1),\ldots,\GP(x_n))^\T$ is Gaussian 
	for all finite sets $\{ x_1,\ldots,x_n \} \subset B$.  
	It is called \emph{continuous} 
	if the mapping~$x\mapsto \GP(x)(\omega)$ 
	is continuous for all $\omega\in \Omega$.
\end{definition}
\begin{definition}\label{def:colored} 
	Let $T\in\cL(L_2(\cD))$. 
	We call~$\GP\colon \cD\times \Omega \rightarrow \bbR$ 
	a \emph{Gaussian random field (GRF) 
	colored by $T$}
	if it is a GRF, 
	a $\cB(\cD)\otimes \cF$-measurable 
	mapping, and 
	\begin{equation}\label{eq:def:colored}
	\scalar{\GP,\psi}{L_2(\cD)} 
	= 
	\white(\dual{T}\psi)  
	\quad 
	\bbP\text{-a.s.} 
	\quad 
	\forall\psi\in L_2(\cD). 
	\end{equation}
\end{definition}
\begin{remark}
	It is well-known and easily verified 
	(see also Proposition~\ref{prop:regularity:hdot}) 
	that there exists a square-integrable GRF~$\GP$ 
	colored by~$T$ 
	if and only if~$T\in \cL_2(L_2(\cD))$, and 
	in this case 
	$\bbE \bigl[\| \GP \|_{L_2(\cD)}^2 \bigr] 
		= \tr(T\dual{T})$, 
	where $\tr(\,\cdot\,)$ is the trace 
	on $L_2(\cD)$.  
\end{remark}

\subsection{H\"older regularity of GRFs}\label{subsec:grfs:hoelder}

We now provide an abstract result on 
the construction and 
H\"older regularity of a GRF
assuming that the color and, thus, 
the covariance structure 
of the field is given. 
\begin{proposition}\label{prop:regularity:hoelder}
	Assume that $T\in \cL(L_2(\cD); C^\gamma(\clos{\cD}))$ 
	for some $\gamma\in(0,1)$. 
	Then there exists a continuous GRF $\GP$ 
	colored by $T$ such that
	\begin{equation}\label{eq:def:reg:hoelder}
		\GP(x) = \white(\dual{T}\dirac{x})  
		\quad 
		\bbP\text{-a.s.}   
		\quad 
		\forall x\in\clos{\cD}. 
	\end{equation}
	Furthermore, 
	for $q \in (0,\infty)$ and 
	$\delta \in (0,\gamma)$, 
	we have 
	\begin{equation}\label{eq:rf_reg}
		\left( 
		\bbE \Bigl[ \norm{\GP}{C^{\delta}(\clos{\cD})}^q \Bigr] 
		\right)^{\nicefrac{1}{q}} 
		\lesssim_{(q,\gamma,\delta,\cD)} 
		\norm{T}{\cL(L_2(\cD); C^{\gamma}(\clos{\cD}))}.
	\end{equation}
\end{proposition}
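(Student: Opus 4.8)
The plan is to construct the field pointwise through the isonormal process~$\white$, to control its increments via the Hölder mapping property of~$T$, and then to invoke a \emph{quantitative} Kolmogorov--Chentsov theorem to simultaneously obtain a continuous modification and the moment bound~\eqref{eq:rf_reg}. First I would note that for each $x\in\clos{\cD}$ the evaluation functional $\dirac{x}\from f\mapsto f(x)$ is bounded on $C^\gamma(\clos{\cD})$ with $\norm{\dirac{x}}{\dual{(C^\gamma(\clos{\cD}))}}\leq 1$, so that $\dual{T}\dirac{x}\in L_2(\cD)$ is well defined and $(T\psi)(x)=\scalar{\psi,\dual{T}\dirac{x}}{L_2(\cD)}$ for all $\psi\in L_2(\cD)$. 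Setting $\GP(x):=\white(\dual{T}\dirac{x})$ gives a centered Gaussian variable for each $x$, with Gaussian finite-dimensional distributions by linearity of $\white$. The key increment estimate is
\[
\bigl(\bbE\bigl[|\GP(x)-\GP(y)|^2\bigr]\bigr)^{\nicefrac12}
=\norm{\dual{T}(\dirac{x}-\dirac{y})}{L_2(\cD)}
\leq \norm{T}{\cL(L_2(\cD);C^\gamma(\clos{\cD}))}\,\norm{\dirac{x}-\dirac{y}}{\dual{(C^\gamma(\clos{\cD}))}},
\]
where the elementary bound $|f(x)-f(y)|\leq \seminorm{f}{C^\gamma(\clos{\cD})}|x-y|^\gamma$ yields $\norm{\dirac{x}-\dirac{y}}{\dual{(C^\gamma(\clos{\cD}))}}\leq |x-y|^\gamma$. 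Since $\GP(x)-\GP(y)$ is Gaussian, all its absolute moments are fixed multiples of powers of its standard deviation, so for every $p\in(0,\infty)$ one has $\bigl(\bbE[|\GP(x)-\GP(y)|^p]\bigr)^{\nicefrac1p}\lesssim_{p}\norm{T}{\cL(L_2(\cD);C^\gamma(\clos{\cD}))}|x-y|^\gamma$.

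Next I would fix $\delta\in(0,\gamma)$ and choose an exponent $p\geq q$ so large that $p\gamma-d>p\delta$, i.e.\ $p>\nicefrac{d}{(\gamma-\delta)}$. Writing $|x-y|^{p\gamma}=|x-y|^{d+(p\gamma-d)}$ with exponent gap $p\gamma-d>0$, the quantitative Kolmogorov--Chentsov theorem (through the Garsia--Rodemich--Rumsey inequality) produces a continuous modification, again denoted $\GP$, with
\[
\bigl(\bbE\bigl[\seminorm{\GP}{C^\delta(\clos{\cD})}^p\bigr]\bigr)^{\nicefrac1p}
\lesssim_{(p,\gamma,\delta,\cD)}\norm{T}{\cL(L_2(\cD);C^\gamma(\clos{\cD}))}.
\]
Combining this with the pointwise bound $\bigl(\bbE[|\GP(x_0)|^p]\bigr)^{\nicefrac1p}\lesssim_p\norm{T}{\cL(L_2(\cD);C^\gamma(\clos{\cD}))}$ at a fixed $x_0\in\clos{\cD}$ (again from the Gaussian structure and $\norm{\dirac{x_0}}{\dual{(C^\gamma(\clos{\cD}))}}\leq1$) and the inequality $\sup_{x\in\clos{\cD}}|\GP(x)|\leq|\GP(x_0)|+\operatorname{diam}(\cD)^\delta\seminorm{\GP}{C^\delta(\clos{\cD})}$ controls the full norm $\norm{\GP}{C^\delta(\clos{\cD})}$. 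As $q\leq p$ and we work on a probability space, Jensen's inequality then delivers \eqref{eq:rf_reg} for the prescribed exponent $q$; this single choice of $p$ thus covers all $q\in(0,\infty)$ at once.

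Finally I would check that this modification is genuinely colored by $T$ and satisfies \eqref{eq:def:reg:hoelder}. Identity \eqref{eq:def:reg:hoelder} is immediate, since a modification agrees with $\GP(x)=\white(\dual{T}\dirac{x})$ $\bbP$-almost surely at every fixed $x$. For the coloring property, joint measurability of $(x,\omega)\mapsto\GP(x)(\omega)$ (a Carath\'eodory map: continuous in $x$, measurable in $\omega$) justifies Fubini, and since $\white$ is bounded and linear it commutes with Bochner integration; using $\int_\cD (\dual{T}\dirac{x})\,\psi(x)\,\rd x=\dual{T}\psi$, which follows by testing against arbitrary $\phi\in L_2(\cD)$ and unfolding the definition of $\dual{T}\dirac{x}$, one obtains $\scalar{\GP,\psi}{L_2(\cD)}=\white(\dual{T}\psi)$ $\bbP$-a.s.\ for every $\psi\in L_2(\cD)$, i.e.\ \eqref{eq:def:colored}.

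The step demanding the most care is the quantitative Kolmogorov--Chentsov estimate: I need not merely existence of a continuous modification but a bound on the $p$-th moment of its $C^\delta$-seminorm that is \emph{linear} in $\norm{T}{\cL(L_2(\cD);C^\gamma(\clos{\cD}))}$, with all remaining constants independent of $T$ (their dependence on $\cD$ absorbing the geometric input needed for the chaining on $\clos{\cD}$). A secondary technical point is the interchange of $\white$ with the spatial integral when verifying the coloring identity.
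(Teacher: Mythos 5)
Your proposal is correct and follows essentially the same route as the paper's proof: pointwise definition $\GP_0(x)=\white(\dual{T}\dirac{x})$, the increment bound $\|\dual{T}(\dirac{x}-\dirac{y})\|_{\dual{L_2(\cD)}}\leq \|T\|\,|x-y|^\gamma$ via duality, Gaussian moment equivalence (Khintchine), a quantitative Kolmogorov--Chentsov argument yielding both the continuous modification and the $C^\delta$-seminorm moment bound, a one-point estimate to control the full norm, and a Jensen/H\"older step to cover all $q\in(0,\infty)$. The only cosmetic differences are that you invoke Garsia--Rodemich--Rumsey where the paper cites Revuz--Yor plus an extension argument, and that you spell out the coloring verification via commuting $\white$ with a Bochner integral where the paper leaves it as a routine variance computation; neither changes the substance.
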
 

\begin{proof}
	We first define the random field 
	$\GP_0\from \clos{\cD} \times \Omega \to \bbR$
	by $\GP_0(x) := 
	\white(\dual{T} \dirac{x})$ 
	for all $x\in\clos{\cD}$. 
	By the properties of 
	an isonormal Gaussian process 
	we find, for $x,y\in\clos{\cD}$, 
	\begin{align}
		\left( \bbE\bigl[ |\GP_0(x) - \GP_0(y)|^2\bigr]
			\right)^{\nicefrac{1}{2}}
		&= \left( \bbE\bigl[ 
			|\white( \dual{T}
			(\dirac{x} - \dirac{y})
			 )|^2\bigr] \right)^{\nicefrac{1}{2}} 
		 = \norm{\dual{T} 
		 	(\dirac{x} - \dirac{y})
	 		}{\dual{L_2(\cD)}} \notag \\
		&\leq \norm{\dual{T}}{
			\cL(\dual{C^{\gamma}(\clos{\cD})};
				\dual{L_2(\cD)})}
			\norm{
				\dirac{x} - \dirac{y}
			}{\dual{C^{\gamma}(\clos{\cD})}} 
			\notag \\
		&= 
			\norm{T}{\cL(L_2(\cD);
				C^{\gamma}(\clos{\cD}))} 
			|x-y|^{\gamma}.
		\label{eq:Hoelder_estimate} 
	\end{align} 
	Since 
	$\GP_0(x)-\GP_0(y) 
	= \white (\dual{T}
	(\dirac{x} - \dirac{y}))$ 
	is a 
	real-valued 
	Gaussian random variable, 
	we can apply  
	the Khintchine inequalities 
	(see, e.g., \cite[Thm.~4.7 and p.~103]{LedouxTalagrand:2011}) 
	and conclude 
	with~\eqref{eq:Hoelder_estimate} 
	that, for all $q\in (0,\infty)$,  
	the estimate 
	\begin{align}
		\seminorm{\GP_0}{C^{\gamma}(\clos{\cD};L_q(\Omega))}
		&\leq C_q 
			\sup\limits_{\substack{x,y\in \clos{\cD} \\ x\neq y}} 
			\left( \E \Biggl[ \left| 
				\frac{\GP_0(x)-\GP_0(y)}{|x-y|^{\gamma}}
			\right|^{2} \Biggr] \right)^{\nicefrac{1}{2}} \notag \\ 
		&\leq C_q \norm{T}{\cL(L_2(\cD); C^{\gamma}(\clos{\cD}))} 
		\label{eq:Hoelderseminormest}
	\end{align}
	holds, with 	
	a constant $C_q > 0$  
	depending only on $q$.  
		
	Thus, by the Kolmogorov--Chentsov 
	continuity theorem 
	(e.g.,~\cite[Thm.~I.2.1]{RevuzYor3:1999},
	combined with an extension argument as discussed in 
	the proof of 
	\cite[Thm.~2.1]{MittmannSteinwart:2003}, see also~\cite[Thm.~VI.2.3]{Stein:1970}), 
	there exists a continuous random field 
	$\GP\from \clos{\cD} \times \Omega \to \bbR$
	such that $\GP(x)=\GP_0(x)$ $\bbP$-a.s.\ 
	for all $x\in \clos{\cD}$, and furthermore,
	for every $\delta\in (0,\gamma)$ and 
	every finite $q > (\gamma-\delta)^{-1}$, 
	we can find a constant $C_{q,\gamma,\delta,\cD} > 0$,  
	depending only on $q$, $\gamma$, $\delta$, 
	as well as the dimension 
	and the diameter of $\cD\subset\bbR^d$, such that 
	\begin{align}\label{eq:Hoelderseminormbound}
	\left(\bbE \Bigl[ \seminorm{\GP}{C^{\delta}(\clos{\cD})}^q 
		\Bigr] \right)^{\nicefrac{1}{q}} 
	\leq C_{q,\gamma,\delta,\cD} 
		\seminorm{\GP_0}{C^{\gamma}(\clos{\cD}; L_q(\Omega))}.
	\end{align}
	Next, again by the Khintchine inequalities, 
	we have, for all $x\in\clos{\cD}$ and all $q\in (0,\infty)$,  
	\begin{align}
		\left(\bbE\left[ | \GP(x) |^q \right] \right)^{\nicefrac{1}{q}} 
		&=
		\left(\bbE\left[ | \GP_0(x) |^q \right] \right)^{\nicefrac{1}{q}} 
		\notag \\ 
		&\leq C_q 
		\left(\bbE\left[ |\white(\dual{T} \dirac{x})|^2 
			\right] \right)^{\nicefrac{1}{2}} 
		\leq C_q  
		\norm{T}{\cL(L_2(\cD); C^{\gamma}(\clos{\cD}))}.
		\label{eq:onepointest}
	\end{align}
	From~\eqref{eq:def:hoelderseminorm}--\eqref{eq:def:hoeldernorm} 
	we deduce, for every~$\delta\in(0,1)$ 
	and all~$f\in C^\delta(\clos{\cD})$, 
	the relation 
	\begin{align*}
		\norm{f}{C^{\delta}(\clos{\cD})} 
		\leq 
		|f(x)|
		+ 
		\bigl( 1 + \operatorname{diam}(\cD)^{\delta} \bigr) 
		\seminorm{f}{C^{\delta}(\clos{\cD})}  
		\quad 
		\forall x \in \clos{\cD}. 
	\end{align*}
	We combine this observation  
	with~\eqref{eq:Hoelderseminormest},~\eqref{eq:Hoelderseminormbound}, 
	and~\eqref{eq:onepointest} to derive,  
	for all $\delta\in (0,\gamma)$ and all  
	finite $q > (\gamma-\delta)^{-1}$, 
	the bound  
	\begin{align}
		\left(\bbE\Bigl[ \norm{\GP}{C^{\delta}(\clos{\cD})}^q
			\Bigr]\right)^{\nicefrac{1}{q}} 
		&\leq C_q 
			\norm{T}{\cL(L_2(\cD); C^{\gamma}(\clos{\cD}))}
			+ \bigl( 1 + \operatorname{diam}(\cD)^{\delta} \bigr) 
			\left(\bbE\Bigl[ 
			    \seminorm{\GP}{C^{\delta}(\clos{\cD})}^q 
				\Bigr] \right)^{\nicefrac{1}{q}} \notag \\ 
		&\leq C_q 
			\bigl( 1 + C_{q,\gamma,\delta,\cD} 
				\bigl( 1+\operatorname{diam}(\cD)^{\delta}\bigr) \bigr) 
			\norm{T}{\cL(L_2(\cD); C^{\gamma}(\clos{\cD}))}.
		\label{eq:Hoeldernormbound}
	\end{align}
	Note that H\"older's inequality and~\eqref{eq:Hoeldernormbound} 
	ensure that~\eqref{eq:rf_reg} holds 
	for every $\delta\in (0,\gamma)$ and 
	every $q \in (0,\infty)$. 
	Furthermore, for every $\psi\in L_2(\cD)$, one readily verifies  
	the identity  
	$\bbE\bigl[|\scalar{\GP, \psi}{L_2(\cD)} - \white(\dual{T}\psi)|^2\bigr]=0$, 
	i.e., 
	$\GP$ is colored by $T$. 	
%
\end{proof}

If Assumption~\ref{ass:dom}.I 
is fulfilled,  
the Sobolev embedding theorem 
(see, e.g.,~\cite[Thm.~5.4 and Thm.~8.2]{NezzaEtAl:2012}) 
is applicable and 
we obtain $\gamma$-H\"older 
continuity~\eqref{eq:def:Cgamma} 
for elements in the 
fractional-order Sobolev space 
$H^{\gamma + \nicefrac{d}{2}}(\cD)$ 
for every
$\gamma\in(0,1)$. 
This continuous embedding, 
$H^{\gamma + \nicefrac{d}{2}}(\cD) 
\hookrightarrow C^{\gamma}(\clos{\cD})$,  
combined 
with Proposition~\ref{prop:regularity:hoelder} 
leads to the following result.
\begin{corollary}\label{cor:regularity:hoelder}
	Let 
	Assumption~\ref{ass:dom}.I, 
	$\gamma \in (0,1)$, and 
	$T\in \cL\bigl(L_2(\cD); H^{\gamma + \nicefrac{d}{2}}(\cD)\bigr)$
	be satisfied. 
	Then there exists a continuous GRF  
	$\GP\colon \clos{\cD} \times \Omega \rightarrow \bbR$ 
	colored by~$T$, cf.~\eqref{eq:def:colored}, \linebreak   
	such that 
	$\GP(x) = \white(\dual{T}\dirac{x})$ 
	$\bbP$-a.s.\ for all $x\in\clos{\cD}$. 
	Moreover, the stability estimate 
	\begin{align}\label{eq:cor:regularity:hoelder} 
		\left( \bbE 
			\Bigl[ \norm{\GP}{C^{\delta}(\clos{\cD})}^q \Bigr]
		\right)^{\nicefrac{1}{q}} 
		\lesssim_{(q,\gamma,\delta,\cD)} 
			\norm{T}{
				\cL\left(L_2(\cD);  
				H^{\gamma + \nicefrac{d}{2}}(\cD)\right)} 
	\end{align} 
	for the $q$-th moment of $\GP$ 
	with respect to the $\delta$-H\"older 
	norm~\eqref{eq:def:hoeldernorm} 
	holds for every $\delta\in(0,\gamma)$ 
	and $q\in(0,\infty)$. 
\end{corollary}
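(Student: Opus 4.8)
The plan is to derive this corollary as a direct consequence of Proposition~\ref{prop:regularity:hoelder} combined with the Sobolev embedding $H^{\gamma+\nicefrac{d}{2}}(\cD)\hookrightarrow C^{\gamma}(\clos{\cD})$, which is valid under Assumption~\ref{ass:dom}.I. The key observation is that the hypotheses of the corollary differ from those of the proposition only in the target space of the operator $T$: here $T$ maps into $H^{\gamma+\nicefrac{d}{2}}(\cD)$ rather than directly into $C^{\gamma}(\clos{\cD})$.

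First I would invoke the Sobolev embedding theorem (as referenced in the text preceding the corollary, e.g.\ \cite[Thm.~5.4 and Thm.~8.2]{NezzaEtAl:2012}), which guarantees the existence of a constant $C_{\gamma,\cD}>0$ such that $\norm{f}{C^{\gamma}(\clos{\cD})}\leq C_{\gamma,\cD}\norm{f}{H^{\gamma+\nicefrac{d}{2}}(\cD)}$ for all $f\in H^{\gamma+\nicefrac{d}{2}}(\cD)$. Composing this embedding with the operator $T\in\cL(L_2(\cD);H^{\gamma+\nicefrac{d}{2}}(\cD))$ shows that $T$, regarded as a map into $C^{\gamma}(\clos{\cD})$, satisfies
\[
\norm{T}{\cL(L_2(\cD);C^{\gamma}(\clos{\cD}))}
\leq
C_{\gamma,\cD}\,
\norm{T}{\cL(L_2(\cD);H^{\gamma+\nicefrac{d}{2}}(\cD))}.
\]
In particular $T\in\cL(L_2(\cD);C^{\gamma}(\clos{\cD}))$, so the hypothesis of Proposition~\ref{prop:regularity:hoelder} is met.

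I would then apply Proposition~\ref{prop:regularity:hoelder} verbatim: it furnishes a continuous GRF $\GP$ colored by $T$ with $\GP(x)=\white(\dual{T}\dirac{x})$ $\bbP$-a.s.\ for all $x\in\clos{\cD}$, and it provides the moment bound \eqref{eq:rf_reg}. Chaining \eqref{eq:rf_reg} with the operator-norm estimate from the embedding yields, for every $\delta\in(0,\gamma)$ and $q\in(0,\infty)$,
\[
\left(\bbE\Bigl[\norm{\GP}{C^{\delta}(\clos{\cD})}^q\Bigr]\right)^{\nicefrac{1}{q}}
\lesssim_{(q,\gamma,\delta,\cD)}
\norm{T}{\cL(L_2(\cD);C^{\gamma}(\clos{\cD}))}
\lesssim_{(\gamma,\cD)}
\norm{T}{\cL(L_2(\cD);H^{\gamma+\nicefrac{d}{2}}(\cD))},
\]
which is precisely \eqref{eq:cor:regularity:hoelder}; absorbing the embedding constant into the implicit constant preserves the stated dependence on $(q,\gamma,\delta,\cD)$.

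There is essentially no hard part here—the corollary is a clean corollary in the literal sense, and the only point requiring minor care is confirming that the embedding constant $C_{\gamma,\cD}$ depends only on $\gamma$ and $\cD$ (through its dimension and geometry under Assumption~\ref{ass:dom}.I), so that the combined implicit constant retains exactly the dependence $(q,\gamma,\delta,\cD)$ asserted in \eqref{eq:cor:regularity:hoelder}. This is standard for the fractional Sobolev embedding on a Lipschitz domain.
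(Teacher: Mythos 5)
Your proposal is correct and coincides with the paper's own argument: the paper proves this corollary implicitly in the two sentences preceding its statement, namely by noting that under Assumption~\ref{ass:dom}.I the Sobolev embedding $H^{\gamma+\nicefrac{d}{2}}(\cD)\hookrightarrow C^{\gamma}(\clos{\cD})$ holds and composing it with $T$ so that Proposition~\ref{prop:regularity:hoelder} applies, exactly as you do. Your additional remark about tracking the embedding constant's dependence on $(\gamma,\cD)$ is a fine point of care but does not constitute a different route.
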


We close the subsection with a brief discussion on
\begin{enumerate*} 
	\item the continuity of covariance functions 
		of colored GRFs, and 
	\item the $L_\infty(\cD\times\cD)$-distance between two     
		covariance functions of GRFs 
		colored by different operators.  
\end{enumerate*} 

By definition, the covariance function 
$\varrho \in L_2(\cD\times\cD)$ 
of a square-integrable random field 
$\GP \in L_2(\cD \times \Omega)$ 
satisfies 
\begin{equation}\label{eq:def:covfct} 
	\varrho(x,y) 
	= 
	\bbE[(\GP(x) - \bbE[\GP(x)]) 
	(\GP(y) - \bbE[\GP(y)])]
	\quad 
	\text{a.e.\ in }\cD\times\cD.  
\end{equation}
We obtain the one-to-one correspondence 
\[
	\int_{\cD} 
	\varrho(x,y) \psi(y)  
	\, \rd y 
	= 
	(\cC \psi)(x)   
	\quad 
	\text{a.e.\ in }\cD,   
	\quad 
	\forall \psi \in L_2(\cD), 
\]
with the covariance operator 
$\cC \from L_2(\cD) \to L_2(\cD)$ 
of the field $\GP$, 
which is defined via 
\[
	\scalar{\cC \phi, \psi}{L_2(\cD)} 
	= 
	\bbE\bigl[
		\scalar{\GP - \bbE[\GP],\phi}{L_2(\cD)}
		\scalar{\GP - \bbE[\GP],\psi}{L_2(\cD)}
	\bigr] 
	\quad 
	\forall \phi,\psi \in L_2(\cD). 
\]
From this definition it is evident 
that a GRF $\GP$ colored by~$T$ 
(note that $\bbE[\GP] = 0$ by construction, 
see Definition~\ref{def:colored}) 
has the covariance operator 
$\cC = T\dual{T}$. 
In the next lemma, this relation 
is exploited to characterize 
continuity of the covariance function~$\varrho$ 
in terms of the color $T$ of the GRF~$\GP$.  

\begin{proposition}\label{prop:regularity:Linf} 
	Let 
	$\GP, \widetilde{\GP}$ be GRFs  
	colored by $T$ and 
	$\widetilde{T}$, 
	respectively, see~\eqref{eq:def:colored},
	with covariance functions 
	denoted by 
	$\varrho$ and $\widetilde{\varrho}$, 
	cf.~\eqref{eq:def:covfct}. 
	Then, 
	\begin{enumerate}[label=\normalfont{(\roman*)}]
		\item\label{prop:regularity:Linf-i} 
			$\varrho$ 
			has a continuous representative 
			on $\clos{\cD\times\cD} $ 
			(again denoted by 
			$\varrho$)
			if and only if 
			$T\in\cL(L_2(\cD); C(\clos{\cD}))$. 
			In this case, 
			\begin{equation}\label{eq:prop:regularity:Linf-i} 
				\sup_{x,y\in\clos{\cD}} | \varrho(x,y) | 
				\leq 
				\norm{T\dual{T}}{\cL(\dual{C(\clos{\cD})}; 
						 C(\clos{\cD}))} \,; 
			\end{equation}
		\item\label{prop:regularity:Linf-ii}  
			if $T,\widetilde{T}\in \cL(L_2(\cD); C(\clos{\cD}))$, 
			then~$\varrho,\widetilde{\varrho} 
			\in C(\clos{\cD\times\cD})$ satisfy  
			\begin{equation}\label{eq:prop:regularity:Linf-ii} 
				\sup_{x,y\in \clos{\cD}}  
				\bigl| 
					\varrho(x,y) - \widetilde{\varrho}(x,y) 
				\bigr|  
				\leq 
				\bigl\| 
					T\dual{T} - \widetilde{T}\dual{\widetilde{T}} 
				\bigr\|_{ 
					\cL(\dual{C(\clos{\cD})}; C(\clos{\cD}))} \,.
			\end{equation} 
	\end{enumerate} 
\end{proposition}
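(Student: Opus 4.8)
The plan is to reduce both assertions to the factorization $\cC=T\dual{T}$ of the covariance operator, together with the pointwise kernel representation of $\varrho$ in terms of the Dirac functionals $\dirac{x}\in\dual{C(\clos{\cD})}$ (recall $\norm{\dirac{x}}{\dual{C(\clos{\cD})}}=1$). When $T\in\cL(L_2(\cD);C(\clos{\cD}))$ we have $\dual{T}\in\cL(\dual{C(\clos{\cD})};L_2(\cD))$, hence $\cC=T\dual{T}\in\cL(\dual{C(\clos{\cD})};C(\clos{\cD}))$, and I would first verify that
\[
\varrho(x,y) = \scalar{\dual{T}\dirac{x},\dual{T}\dirac{y}}{L_2(\cD)} = \duality{\dirac{x},T\dual{T}\dirac{y}}{\dual{C(\clos{\cD})}\times C(\clos{\cD})}
\]
defines a representative of the $L_2(\cD\times\cD)$-class $\varrho$ (by testing the right-hand side against product functions $\psi\otimes\phi$ and using $\scalar{\psi,\cC\phi}{L_2(\cD)}=\int_\cD\int_\cD\varrho(x,y)\psi(x)\phi(y)\,\rd x\,\rd y$). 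Granting this, both norm bounds are immediate: the duality estimate gives $|\varrho(x,y)|\leq\norm{T\dual{T}}{\cL(\dual{C(\clos{\cD})};C(\clos{\cD}))}\norm{\dirac{x}}{\dual{C(\clos{\cD})}}\norm{\dirac{y}}{\dual{C(\clos{\cD})}}$, which is \eqref{eq:prop:regularity:Linf-i}, and the same computation applied to $T\dual{T}-\widetilde{T}\dual{\widetilde{T}}$ (evaluated at $\dirac{y}$, tested against $\dirac{x}$) yields \eqref{eq:prop:regularity:Linf-ii}, once continuity of both $\varrho,\widetilde{\varrho}$ is available from part~\ref{prop:regularity:Linf-i}.

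For the equivalence in \ref{prop:regularity:Linf-i} I would treat the two implications separately. For ``only if'', suppose $\varrho$ has a continuous representative on the compact set $\clos{\cD}\times\clos{\cD}$. Then $\varrho$ is a bounded, symmetric, positive semidefinite kernel, so $\cC=T\dual{T}$ is a self-adjoint, positive, trace-class operator on $L_2(\cD)$ whose eigenfunctions corresponding to nonzero eigenvalues are continuous, and Mercer's theorem provides the uniformly convergent expansion $\varrho(x,y)=\sum_{k}\mu_k\phi_k(x)\phi_k(y)$. A Cauchy--Schwarz estimate in $k$ then shows $\cC^{\nicefrac12}\in\cL(L_2(\cD);C(\clos{\cD}))$ with $\norm{\cC^{\nicefrac12}}{\cL(L_2(\cD);C(\clos{\cD}))}\leq\sup_{x\in\clos{\cD}}\varrho(x,x)^{\nicefrac12}$. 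Since $T\dual{T}=\cC=\cC^{\nicefrac12}\cC^{\nicefrac12}$ with $\cC^{\nicefrac12}$ self-adjoint, an operator factorization (Douglas' lemma) furnishes a contraction $U\in\cL(L_2(\cD))$ with $T=\cC^{\nicefrac12}U$, whence $T\in\cL(L_2(\cD);C(\clos{\cD}))$.

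For ``if'' I would fix $T\in\cL(L_2(\cD);C(\clos{\cD}))$ and prove that the representative above is jointly continuous. Writing $g_x:=\dual{T}\dirac{x}$, the family $\{g_x\}_{x\in\clos{\cD}}$ is bounded in $L_2(\cD)$ (by $\norm{T}{\cL(L_2(\cD);C(\clos{\cD}))}$) and weakly continuous, since $\scalar{g_x,\psi}{L_2(\cD)}=(T\psi)(x)$ depends continuously on $x$ for each fixed $\psi\in L_2(\cD)$. The decomposition
\[
\varrho(x,y)-\varrho(x_0,y_0) = \scalar{g_x-g_{x_0},g_y}{L_2(\cD)} + \scalar{g_{x_0},g_y-g_{y_0}}{L_2(\cD)}
\]
then reduces joint continuity to \emph{strong} continuity of $x\mapsto g_x$ in $L_2(\cD)$ (equivalently, mean-square continuity of the field). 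This upgrade from weak to norm continuity is the step I expect to be the main obstacle: boundedness and weak continuity alone do not suffice, so one must control $\norm{g_x-g_{x_0}}{L_2(\cD)}=\sup_{\norm{\psi}{L_2(\cD)}\leq1}|(T\psi)(x)-(T\psi)(x_0)|$ uniformly. I would obtain this by establishing equicontinuity of the image $\{T\psi:\norm{\psi}{L_2(\cD)}\leq1\}$ in $C(\clos{\cD})$ --- that is, relative compactness of this set, so that $\norm{g_x-g_{x_0}}{L_2(\cD)}\to0$ as $x\to x_0$ by Arzel\`a--Ascoli. With strong continuity in hand, the displayed decomposition and the uniform bound on $\norm{g_y}{L_2(\cD)}$ give continuity of $\varrho$ on $\clos{\cD}\times\clos{\cD}$, and this continuous function coincides a.e.\ with the $L_2(\cD\times\cD)$-class $\varrho$, completing the argument.
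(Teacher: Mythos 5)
Your kernel representation, the two sup-norm bounds \eqref{eq:prop:regularity:Linf-i}--\eqref{eq:prop:regularity:Linf-ii}, and the ``only if'' half of \ref{prop:regularity:Linf-i} are sound. The bounds are obtained exactly as in the paper, by pairing $T\dual{T}$ (resp.\ $T\dual{T}-\widetilde{T}\dual{\widetilde{T}}$) against $\dirac{x},\dirac{y}$ and using $\norm{\dirac{x}}{\dual{C(\clos{\cD})}}=1$. For ``only if'', however, the paper argues much more directly: continuity of $\varrho$ gives $\norm{\dual{T}\dirac{x}}{\dual{L_2(\cD)}}^2=\varrho(x,x)$, hence $\norm{T\phi}{C(\clos{\cD})}\leq\sup_{x\in\clos{\cD}}\norm{\dual{T}\dirac{x}}{\dual{L_2(\cD)}}\,\norm{\phi}{L_2(\cD)}<\infty$. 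Your route through Mercer's theorem and Douglas' factorization reaches the same conclusion, correctly, but with considerably heavier machinery and with side conditions (pointwise positive semidefiniteness of the continuous representative, validity of Mercer's theorem on $\clos{\cD}$) that you would still need to verify.

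The genuine gap is the ``if'' half of \ref{prop:regularity:Linf-i}. You located the obstruction precisely --- $x\mapsto g_x:=\dual{T}\dirac{x}$ is only bounded and weakly continuous, while joint continuity of $\scalar{g_x,g_y}{L_2(\cD)}$ needs strong continuity --- but your proposed repair cannot be carried out. Since $\sup_{\norm{\psi}{L_2(\cD)}\leq1}|(T\psi)(x)-(T\psi)(x_0)|=\norm{g_x-g_{x_0}}{L_2(\cD)}$, equicontinuity of $\{T\psi:\norm{\psi}{L_2(\cD)}\leq1\}$ at $x_0$ \emph{is} strong continuity at $x_0$, so invoking Arzel\`a--Ascoli (i.e.\ compactness of $T$) to produce it is circular; and neither equicontinuity nor compactness follows from $T\in\cL(L_2(\cD);C(\clos{\cD}))$. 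Concretely, on $\cD=(0,1)$ set $g_x:=\sqrt{2}\sin(\pi\,\cdot\,/x)$ for $x>0$ and $g_0:=0$; then $T\psi(x):=\scalar{g_x,\psi}{L_2(\cD)}$ defines a bounded (indeed Hilbert--Schmidt) operator from $L_2(\cD)$ into $C([0,1])$ --- continuity at $x=0$ is the Riemann--Lebesgue lemma --- yet $\norm{g_x-g_0}{L_2(\cD)}\to1$, the image of the unit ball is not equicontinuous at $0$, and the kernel $\scalar{g_x,g_y}{L_2(\cD)}$ tends to $1$ along $(x,x)\to(0,0)$ but to $0$ along $(x,2x)\to(0,0)$, so it admits no continuous representative on $[0,1]^2$. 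Hence the step you flagged as ``the main obstacle'' is not merely hard: it cannot be filled from the stated hypothesis, and part \ref{prop:regularity:Linf-ii}, which asserts $\varrho,\widetilde{\varrho}\in C(\clos{\cD\times\cD})$, inherits the same issue. For comparison, the paper's own proof of this implication is a one-line claim that continuity of $\varrho$ follows from the kernel formula once $\dual{T}\in\cL(\dual{C(\clos{\cD})};\dual{L_2(\cD)})$, which tacitly presumes exactly the strong continuity you identified; in the paper's applications that extra input is supplied by other means (a H\"older color $T\in\cL(L_2(\cD);C^\gamma(\clos{\cD}))$ in Proposition~\ref{prop:regularity:hoelder}, and uniformly bounded eigenfunctions giving a uniformly convergent spectral expansion in Lemma~\ref{lem:matern:Linf}). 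Your instinct about where the difficulty lies was correct, but the proposal as written leaves it open.
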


\begin{proof} 
	By~\eqref{eq:def:covfct}, 
	the covariance function~$\varrho$  
	of a GRF $\GP$ colored by $T$ 
	is given by 
	\begin{equation}\label{eq:covfct-xy}
		\varrho(x,y) 
		= 
		\scalar{\dual{T}\dirac{x}, \dual{T}\dirac{y}}{\dual{L_2(\cD)}}
		\quad 
		\text{a.e.\ in }\clos{\cD\times\cD}. 
	\end{equation}
	First, let $T\in\cL(L_2(\cD); C(\clos{\cD}))$. 
	Then, we have 
	$\dual{T}\in\cL(\dual{C(\clos{\cD})}; \dual{L_2(\cD)})$
	and continuity 
	of $\varrho \from\clos{\cD\times\cD} \to \bbR$ 
	follows from~\eqref{eq:covfct-xy}. 
	Assume now that  
	$\varrho \in C(\clos{\cD\times\cD})$. 
	Then, again by~\eqref{eq:covfct-xy}, 
	we obtain  
	$\norm{\dual{T}\dirac{x}}{\dual{L_2(\cD)}}^2 
	= \varrho(x,x)	< \infty$ 
	for all $x\in\clos{\cD}$   
	and   
	\[
		\norm{T \phi}{C(\clos{\cD})} 
		= 
		\sup_{x\in\clos{\cD}} 
		\duality{\dirac{x}, T \phi}{
				\dual{C(\clos{\cD})}\times C(\clos{\cD})}
		\leq 
		\sup_{x\in\clos{\cD}} 
		\norm{\dual{T}\dirac{x}}{\dual{L_2(\cD)}}
		<\infty 
	\]
	holds for all $\phi\in L_2(\cD)$ 
	with $\norm{\phi}{L_2(\cD)} \leq 1$.  
	Thus, $T\in\cL(L_2(\cD); C(\clos{\cD}))$ 
	if $\varrho$ 
	is continuous. 
	Furthermore, by identifying 
	$\dual{L_2(\cD)} \cong L_2(\cD)$ 
	via the Riesz map, 
	the covariance operator~$\cC$ of~$\GP$ 
	satisfies  
	$\cC = T\dual{T} \in \cL(\dual{C(\clos{\cD})}; C(\clos{\cD}))$,  
	and we can deduce~\eqref{eq:prop:regularity:Linf-i}
	from~\eqref{eq:covfct-xy} since,  
	for all $x,y \in \clos{\cD}$,  
	\[ 
		|\varrho(x,y)| 
		= 
		|\duality{\dirac{x}, T\dual{T}\dirac{y}}{
			\dual{C(\clos{\cD})}\times C(\clos{\cD})}| 
		\leq \norm{T\dual{T}\dirac{y}}{C(\clos{\cD})} 
		\leq \norm{T\dual{T}}{\cL(\dual{C(\clos{\cD})}; C(\clos{\cD}))}. 
	\]
	Finally, the estimate~\eqref{eq:prop:regularity:Linf-ii} 
	can be shown similarly since, 
	for all $x,y\in\clos{\cD}$, 
	\begin{align*}
		|\varrho(x,y)
			-\widetilde{\varrho}(x,y)| 
		&= 
		\bigl| \bigl\langle \dirac{x}, 
			\bigl( T\dual{T} - \widetilde{T}\dual{\widetilde{T}}\bigr)
				\dirac{y}\bigr\rangle_{\dual{C(\clos{\cD})}\times C(\clos{\cD})} \bigr| .
		\qedhere
	\end{align*} 
\end{proof}

\subsection{Sobolev regularity of GRFs and their covariances}\label{subsec:grfs:cov}

After having characterized 
\begin{enumerate}[label=(\roman*)] 
	\item the H\"older 
		regularity (in $L_q(\Omega)$-sense) 
		of a GRF $\GP$, and  
	\item continuity of the covariance function 
		$\varrho$ in~\eqref{eq:def:covfct}, 
\end{enumerate}
in terms of the color of $\GP$,  
we now proceed with this discussion 
for Sobolev spaces. 
Specifically, 
we investigate the regularity 
of $\GP$ in $L_q(\Omega;H^\sigma(\cD))$
and  
of the covariance 
function~$\varrho$ 
with respect to the norm 
on the mixed Sobolev space 
\begin{equation}\label{eq:def:Hmixed} 
	H^{\sigma,\sigma}(\cD\times\cD)
	:= 
	H^{\sigma}(\cD) 
	\mathop{\hat{\otimes}} 
	H^{\sigma}(\cD) . 
\end{equation}  
Here, $\mathop{\hat{\otimes}}$ denotes 
the tensor product of Hilbert spaces. 
Thus, the inner product on 
$H^{\sigma,\sigma}(\cD\times\cD)$ 
inducing the norm 
$\norm{\,\cdot\,}{H^{\sigma,\sigma}(\cD\times\cD)}$ 
is uniquely defined via   
\[
	\scalar{\phi \otimes \chi,    
	\psi \otimes \vartheta }{
	H^{\sigma,\sigma}(\cD\times\cD)} 
	:= 
	\scalar{\phi, \psi}{H^\sigma(\cD)} 
	\scalar{\chi, \vartheta}{H^\sigma(\cD)} 
	\quad 
	\forall \phi,\psi,\chi,\vartheta\in H^\sigma(\cD). 
\]
To this end, 
in the following proposition 
we first quantify the $\Hdot{\sigma}$-regularity 
(in $L_q(\Omega)$-sense)  
of a colored GRF 
in terms of its color,  
cf.~\eqref{eq:def:Hdot} and Definition~\ref{def:colored}. 
In addition, we specify the regularity 
of the covariance function~\eqref{eq:def:covfct}
in the Hilbert tensor product space 
\begin{equation}\label{eq:def:Hdotmixed} 
	\bigl( \Hdot{\sigma,\sigma}, 
	\norm{\,\cdot\,}{\sigma,\sigma} \bigr), 
	\qquad 
	\Hdot{\sigma,\sigma}  
	:= 
	\Hdot{\sigma} \mathop{\hat{\otimes}} \Hdot{\sigma},  
\end{equation}
cf.~\eqref{eq:def:Hmixed}. 
Finally, we characterize 
the distance between two GRFs 
which are colored by different operators 
with respect to these norms.  

\begin{proposition}
	\label{prop:regularity:hdot}
	Let $\GP\from\cD\times\Omega\to\R$ 
	be a GRF colored by 
	$T\in\cL(L_2(\cD))$, 
	cf.~\eqref{eq:def:colored}.  
	Then~$\GP$ is square-integrable,  
	i.e., $\GP\in L_2(\cD\times\Omega)$, 
	if and only if 	
	its covariance operator   
	$\cC = T\dual{T}$ 
	has a finite trace 
	on $L_2(\cD)$. 
	More generally, for 
	$\sigma\geq 0$ and 
	$q\in(0,\infty)$, 
	we have 
	\begin{align}
		\bbE \bigl[
		\norm{\GP}{\sigma}^2 \bigr] 
		&= 
		\operatorname{tr}( 
		T\dual{T} L^\sigma) 
		= 
		\norm{T}{\cL_2^{0;\sigma}}^2,
		\label{eq:field-reg-2}
		\\ 
		\left(\bbE\bigl[
		\norm{\GP}{\sigma}^q
		\bigr]
		\right)^{\nicefrac{1}{q}}
		&\eqsim_{q}  
		\sqrt{\tr( 
		T\dual{T} L^\sigma)} 
		= 
		\norm{T}{\cL_2^{0;\sigma}},
		\label{eq:field-reg-p}
		\\ 
		\norm{\varrho}{
			\sigma,\sigma} 
		&= 
		\norm{\cC}{\cL_2^{-\sigma;\sigma}}
		=
		\norm{T\dual{T}}{\cL_2^{-\sigma;\sigma}}. 
		\label{eq:cov-fct-reg}
	\end{align} 
	Here, 
	$\tr(\,\cdot\,)$ 
	is the trace on $L_2(\cD)$,  
	$L$ is the differential operator 
	in~\eqref{eq:def:L} 
	with coefficients $A,\kappa$ satisfying  
	Assumptions~\ref{ass:coeff}.I--II, 
	$\varrho$ is the 
	covariance 
	function of~$\GP$, 
	see~\eqref{eq:def:covfct}, 
	and~$\cL_2^{\theta;\sigma}$ 
	is a short notation for the 
	Hilbert--Schmidt space  
	$\cL_2\bigl( \Hdot{\theta}; 
	\Hdot{\sigma} \bigr)$.
	
	If 
	$\widetilde{\GP}\in L_2(\cD\times\Omega)$ 
	is another GRF colored by 
	$\widetilde{T} \in \cL(L_2(\cD))$, 
	with covariance function $\widetilde{\varrho}$ 
	and covariance operator 
	$\widetilde{\cC} = \widetilde{T}\dual{\widetilde{T}}$, 
	we have, 
	for $\sigma \geq 0$ and $q\in(0,\infty)$, 
	\begin{align}
	\left( \bbE \Bigl[ 
			\bigl\| \GP - \widetilde{\GP} \bigr\|_{\sigma}^q 
		\Bigr] \right)^{\nicefrac{1}{q}}
	&\eqsim_q  
			\bigl\| T - \widetilde{T} \bigr\|_{\cL_2^{0;\sigma}},  
	\label{eq:field-err-p} \\
	\norm{\varrho - \widetilde{\varrho}}{
		\sigma,\sigma} 
	&= 
		\bigl\| \cC - \widetilde{\cC} 
		\bigr\|_{\cL_2^{-\sigma;\sigma}} 
	=
	\bigl\| 
		T\dual{T} - \widetilde{T} \dual{\widetilde{T}} 
	\bigr\|_{\cL_2^{-\sigma;\sigma}}. 
	\label{eq:cov-fct-err}  
	\end{align}	
\end{proposition}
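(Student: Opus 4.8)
The plan is to reduce every assertion to an explicit computation in the $L_2(\cD)$-orthonormal eigenbasis $\cE=\{e_j\}_{j\in\bbN}$ of $L$, using only the defining relation \eqref{eq:def:colored} of a colored GRF, the isonormality of $\white$ (in particular $\bbE[\white(h)^2]=\norm{h}{L_2(\cD)}^2$), and one Gaussian moment-equivalence inequality. Throughout I would use that $\{\lambda_j^{-\nicefrac{\sigma}{2}}e_j\}_{j\in\bbN}$ is an orthonormal basis of $\Hdot{\sigma}$ and that $\{\lambda_j^{\nicefrac{\sigma}{2}}e_j\}_{j\in\bbN}$ is an orthonormal basis of the dual space $\Hdot{-\sigma}$ (indeed $\norm{e_j}{-\sigma}^2=\lambda_j^{-\sigma}$).

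First I would prove \eqref{eq:field-reg-2}, which for $\sigma=0$ is the claimed trace characterization of square-integrability. For each realization, $\norm{\GP}{\sigma}^2=\sum_{j\in\bbN}\lambda_j^\sigma\scalar{\GP,e_j}{L_2(\cD)}^2$; replacing $\scalar{\GP,e_j}{L_2(\cD)}$ by $\white(\dual{T}e_j)$ via \eqref{eq:def:colored}, taking expectations, and exchanging sum and integral (Tonelli; all terms are nonnegative) gives $\bbE[\norm{\GP}{\sigma}^2]=\sum_{j\in\bbN}\lambda_j^\sigma\norm{\dual{T}e_j}{L_2(\cD)}^2$. This sum equals $\tr(T\dual{T}L^\sigma)$ by evaluating the trace in $\{e_j\}$, and equals $\norm{T}{\cL_2^{0;\sigma}}^2=\sum_{j,k}\lambda_k^\sigma\scalar{Te_j,e_k}{L_2(\cD)}^2$ after writing $\norm{\dual{T}e_j}{L_2(\cD)}^2=\sum_k\scalar{e_j,Te_k}{L_2(\cD)}^2$ and relabeling the double sum. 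For general $q$ in \eqref{eq:field-reg-p}, I would observe that finiteness of the right-hand side forces $\GP\in\Hdot{\sigma}$ $\bbP$-a.s.\ and that $\GP$ is then a centered Gaussian random variable with values in the separable Hilbert space $\Hdot{\sigma}$ (its coordinates are jointly Gaussian); the Kahane--Khintchine inequality \cite{LedouxTalagrand:2011} yields $(\bbE[\norm{\GP}{\sigma}^q])^{\nicefrac{1}{q}}\eqsim_q(\bbE[\norm{\GP}{\sigma}^2])^{\nicefrac{1}{2}}$, and \eqref{eq:field-reg-2} closes the case.

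The identity \eqref{eq:cov-fct-reg} is the main obstacle, as it requires making the informal kernel-to-operator correspondence precise at the level of the weighted norms. I would use that $\GP$ colored by $T$ has covariance operator $\cC=T\dual{T}$ (established above Proposition~\ref{prop:regularity:Linf}) together with the canonical isometry $\Hdot{\sigma}\mathop{\hat{\otimes}}\Hdot{\sigma}\cong\cL_2(\dual{(\Hdot{\sigma})};\Hdot{\sigma})=\cL_2^{-\sigma;\sigma}$, under which the elementary tensor $\phi\otimes\psi$ corresponds to the rank-one operator $g\mapsto\langle g,\phi\rangle\,\psi$. From \eqref{eq:def:covfct} and the defining relation $\int_\cD\varrho(\,\cdot\,,y)\psi(y)\,\rd y=\cC\psi$ I would read off the $L_2(\cD\times\cD)$-coefficients $\varrho_{jk}=\scalar{\cC e_k,e_j}{L_2(\cD)}$. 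Since $\{e_j\otimes e_k\}$ is orthogonal in $\Hdot{\sigma,\sigma}$ with $\norm{e_j\otimes e_k}{\sigma,\sigma}^2=\lambda_j^\sigma\lambda_k^\sigma$, expanding $\varrho$ yields $\norm{\varrho}{\sigma,\sigma}^2=\sum_{j,k}\lambda_j^\sigma\lambda_k^\sigma\varrho_{jk}^2$; evaluating the Hilbert--Schmidt norm of $\cC$ on the $\Hdot{-\sigma}$-basis gives $\norm{\cC}{\cL_2^{-\sigma;\sigma}}^2=\sum_{j}\lambda_j^\sigma\norm{\cC e_j}{\sigma}^2=\sum_{j,k}\lambda_j^\sigma\lambda_k^\sigma\varrho_{jk}^2$, so the two coincide. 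The delicate points are the correct pairing of the dual orthonormal bases and checking that $\cC$ maps $\Hdot{-\sigma}$ into $\Hdot{\sigma}$ Hilbert--Schmidt precisely when $\varrho\in\Hdot{\sigma,\sigma}$; the symmetry $\varrho_{jk}=\varrho_{kj}$ makes the index convention immaterial for the norm.

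Finally, the difference estimates follow by linearity. Coupling $\GP$ and $\widetilde{\GP}$ through the same isonormal process $\white$, the field $\GP-\widetilde{\GP}$ satisfies \eqref{eq:def:colored} with color $T-\widetilde{T}$, so \eqref{eq:field-err-p} is \eqref{eq:field-reg-p} applied to $T-\widetilde{T}$. Likewise, the kernel-to-operator correspondence used for \eqref{eq:cov-fct-reg} is linear, so $\varrho-\widetilde{\varrho}$ corresponds to $\cC-\widetilde{\cC}=T\dual{T}-\widetilde{T}\dual{\widetilde{T}}$, and \eqref{eq:cov-fct-err} is \eqref{eq:cov-fct-reg} applied to this difference.
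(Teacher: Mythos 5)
Your proposal is correct and follows essentially the same route as the paper's proof: eigenbasis expansion of $\norm{\GP}{\sigma}^2$ combined with exchanging expectation and summation for \eqref{eq:field-reg-2}, the Kahane--Khintchine inequalities for the $\Hdot{\sigma}$-valued Gaussian variable to get \eqref{eq:field-reg-p}, the observation that $\GP-\widetilde{\GP}$ is a GRF colored by $T-\widetilde{T}$ for \eqref{eq:field-err-p}, and the double eigenbasis (tensor/Hilbert--Schmidt) computation for the covariance identities. The only, cosmetic, difference is in ordering: you establish \eqref{eq:cov-fct-reg} first and pass to \eqref{eq:cov-fct-err} via linearity of the kernel-to-operator isometry, whereas the paper computes the difference identity \eqref{eq:cov-fct-err} directly and recovers \eqref{eq:cov-fct-reg} as the special case $\widetilde{\GP}\equiv 0$.
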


\begin{proof} 
	Assume first that 
	$\GP\in L_2(\cD\times\Omega)$. 
	Since $\GP$ has mean zero 
	and since it is colored by~$T\in\cL(L_2(\cD))$, 
	we obtain $\cC = T\dual{T}$, i.e., 
	\begin{align*} 
		\bbE\bigl[
		\scalar{\GP,\phi}{L_2(\cD)} 
		\scalar{\GP,\psi}{L_2(\cD)}
		\bigr]
		=
		\scalar{T\dual{T} \phi, \psi}{L_2(\cD)} 
		\quad 
		\forall 
		\phi,\psi\in L_2(\cD). 
	\end{align*}	
	By choosing 
	$\phi=\psi:=\lambda_j^{\nicefrac{\sigma}{2}} e_j$, 
	summing these equalities over $j\in\bbN$, 
	and exchanging the order 
	of summation and expectation, 
	we obtain 
	the identity 
	\begin{align*} 
		\bbE\bigl[\norm{\GP}{\sigma}^2\bigr]
		&= 
		\sum_{j\in\bbN} 
		\lambda_j^\sigma 
		\scalar{T\dual{T}e_j,e_j}{L_2(\cD)}
		=
		\operatorname{tr}( 
		T\dual{T} L^\sigma) 
		= 
		\bigl\| L^{\nicefrac{\sigma}{2}} T \bigr\|_{\cL_2^{0;0}} 
		= 
		\norm{T}{\cL_2^{0;\sigma}}, 
	\end{align*}
	and the first part of the proposition 
	as well as~\eqref{eq:field-reg-2} 
	are proven. 
	The estimate~\eqref{eq:field-reg-p} 
	follows from~\eqref{eq:field-reg-2} 
	by the Kahane--Khintchine inequalities
	(see, e.g.,~\cite[Thm.~4.7 and p.~103]{LedouxTalagrand:2011}), 
	since $\GP$ is an $\Hdot{\sigma}$-valued 
	zero-mean Gaussian random variable. 
	
	Assume now that 
	$\widetilde{\GP} \in L_2(\cD\times\Omega)$ 
	is another GRF colored by $\widetilde{T}\in\cL(L_2(\cD))$. 
	Then we obtain~\eqref{eq:field-err-p} 
	from~\eqref{eq:field-reg-p}, since 
	$\GP - \widetilde{\GP}$ is again 
	a GRF, colored by $T - \widetilde{T}$, 
	see~\eqref{eq:def:colored} and Definition~\ref{def:colored}. 
	Furthermore, 
	we find
	\begin{align*} 
		\norm{\varrho 
		- \widetilde{\varrho}}{\sigma,\sigma}^2 
		&=
		\sum\limits_{i\in\bbN}
		\sum\limits_{j\in\bbN} 
		\lambda_i^\sigma 
		\lambda_j^\sigma 
		\bigl( 
			\bigl(T\dual{T} 
			- \widetilde{T}\dual{\widetilde{T}} \bigr)
			e_i, 
			e_j \bigr)_{L_2(\cD)}^2 \\ 	
		&= 	
		\sum\limits_{i\in\bbN}
		\bigl\| \bigl(T\dual{T} 
			- \widetilde{T}\dual{\widetilde{T}} \bigr) 
			L^{\nicefrac{\sigma}{2}}e_i \bigr\|_{
			\sigma}^2  
		= \bigl\| T\dual{T} 
			- \widetilde{T}\dual{\widetilde{T}} 
			\bigr\|_{\cL_2^{-\sigma;\sigma}}^2.
	\end{align*}
	This proves~\eqref{eq:cov-fct-err} 
	and~\eqref{eq:cov-fct-reg}
	follows from this result   
	for   
	$\widetilde{\GP} \equiv 0$. 
\end{proof}

\begin{remark} 
	\label{rem:regularity:sobolev}
	Note that if 
	Assumptions~\ref{ass:coeff}.I--II,~\ref{ass:dom}.I,  
	and 
	$0\leq \sigma\leq 1$
	(or 
	Assumptions~\ref{ass:coeff}.I--III,~\ref{ass:dom}.II,  
	and 
	$0\leq \sigma\leq 2$)  
	are satisfied and $\sigma\neq\nicefrac{1}{2}$, 
	it follows from Lemma~\ref{lem:hdot-sobolev}
	that 
	all assertions 
	of 
	Proposition~\ref{prop:regularity:hdot}
	remain true 
	if we replace 
	the equalities 
	with equivalences and 
	the norms 
	$\norm{\,\cdot\,}{\sigma}$,  
	$\norm{\,\cdot\,}{\sigma,\sigma}$ 
	(cf.~the spaces in~\eqref{eq:def:Hdot}, \eqref{eq:def:Hdotmixed})
	with the Sobolev norm 
	$\norm{\,\cdot\,}{H^\sigma(\cD)}$ 
	and with the 
	norm $\norm{\,\cdot\,}{H^{\sigma,\sigma}(\cD\times\cD)}$ 
	on the mixed 
	Sobolev space~\eqref{eq:def:Hmixed},  
	respectively. 
	Furthermore, 
	by~\eqref{eq:hdot-sobolev1} 
	Proposition~\ref{prop:regularity:hdot} 
	provides upper bounds for these 
	quantities if $\sigma=\nicefrac{1}{2}$.
\end{remark}

\section{Regularity of Whittle--Mat\'ern fields}
\label{section:matern-regularity}

In this section 
we focus on the 
regularity of 
(generalized)
Whittle--Mat\'ern fields, 
i.e., of GRFs 
colored (cf.\ Definition~\ref{def:colored}) 
by a negative fractional power 
of the differential 
operator $L$ 
as provided in~\eqref{eq:def:L}. 
Specifically, we consider 
\begin{equation}\label{eq:gpbeta} 
	\GP^\beta \from \cD\times\Omega \to \R, 
	\qquad 
	\bigl( \GP^\beta, \psi \bigr)_{L_2(\cD)}
	= \white\bigl(L^{-\beta}\psi\bigr)  
	\quad 
	\bbP\text{-a.s.}  
	\quad 
	\forall\psi\in L_2(\cD), 
\end{equation}
for 
\begin{equation}\label{eq:def:beta} 
	\beta := \nbeta + \betafrac,  
	\qquad 
	\nbeta\in\bbN_0, 
	\qquad 
	0\leq \betafrac < 1.
\end{equation}
We emphasize the dependence 
of the covariance structure of $\GP^\beta$ 
on the fractional exponent $\beta>0$ 
by the index and   
write $\varrho^\beta$ 
for the covariance function~\eqref{eq:def:covfct}
of~$\GP^\beta$. 

The first aim of this section is to apply 
Proposition~\ref{prop:regularity:hdot}
for specifying the regularity 
of $\GP^\beta$ in~\eqref{eq:gpbeta} 
and of its covariance function $\varrho^\beta$ 
with respect to the spaces 
$\Hdot{\sigma}$ 
and $\Hdot{\sigma,\sigma}$ 
in~\eqref{eq:def:Hdot}, \eqref{eq:def:Hdotmixed}.  
As already pointed out 
in Remark~\ref{rem:regularity:sobolev}, 
provided that 
the assumptions of Lemma~\ref{lem:hdot-sobolev}
are satisfied, 
this implies  
regularity in the Sobolev space $H^{\sigma}(\cD)$  
and in the mixed Sobolev space $H^{\sigma,\sigma}(\cD\times\cD)$ 
in~\eqref{eq:def:Hmixed}, respectively.

Besides this regularity result 
with respect to the 
spaces 
$\Hdot{\sigma}$ 
and 
$H^{\sigma}(\cD)$, 
we obtain a stability  
estimate with respect to the 
H\"older norm 
from Corollary~\ref{cor:regularity:hoelder} 
and continuity of the 
covariance function 
from Proposition~\ref{prop:regularity:Linf}. 
Although we believe that, 
at least in some specific cases, 
these results are well-known, 
for the sake of completeness, 
we derive them here 
in our general framework. 

\begin{lemma} 
	\label{lem:matern:hdot}
	Let Assumptions~\ref{ass:coeff}.I--II 
	be fulfilled, 
	$\beta,q\in(0,\infty)$, 
	$\sigma\geq 0$, 
	and $\GP^\beta$ be the 
	Whittle--Mat\'ern field in~\eqref{eq:gpbeta}, 
	with covariance function $\varrho^\beta$. 
	Then, 
	\begin{enumerate}[label=\normalfont{(\roman*)}] 
		\item\label{lem:matern:hdot:field}  
			$\bbE\Bigl[\bigl\| \GP^\beta \bigr\|_{\sigma}^q\Bigr] < \infty$ 
			if and only if 
			$2\beta > \sigma + \nicefrac{d}{2}$, and 
		\item\label{lem:matern:hdot:cov}     
			$\bigl\| \varrho^\beta \bigr\|_{\sigma,\sigma}
			< \infty$  
			if and only if 
			$2\beta > \sigma + \nicefrac{d}{4}$. 
	\end{enumerate}	
	If, in addition,  
	Assumption~\ref{ass:dom}.I 
	and 
	$0\leq \sigma\leq 1$ 
	(or   
	Assumptions~\ref{ass:coeff}.I--III,~\ref{ass:dom}.II,  
	and 
	$0\leq \sigma\leq 2$) 
	hold, 
	then the assertions 
	\ref{lem:matern:hdot:field}--\ref{lem:matern:hdot:cov} 
	remain true 
	if we formulate them 
	with respect to the Sobolev norms   
	$\norm{\,\cdot\,}{H^\sigma(\cD)}$,  
	$\norm{\,\cdot\,}{H^{\sigma,\sigma}(\cD\times\cD)}$.
\end{lemma}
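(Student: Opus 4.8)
The plan is to read off both claims from Proposition~\ref{prop:regularity:hdot} applied to the colour $T=L^{-\beta}$, and then to reduce the resulting Hilbert--Schmidt norms to explicit series in the eigenvalues $(\lambda_j)_{j\in\bbN}$ whose convergence is governed by Weyl's law, Lemma~\ref{lem:spectral-behav}. Since $L$ is self-adjoint with strictly positive spectrum, the operator $L^{-\beta}\in\cL(L_2(\cD))$ is self-adjoint, so the field $\GP^\beta$ from~\eqref{eq:gpbeta} is indeed coloured by $T=L^{-\beta}$ in the sense of Definition~\ref{def:colored}, and its covariance operator equals $\cC=T\dual{T}=L^{-2\beta}$. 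Thus Proposition~\ref{prop:regularity:hdot} yields $\bbE\bigl[\norm{\GP^\beta}{\sigma}^2\bigr]=\norm{L^{-\beta}}{\cL_2^{0;\sigma}}^2$ and $\norm{\varrho^\beta}{\sigma,\sigma}=\norm{L^{-2\beta}}{\cL_2^{-\sigma;\sigma}}$; moreover, for $q\in(0,\infty)$ the Kahane--Khintchine equivalence~\eqref{eq:field-reg-p} turns finiteness of the $q$-th moment into finiteness of the second moment, which is what makes~\ref{lem:matern:hdot:field} independent of $q$.

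First I would evaluate the two Hilbert--Schmidt norms in the orthonormal eigenbasis $\cE=\{e_j\}_{j\in\bbN}$. For~\ref{lem:matern:hdot:field} I use that $\{e_j\}$ is orthonormal in $\Hdot{0}=L_2(\cD)$ and that $\norm{e_j}{\sigma}^2=\lambda_j^{\sigma}$, so that $\norm{L^{-\beta}e_j}{\sigma}^2=\lambda_j^{\sigma-2\beta}$ and hence $\norm{L^{-\beta}}{\cL_2^{0;\sigma}}^2=\sum_{j\in\bbN}\lambda_j^{\sigma-2\beta}$. For~\ref{lem:matern:hdot:cov} the natural orthonormal basis of $\Hdot{-\sigma}$ is $\{\lambda_j^{\nicefrac{\sigma}{2}}e_j\}_{j\in\bbN}$, which gives $\norm{L^{-2\beta}}{\cL_2^{-\sigma;\sigma}}^2=\sum_{j\in\bbN}\lambda_j^{2\sigma-4\beta}$. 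By Lemma~\ref{lem:spectral-behav} we have $\lambda_j^{s}\eqsim_{(A,\kappa,\cD)}j^{\nicefrac{2s}{d}}$ for every $s\in\bbR$, so the two series behave like $\sum_j j^{\nicefrac{2}{d}(\sigma-2\beta)}$ and $\sum_j j^{\nicefrac{2}{d}(2\sigma-4\beta)}$; these converge precisely when $\nicefrac{2}{d}(2\beta-\sigma)>1$ and $\nicefrac{2}{d}(4\beta-2\sigma)>1$, i.e.\ when $2\beta>\sigma+\nicefrac{d}{2}$ and $2\beta>\sigma+\nicefrac{d}{4}$, respectively. This establishes~\ref{lem:matern:hdot:field}--\ref{lem:matern:hdot:cov}.

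For the Sobolev reformulation I would invoke Remark~\ref{rem:regularity:sobolev} together with Lemma~\ref{lem:hdot-sobolev}. Under Assumptions~\ref{ass:coeff}.I--II and~\ref{ass:dom}.I with $0\leq\sigma\leq1$ (resp.\ Assumptions~\ref{ass:coeff}.I--III and~\ref{ass:dom}.II with $0\leq\sigma\leq2$) and for $\sigma\neq\nicefrac{1}{2}$, the norms $\norm{\,\cdot\,}{\sigma}$ and $\norm{\,\cdot\,}{H^\sigma(\cD)}$, as well as their mixed counterparts $\norm{\,\cdot\,}{\sigma,\sigma}$ and $\norm{\,\cdot\,}{H^{\sigma,\sigma}(\cD\times\cD)}$, are equivalent, so the finiteness of the corresponding moment or mixed-Sobolev norm is unchanged and the two thresholds carry over verbatim.

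The main obstacle is the endpoint $\sigma=\nicefrac{1}{2}$, where Lemma~\ref{lem:hdot-sobolev} provides only the one-sided embedding~\eqref{eq:hdot-sobolev1}, $\Hdot{\nicefrac{1}{2}}\hookrightarrow H^{\nicefrac{1}{2}}(\cD)$, and not norm equivalence. The induced bound $\norm{\,\cdot\,}{H^{\nicefrac{1}{2}}(\cD)}\lesssim\norm{\,\cdot\,}{\nicefrac{1}{2}}$ immediately gives the ``if'' direction (if $2\beta>\nicefrac{1}{2}+\nicefrac{d}{2}$, the field lies in $\Hdot{\nicefrac{1}{2}}$ and hence in $H^{\nicefrac{1}{2}}(\cD)$ with finite $q$-th moment), as recorded by the upper bound in Remark~\ref{rem:regularity:sobolev}. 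For the converse at $\sigma=\nicefrac{1}{2}$ I would argue by monotonicity in $\sigma$: since $H^{\nicefrac{1}{2}}(\cD)\hookrightarrow H^{\sigma'}(\cD)$ for every $\sigma'<\nicefrac{1}{2}$, finiteness of the $H^{\nicefrac{1}{2}}(\cD)$-moment forces $2\beta>\sigma'+\nicefrac{d}{2}$ for all such $\sigma'$, whence $2\beta\geq\nicefrac{1}{2}+\nicefrac{d}{2}$. The remaining, genuinely delicate step is to exclude the critical value $2\beta=\nicefrac{1}{2}+\nicefrac{d}{2}$, at which the intrinsic series $\sum_j\lambda_j^{\nicefrac{1}{2}-2\beta}\eqsim\sum_j j^{-1}$ already diverges in $\Hdot{\nicefrac{1}{2}}$; this I would handle by a direct lower bound on the expected $H^{\nicefrac{1}{2}}(\cD)$-seminorm of the spectral truncations of $\GP^\beta$, showing it diverges as well. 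The analogous considerations apply to the mixed Sobolev norm in~\ref{lem:matern:hdot:cov}.
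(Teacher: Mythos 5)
Your proposal is correct and, in its core, identical to the paper's proof: Proposition~\ref{prop:regularity:hdot} applied with $T=L^{-\beta}$ reduces both assertions to the series $\sum_{j\in\bbN}\lambda_j^{-(2\beta-\sigma)}$ and $\sum_{j\in\bbN}\lambda_j^{-2(2\beta-\sigma)}$, Weyl's law (Lemma~\ref{lem:spectral-behav}) gives the two thresholds $2\beta>\sigma+\nicefrac{d}{2}$ and $2\beta>\sigma+\nicefrac{d}{4}$, and Lemma~\ref{lem:hdot-sobolev} transfers the statement to the Sobolev norms. The one place where you genuinely depart from the paper is the endpoint $\sigma=\nicefrac{1}{2}$. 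The paper's proof simply ends with ``applying Lemma~\ref{lem:hdot-sobolev} completes the proof,'' although that lemma yields norm \emph{equivalence} only for $\sigma\neq\nicefrac{1}{2}$; at $\sigma=\nicefrac{1}{2}$ only the one-sided embedding \eqref{eq:hdot-sobolev1} is available, which is precisely why Remark~\ref{rem:regularity:sobolev} claims only upper bounds there. So your extra care is warranted, and your treatment is actually more complete than the source. Of the three steps you outline, the ``if'' direction (via the embedding) and the bound $2\beta\geq\nicefrac{1}{2}+\nicefrac{d}{2}$ (via monotonicity in $\sigma$) are fine as written; the critical case $2\beta=\nicefrac{1}{2}+\nicefrac{d}{2}$, which you only sketch, can indeed be completed along the lines you indicate: by interpolation, $\norm{e_j}{H^{\sigma'}(\cD)}\lesssim \norm{e_j}{L_2(\cD)}^{1-2\sigma'}\norm{e_j}{H^{\nicefrac{1}{2}}(\cD)}^{2\sigma'}$ for $0<\sigma'<\nicefrac{1}{2}$, so the equivalence $\norm{e_j}{H^{\sigma'}(\cD)}\eqsim\lambda_j^{\nicefrac{\sigma'}{2}}$ forces $\norm{e_j}{H^{\nicefrac{1}{2}}(\cD)}\gtrsim\lambda_j^{\nicefrac{1}{4}}$, and then Tonelli together with the independence of the Gaussian coefficients in the Karhunen--Lo\`eve expansion gives $\bbE\bigl[\norm{\GP^\beta}{H^{\nicefrac{1}{2}}(\cD)}^2\bigr]\gtrsim\sum_{j\in\bbN}\lambda_j^{\nicefrac{1}{2}-2\beta}=\infty$ at the critical value (finiteness of any $q$-th moment would, by Gaussianity, imply finiteness of the second moment). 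With that step filled in, your argument justifies the Sobolev formulation on the full stated range including $\sigma=\nicefrac{1}{2}$, i.e., slightly more than the paper's own one-line citation of Lemma~\ref{lem:hdot-sobolev} literally covers.
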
 

\begin{proof}  
	By Proposition~\ref{prop:regularity:hdot} 
	we have, for any 
	$\beta,q\in(0,\infty)$ and $\sigma\geq 0$,  
	\begin{align}
		\left(\bbE\Bigl[
		\bigl\| \GP^\beta \bigr\|_{\sigma}^q \Bigr]
		\right)^{\nicefrac{2}{q}} 
		\eqsim_{q} 
		\tr\bigl( L^{-2\beta + \sigma} \bigr) 
		&= 
		\sum_{j\in\bbN} \lambda_j^{-(2\beta-\sigma)}, 
		\label{eq:lem:matern:hdot:field}\\
		\bigl\| \varrho^\beta \bigr\|_{\sigma,
			\sigma}^2
		= 
		\bigl\| L^{-2\beta} \bigr\|_{
			\cL_2\left(\Hdot{-\sigma};  
			\Hdot{\sigma}\right)}^2 
		&= 
		\sum_{j\in\bbN} 
		\lambda_j^{-2(2\beta - \sigma)}. 
		\label{eq:lem:matern:hdot:cov}
	\end{align}	
	
	Combining the spectral 
	behavior~\eqref{eq:lem:spectral-behav} 
	of~$L$ from Lemma~\ref{lem:spectral-behav}
	with~\eqref{eq:lem:matern:hdot:field}/\eqref{eq:lem:matern:hdot:cov}    
	proves~\ref{lem:matern:hdot:field}/\ref{lem:matern:hdot:cov}
	for $\norm{\,\cdot\,}{\sigma}$,  
	$\norm{\,\cdot\,}{\sigma,\sigma}$. 
	If the assumptions stated in the 
	second part of the lemma are satisfied, 
	then applying 
	Lemma~\ref{lem:hdot-sobolev} 
	completes the proof. 
\end{proof} 

\pagebreak

\begin{lemma}\label{lem:matern:hoelder} 
	Suppose that 
	\begin{enumerate}[label=\normalfont{(\roman*)}]  
		\item\label{lem:matern:hoelder:ass-i}
			Assumptions~\ref{ass:coeff}.I--II 
			are satisfied, $0< 2\gamma \leq 1$, and 
			$d=1$, or 
		\item\label{lem:matern:hoelder:ass-ii} 
			Assumptions~\ref{ass:coeff}.I--III 
			and~\ref{ass:dom}.II 
			are fulfilled, 
			$d\in\{1,2,3\}$ and 
			$\gamma\in(0,1)$ are such that 
			$\gamma \leq 2 - \nicefrac{d}{2}$. 	
	\end{enumerate}
	In either of these cases and  
	if $2\beta \geq \gamma + \nicefrac{d}{2}$, 
	there exists a continuous Whittle--Mat\'ern
	field 
	$\GP^\beta\colon \clos{\cD} \times \Omega \rightarrow \bbR$ 
	satisfying \eqref{eq:gpbeta} 
	such that 
	$\GP^\beta(x) = \white(L^{-\beta}\dirac{x})$ 
	$\bbP$-a.s.\ for all~$x\in\clos{\cD}$, 
	and, for every $\delta\in(0,\gamma)$ 
	and $q\in(0,\infty)$, the bound 
	\begin{equation}\label{eq:matern:hoelder}
		\left( \bbE \Bigl[ \bigl\| \GP^\beta \bigr\|_{C^{\delta}(\clos{\cD})}^q \Bigr]
		\right)^{\nicefrac{1}{q}} 
		\lesssim_{(q,\gamma,\delta,\cD)}
		\bigl\| L^{-\beta} \bigr\|_{\cL\left( 
			\Hdot{0}; 
			\Hdot{\gamma+\nicefrac{d}{2}}\right)} 
		< \infty, 
	\end{equation}
	for the 
	$q$-th moment of $\GP^\beta$ 
	with respect to the 
	$\delta$-H\"older norm, cf.~\eqref{eq:def:hoeldernorm},  
	holds. 
\end{lemma}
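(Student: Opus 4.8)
The plan is to recognize the claim as a direct application of Corollary~\ref{cor:regularity:hoelder} with the color $T:=L^{-\beta}$, so that the real work reduces to verifying its single nontrivial hypothesis, namely $L^{-\beta}\in\cL\bigl(L_2(\cD);H^{\gamma+\nicefrac{d}{2}}(\cD)\bigr)$, and then to identifying the resulting field with $\GP^\beta$ from~\eqref{eq:gpbeta}. First I would establish boundedness of $L^{-\beta}$ into the fractional power space $\Hdot{\gamma+\nicefrac{d}{2}}=\scrD\bigl(L^{(\gamma+\nicefrac{d}{2})/2}\bigr)$: by the spectral definition of the fractional powers one has, for $\psi\in L_2(\cD)=\Hdot{0}$, the identity $\bigl\|L^{-\beta}\psi\bigr\|_{\gamma+\nicefrac{d}{2}}=\bigl\|L^{(\gamma+\nicefrac{d}{2})/2-\beta}\psi\bigr\|_{L_2(\cD)}$, and since $2\beta\geq\gamma+\nicefrac{d}{2}$ the exponent $(\gamma+\nicefrac{d}{2})/2-\beta$ is nonpositive. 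As the eigenvalues satisfy $\lambda_j\geq\lambda_1>0$, the operator $L^{(\gamma+\nicefrac{d}{2})/2-\beta}$ is bounded on $L_2(\cD)$, whence $L^{-\beta}\in\cL\bigl(\Hdot{0};\Hdot{\gamma+\nicefrac{d}{2}}\bigr)$ with finite norm.

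Next I would pass from $\Hdot{\gamma+\nicefrac{d}{2}}$ to the Sobolev space $H^{\gamma+\nicefrac{d}{2}}(\cD)$ via Lemma~\ref{lem:hdot-sobolev}, which is precisely where the two parameter regimes enter. In case~\ref{lem:matern:hoelder:ass-i} ($d=1$, $0<2\gamma\leq1$) we have $\gamma+\nicefrac{d}{2}\in(\nicefrac{1}{2},1]$, so the embedding~\eqref{eq:hdot-sobolev1} under Assumptions~\ref{ass:coeff}.I--II and the global~\ref{ass:dom}.I applies; in case~\ref{lem:matern:hoelder:ass-ii} the constraint $\gamma\leq2-\nicefrac{d}{2}$ forces $\gamma+\nicefrac{d}{2}\leq2$, so that under the stronger Assumptions~\ref{ass:coeff}.I--III and~\ref{ass:dom}.II the equivalence~\eqref{eq:hdot-sobolev2} (for $\gamma+\nicefrac{d}{2}\in(1,2]$) or~\eqref{eq:hdot-sobolev1} (for $\gamma+\nicefrac{d}{2}\leq1$) yields $\Hdot{\gamma+\nicefrac{d}{2}}\hookrightarrow H^{\gamma+\nicefrac{d}{2}}(\cD)$. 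In both cases $\gamma+\nicefrac{d}{2}>\nicefrac{1}{2}$, so the value $\nicefrac{1}{2}$ excluded in Lemma~\ref{lem:hdot-sobolev} is never reached. Composing the two embeddings gives $L^{-\beta}\in\cL\bigl(L_2(\cD);H^{\gamma+\nicefrac{d}{2}}(\cD)\bigr)$ together with the norm bound $\bigl\|L^{-\beta}\bigr\|_{\cL(L_2(\cD);H^{\gamma+\nicefrac{d}{2}}(\cD))}\lesssim\bigl\|L^{-\beta}\bigr\|_{\cL(\Hdot{0};\Hdot{\gamma+\nicefrac{d}{2}})}$.

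Finally I would invoke Corollary~\ref{cor:regularity:hoelder} with $T=L^{-\beta}$ and $\gamma\in(0,1)$ (guaranteed in both cases, since $0<2\gamma\le1$ forces $\gamma\in(0,\nicefrac12]$ in the first). This produces a continuous GRF colored by $L^{-\beta}$ obeying the stability estimate~\eqref{eq:cor:regularity:hoelder}, which, combined with the norm bound of the previous step, yields~\eqref{eq:matern:hoelder} for every $\delta\in(0,\gamma)$ and $q\in(0,\infty)$. Because $L$ is self-adjoint, $L^{-\beta}$ is self-adjoint on $L_2(\cD)$, so $\dual{(L^{-\beta})}=L^{-\beta}$; hence the field simultaneously satisfies the coloring identity~\eqref{eq:gpbeta} and the pointwise representation $\GP^\beta(x)=\white(L^{-\beta}\dirac{x})$ delivered by the corollary. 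I expect no serious obstacle here; the only point demanding genuine care is the bookkeeping of the parameter ranges in Lemma~\ref{lem:hdot-sobolev}, that is, checking that hypotheses~\ref{lem:matern:hoelder:ass-i}--\ref{lem:matern:hoelder:ass-ii} are exactly what keeps $\gamma+\nicefrac{d}{2}$ inside the interval on which $\Hdot{\gamma+\nicefrac{d}{2}}$ embeds into $H^{\gamma+\nicefrac{d}{2}}(\cD)$.
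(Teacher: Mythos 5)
Your proposal is correct and follows essentially the same route as the paper's own proof: boundedness of $L^{-\beta}\from\Hdot{0}\to\Hdot{\gamma+\nicefrac{d}{2}}$ from $2\beta\geq\gamma+\nicefrac{d}{2}$ (the paper phrases this via the isometric isomorphism onto $\Hdot{2\beta}$, you via the equivalent spectral computation), then the embedding $\Hdot{\gamma+\nicefrac{d}{2}}\hookrightarrow H^{\gamma+\nicefrac{d}{2}}(\cD)$ from Lemma~\ref{lem:hdot-sobolev} under the case-by-case parameter checks, and finally Corollary~\ref{cor:regularity:hoelder} with $T=L^{-\beta}$. Your added remarks on self-adjointness (to recover~\eqref{eq:gpbeta}) and the explicit bookkeeping of which of~\eqref{eq:hdot-sobolev1}/\eqref{eq:hdot-sobolev2} applies are details the paper leaves implicit, but they do not change the argument.
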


\begin{proof}
	Note that by definition of $\Hdot{\sigma}$, 
	see~\eqref{eq:def:Hdot}, 
	for any $\beta > 0$, the operator 
	\[
		L^{-\beta}\from L_2(\cD) 
		= \Hdot{0} \to \Hdot{2\beta} 
	\] 
	is an isometric isomorphism. 
	For this reason,    
	$L^{-\beta}\from L_2(\cD) 
	\to \Hdot{\gamma + \nicefrac{d}{2}}$ 
	is bounded provided that  
	$2\beta \geq \gamma + \nicefrac{d}{2}$. 
	For $d$ and $\gamma$ 
	as specified 
	in~\ref{lem:matern:hoelder:ass-i}/\ref{lem:matern:hoelder:ass-ii}
	above, 
	we have  
	$\bigl(\Hdot{\gamma + \nicefrac{d}{2}}, 
	\norm{\,\cdot\,}{\gamma + \nicefrac{d}{2}} \bigr) 
	\hookrightarrow 
	\bigl( H^{\gamma + \nicefrac{d}{2}}(\cD), 
	\norm{\,\cdot\,}{H^{\gamma + \nicefrac{d}{2}}(\cD)} \bigr)$ 
	by the 
	relations~\eqref{eq:hdot-sobolev1}--\eqref{eq:hdot-sobolev2} 
	from Lemma~\ref{lem:hdot-sobolev}  
	and we conclude that  
	$L^{-\beta} \in \cL\bigl(L_2(\cD);  
	H^{\gamma + \nicefrac{d}{2}}(\cD) \bigr)$. 
	The proof is then completed by applying 
	Corollary~\ref{cor:regularity:hoelder}
	in both 
	cases~\ref{lem:matern:hoelder:ass-i}/\ref{lem:matern:hoelder:ass-ii}.  
\end{proof}

\begin{lemma}\label{lem:matern:Linf}  
	Let Assumptions~\ref{ass:coeff}.I--II 
	be satisfied and $\beta > \nicefrac{d}{4}$.  
	Suppose furthermore that 
	a system of $L_2(\cD)$-orthonormal 
	eigenvectors $\cE = \{e_j\}_{j\in\bbN}$ 
	corresponding to the eigenvalues 
	$0<\lambda_1 \leq \lambda_2 \leq \ldots$ 
	of $L$ in~\eqref{eq:def:L}  
	is uniformly bounded in $C(\clos{\cD})$, i.e., 
	\begin{equation}\label{eq:ass:ej-unibound} 	
		\exists C_\cE > 0 : 
		\quad 
		\sup_{j\in\bbN} \, \sup_{x\in\clos{\cD}} |e_j(x)| \leq C_\cE.  
	\end{equation}
	Then the covariance 
	function, cf.~\eqref{eq:def:covfct}, 
	of the Whittle--Mat\'ern field $\GP^\beta$ 
	in~\eqref{eq:gpbeta}
	has a continuous representative 
	$\varrho^\beta\from\clos{\cD\times\cD} \to \bbR$
	and 
	\[
		\sup_{x,y\in\clos{\cD}} \bigl| \varrho^\beta(x,y) \bigr| 
		\leq 
		C_\cE^2 \, \tr\left( L^{-2\beta} \right),  
	\]	
	where $\tr(\,\cdot\,)$ denotes the trace 
	on $L_2(\cD)$. 
\end{lemma}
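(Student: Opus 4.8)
The plan is to use the self-adjointness of $L^{-\beta}$ to write the covariance function as a Mercer-type series in the eigenbasis $\cE$, and to read off both the continuity of $\varrho^\beta$ and the stated supremum bound from the uniform eigenvector bound~\eqref{eq:ass:ej-unibound} together with the finiteness of $\tr(L^{-2\beta})$. \emph{Step 1 (finiteness of the trace).} Since $T:=L^{-\beta}$ is self-adjoint on $L_2(\cD)$, the covariance operator of $\GP^\beta$ is $\cC=T\dual{T}=L^{-2\beta}$, with eigenpairs $\bigl(\lambda_j^{-2\beta},e_j\bigr)_{j\in\bbN}$. Weyl's law (Lemma~\ref{lem:spectral-behav}) gives $\lambda_j\eqsim_{(A,\kappa,\cD)} j^{\nicefrac{2}{d}}$, so that $\tr(L^{-2\beta})=\sum_{j\in\bbN}\lambda_j^{-2\beta}\eqsim\sum_{j\in\bbN}j^{-\nicefrac{4\beta}{d}}<\infty$, the last series converging precisely because $\beta>\nicefrac{d}{4}$; in particular $\GP^\beta\in L_2(\cD\times\Omega)$ by Proposition~\ref{prop:regularity:hdot}.

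\emph{Step 2 (the color maps into $C(\clos\cD)$).} Next I would show $L^{-\beta}\in\cL\bigl(L_2(\cD);C(\clos\cD)\bigr)$. For $\psi\in L_2(\cD)$ one has $L^{-\beta}\psi=\sum_{j\in\bbN}\lambda_j^{-\beta}\scalar{\psi,e_j}{L_2(\cD)}e_j$, and the Cauchy--Schwarz estimate $\sum_{j\in\bbN}\lambda_j^{-\beta}\bigl|\scalar{\psi,e_j}{L_2(\cD)}\bigr|\leq\sqrt{\tr(L^{-2\beta})}\,\norm{\psi}{L_2(\cD)}$, combined with $\sup_{x}|e_j(x)|\leq C_\cE$, shows via the Weierstrass $M$-test that this series converges uniformly on $\clos\cD$ to a continuous function, with $\norm{L^{-\beta}\psi}{C(\clos\cD)}\leq C_\cE\sqrt{\tr(L^{-2\beta})}\,\norm{\psi}{L_2(\cD)}$. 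Proposition~\ref{prop:regularity:Linf}\,\ref{prop:regularity:Linf-i} then already yields that $\varrho^\beta$ admits a continuous representative on $\clos{\cD\times\cD}$.

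\emph{Step 3 (Mercer series and the sup-bound).} It remains to make the bound explicit. As $\dual{T}\dirac{x}\in L_2(\cD)$ is now well defined, testing against $\psi\in L_2(\cD)$ and using $\scalar{\dual{T}\dirac{x},\psi}{L_2(\cD)}=(L^{-\beta}\psi)(x)$ identifies $\dual{T}\dirac{x}=\sum_{j\in\bbN}\lambda_j^{-\beta}e_j(x)\,e_j$ in $L_2(\cD)$ (the series converges since $\sum_{j\in\bbN}|\lambda_j^{-\beta}e_j(x)|^2\leq C_\cE^2\tr(L^{-2\beta})<\infty$). Feeding this into the representation $\varrho^\beta(x,y)=\scalar{\dual{T}\dirac{x},\dual{T}\dirac{y}}{\dual{L_2(\cD)}}$ from~\eqref{eq:covfct-xy} and using the $L_2(\cD)$-orthonormality of $\cE$ gives the Mercer-type series
\[
\varrho^\beta(x,y)=\sum_{j\in\bbN}\lambda_j^{-2\beta}\,e_j(x)\,e_j(y),\qquad x,y\in\clos\cD,
\]
which converges absolutely and uniformly by the $M$-test, since $\bigl|\lambda_j^{-2\beta}e_j(x)e_j(y)\bigr|\leq C_\cE^2\lambda_j^{-2\beta}$. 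The continuous representative therefore satisfies $\sup_{x,y\in\clos\cD}|\varrho^\beta(x,y)|\leq C_\cE^2\sum_{j\in\bbN}\lambda_j^{-2\beta}=C_\cE^2\tr(L^{-2\beta})$, as claimed.

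I expect the only genuinely delicate point to be the bookkeeping in Step~3, specifically the justification that the a.e.-defined kernel from~\eqref{eq:covfct-xy} coincides with the continuous series; this follows by interchanging summation and integration (legitimate by the uniform convergence) to verify that both define the same integral operator $\cC=L^{-2\beta}$ on the separable space $L_2(\cD)$, and then appealing to continuity to upgrade the a.e.\ identity to the stated pointwise representative. Everything else is a routine application of the Weierstrass $M$-test driven by $\beta>\nicefrac{d}{4}$ and~\eqref{eq:ass:ej-unibound}.
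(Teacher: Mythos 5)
Your proof is correct and follows essentially the same route as the paper: both establish $L^{-\beta}\in\cL\bigl(L_2(\cD);C(\clos{\cD})\bigr)$ via the spectral expansion, Cauchy--Schwarz and Weyl's law, invoke Proposition~\ref{prop:regularity:Linf}\ref{prop:regularity:Linf-i} for the continuous representative, and obtain the supremum bound from the expansion of $L^{-2\beta}$ combined with the uniform eigenvector bound~\eqref{eq:ass:ej-unibound}. Your explicit Mercer series in Step~3 is simply the paper's operator-norm estimate $\bigl\|L^{-2\beta}\bigr\|_{\cL(\dual{C(\clos{\cD})};C(\clos{\cD}))}\leq C_\cE^2\,\tr\bigl(L^{-2\beta}\bigr)$ specialized to the Dirac functionals $\dirac{x},\dirac{y}$.
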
 

\begin{proof} 
	By Proposition~\ref{prop:regularity:Linf}\ref{prop:regularity:Linf-i}
	we have to show boundedness of  
	$L^{-\beta} \from L_2(\cD) \to C(\clos{\cD})$ 	
	to infer that 
	$\varrho^\beta  \in C(\clos{\cD\times\cD})$, 
	with  
	\begin{equation}\label{eq:lem:matern:Linf:proof-1}
		\sup_{x,y\in\clos{\cD}} \bigl| \varrho^\beta(x,y) \bigr| \leq 
			\bigl \| L^{-2\beta} 
			\bigr\|_{\cL(\dual{C(\clos{\cD})}; C(\clos{\cD}))}.
	\end{equation}  
	For $\psi\in L_2(\cD)$, 
	the spectral representation 
	$L^{-\beta} \psi 
		= \sum_{j\in\bbN} \lambda_j^{-\beta} 
			\scalar{\psi, e_j}{L_2(\cD)} \, e_j$	
	shows that, 
	for all $x\in\clos{\cD}$, 
	\begin{align*} 
		\bigl| \bigl(L^{-\beta} \psi\bigr)(x) \bigr| 
		 \leq C_\cE \sum_{j\in\bbN} \lambda_j^{-\beta} \,  
			\left|\scalar{\psi, e_j}{L_2(\cD)}\right| 
		 \lesssim_{(A,\kappa,\cD)} 
		 	C_\cE 
			\Biggl( \sum_{j\in\bbN} j^{-\nicefrac{4\beta}{d}} \Biggr)^{\nicefrac{1}{2}} 
			\norm{\psi}{L_2(\cD)} 
	\end{align*} 
	is finite, 
	provided that $\beta > \nicefrac{d}{4}$.  
	Here, we have used 
	the Cauchy--Schwarz inequality 
	and 
	the spectral behavior~\eqref{eq:lem:spectral-behav} 
	from Lemma~\ref{lem:spectral-behav}
	in the last estimate. 
	Similarly, 
	\begin{align}\label{eq:lem:matern:Linf:proof-2}
		\bigl| \bigl(L^{-2\beta} \varphi\bigr)(x) \bigr| 
		\leq C_\cE \sum_{j\in\bbN} \lambda_j^{-2\beta} \,  
		\bigl|
		\duality{\varphi, e_j}{\dual{C(\clos{\cD})}\times C(\clos{\cD})} 
		\bigr| 
		\leq C_\cE^2 \, 
		\tr\left( L^{-2\beta} \right)  
		\norm{\varphi}{\dual{C(\clos{\cD})}},  
	\end{align} 
	for all $\varphi\in \dual{C(\clos{\cD})}$. 
	Combining~\eqref{eq:lem:matern:Linf:proof-1} 
	and~\eqref{eq:lem:matern:Linf:proof-2} 
	completes the proof. 
\end{proof} 

\begin{remark} 
	Note that if 
	$\gamma\in(0,1)$ and $d\in\{1,2,3\}$ 
	are such that 	
	Assumption~\ref{lem:matern:hoelder:ass-i} 
	or~\ref{lem:matern:hoelder:ass-ii} 
	of Lemma~\ref{lem:matern:hoelder} is satisfied, 
	then the Sobolev embedding 
	$H^{\delta + \nicefrac{d}{2}}(\cD) 
	\hookrightarrow 
	C^{\delta}(\clos{\cD})$ 
	and 
	Lemma~\ref{lem:hdot-sobolev} 
	are applicable for any 
	$0 < \delta \leq \gamma$. 
	Thus, 
	if~$2\beta \geq \delta + \nicefrac{d}{2}$,  
	we find 
	\[
		L^{-\beta} 
		\from 
		L_2(\cD) 
		= 
		\Hdot{0} 
		\to 
		\Hdot{2\beta} 
		\hookrightarrow 
		\Hdot{\delta + \nicefrac{d}{2}} 
		\cong 
		H^{\delta + \nicefrac{d}{2}}(\cD) 
		\hookrightarrow 
		C^{\delta}(\clos{\cD}) 
		\hookrightarrow 
		C(\clos{\cD}), 
	\]
	i.e., $L^{-\beta} \from L_2(\cD) \to C(\clos{\cD})$ 
	is bounded.  
	Thus, by Proposition~\ref{prop:regularity:Linf}\ref{prop:regularity:Linf-i} 
	the covariance 
	function~$\varrho^\beta \from \clos{\cD\times\cD} \to \bbR$ 
	of the Whittle--Mat\'ern field 
	$\GP^{\beta}$ in~\eqref{eq:gpbeta} 
	is a continuous kernel and  
	$\Hdot{2\beta}$ is the corresponding 
	reproducing kernel Hilbert space, cf.~\cite{Steinwart2012}.  
\end{remark}

\section{Spectral Galerkin approximations}\label{section:matern:spectral}

In this section we investigate convergence of  
spectral Galerkin approximations 
for the Whittle--Mat\'ern 
field~$\GP^\beta$ in~\eqref{eq:gpbeta}.  
Recall that the covariance structure 
of the GRF~$\GP^\beta$ is uniquely determined 
via its color~\eqref{eq:def:colored}
given by the negative fractional 
power~$L^{-\beta}$ of 
the second-order differential 
operator~$L$ in~\eqref{eq:def:L}
which is defined with respect to the bounded spatial 
domain $\cD\subset\bbR^d$.    

For $N\in\bbN$, 
the spectral Galerkin approximation 
$\GP^\beta_{N}$ of $\GP^\beta$ 
is ($\bbP$-a.s.)\ defined
by 
\begin{equation}\label{eq:def:gpbetaN}
	\bigl( \GP^\beta_N,\psi \bigr)_{L_2(\cD)} 
	= 
	\white\bigl( L_N^{-\beta} \psi \bigr)  
	\quad 
	\bbP\text{-a.s.} 
	\quad 
	\forall\psi\in L_2(\cD),  
\end{equation}
i.e., it is a GRF colored by the 
finite-rank operator 
\begin{equation}\label{eq:def:LN} 
	L_N^{-\beta} \from L_2(\cD) \to V_N \subset L_2(\cD), 
	\qquad 
	L_N^{-\beta} \psi
	:= 
	\sum_{j=1}^N \lambda_j^{-\beta} \scalar{\psi,e_j}{L_2(\cD)} e_j, 
\end{equation} 
mapping to the finite-dimensional subspace 
$V_N:=\operatorname{span}\{e_1,\ldots,e_N\}$ 
generated by the first $N$ eigenvectors 
of~$L$ 
corresponding to the eigenvalues 
$0<\lambda_1 \leq \ldots \leq \lambda_N$. 

The following three corollaries,  
which provide explicit  
convergence rates of these approximations 
and their covariance functions  
with respect to the truncation parameter~$N$, 
are consequences of the Propositions~\ref{prop:regularity:hoelder}, 
\ref{prop:regularity:Linf} and \ref{prop:regularity:hdot}. 
We first formulate the results 
in the Sobolev norms.    
\begin{corollary}
\label{cor:spectral:conv:hdot} 
	Suppose Assumptions~\ref{ass:coeff}.I--II  
	and that 
	$d\in\bbN$, $\sigma\geq0$,  
	$\beta,q\in(0,\infty)$. 
	Let $\GP^\beta$ be the Whittle--Mat\'ern 
	field in~\eqref{eq:gpbeta}
	and, for $N\in\bbN$, let $\GP^\beta_N$ be the 
	spectral Galerkin approximation 
	in~\eqref{eq:def:gpbetaN}.
	If $2\beta-\sigma > \nicefrac{d}{2}$,  
	the following bounds hold,  
	\begin{align}
		\left(\bbE\Bigl[
		\bigl\| \GP^{\beta} - \GP^\beta_{N} \bigr\|_{\sigma}^q
		\Bigr]\right)^{\nicefrac{1}{q}} 
		&\lesssim_{(q,\sigma,\beta,A,\kappa,\cD)} 
		N^{-\nicefrac{1}{d} \, 
				\left(2\beta - \sigma 
				- \nicefrac{d}{2}\right)}, 
		\label{eq:spectral:conv:hdot} 
		\\
		\bigl\| \varrho^\beta - \varrho^\beta_{N} \bigr\|_{\sigma,
			\sigma}
		&\lesssim_{(\sigma,\beta,A,\kappa,\cD)}    
			N^{-\nicefrac{1}{d} \, 
			\left(4\beta - 2\sigma 
			- \nicefrac{d}{2}\right)},  
		\label{eq:spectral:conv:cov} 
	\end{align}
	where $\varrho^\beta, \varrho^\beta_{N}$ denote  
	the covariance functions 
	of $\GP^\beta$ and $\GP^\beta_{N}$, 
	respectively, 
	cf.~\eqref{eq:def:covfct}. 
	
	If, in addition,   		 
	Assumption~\ref{ass:dom}.I  
	and 
	$0\leq \sigma\leq 1$ 
	(or 
	Assumptions~\ref{ass:coeff}.I--III, \ref{ass:dom}.II, 
	and 
	$0\leq \sigma\leq 2$) 
	are satisfied, 
	then the assertions 
	\eqref{eq:spectral:conv:hdot}--\eqref{eq:spectral:conv:cov} 
	remain true 
	if we formulate them 
	with respect to the Sobolev norms   
	$\norm{\,\cdot\,}{H^\sigma(\cD)}$,   
	$\norm{\,\cdot\,}{H^{\sigma,\sigma}(\cD\times\cD)}$.
\end{corollary}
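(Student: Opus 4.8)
The plan is to apply Proposition~\ref{prop:regularity:hdot} with the colors $T := L^{-\beta}$ and $\widetilde T := L_N^{-\beta}$ from~\eqref{eq:def:LN}, which reduces both error estimates to Hilbert--Schmidt norm computations that can be evaluated explicitly through the spectral representation, in complete parallel to the proof of Lemma~\ref{lem:matern:hdot}; the only difference is that the full sums~\eqref{eq:lem:matern:hdot:field} get replaced by their tails over $j > N$. Since $L$ is self-adjoint, both $L^{-\beta}$ and the finite-rank operator $L_N^{-\beta}$ are self-adjoint on $L_2(\cD)$, and the difference acts by truncation,
\[
	\bigl( L^{-\beta} - L_N^{-\beta} \bigr) \psi
	= \sum_{j > N} \lambda_j^{-\beta} \scalar{\psi, e_j}{L_2(\cD)} \, e_j,
	\qquad \psi \in L_2(\cD).
\]

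For the field error, equation~\eqref{eq:field-err-p} gives $\bigl( \bbE \bigl[ \| \GP^\beta - \GP^\beta_N \|_\sigma^q \bigr] \bigr)^{\nicefrac{1}{q}} \eqsim_q \| L^{-\beta} - L_N^{-\beta} \|_{\cL_2^{0;\sigma}}$, and since $\{ e_j \}_{j\in\bbN}$ is orthonormal in $\Hdot{0} = L_2(\cD)$ while $\| e_j \|_\sigma^2 = \lambda_j^\sigma$, the Hilbert--Schmidt norm collapses to the tail sum
\[
	\bigl\| L^{-\beta} - L_N^{-\beta} \bigr\|_{\cL_2^{0;\sigma}}^2
	= \sum_{j > N} \lambda_j^{-(2\beta - \sigma)}.
\]
For the covariance error I would use self-adjointness once more to write $L^{-\beta}\dual{(L^{-\beta})} = L^{-2\beta}$ and $L_N^{-\beta}\dual{(L_N^{-\beta})} = L_N^{-2\beta}$, so that by~\eqref{eq:cov-fct-err} the quantity to bound is $\| L^{-2\beta} - L_N^{-2\beta} \|_{\cL_2^{-\sigma;\sigma}}$. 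Here I would first record that $\{ \lambda_k^{\nicefrac{\sigma}{2}} e_k \}_{k\in\bbN}$ is an orthonormal basis of $\Hdot{-\sigma}$ (dual to the orthonormal basis $\{ \lambda_k^{-\nicefrac{\sigma}{2}} e_k \}$ of $\Hdot{\sigma}$); evaluating $L^{-2\beta} - L_N^{-2\beta}$ on these basis vectors and using $\| e_k \|_\sigma^2 = \lambda_k^\sigma$ then yields
\[
	\bigl\| L^{-2\beta} - L_N^{-2\beta} \bigr\|_{\cL_2^{-\sigma;\sigma}}^2
	= \sum_{k > N} \lambda_k^{-2(2\beta - \sigma)}.
\]

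It remains to estimate the two tails. Inserting Weyl's law $\lambda_j \eqsim_{(A,\kappa,\cD)} j^{\nicefrac{2}{d}}$ from Lemma~\ref{lem:spectral-behav} and comparing with an integral, for any exponent $\mu > 1$ one has $\sum_{j > N} j^{-\mu} \lesssim N^{1-\mu}$. Taking $\mu = \nicefrac{2}{d}\,(2\beta - \sigma)$ (which exceeds $1$ precisely under the hypothesis $2\beta - \sigma > \nicefrac{d}{2}$) gives the field rate $N^{-\nicefrac{1}{d}\,(2\beta - \sigma - \nicefrac{d}{2})}$, and taking $\mu = \nicefrac{4}{d}\,(2\beta - \sigma)$ gives the covariance rate $N^{-\nicefrac{1}{d}\,(4\beta - 2\sigma - \nicefrac{d}{2})}$, which are exactly~\eqref{eq:spectral:conv:hdot}--\eqref{eq:spectral:conv:cov}. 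Finally, under the additional assumptions of the second part of the statement, the Sobolev-norm versions follow verbatim by invoking the norm equivalences of Lemma~\ref{lem:hdot-sobolev}, just as in the second part of Lemma~\ref{lem:matern:hdot}.

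These computations are essentially routine once the correct orthonormal systems are in place. The step that requires genuine care is the covariance estimate: the relevant Hilbert--Schmidt norm lives in $\cL_2(\Hdot{-\sigma}; \Hdot{\sigma})$, and one must correctly identify the dual basis $\{ \lambda_k^{\nicefrac{\sigma}{2}} e_k \}$ of $\Hdot{-\sigma}$ in order to arrive at the clean exponent $-2(2\beta - \sigma)$ in the tail sum---a misidentification here would directly corrupt the convergence rate.
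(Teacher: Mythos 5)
Your proposal is correct and follows exactly the paper's own route: apply Proposition~\ref{prop:regularity:hdot} (via~\eqref{eq:field-err-p} and~\eqref{eq:cov-fct-err}) with $T := L^{-\beta}$, $\widetilde{T} := L_N^{-\beta}$, reduce to the tail sums $\sum_{j>N}\lambda_j^{-(2\beta-\sigma)}$ and $\sum_{j>N}\lambda_j^{-2(2\beta-\sigma)}$, estimate them with Weyl's law from Lemma~\ref{lem:spectral-behav}, and invoke Lemma~\ref{lem:hdot-sobolev} for the Sobolev-norm statements. You merely spell out the Hilbert--Schmidt computations (including the dual basis of $\Hdot{-\sigma}$) that the paper's three-sentence proof leaves implicit, and these details are accurate.
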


\begin{proof} 
	The 
	estimates~\eqref{eq:spectral:conv:hdot}/\eqref{eq:spectral:conv:cov} 
	follow from~\eqref{eq:field-err-p}/\eqref{eq:cov-fct-err} 
	of Proposition~\ref{prop:regularity:hdot}
	with $\GP := \GP^\beta$,  
	$\widetilde{\GP} := \GP^\beta_{N}$, 
	$T := L^{-\beta}$, and 
	$\widetilde{T} := L_N^{-\beta}$ 
	by exploiting the 
	spectral behavior~\eqref{eq:lem:spectral-behav} 
	from Lemma~\ref{lem:spectral-behav}. 
	Finally, 
	applying Lemma~\ref{lem:hdot-sobolev} 
	proves 
	the last claim of this proposition.  
\end{proof}

By Proposition~\ref{prop:regularity:Linf} 
we furthermore obtain 
the following convergence result 
as $N\to\infty$  
for the covariance 
function~$\varrho^\beta_{N}$ 
in the $L_{\infty}(\cD\times\cD)$-norm. 

\begin{corollary}\label{cor:spectral:conv:Linf}
	Suppose Assumptions~\ref{ass:coeff}.I--II  
	and that the system $\cE = \{e_j\}_{j\in\bbN}$ 
	of $L_2(\cD)$-orthonormal eigenvectors  
	of the operator $L$ in~\eqref{eq:def:L} 
	is uniformly bounded 
	in~$C(\clos{\cD})$ as 
	in~\eqref{eq:ass:ej-unibound}.  
	Then, for $\beta> \nicefrac{d}{4}$, 
	the covariance 
	functions
	of~$\GP^\beta$ in~\eqref{eq:gpbeta} 
	and of $\GP^{\beta}_{N}$ in~\eqref{eq:def:gpbetaN}  
	have continuous 
	representatives~$\varrho^\beta, \varrho^\beta_{N} 
	\from\clos{\cD\times\cD} \to \bbR$, and 
	\begin{equation}\label{eq:spectral:conv:Linf} 
		\sup_{x,y\in\clos{\cD}} 
			\bigl| \varrho^\beta(x,y) - \varrho^\beta_{N}(x,y) \bigr| 
		\lesssim_{(C_\cE,\beta,A,\kappa,\cD)} 
		N^{-\nicefrac{1}{d} \, 
			\left(4\beta - d \right)}. 
	\end{equation}
\end{corollary}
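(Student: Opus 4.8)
The plan is to invoke Proposition~\ref{prop:regularity:Linf}\ref{prop:regularity:Linf-ii} with the colors $T:=L^{-\beta}$ and $\widetilde{T}:=L_N^{-\beta}$ from~\eqref{eq:gpbeta} and~\eqref{eq:def:LN}. First I would record that both covariance functions admit continuous representatives: for $\varrho^\beta$ this is exactly the content of Lemma~\ref{lem:matern:Linf}, which applies because $\beta>\nicefrac{d}{4}$ and the eigenvectors satisfy~\eqref{eq:ass:ej-unibound}; for $\varrho^\beta_N$ it suffices to note that $L_N^{-\beta}$ has finite rank with range $V_N=\operatorname{span}\{e_1,\ldots,e_N\}\subset C(\clos{\cD})$, so that $L_N^{-\beta}\in\cL(L_2(\cD);C(\clos{\cD}))$ and Proposition~\ref{prop:regularity:Linf}\ref{prop:regularity:Linf-i} applies. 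Since both $L^{-\beta}$ and $L_N^{-\beta}$ are self-adjoint on $L_2(\cD)$, an elementary computation with the orthonormal system $\cE$ identifies the associated covariance operators as $T\dual{T}=\sum_{j\in\bbN}\lambda_j^{-2\beta}\scalar{\,\cdot\,,e_j}{L_2(\cD)}e_j$ and $\widetilde{T}\dual{\widetilde{T}}=\sum_{j=1}^{N}\lambda_j^{-2\beta}\scalar{\,\cdot\,,e_j}{L_2(\cD)}e_j$, so that their difference is precisely the spectral tail operator $\sum_{j>N}\lambda_j^{-2\beta}\scalar{\,\cdot\,,e_j}{L_2(\cD)}e_j$.

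Next I would estimate the operator norm of this difference in $\cL(\dual{C(\clos{\cD})};C(\clos{\cD}))$, mirroring the pointwise computation~\eqref{eq:lem:matern:Linf:proof-2} in the proof of Lemma~\ref{lem:matern:Linf}. For $\varphi\in\dual{C(\clos{\cD})}$ and $x\in\clos{\cD}$, the uniform bound~\eqref{eq:ass:ej-unibound} yields $|e_j(x)|\leq C_\cE$ and $|\duality{\varphi,e_j}{\dual{C(\clos{\cD})}\times C(\clos{\cD})}|\leq C_\cE\norm{\varphi}{\dual{C(\clos{\cD})}}$, whence the tail operator applied to $\varphi$ is bounded at $x$ by $C_\cE^2\norm{\varphi}{\dual{C(\clos{\cD})}}\sum_{j>N}\lambda_j^{-2\beta}$. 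Taking the supremum over $x\in\clos{\cD}$ and over $\norm{\varphi}{\dual{C(\clos{\cD})}}\leq 1$ gives
\[
\bigl\| T\dual{T} - \widetilde{T}\dual{\widetilde{T}} \bigr\|_{\cL(\dual{C(\clos{\cD})};C(\clos{\cD}))} \leq C_\cE^2 \sum_{j>N} \lambda_j^{-2\beta}.
\]

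Finally I would control the tail of the eigenvalue series by Weyl's law, Lemma~\ref{lem:spectral-behav}: since $\lambda_j\eqsim_{(A,\kappa,\cD)} j^{\nicefrac{2}{d}}$ we have $\lambda_j^{-2\beta}\eqsim_{(A,\kappa,\cD)} j^{-\nicefrac{4\beta}{d}}$, and because $\beta>\nicefrac{d}{4}$ the exponent $\nicefrac{4\beta}{d}>1$, so an integral comparison gives $\sum_{j>N} j^{-\nicefrac{4\beta}{d}} \lesssim N^{-(\nicefrac{4\beta}{d}-1)} = N^{-\nicefrac{1}{d}(4\beta-d)}$. Combining this with the displayed operator-norm bound and Proposition~\ref{prop:regularity:Linf}\ref{prop:regularity:Linf-ii} delivers~\eqref{eq:spectral:conv:Linf}. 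Every step reuses machinery already in place, so I do not anticipate a genuine obstacle; the only points requiring care are the self-adjointness identification of $\widetilde{T}\dual{\widetilde{T}}$ as the truncated spectral sum, which guarantees that the difference is exactly the tail, and the convergence of that tail series, which is ensured precisely by the hypothesis $\beta>\nicefrac{d}{4}$.
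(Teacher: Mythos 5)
Your proposal is correct and follows essentially the same route as the paper: both reduce the claim to Proposition~\ref{prop:regularity:Linf}\ref{prop:regularity:Linf-ii}, identify the difference of covariance operators with the spectral tail $\sum_{j>N}\lambda_j^{-2\beta}\scalar{\,\cdot\,,e_j}{L_2(\cD)}e_j$, bound its $\cL(\dual{C(\clos{\cD})};C(\clos{\cD}))$-norm by $C_\cE^2\sum_{j>N}\lambda_j^{-2\beta}$ via the uniform eigenvector bound~\eqref{eq:ass:ej-unibound}, and conclude with Weyl's law from Lemma~\ref{lem:spectral-behav}. The only (harmless) deviation is cosmetic: the paper cites Lemma~\ref{lem:matern:Linf} for continuity of both $\varrho^\beta$ and $\varrho^\beta_N$, whereas you handle $\varrho^\beta_N$ by the finite-rank observation and Proposition~\ref{prop:regularity:Linf}\ref{prop:regularity:Linf-i}, which is equally valid.
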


\begin{proof}
	By Lemma~\ref{lem:matern:Linf}, 
	$\varrho^\beta$ and $\varrho^{\beta}_{N}$ 
	have continuous representatives. 
	In addition, the 
	estimate~\eqref{eq:prop:regularity:Linf-ii}
	from Proposition~\ref{prop:regularity:Linf} 
	proves~\eqref{eq:spectral:conv:Linf} 
	since, for all   
	$x\in\clos{\cD}$, $\varphi\in \dual{C(\clos{\cD})}$, 
	\begin{align*}
		\bigl\langle \dirac{x}, 
		\bigl(L^{-2\beta} - L_N^{-2\beta}\bigr)\varphi \bigr\rangle_{ 
			\dual{C(\clos{\cD})} \times  C(\clos{\cD})} 
		&= 
		\sum_{j>N} 
		\lambda_j^{-2\beta} 
		\duality{\varphi, e_j}{\dual{C(\clos{\cD})}\times C(\clos{\cD})}
		\, e_j(x) \\
		&\leq  
		C_\cE^2 \, \norm{\varphi}{\dual{C(\clos{\cD})}} 
		\sum_{j>N} 
		\lambda_j^{-2\beta}. 	
	\end{align*} 
	Finally, for $\beta > \nicefrac{d}{4}$,  
	the spectral behavior~\eqref{eq:lem:spectral-behav} 
	of~$L$ from Lemma~\ref{lem:spectral-behav} 
	yields  
	\[
		\bigl\| L^{-2\beta} - L_N^{-2\beta} 
			\bigr\|_{\cL(\dual{C(\clos{\cD})}; C(\clos{\cD}))} 
		\lesssim_{(C_\cE,\beta,A,\kappa,\cD)} 
		N^{-\nicefrac{1}{d} \, 
			\left(4\beta - d \right)}. 
		\qedhere 
	\]
\end{proof}  

If Assumption~\ref{lem:matern:hoelder:ass-i}  
or~\ref{lem:matern:hoelder:ass-ii}  
of Lemma~\ref{lem:matern:hoelder} 
is satisfied, we obtain not only 
Sobolev regularity 
of the GRF $\GP^\beta$ in ($L_q(\Omega)$-sense), 
but also H\"older continuity. 
The next proposition 
shows that in this case  
the sequence of spectral Galerkin 
approximations~$\bigl( \GP^{\beta}_{N} \bigr)_{N\in\bbN}$ 
converges also 
with respect to these norms. 

\begin{corollary}\label{cor:spectral:conv:hoelder}
	Suppose that 
	$d\in\{1,2,3\}$, $\gamma\in(0,1)$ satisfy 
	Assumption~\ref{lem:matern:hoelder:ass-i} 
	or~\ref{lem:matern:hoelder:ass-ii} of 
	Lemma~\ref{lem:matern:hoelder},   
	and let 
	$L, L_N^{-\beta}$ be the operators 
	in~\eqref{eq:def:L} and \eqref{eq:def:LN}. 
	Then, for every $N\in\bbN$ and 
	$2\beta \geq \gamma + \nicefrac{d}{2}$, 
	there exist continuous random fields  
	$\GP^\beta, \GP^{\beta}_{N} \colon 
	 \clos{\cD}\times\Omega \rightarrow \bbR$,  
	colored by $L^{-\beta}$ and $L_N^{-\beta}$, respectively, 
	such that  
	\begin{align}\label{eq:spectral:conv:hoelder}
		\left( \bbE 
		\Bigl[ 
		\bigl\| \GP^\beta - \GP^{\beta}_{N} \bigr\|_{C^{\delta}(\clos{\cD})}^q 
		\Bigr] 
		\right)^{\nicefrac{1}{q}} 
		\lesssim_{(q,\gamma,\delta,\beta,A,\kappa,\cD)} 
		N^{-\nicefrac{1}{d} \, 
			\left(2\beta - \gamma  
			- \nicefrac{d}{2}\right)},
	\end{align} 
	for every $\delta\in(0,\gamma)$ 
	and $q\in(0,\infty)$.  
\end{corollary}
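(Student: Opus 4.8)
The plan is to view the approximation error $\GP^\beta-\GP_N^\beta$ as a single Gaussian random field colored by the difference operator $L^{-\beta}-L_N^{-\beta}$ and to feed this color into the H\"older stability estimate of Corollary~\ref{cor:regularity:hoelder}; the whole statement then reduces to bounding $\norm{L^{-\beta}-L_N^{-\beta}}{\cL(L_2(\cD);H^{\gamma+\nicefrac{d}{2}}(\cD))}$. First I would produce the continuous representatives. For $\GP^\beta$ this is exactly Lemma~\ref{lem:matern:hoelder}, which under Assumption~\ref{lem:matern:hoelder:ass-i} or~\ref{lem:matern:hoelder:ass-ii} and $2\beta\geq\gamma+\nicefrac{d}{2}$ gives a continuous field with $\GP^\beta(x)=\white(L^{-\beta}\dirac{x})$ $\bbP$-a.s. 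For $\GP_N^\beta$ I would observe that every eigenvector $e_j$ lies in $\Hdot{\gamma+\nicefrac{d}{2}}$ (since $L^{(\gamma+\nicefrac{d}{2})/2}e_j=\lambda_j^{(\gamma+\nicefrac{d}{2})/2}e_j$), so the finite-rank color $L_N^{-\beta}$ maps $L_2(\cD)$ into $\operatorname{span}\{e_1,\dots,e_N\}\subset\Hdot{\gamma+\nicefrac{d}{2}}\hookrightarrow C^\gamma(\clos{\cD})$, where the embedding is the one used in the proof of Lemma~\ref{lem:matern:hoelder} (Lemma~\ref{lem:hdot-sobolev} together with the Sobolev embedding $H^{\gamma+\nicefrac{d}{2}}(\cD)\hookrightarrow C^\gamma(\clos{\cD})$). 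Hence Corollary~\ref{cor:regularity:hoelder} applies to $L_N^{-\beta}$ as well and yields a continuous representative with $\GP_N^\beta(x)=\white(L_N^{-\beta}\dirac{x})$ $\bbP$-a.s.

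Next, since $\white$ is linear, subtracting the two pointwise $\bbP$-a.s.\ representations yields $(\GP^\beta-\GP_N^\beta)(x)=\white\bigl((L^{-\beta}-L_N^{-\beta})\dirac{x}\bigr)$ for every $x\in\clos{\cD}$. Thus $\GP^\beta-\GP_N^\beta$ is a continuous GRF colored by $T:=L^{-\beta}-L_N^{-\beta}$, and $T$ inherits the mapping property $T\in\cL(L_2(\cD);H^{\gamma+\nicefrac{d}{2}}(\cD))$ from its two summands. Applying Corollary~\ref{cor:regularity:hoelder} to $T$ then bounds the left-hand side of~\eqref{eq:spectral:conv:hoelder}, for every $\delta\in(0,\gamma)$ and $q\in(0,\infty)$, by a constant (depending only on $q,\gamma,\delta,\cD$) times $\norm{T}{\cL(L_2(\cD);H^{\gamma+\nicefrac{d}{2}}(\cD))}$.

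It remains to estimate this operator norm, the only genuine computation. Using the embedding $\Hdot{\gamma+\nicefrac{d}{2}}\hookrightarrow H^{\gamma+\nicefrac{d}{2}}(\cD)$ from Lemma~\ref{lem:hdot-sobolev} it suffices to control $\norm{T}{\cL(\Hdot{0};\Hdot{\gamma+\nicefrac{d}{2}})}$. Writing $T\psi=\sum_{j>N}\lambda_j^{-\beta}\scalar{\psi,e_j}{L_2(\cD)}e_j$ and using the spectral definition of $\norm{\,\cdot\,}{\gamma+\nicefrac{d}{2}}$ gives $\norm{T\psi}{\gamma+\nicefrac{d}{2}}^2=\sum_{j>N}\lambda_j^{\gamma+\nicefrac{d}{2}-2\beta}\scalar{\psi,e_j}{L_2(\cD)}^2$. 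Since the exponent $\gamma+\nicefrac{d}{2}-2\beta$ is nonpositive and the $\lambda_j$ are nondecreasing, the tail supremum is attained at $j=N+1$, whence $\norm{T}{\cL(\Hdot{0};\Hdot{\gamma+\nicefrac{d}{2}})}\leq\lambda_{N+1}^{(\gamma+\nicefrac{d}{2}-2\beta)/2}$. Weyl's law (Lemma~\ref{lem:spectral-behav}) gives $\lambda_{N+1}\gtrsim N^{\nicefrac{2}{d}}$, and since the exponent is nonpositive this produces the claimed rate $N^{-\nicefrac{1}{d}\,(2\beta-\gamma-\nicefrac{d}{2})}$.

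The main obstacle is not the arithmetic but a consistency point: I must be sure that the difference of the two chosen continuous versions is genuinely the continuous field colored by $T$ to which Corollary~\ref{cor:regularity:hoelder} applies, and not merely a modification agreeing with it at each fixed point. This is exactly what the pointwise $\bbP$-a.s.\ identity $\GP^\beta(x)-\GP_N^\beta(x)=\white(T\dirac{x})$ secures: both sides are continuous in $x$, so the Kolmogorov--Chentsov estimate underlying Corollary~\ref{cor:regularity:hoelder} can be read off directly for the error field, giving the bound with the stated dependence of the constant on $q,\gamma,\delta$ and $\cD$.
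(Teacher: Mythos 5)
Your proposal is correct and follows essentially the same route as the paper: treat the error as a single GRF colored by $T_N = L^{-\beta}-L_N^{-\beta}$, apply the H\"older stability estimate of Corollary~\ref{cor:regularity:hoelder} (equivalently, \eqref{eq:matern:hoelder} of Lemma~\ref{lem:matern:hoelder}), and bound $\norm{T_N}{\cL(L_2(\cD);\Hdot{\gamma+\nicefrac{d}{2}})}$ by the spectral tail $\lambda_{N+1}^{-(2\beta-\gamma-\nicefrac{d}{2})/2}$ together with Weyl's law from Lemma~\ref{lem:spectral-behav}. Your only additions—the explicit finite-rank argument giving the continuous representative of $\GP_N^\beta$ and the indistinguishability point for the difference field—are details the paper's proof leaves implicit, not a different method.
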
 

\begin{proof}
	By Lemma~\ref{lem:matern:hoelder}
	there exist continuous random 
	fields 
	$\GP^\beta, \GP^{\beta}_{N} 
		\colon \clos{\cD} \times \Omega \rightarrow \bbR$  
	colored by $L^{-\beta}$ and $L_N^{-\beta}$, respectively. 
	Their difference 
	$\GP^\beta - \GP^{\beta}_{N}$
	is then a continuous random 
	field colored by 
	$T_N := L^{-\beta} -L_N^{-\beta}
		= (L-L_N)^{-\beta}$ 
	and we 
	obtain the convergence result  
	in~\eqref{eq:spectral:conv:hoelder} 
	from the stability 
	estimate~\eqref{eq:matern:hoelder} 
	of Lemma~\ref{lem:matern:hoelder}
	applied to $\GP^\beta - \GP^{\beta}_{N}$, 
	since, for every $\psi\in L_2(\cD)$, 
	\begin{align*} 
		\norm{T_N\psi}{\Hdot{\gamma+\nicefrac{d}{2}}}^2 
		&= 
		\sum_{j>N} 
		\lambda_j^{-2\beta + \gamma + \nicefrac{d}{2}} 
		\scalar{\psi, e_j}{L_2(\cD)}^2 
		\leq \lambda_N^{-2\beta + \gamma + \nicefrac{d}{2}} 
		\sum_{j>N} 
		\scalar{\psi, e_j}{L_2(\cD)}^2 \\
		&\lesssim_{(\gamma,\beta,A,\kappa,\cD)}  
		N^{-\nicefrac{2}{d} \, 
			\left(2\beta - \gamma  
			- \nicefrac{d}{2}\right)} 
		\norm{\psi}{L_2(\cD)}^2.   
	\end{align*}
	Here, we have used the spectral 
	behavior~\eqref{eq:lem:spectral-behav} from 
	Lemma~\ref{lem:spectral-behav} 
	for~$\lambda_N$. 
\end{proof}

\section{General Galerkin approximations}\label{section:matern:galerkin}

After having derived error estimates 
for spectral Galerkin approximations 
in the previous subsection, we now consider 
a family of general 
Galerkin approximations 
for the Whittle--Mat\'ern field~$\GP^\beta$ 
in~\eqref{eq:gpbeta} 
which, for the case $\beta\in(0,1)$, has been proposed 
in~\cite{bkk-strong, bkk-weak}. 
Recall that the random field~$\GP^\beta$ 
is indexed by   
the bounded spatial domain $\cD\subset\bbR^d$.

\subsection{Sinc-Galerkin approximations}
\label{subsec:matern:sinc-galerkin}

The approximations proposed in~\cite{bkk-strong, bkk-weak} 
are based on a Galerkin method for the spatial discretization 
$L_h$ of $L$  
and a sinc quadrature for an integral representation 
of the resulting discrete fractional inverse $L_h^{-\beta}$. 
We recall that approach in this subsection, and formulate all 
assumptions and auxiliary results 
which are needed  
for the subsequent error analysis 
in Subsection~\ref{subsec:matern:galerkin-error}. 

\subsubsection{Galerkin discretization}

We assume that we are given a family 
$(V_h)_{h > 0}$ of 
subspaces 
of $H^1_0(\cD)$, 
with dimension $N_h := \dim(V_h) <\infty$.
We let $\Pi_h \from L_2(\cD) \to V_h$  
denote the $L_2(\cD)$-orthogonal projection onto $V_h$. 
Since $V_h\subset H^1_0(\cD) = \Hdot{1}$, 
$\Pi_h$ can be uniquely extended to a bounded 
linear operator $\Pi_h\from \Hdot{-1} \to V_h$. 
In addition, we let 
$L_h \from V_h \to V_h$ be the Galerkin discretization 
of the differential operator $L$ in~\eqref{eq:def:L}
with respect to $V_h$, i.e., 
\begin{equation}\label{eq:def:Lh} 
	\scalar{L_h \phi_h, \psi_h}{L_2(\cD)} 
	= 
	\duality{L \phi_h, \psi_h}{\Hdot{-1} \times \Hdot{1}}
	\quad 
	\forall 
	\phi_h, \psi_h \in V_h. 
\end{equation}
We arrange  
the eigenvalues of~$L_h$
in nondecreasing order, 
\[
	0
	<
	\lambda_{1,h} 
	\leq 
	\lambda_{2,h} 
	\leq 
	\ldots 
	\leq 
	\lambda_{N_h,h}, 
\]
and let  
$\{e_{j,h}\}_{j=1}^{N_h}$ be a set of corresponding 
eigenvectors which are orthonormal 
in~$L_2(\cD)$. 
The operator $R_h \from H^1_0(\cD) = \Hdot{1} \to V_h$ 
is the Rayleigh--Ritz projection, i.e., 
$R_h := L_h^{-1} \Pi_h L$ and, 
for all $\psi\in\Hdot{1}$, 
\begin{equation}\label{eq:def:ritz}
	\scalar{R_h \psi, \phi_h}{1}
	= 
	\scalar{\psi , \phi_h}{1} 
	\quad 
	\forall \phi_h \in V_h. 
\end{equation} 

All further assumptions on the 
finite-dimensional subspaces $(V_h)_{h > 0}$
are 
summarized below and explicitly 
referred to, when needed 
in our error analysis.  

\begin{assumption}[on the Galerkin discretization]
	\label{ass:galerkin} 
	$\quad$ 
	
	\begin{enumerate}[label=\Roman*.] 
	\item\label{ass:galerkin-i}
		There exist $\theta_1 > \theta_0 > 0$   
		and a linear operator 
		$\cI_h \from H^{\theta_1}(\cD) \to V_h$ 
		such that,
		for all $\theta_0 < \theta \leq \theta_1$, 
		$\cI_h \from H^{\theta}(\cD) \to V_h$ 
		is a continuous extension, and 
		\begin{align}
			\label{eq:err-interpol}
			\norm{v - \cI_h v}{H^\sigma(\cD)} 
			\lesssim_{(\sigma,\theta,\cD)} 
			h^{\theta - \sigma} 
			\norm{v}{H^\theta(\cD)} 
			\quad 
			\forall 
			v \in H^{\theta}(\cD), 
		\end{align} 
		holds for $0\leq \sigma \leq \min\{1, \theta\}$ and 
		sufficiently small $h>0$.  	
	\item\label{ass:galerkin-ii}  
		For all $h>0$ sufficiently small and all  
		$0\leq\sigma\leq 1$ 
		the following inverse inequality 
		holds:  
		\begin{equation}
			\norm{\phi_h}{H^{\sigma}(\cD)} 
			\lesssim_{(\sigma,\cD)} 
			h^{-\sigma} \norm{\phi_h}{L_2(\cD)} 
			\quad 
			\forall 
			\phi_h \in V_h. 
			\label{eq:ass:galerkin:inverse}  
		\end{equation} 
	\item\label{ass:galerkin-iii} 
		$\dim(V_h) = N_h \eqsim_{\cD} h^{-d}$ 
		for sufficiently small $h>0$. 
	\item\label{ass:galerkin-iv} 
		There exist 
		$r,s_0,t,C_{0}, C_{\lambda}>0 $ 
		such that for all $h>0$ sufficiently small 
		and for all 
		$j\in\{1,\ldots,N_h\}$
		the following error estimates hold: 
		\begin{gather} 
			\lambda_j 
			\leq 
			\lambda_{j,h}
			\leq 
			\lambda_j + C_\lambda h^r \lambda_j^t, 
			\label{eq:ass:galerkin:lambda}
			\\
			\norm{e_j - e_{j,h}}{L_2(\cD)}^2 
			\leq
			C_0 h^{2s_0} \lambda_j^t,  
			\label{eq:ass:galerkin:e}
		\end{gather} 
		where $\{(\lambda_j,e_j)\}_{j\in\bbN}$ 
		are the 
		eigenpairs of the operator~$L$ 
		in~\eqref{eq:def:L}. 
	\end{enumerate}
\end{assumption} 

We refer to Subsection~\ref{subsec:matern:fem} 
for explicit examples 
of finite element spaces $(V_h)_{h>0}$, 
which satisfy these assumptions. 
\begin{remark}\label{rem:min-max} 
	It is a consequence of 
	the min-max principle that the first inequality 
	in~\eqref{eq:ass:galerkin:lambda}, 
	$\lambda_j\leq\lambda_{j,h}$, 
	is satisfied for all conforming Galerkin spaces 
	$V_h\subset \Hdot{1}$. 
\end{remark} 

In Theorem~\ref{thm:fem-error} below, 
we bound the \emph{deterministic  
Galerkin error} in the fractional case, 
i.e., we consider 
the error between $L^{-\beta}g$ 
and $L_h^{-\beta}\Pi_h g$. 
This theorem is one 
of our main results  
and it will be a crucial ingredient 
when analyzing general Galerkin 
approximations of the Whittle--Mat\'ern 
field $\GP^\beta$ from~\eqref{eq:gpbeta} 
in Subsection~\ref{subsec:matern:galerkin-error}. 
For its derivation, we need the 
following two lemmata. 

\begin{lemma}\label{lem:fem-error-integer}
	Let $L$ be as in~\eqref{eq:def:L}  
	and, for $h>0$, 
	let $L_h, R_h$  be as in~\eqref{eq:def:Lh}
	and~\eqref{eq:def:ritz}.  
	Suppose Assumptions~\ref{ass:coeff}.I--II 
	and~\ref{ass:dom}.I.  
	Let $0 < \alpha \leq 1$ 
	be such that 
	\begin{equation}\label{eq:ass:1+alpha}
		\bigl(
		\Hdot{1+\delta}, 
		\norm{\,\cdot\,}{1+\delta}
		\bigr) 
		\cong 
		\bigl(
		H^{1+\delta}(\cD)\cap H^1_0(\cD), 
		\norm{\,\cdot\,}{H^{1+\delta}(\cD)} 
		\bigr), 
		\qquad 
		0\leq \delta \leq \alpha, 
	\end{equation}
	where $\Hdot{1+\delta}$ is defined as in 
	in~\eqref{eq:def:Hdot}.  
	Furthermore, let Assumption~\ref{ass:galerkin}.I  
	be satisfied with parameters   
	$\theta_0 \in (0,1)$ and $\theta_1\geq 1+\alpha$. 
	Then, for  
	every   
	$0 \leq \eta \leq \vartheta \leq \alpha$, 
	\begin{align}
		\| u - R_h u \|_{ 
			1-\eta} 
			&\lesssim_{(\eta,\vartheta,A,\kappa,\cD)} 
			h^{\vartheta+\eta}
			\norm{u}{1+\vartheta}, 
			&& 
			u \in \Hdot{1+\vartheta}, 
		\label{eq:ritz-error} \\
		\bigl\| L^{-1} g - L_h^{-1} \Pi_h g \bigr\|_{ 
				1-\eta} 
			&\lesssim_{(\eta,\vartheta,A,\kappa,\cD)} 
			h^{\vartheta+\eta}
			\norm{g}{\vartheta-1}, 
			&& 
			g \in \Hdot{\vartheta-1}, 
		\label{eq:fem-error-integer} 
	\end{align} 
	for sufficiently small $h>0$. 
\end{lemma}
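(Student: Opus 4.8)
The plan is to reduce both estimates to the Rayleigh--Ritz bound \eqref{eq:ritz-error}, which I would prove in two steps: first the energy-norm case $\eta=0$ via the best-approximation property of $R_h$, then the negative-order cases $\eta>0$ by an Aubin--Nitsche duality argument. The solution-operator bound \eqref{eq:fem-error-integer} then follows from the algebraic identity $R_h=L_h^{-1}\Pi_h L$.

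First I would treat \eqref{eq:ritz-error} for $\eta=0$. Since the inner product $\scalar{\,\cdot\,,\,\cdot\,}{1}$ coincides with the bilinear form defining $L$, the defining relation \eqref{eq:def:ritz} shows that $R_h u$ is the $\scalar{\,\cdot\,,\,\cdot\,}{1}$-orthogonal projection of $u$ onto $V_h$, so $\norm{u - R_h u}{1} = \inf_{\phi_h \in V_h}\norm{u-\phi_h}{1} \le \norm{u - \cI_h u}{1}$. By Lemma~\ref{lem:hdot-sobolev} the norms $\norm{\,\cdot\,}{1}$ and $\norm{\,\cdot\,}{H^1(\cD)}$ are equivalent, and by the hypothesis \eqref{eq:ass:1+alpha} so are $\norm{\,\cdot\,}{1+\vartheta}$ and $\norm{\,\cdot\,}{H^{1+\vartheta}(\cD)}$ for $0 \le \vartheta \le \alpha$. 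Applying the interpolation estimate \eqref{eq:err-interpol} from Assumption~\ref{ass:galerkin}.I with $\theta = 1+\vartheta$ (admissible since $\theta_0 < 1 \le 1+\vartheta \le 1+\alpha \le \theta_1$) and $\sigma=1$ then yields $\norm{u - R_h u}{1} \lesssim h^{\vartheta}\norm{u}{1+\vartheta}$.

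Next I would upgrade to $\eta \in (0,\alpha]$ by duality. For $v := u - R_h u \in \Hdot{1-\eta}$ I would use the characterization $\norm{v}{1-\eta} = \sup\{\duality{v,w}{} : w \in \Hdot{\eta-1},\ \norm{w}{\eta-1}\le 1\}$, where the pairing extends the $L_2(\cD)$-inner product. Given such a $w$, set $\phi := L^{-1}w \in \Hdot{1+\eta}$, so that $\norm{\phi}{1+\eta} = \norm{w}{\eta-1} \le 1$ and, by the spectral representation, $\duality{v,w}{} = \scalar{v,\phi}{1}$. Galerkin orthogonality \eqref{eq:def:ritz} gives $\scalar{v,\phi}{1} = \scalar{v, \phi - R_h\phi}{1}$, whence by Cauchy--Schwarz and the $\eta=0$ estimate applied to both $u$ and $\phi$ (the latter with exponent $\eta \le \alpha$) one gets $\duality{v,w}{} \le \norm{v}{1}\,\norm{\phi - R_h\phi}{1} \lesssim h^{\vartheta}\norm{u}{1+\vartheta}\,h^{\eta}\norm{\phi}{1+\eta} \le h^{\vartheta+\eta}\norm{u}{1+\vartheta}$. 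Taking the supremum over $w$ proves \eqref{eq:ritz-error}.

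Finally, for \eqref{eq:fem-error-integer} I would set $u := L^{-1}g \in \Hdot{1+\vartheta}$, an isometry so that $\norm{u}{1+\vartheta} = \norm{g}{\vartheta-1}$. Since $\vartheta \ge \eta \ge 0$ gives $g \in \Hdot{\vartheta-1}\subseteq\Hdot{-1}$, the extension of $\Pi_h$ to $\Hdot{-1}$ makes $\Pi_h g$ well defined, and the identity $R_h = L_h^{-1}\Pi_h L$ yields $R_h u = L_h^{-1}\Pi_h(Lu) = L_h^{-1}\Pi_h g$; hence $u - R_h u = L^{-1}g - L_h^{-1}\Pi_h g$ and \eqref{eq:fem-error-integer} follows from \eqref{eq:ritz-error}. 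The main obstacle I anticipate is the bookkeeping on the negative-order scale $\Hdot{s}$: one must check carefully that $\Pi_h$, $L$ and the duality pairing all extend consistently, so that the identity $R_h u = L_h^{-1}\Pi_h g$ and the equality $\duality{v,w}{} = \scalar{v,\phi}{1}$ are legitimate, and that the ranges of $\vartheta$ and $\eta$ keep $1+\vartheta$ and $1+\eta$ both within the interpolation window $(\theta_0,\theta_1]$ of Assumption~\ref{ass:galerkin}.I and within the regularity window $[1,1+\alpha]$ of \eqref{eq:ass:1+alpha}.
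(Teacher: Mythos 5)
Your proof is correct, and its skeleton matches the paper's: the case $\eta=0$ via the best-approximation property of $R_h$ in $\Hdot{1}$ combined with Assumption~\ref{ass:galerkin}.I and the norm equivalences, the lower-order norms via Galerkin orthogonality and duality, and \eqref{eq:fem-error-integer} deduced from \eqref{eq:ritz-error} through the identity $R_h = L_h^{-1}\Pi_h L$ and the isometry $\norm{L^{-1}g}{1+\vartheta} = \norm{g}{\vartheta-1}$. The one genuine difference lies in how the intermediate values of $\eta$ are treated. The paper proves only the endpoint $\eta=\vartheta$, by testing $e := u - R_h u$ against the single dual function $\psi := L^{-\vartheta}e$, which satisfies $\scalar{\psi,e}{1} = \norm{e}{1-\vartheta}^2$ and $\norm{\psi}{1+\vartheta} = \norm{e}{1-\vartheta}$ and is precisely the optimizing element of your supremum; it bounds $\norm{\psi - \cI_h\psi}{1}$ directly by Assumption~\ref{ass:galerkin}.I, and then obtains $\eta\in(0,\vartheta)$ by interpolation of the $\Hdot{s}$ scale. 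You instead run the Aubin--Nitsche argument separately for each $\eta$, solving $L\phi = w$ for every dual datum $w$ and reusing the already-established $\eta=0$ estimate for $\phi$ (recall that the Ritz error is dominated by the quasi-interpolation error, so this is equivalent to the paper's bound). Your route buys two things: no appeal to interpolation-space theory is needed, and the argument in fact covers all $0\le\eta\le\alpha$ rather than merely $\eta\le\vartheta$; the paper's endpoint-plus-interpolation route is marginally more compact because the optimizing dual element is written down explicitly. The bookkeeping issues you flag are indeed harmless: by the spectral definition \eqref{eq:def:Hdot}, $L^{-1}\from\Hdot{\eta-1}\to\Hdot{1+\eta}$ is an isometric isomorphism, so the pairing identity $\duality{v,w}{\,} = \scalar{v,L^{-1}w}{1}$ holds, and $1+\eta,\,1+\vartheta$ stay inside both the window $(\theta_0,\theta_1]$ of Assumption~\ref{ass:galerkin}.I and the regularity window of \eqref{eq:ass:1+alpha}.
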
  

\begin{proof}
	Since $R_h u \in V_h$  
	is the best approximation 
	of $u \in \Hdot{1}$ 
	with respect to 
	$\norm{\,\cdot\,}{1}$,  
	we find 
	by~Assumption~\ref{ass:galerkin}.I  
	and the assumed equivalence \eqref{eq:ass:1+alpha} 
	that, for $e:=u-R_h u$ 
	and any $0\leq\vartheta\leq\alpha$, 
	\begin{align*} 
		\norm{e}{1}  
		\lesssim_{(A,\kappa,\cD)}
		\norm{u - \cI_h u }{H^1(\cD)}
		\lesssim_{(\vartheta,A,\kappa,\cD)}
		h^\vartheta \norm{u}{H^{1+\vartheta}(\cD)}
		\lesssim_{(\vartheta,A,\kappa,\cD)}
		h^\vartheta \norm{u}{1+\vartheta},  
	\end{align*} 
	i.e.,~\eqref{eq:ritz-error} for $\eta=0$ follows. 
	Furthermore, if we let   
	$\psi := L^{-\vartheta} e \in \Hdot{1+2\vartheta}$, 
	the estimate above and the orthogonality of $e$
	to $V_h$ in $\Hdot{1}$, combined with \eqref{eq:hdot-sobolev1}, 
	Assumption~\ref{ass:galerkin}.I 
	and~\eqref{eq:ass:1+alpha} yield  
	\begin{align*} 
		\norm{e}{1-\vartheta}^2
		= 
		\scalar{\psi, e}{1} 
		=
		\scalar{\psi - \cI_h \psi, e}{1} 
		\leq 
		\norm{\psi - \cI_h \psi}{1}
		\norm{e}{1} 
		\lesssim_{(\vartheta,A,\kappa,\cD)}
		h^{2\vartheta} 
		\norm{u}{1+\vartheta} 
		\norm{\psi}{1+\vartheta},    
	\end{align*} 
	which proves~\eqref{eq:ritz-error} for 
	$\eta = \vartheta$ 
	since $\norm{\psi}{1+\vartheta} = \norm{e}{1-\vartheta}$. 
	For $\eta\in(0,\vartheta)$, the 
	result~\eqref{eq:ritz-error}
	holds by interpolation. 
	Now let $g\in\Hdot{\vartheta-1}$ be given. 
	Then, \eqref{eq:fem-error-integer} 
	follows from \eqref{eq:ritz-error} 
	for $u := L^{-1}g \in\Hdot{1+\vartheta}$, 
	since 
	$\norm{u}{1+\vartheta}=\norm{g}{\vartheta-1}$. 
\end{proof}

\begin{remark} 
	Note that if Assumptions~\ref{ass:coeff}.I--III 
	and \ref{ass:dom}.II 
	are satisfied, 
	i.e., if the coefficient $A$ 
	of the operator $L$ in~\eqref{eq:def:L}
	is Lipschitz continuous 
	and the domain $\cD$ is convex, 
	then the equivalence~\eqref{eq:ass:1+alpha}  
	with $\alpha=1$ is part of 
	Lemma~\ref{lem:hdot-sobolev}, 
	see \eqref{eq:hdot-sobolev2}. 
\end{remark}

\begin{lemma}\label{lem:Lh-L}  
	Suppose Assumptions~\ref{ass:coeff}.I--II 
	and~\ref{ass:dom}.I.  
	Let $L$ be as in~\eqref{eq:def:L}  
	and, for $h>0$,
	let $L_h$ be as in~\eqref{eq:def:Lh}.  
	Then, for each $0\leq \gamma \leq \nicefrac{1}{2}$, 
	we have 
	\begin{equation}\label{eq:L+Lh}
		\bigl\| L^\gamma L_h^{-\gamma} \Pi_h \bigr\|_{\cL(L_2(\cD))}
		\lesssim_\gamma 1. 
	\end{equation}
	Furthermore, if 
	the $L_2(\cD)$-orthogonal projection 
	$\Pi_h$ is $H^1(\cD)$-stable, i.e., if there exists 
	a constant $C_\Pi > 0$ such that
	\begin{equation}\label{eq:PihH1stable}
		\norm{\Pi_h}{\cL(H^1(\cD))} \leq C_\Pi, 
	\end{equation}
	for all sufficiently small $h>0$, 
	then, for such $h>0$ 
	and all $0\leq \gamma \leq \nicefrac{1}{2}$,
	\begin{equation}\label{eq:Lh+L}
		\bigl\| L_h^{\gamma} \Pi_h L^{-\gamma} \bigr\|_{\cL(L_2(\cD))} 
		\lesssim_{(\gamma,A,\kappa,\cD)} 
		1.  
	\end{equation} 
	If additionally Assumption~\ref{ass:galerkin}.II 
	is satisfied and if 
	$0<\alpha\leq 1$ is as in~\eqref{eq:ass:1+alpha}, 
	then \eqref{eq:Lh+L} 
	holds for $0\leq \gamma \leq \nicefrac{(1+\alpha)}{2}$. 
\end{lemma}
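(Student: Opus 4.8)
The plan is to reduce everything to the single algebraic identity $\norm{w_h}{1} = \bigl\| L_h^{\nicefrac{1}{2}} w_h \bigr\|_{L_2(\cD)}$ for $w_h \in V_h$, which follows from~\eqref{eq:def:Lh} with $\phi_h = \psi_h = w_h$ and the self-adjointness of $L_h$ on $V_h$; equivalently, on $V_h$ the endpoint norm $\norm{\,\cdot\,}{1}$ is \emph{simultaneously} the $L^{\nicefrac{1}{2}}$-power norm and the $L_h^{\nicefrac{1}{2}}$-power norm. I would then obtain both~\eqref{eq:L+Lh} and~\eqref{eq:Lh+L} on the range $0 \leq \gamma \leq \nicefrac{1}{2}$ purely from the functoriality of complex interpolation. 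For~\eqref{eq:L+Lh} I would interpolate the inclusion $V_h \hookrightarrow \Hdot{\cdot}$, which is isometric for the couple $\bigl(L_2(\cD),\Hdot{1}\bigr)$ at both endpoints (the identity makes $\norm{w_h}{1} = \bigl\| L^{\nicefrac{1}{2}} w_h \bigr\|_{L_2(\cD)} = \bigl\| L_h^{\nicefrac{1}{2}} w_h \bigr\|_{L_2(\cD)}$). Since on $V_h$ the space $[L_2(\cD),\Hdot{1}]_{2\gamma}$ is isometric to $\bigl(V_h, \| L_h^{\gamma}\,\cdot\,\|_{L_2(\cD)}\bigr)$ by the isometric interpolation of weighted $\ell_2$-norms (spectral decomposition of $L_h$ on $V_h$), while it embeds contractively into the ambient space $\Hdot{2\gamma}$, I obtain $\| L^{\gamma} w_h \|_{L_2(\cD)} = \norm{w_h}{2\gamma} \leq \| L_h^{\gamma} w_h \|_{L_2(\cD)}$ for $w_h \in V_h$. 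Applying this with $w_h := L_h^{-\gamma}\Pi_h\psi$ gives $\| L^{\gamma} L_h^{-\gamma}\Pi_h\psi \|_{L_2(\cD)} \leq \| L_h^{\gamma} L_h^{-\gamma}\Pi_h\psi \|_{L_2(\cD)} = \norm{\Pi_h\psi}{L_2(\cD)} \leq \norm{\psi}{L_2(\cD)}$, i.e.~\eqref{eq:L+Lh} with constant $1$. For~\eqref{eq:Lh+L} on $[0,\nicefrac{1}{2}]$ I would instead interpolate $\Pi_h$ itself, as a map from the couple $\bigl(L_2(\cD),\Hdot{1}\bigr)$ to the couple $\bigl((V_h,\norm{\,\cdot\,}{L_2(\cD)}),(V_h,\norm{\,\cdot\,}{1})\bigr)$: it has norm $1$ at the lower endpoint and norm $\lesssim C_\Pi$ at the upper endpoint by the $H^1(\cD)$-stability~\eqref{eq:PihH1stable} and the norm equivalence $\norm{\,\cdot\,}{1}\eqsim\norm{\,\cdot\,}{H^1(\cD)}$ from Lemma~\ref{lem:hdot-sobolev}. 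Identifying the target interpolation space with $\bigl(V_h,\| L_h^{\gamma}\,\cdot\,\|_{L_2(\cD)}\bigr)$ as before yields $\| L_h^{\gamma}\Pi_h L^{-\gamma} \|_{\cL(L_2(\cD))} \lesssim_{(\gamma,A,\kappa,\cD)} 1$.

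For the extended range $\nicefrac{1}{2} \leq \gamma \leq \nicefrac{(1+\alpha)}{2}$ in~\eqref{eq:Lh+L} I would set $v := L^{-\gamma}\psi \in \Hdot{2\gamma}$ (note $\norm{v}{2\gamma} = \norm{\psi}{L_2(\cD)}$ and $2\gamma \geq 1$) and split, with $R_h$ the Ritz projection~\eqref{eq:def:ritz},
\[
	L_h^{\gamma}\Pi_h v = L_h^{\gamma} R_h v + L_h^{\gamma}(\Pi_h - R_h) v ,
\]
bounding each piece by $\norm{v}{2\gamma}$.

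The first piece I would rewrite through $R_h = L_h^{-1}\Pi_h L$ as $L_h^{\gamma} R_h v = L_h^{\gamma - 1}\Pi_h L v$ and estimate by duality over $V_h$: for $\phi_h \in V_h$, the extension property of $\Pi_h$ on $\Hdot{-1}$ and the definition of $\norm{\,\cdot\,}{1}$ give $\scalar{L_h^{\gamma-1}\Pi_h L v,\phi_h}{L_2(\cD)} = \scalar{v, L_h^{\gamma-1}\phi_h}{1} = \scalar{L^{\gamma} v,\, L^{1-\gamma}L_h^{-(1-\gamma)}\phi_h}{L_2(\cD)}$, so that—since $1-\gamma \in [0,\nicefrac{1}{2}]$—the already-proven bound~\eqref{eq:L+Lh} applied to $L^{1-\gamma}L_h^{-(1-\gamma)}\Pi_h$ yields $\| L_h^{\gamma} R_h v \|_{L_2(\cD)} \lesssim \norm{v}{2\gamma}$. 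The second piece I would control with the discrete inverse inequality $\| L_h^{\gamma}\phi_h \|_{L_2(\cD)} \lesssim h^{-(2\gamma-1)}\norm{\phi_h}{1}$ for $\phi_h \in V_h$ (a consequence of Assumption~\ref{ass:galerkin}.II via $\lambda_{N_h,h} \lesssim h^{-2}$) combined with $\norm{(\Pi_h - R_h)v}{1} \lesssim h^{2\gamma-1}\norm{v}{2\gamma}$; the latter follows from the Ritz estimate~\eqref{eq:ritz-error} (with $\eta = 0$, $\vartheta = 2\gamma-1 \leq \alpha$) and the analogous $L_2(\cD)$-projection estimate obtained from Assumption~\ref{ass:galerkin}.I together with $H^1(\cD)$-stability, using the equivalence $\norm{\,\cdot\,}{2\gamma}\eqsim\norm{\,\cdot\,}{H^{2\gamma}(\cD)}$ from~\eqref{eq:ass:1+alpha}. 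The powers of $h$ then cancel, giving $\| L_h^{\gamma}(\Pi_h - R_h)v \|_{L_2(\cD)} \lesssim \norm{v}{2\gamma}$, and hence~\eqref{eq:Lh+L} on the full range. I expect the extended range to be the main obstacle: one must (i) choose the regularity exponent $\vartheta = 2\gamma-1$ in the approximation estimate precisely so that it compensates the loss $h^{-(2\gamma-1)}$ of the inverse inequality, and (ii) treat the negative, non-integer power in $L_h^{\gamma-1}\Pi_h L v$ by the duality identity above, where feeding the already-established~\eqref{eq:L+Lh} back in (now at exponent $1-\gamma$) is the crucial point.
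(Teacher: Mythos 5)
Your proposal is correct, and its skeleton coincides with the paper's: the key identity $\norm{w_h}{1} = \| L_h^{\nicefrac{1}{2}} w_h \|_{L_2(\cD)}$ on $V_h$, endpoint bounds at $\gamma\in\{0,\nicefrac{1}{2}\}$ plus interpolation for both \eqref{eq:L+Lh} and \eqref{eq:Lh+L}, and for the extended range the same splitting $\Pi_h = R_h + (\Pi_h - R_h)$ with the Ritz part reduced to \eqref{eq:L+Lh} at the complementary exponent $1-\gamma$ (the paper does this via the self-adjointness of $L_h^{-(1-\vartheta)/2}\Pi_h$ and the resulting adjoint identity; your explicit duality computation over $V_h$ is the same mechanism written out by hand). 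Where you genuinely deviate is the remainder term $L_h^{\gamma}(\Pi_h-R_h)$: the paper factorizes it as $\bigl(L_h^{\nicefrac{\vartheta}{2}}\Pi_h L^{-\nicefrac{\vartheta}{2}}\bigr)\,L^{\nicefrac{\vartheta}{2}}L_h^{\nicefrac{1}{2}}\Pi_h E_h^R L^{-\gamma}$, which re-uses the already-proven \eqref{eq:Lh+L} at exponent $\nicefrac{\vartheta}{2}$ and invokes the inverse inequality in the fractional norm $\Hdot{\vartheta}$; this forces the norm equivalence $\Hdot{\vartheta}\cong H^{\vartheta}(\cD)$, which fails at $\vartheta=\nicefrac{1}{2}$, so the paper must exclude $\gamma=\nicefrac{3}{4}$ and appeal to ``a slight modification.'' Your purely spectral inverse inequality $\| L_h^{\gamma}\phi_h \|_{L_2(\cD)} \leq \lambda_{N_h,h}^{\gamma-\nicefrac{1}{2}}\norm{\phi_h}{1} \lesssim h^{-(2\gamma-1)}\norm{\phi_h}{1}$ (valid since $\lambda_{N_h,h}\lesssim h^{-2}$ follows from Assumption~\ref{ass:galerkin}.II at $\sigma=1$ together with $\Hdot{1}\cong H^1_0(\cD)$) never leaves the scale of $L_h$-powers and the energy norm, so it treats all $\gamma\in(\nicefrac{1}{2},\nicefrac{(1+\alpha)}{2}]$ uniformly with no exceptional case — a small but real improvement in cleanliness. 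Both routes consume the same ingredients overall ($H^1(\cD)$-stability of $\Pi_h$, the Ritz estimate \eqref{eq:ritz-error} with $\eta=0$, $\vartheta=2\gamma-1$, and hence implicitly Assumption~\ref{ass:galerkin}.I through Lemma~\ref{lem:fem-error-integer}), so your argument proves exactly the stated range under exactly the stated hypotheses.
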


\begin{proof}
	For $g\in L_2(\cD)=\Hdot{0}$, we find 
	by the definition~\eqref{eq:def:Lh} 
	of $L_h$ that 
	\begin{align*}
		\bigl\| L^{\nicefrac{1}{2}} L_h^{-\nicefrac{1}{2}} \Pi_h g 
			\bigr\|_{0}^2 
		&= 
		\bigl\langle L L_h^{-\nicefrac{1}{2}} \Pi_h g, 
			L_h^{-\nicefrac{1}{2}} \Pi_h g 
			\bigr\rangle_{\Hdot{-1}\times \Hdot{1}} 
		= \norm{\Pi_h g}{0}^2 \leq  \norm{g}{0}^2. 
	\end{align*} 
	Thus, \eqref{eq:L+Lh} holds 
	for $\gamma\in\{0,\nicefrac{1}{2}\}$. 
	In other words,  
	the canonical embedding 
	$I_h$ of $V_h$ into $L_2(\cD)$ 
	is a continuous mapping 
	from $\Hdoth{2\gamma}$
	to $\Hdot{2\gamma}$,  
	for $\gamma\in\{0,\nicefrac{1}{2}\}$, 
	where $\Hdoth{2\gamma}$
	denotes the space~$V_h$ 
	equipped with the norm 
	$\| \,\cdot\, \|_{\Hdoth{2\gamma}} 
	:=
	\|L_h^\gamma \,\cdot\, \|_{L_2(\cD)}$. 
	Thus, 
	\[
		\bigl\| L^\gamma L_h^{-\gamma}\Pi_h \bigr\|_{\cL(L_2(\cD))}
		= 
		\| I_h \|_{\cL\left(\Hdoth{2\gamma}; \Hdot{2\gamma}\right)}
		\lesssim_\gamma  
		\| I_h \|_{\cL\left(\Hdoth{0}; \Hdot{0}\right)}^{1-2\gamma}
		\| I_h \|_{\cL\left(\Hdoth{1}; \Hdot{1}\right)}^{2\gamma}
		\leq 1,
	\]
	follows 
	by interpolation 
	for all $0\leq\gamma\leq\nicefrac{1}{2}$, 
	which completes the proof of~\eqref{eq:L+Lh}. 
	
	If $\Pi_h$ is $H^1(\cD)$-stable, 
	by Lemma~\ref{lem:hdot-sobolev}
	we have 
	$\|\Pi_h\|_{\cL\left(\Hdot{1}\right)} 
	\lesssim_{(A,\kappa,\cD)} C_\Pi$,  
	and
	\begin{align*} 
		\bigl\| L_h^{\nicefrac{1}{2}} \Pi_h L^{-\nicefrac{1}{2}} g 
			\bigr\|_{0}^2 
		&=
		\bigl(L_h \Pi_h L^{-\nicefrac{1}{2}} g , 
			\Pi_h L^{-\nicefrac{1}{2}} g \bigr)_{0}
		=
		\bigl\langle L \Pi_h L^{-\nicefrac{1}{2}} g, 
			\Pi_h L^{-\nicefrac{1}{2}}  g 
			\bigr\rangle_{\Hdot{-1}\times\Hdot{1}} 
		\\ 
		&= 
		\bigl\| \Pi_h L^{-\nicefrac{1}{2}} g \bigr\|_{1}^2 
		\lesssim_{(A,\kappa,\cD)}
		C_\Pi^2 
		\bigl\| L^{-\nicefrac{1}{2}} g \bigr\|_{1}^2
		= 
		C_\Pi^2 
		\norm{g}{0}^2 
	\end{align*} 
	follows, i.e.,   
	\eqref{eq:Lh+L} holds for 
	$\gamma\in\{0,\nicefrac{1}{2}\}$. 
	By interpreting this result 
	as continuity of $\Pi_h$ 
	as a mapping from $\Hdot{2\gamma}$ 
	to $\Hdoth{2\gamma}$,  
	again by interpolation, 
	we obtain \eqref{eq:Lh+L} for all 
	$0\leq \gamma \leq \nicefrac{1}{2}$.   
	Finally, if 
	$\gamma = \nicefrac{(1+\vartheta)}{2}$ 
	for some $0 < \vartheta \leq \alpha$, 
	we use the identity 
	\[
		L_h^{\nicefrac{(1+\vartheta)}{2}} \Pi_h 
			L^{-\nicefrac{(1+\vartheta)}{2}} 
		= 
		L_h^{-\nicefrac{(1-\vartheta)}{2}} 
		\Pi_h L^{\nicefrac{(1-\vartheta)}{2}} 
		+ 
		L_h^{\nicefrac{(1+\vartheta)}{2}} \Pi_h 
			\bigl(\operatorname{Id}_{\Hdot{1+\vartheta}} - R_h\bigr) 
			L^{-\nicefrac{(1+\vartheta)}{2}}, 
	\]
	where $R_h = L_h^{-1} \Pi_h L$ is 
	the Rayleigh--Ritz projection \eqref{eq:def:ritz}. 
	Since $0<\vartheta\leq\alpha\leq 1$, we
	obtain for the first term by~\eqref{eq:L+Lh} that 
	\[
		\bigl\| L_h^{-\nicefrac{(1-\vartheta)}{2}} 
		\Pi_h L^{\nicefrac{(1-\vartheta)}{2}} 
		\bigr\|_{\cL(L_2(\cD))}
		= 
		\bigl\| 	L^{\nicefrac{(1-\vartheta)}{2}} 
		L_h^{-\nicefrac{(1-\vartheta)}{2}} 
		\Pi_h 
		\bigr\|_{\cL(L_2(\cD))}
		\lesssim_\gamma 1. 
	\]
	To estimate the second term, 
	we write 
	$E_h^R := \operatorname{Id}_{\Hdot{1+\vartheta}} - R_h$. 
	Then, 
	\begin{align*}
		\bigl\| L_h^{\nicefrac{(1+\vartheta)}{2}} 
		&\Pi_h 
		E_h^R
		L^{-\nicefrac{(1+\vartheta)}{2}} 
		\bigr\|_{\cL(L_2(\cD))} \\
		&\leq 
		\bigl\| L_h^{\nicefrac{\vartheta}{2}} \Pi_h 
		L^{-\nicefrac{\vartheta}{2}} 
		\bigr\|_{\cL(L_2(\cD))} 
		\bigl\| L^{\nicefrac{\vartheta}{2}} 
		L_h^{\nicefrac{1}{2}} \Pi_h 
		E_h^R  
		L^{-\nicefrac{(1+\vartheta)}{2}} 
		\bigr\|_{\cL(L_2(\cD))}. 
	\end{align*}
	Here, $\| L_h^{\nicefrac{\vartheta}{2}} \Pi_h 
	L^{-\nicefrac{\vartheta}{2}} 
	\|_{\cL(L_2(\cD))} \lesssim_{(\gamma,A,\kappa,\cD)} 
	1$, 
	since $0<\vartheta=2\gamma-1\leq 1$, and we can use  
	Assumption~\ref{ass:galerkin}.II, 
	\eqref{eq:PihH1stable}, 
	and~\eqref{eq:ritz-error} 
	to conclude 
	for $\vartheta\neq\nicefrac{1}{2}$ 
	as follows,  
	\begin{align*} 
		\bigl\| L_h^{\nicefrac{1}{2}} \Pi_h 
		E_h^R  
		\bigr\|_{\cL\left(\Hdot{1+\vartheta}; \Hdot{\vartheta}\right)} 
		&\lesssim_{(\gamma,A,\kappa,\cD)} 
		h^{-\vartheta}
		\bigl\| L_h^{\nicefrac{1}{2}} \Pi_h 
		E_h^R  
		\bigr\|_{\cL\left(\Hdot{1+\vartheta}; \Hdot{0}\right)} \\
		&\lesssim_{(\gamma,A,\kappa,\cD)}  
		h^{-\vartheta}
		\bigl\| \Pi_h 
		E_h^R   
		\bigr\|_{\cL\left(\Hdot{1+\vartheta};\Hdot{1}\right)} \\
		&\lesssim_{(\gamma,A,\kappa,\cD)}  
		C_\Pi h^{-\vartheta}
		\bigl\| E_h^R   
		\bigr\|_{\cL\left(\Hdot{1+\vartheta};\Hdot{1}\right)} 
		\lesssim_{(\gamma,A,\kappa,\cD)}
		1. 
	\end{align*}
	If $\gamma=\nicefrac{3}{4}$ 
	and, thus, 
	$\vartheta=\nicefrac{1}{2}$,  
	a slight modification 
	completes the proof 
	of~\eqref{eq:Lh+L} for all 
	$\nicefrac{1}{2}<\gamma\leq\nicefrac{(1+\alpha)}{2}$. 
\end{proof}

\begin{theorem}\label{thm:fem-error}
	Let $L$ be as in~\eqref{eq:def:L}  
	and, for $h>0$, 
	let $L_h$ be as in~\eqref{eq:def:Lh}.  
	Suppose Assumptions~\ref{ass:coeff}.I--II,~\ref{ass:dom}.I,~\ref{ass:galerkin}.II  
	and that  
	$\Pi_h$ is $H^1(\cD)$-stable, 
	see~\eqref{eq:PihH1stable}. 
	Let Assumption~\ref{ass:galerkin}.I  
	be satisfied with parameters   
	$\theta_0 \in (0,1)$ and $\theta_1\geq 1+\alpha$, 
	where $0 < \alpha \leq 1$ 
	is as in~\eqref{eq:ass:1+alpha}. 
	Assume further that 
	$\beta > 0$,    
	$0\leq\sigma\leq 1$, 
	and $-1\leq \delta\leq 1+\alpha$   
	are such that 
	$2\beta + \delta - \sigma > 0$ 
	and  
	$2\beta-\sigma\geq 0$. 
	Then, for all $g\in\Hdot{\delta}$, we have 
	\begin{align}
		\hspace{-0.05cm} 
		\bigl\| L^{-\beta} g - L_h^{-\beta} \Pi_h g \bigr\|_{ 
			\sigma} 
		&\lesssim_{(\varepsilon,\delta,\sigma,\alpha,\beta,A,\kappa,\cD)} 
		h^{\min\{ 2\beta + \delta - \sigma - \varepsilon,\,
			1+\alpha-\sigma,\,
			1+\alpha+\delta, \, 
			2\alpha\}}
		\norm{g}{\delta} , 
		\label{eq:fem-error-fractional}
	\end{align} 
	for arbitrary $\eps>0$ and all $h>0$ sufficiently small. 
\end{theorem}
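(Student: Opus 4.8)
The plan is to reduce the fractional-order error to the integer-order (Rayleigh--Ritz) estimate of Lemma~\ref{lem:fem-error-integer} by means of an integral representation of the fractional power, in the spirit of~\cite{BonitoPasciak:2015}, and then to extend from the basic case $\beta\in(0,1)$, $\sigma=0$, $\delta\geq0$ treated there to the full parameter range by a telescoping argument combined with the norm equivalences of Lemma~\ref{lem:Lh-L}. Since everything is formulated in the spaces $\Hdot{\sigma}$, no Sobolev identifications (and hence no exclusion of $\delta=\nicefrac{1}{2}$) are needed at this stage; those enter only through the equivalences already built into Lemma~\ref{lem:Lh-L}.

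The heart of the argument is a uniform-in-$\mu$ bound on the \emph{resolvent error}
\[
	E(\mu) := (\mu+L)^{-1} - (\mu+L_h)^{-1}\Pi_h, \qquad \mu>0,
\]
viewed as a map $\Hdot{\delta}\to\Hdot{\sigma}$. Setting $u := (\mu+L)^{-1}g$ and $u_h := (\mu+L_h)^{-1}\Pi_h g$, I would use the splitting $u-u_h = (u-R_h u) + (R_h u - u_h)$. Testing the two defining identities against $\phi_h\in V_h$ and subtracting gives
\[
	\mu\,(u-u_h,\phi_h)_{L_2(\cD)} + \bigl(L_h(R_h u - u_h),\phi_h\bigr)_{L_2(\cD)} = 0,
\]
so that an energy argument bounds the discrete remainder $R_h u - u_h\in V_h$ in $L_2(\cD)$ by the Ritz error $u-R_h u$, which in turn is controlled by~\eqref{eq:ritz-error}. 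Stronger $\Hdot{\sigma}$-control of the remainder then follows from the inverse inequality (Assumption~\ref{ass:galerkin}.II), the $H^1(\cD)$-stability~\eqref{eq:PihH1stable} of $\Pi_h$, and the equivalences~\eqref{eq:L+Lh}--\eqref{eq:Lh+L}. Combining this with the spectral bound $\norm{(\mu+L)^{-1}}{\cL(\Hdot{a};\Hdot{b})}\lesssim \mu^{(b-a)/2-1}$ (valid for $0\leq b-a\leq2$, and used to lift $u=(\mu+L)^{-1}g$ into $\Hdot{1+\vartheta}$ from $g\in\Hdot{\delta}$ at the cost of a power of $\mu$) yields an estimate for $\norm{E(\mu)g}{\sigma}$ whose $h$-exponent is capped by $1+\alpha-\sigma$, $1+\alpha+\delta$ and $2\alpha$, reflecting the ceiling $\vartheta\leq\alpha$ and the doubling from the duality step in~\eqref{eq:ritz-error}.

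With the resolvent estimate in hand, for $\betafrac\in(0,1)$ I would insert the Balakrishnan representation
\[
	L^{-\betafrac} - L_h^{-\betafrac}\Pi_h = \frac{\sin(\pi\betafrac)}{\pi}\int_0^\infty \mu^{-\betafrac}\,E(\mu)\,\rd\mu,
\]
apply the $\Hdot{\sigma}$-bound on $E(\mu)$, and split the $\mu$-integral at a threshold $\mu\eqsim h^{-2}$ to balance the regularity-limited regime (small $\mu$) against the resolvent-decay regime (large $\mu$); this produces the generic order $2\betafrac+\delta-\sigma$, the arbitrarily small loss $\eps>0$ arising precisely from the logarithmic factor at the crossover, explaining the $-\eps$ in the first entry of the minimum in~\eqref{eq:fem-error-fractional}. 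To pass to arbitrary $\beta=\nbeta+\betafrac$, I would write $L^{-\beta}=L^{-\nbeta}L^{-\betafrac}$ and $L_h^{-\beta}\Pi_h = L_h^{-\nbeta}L_h^{-\betafrac}\Pi_h$, telescope the difference into one fractional factor (handled above) and $\nbeta$ integer factors (each handled by~\eqref{eq:fem-error-integer}), and commute the intermediate operators through the equivalences of Lemma~\ref{lem:Lh-L}. Each integer factor contributes up to two extra powers of $h$, so the accumulated order grows like $2\beta+\delta-\sigma$ until it saturates at one of the caps $1+\alpha-\sigma$, $1+\alpha+\delta$, $2\alpha$; interpolation covers the non-endpoint values of $\sigma$ and $\delta$.

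I expect the main obstacle to be the uniform resolvent estimate of the second step: keeping track of all the $\Hdot{}$-indices simultaneously so that the four competing caps emerge correctly, and—crucially—measuring the discrete remainder $R_h u - u_h$ in the stronger $\Hdot{\sigma}$-norm rather than merely in $L_2(\cD)$ without forfeiting the sharp power of $h$. This is exactly where the inverse inequality, the $H^1(\cD)$-stability of $\Pi_h$, and the full strength of Lemma~\ref{lem:Lh-L} (including the range $0\leq\gamma\leq\nicefrac{(1+\alpha)}{2}$) are indispensable, and it is the part of the argument that genuinely extends beyond~\cite{BonitoPasciak:2015}.
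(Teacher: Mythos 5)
Your plan is, at its core, the Bonito--Pasciak strategy (Balakrishnan integral over the resolvent error $E(\mu)$, energy-argument bound of the discrete remainder $R_hu-u_h$ by the Ritz error, integral splitting at $\mu\eqsim h^{-2}$), extended by telescoping and duality. This is a genuinely different route from the paper's. The paper first splits off the projection error $L^{\nicefrac{\sigma}{2}-\beta}(I-\Pi_h)L^{-\nicefrac{\delta}{2}}$ as a separate term (this is where the cap $1+\alpha+\delta$ actually originates), and for the remaining term it uses a \emph{truncated-ray-plus-circle} contour representation of $L^{-\beta}$ (rays from $r=\nicefrac{\lambda_1}{2}$ to $\infty$ plus a circle of radius $r$, obtained as the limit $\omega\to\pi$ of a sector contour). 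That representation is valid for \emph{all} $\beta>0$ at once, so the paper never telescopes over $\nbeta$; inside the integral it inserts the algebraic identity $\bigl((L-zI)^{-1}-(L_h-zI)^{-1}\bigr)\Pi_h=(L-zI)^{-1}L\bigl(L^{-1}-L_h^{-1}\Pi_h\bigr)L_h(L_h-zI)^{-1}\Pi_h$, which isolates a single integer-order Ritz error \eqref{eq:fem-error-integer}, bounds the flanking resolvent factors by spectral calculus uniformly in $h$, and obtains the $\eps$-loss from a regime-dependent choice of $(\eta,\vartheta)$ making the $t$-integral converge. Negative $\delta$ is then handled by a separate two-term splitting that re-uses the $\delta\geq 0$ case together with \eqref{eq:L+Lh} of Lemma~\ref{lem:Lh-L}.

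The genuine gap is your telescoping step, specifically the claim that ``each integer factor contributes up to two extra powers of $h$, so the accumulated order grows like $2\beta+\delta-\sigma$.'' In a telescoped sum each term contains exactly \emph{one} error factor; all other factors are $O(1)$ smoothing operators, so powers of $h$ do not accumulate across factors. What the smoothing factors actually do is shift the $\Hdot{}$-indices seen by the single error factor, and cashing in that shift requires error bounds at indices \emph{outside} the ranges you have available. Concretely, in the term $L^{-\nbeta}\bigl(L^{-\betafrac}-L_h^{-\betafrac}\Pi_h\bigr)$ the fractional error must be measured as a map $\Hdot{\delta}\to\Hdot{\sigma-2\nbeta}$, a negative target index covered neither by the base case ($0\leq\sigma\leq 1$) nor by Lemma~\ref{lem:Lh-L}, whose exponents are capped at $\nicefrac{(1+\alpha)}{2}$. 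Duality (the error operators are self-adjoint on $L_2(\cD)$) converts this into a bound with source index $2\nbeta-\sigma$, which for $\nbeta\geq 2$ exceeds $1+\alpha$ and is again out of range. The repair is possible, but only via an observation your plan never makes: since $\delta\geq-1$, $\sigma\leq 1$, $\alpha\leq 1$, one has $2\beta+\delta-\sigma>1+\alpha-\sigma$ whenever $\nbeta\geq 2$, so the delicate entry $2\beta+\delta-\sigma-\eps$ of the minimum in \eqref{eq:fem-error-fractional} is active only for $\nbeta\leq 1$, and for $\nbeta=1$ the activity conditions force $\sigma>1-\alpha+2\betafrac$, which is exactly what keeps the dualized indices admissible. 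Without this case analysis the telescoping argument, as written, does not deliver the claimed rate for $\beta>1$.

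A second, smaller soft spot is the regime $\delta<0$ inside your resolvent estimate: lifting $u=(\mu+L)^{-1}g$ into $\Hdot{1+\vartheta}$ from $g\in\Hdot{\delta}$ by spectral calculus requires $1+\vartheta-\delta\leq 2$, i.e.\ $\vartheta\leq 1+\delta$, so for $\delta$ near $-1$ the per-resolvent Ritz rate degenerates and the stated caps must be recovered from the $\mu$-integration rather than pointwise in $\mu$. This can be done (the paper circumvents it entirely with its separate $\delta<0$ splitting), but it is a step that needs proof, not assumption.
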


\begin{remark}[Sobolev bounds]\label{rem:fem-error-sobolev}
	By~\eqref{eq:hdot-sobolev1} 
	of Lemma~\ref{lem:hdot-sobolev} 
	and under the assumption given   
	by~\eqref{eq:ass:1+alpha}, 
	the result 
	\eqref{eq:fem-error-fractional} 
	implies an error bound with 
	respect to the Sobolev norms, 
	for all $0\leq\sigma\leq 1$ 
	and $-1\leq\delta\leq 1+\alpha$, 
	$\delta\neq\nicefrac{1}{2}$. 
	Namely, for all $g\in H^\delta(\cD)$, 
	\begin{align*} 
	\bigl\| L^{-\beta} g - L_h^{-\beta} \Pi_h g \bigr\|_{ 
		H^\sigma(\cD)} \hspace{-0.05cm} 
	&\lesssim_{(\varepsilon,\delta,\sigma,\alpha,\beta,A,\kappa,\cD)} 
	\hspace{-0.05cm} 
	h^{\min\{ 2\beta + \delta - \sigma - \varepsilon,\,
		1+\alpha-\sigma,\, 1+\alpha+\delta,\, 2\alpha\}}
	\norm{g}{H^\delta(\cD)}  
	\end{align*}
	for any $\eps>0$ and 
	all $h>0$ sufficiently small.  
\end{remark}

\begin{remark}[Comparison with \cite{BonitoPasciak:2015}]\label{rem:bonito-comparison}
	For the specific case  
	$\beta\in(0,1)$, $\sigma=0$, and $\delta \geq 0$ 
	the error in~\eqref{eq:fem-error-fractional} 
	has already been investigated 
	in~\cite{BonitoPasciak:2015}, 
	where $(V_h)_{h>0}$ are chosen as 
	finite element spaces with  
	continuous piecewise linear basis 
	functions, defined with respect to 
	a quasi-uniform family 
	of triangulations $(\cT_h)_{h>0}$ 
	of $\cD$. 
	If $g\in \Hdot{\delta}$, $\delta \geq 0$ and $\alpha<\beta$, 
	the results  
	of~\cite[Thm.~4.3]{BonitoPasciak:2015} 
	show convergence at the rate $2\alpha$, 
	in accordance 
	with~\eqref{eq:fem-error-fractional}. 
	For $\alpha\geq\beta$ and $g\in \Hdot{\delta}$, 
	by~\cite[Thm.~4.3 \& Rem.~4.1]{BonitoPasciak:2015} 
	\[
		\bigl\| L^{-\beta} g - L_h^{-\beta} \Pi_h g \bigr\|_{L_2(\cD)} 
		\leq 
		\begin{cases} 
		C \ln(1/h) h^{2\beta + \delta} \norm{g}{\delta} 
		& 
		\text{if } 
		0\leq \delta \leq 2(\alpha-\beta),
		\\ 
		C h^{2\alpha} \norm{g}{\delta} 
		& 
		\text{if } 
		\delta > 2(\alpha-\beta),
		\end{cases}
	\]
	i.e., compared to~\eqref{eq:fem-error-fractional}, 
	one obtains a log-term $\ln(1/h)$
	instead of $h^{-\eps}$ in the first case. 
	We point out that the purpose of  
	Theorem~\ref{thm:fem-error} 
	was to allow 
	for all $\beta > 0$ 
	and, in addition, for the wider range 
	of parameters: 
	$0\leq\sigma\leq 1$ 
	and 
	$-1\leq\delta\leq 1+\alpha$.   
\end{remark}

\begin{remark}[$p$-FEM]\label{rem:p-conv}
	Due to the term $2\alpha$ 
	and $0<\alpha\leq 1$, 
	\eqref{eq:fem-error-fractional} 
	will be sharp 
	for finite elements of first order, 
	but not for finite elements of polynomial 
	degree $p \geq 2$ when  
	$\beta > 1$ and the problem is 
	``smooth'' such that \eqref{eq:ass:1+alpha} 
	holds with $\alpha > 1$. 
\end{remark} 

\begin{proof}[Proof of Theorem \ref{thm:fem-error}]  
	We first prove~\eqref{eq:fem-error-fractional} 
	for $0\leq \delta\leq 1+\alpha$. 
	To this end,  
	let $\beta > 0$ and 
	$0\leq \sigma\leq \min\{2\beta, 1\}$  
	satisfying 
	$2\beta +\delta > \sigma$ 
	be given. 
	Without loss of generality we 
	may assume that $\eps\in\left(0,2\beta+\delta-\sigma-\alpha  
	\mathds{1}_{\{2\beta+\delta-\sigma-\alpha > 0\}}\right)$.  
	We write $I := \operatorname{Id}_{L_2(\cD)}$ and split
	\begin{align*}
		\bigl\| 
		L^{-\beta} &- L_h^{-\beta}\Pi_h  
		\bigr\|_{\cL\left(\Hdot{\delta}; \Hdot{\sigma}\right)} 
		=
		\bigl\| L^{\nicefrac{\sigma}{2}} 
		\bigl(L^{-\beta} - L_h^{-\beta}\Pi_h \bigr) 
		L^{-\nicefrac{\delta}{2}} 
		\bigr\|_{\cL(L_2(\cD))} 
		\\
		&\leq  
		\bigl\| L^{\nicefrac{\sigma}{2} - \beta}
		\bigl( I - \Pi_h \bigr) 
		L^{-\nicefrac{\delta}{2}} 
		\bigr\|_{\cL(L_2(\cD))} 
		+ 
		\bigl\| L^{\nicefrac{\sigma}{2}} 
		\bigl(L^{-\beta} - L_h^{-\beta} \bigr) \Pi_h 
		L^{-\nicefrac{\delta}{2}} 
		\bigr\|_{\cL(L_2(\cD))} \\
		&=: 
		\text{(A)} + \text{(B)}. 
	\end{align*} 

	In order to estimate term (A), 
	we first note that by 
	Assumption~\ref{ass:galerkin}.I,
	with $\theta=1+\alpha$, 
	and by~\eqref{eq:ass:1+alpha}
	we have, for $h>0$ sufficiently small,  
	\begin{align*}
		\norm{I-\Pi_h}{\cL\left(\Hdot{1+\alpha}; \Hdot{0}\right)} 
		\lesssim_{(\alpha,A,\kappa,\cD)}
		\norm{I-\Pi_h}{\cL\left( H^{1+\alpha}(\cD); L_2(\cD) \right)} 
		\lesssim_{(\alpha,A,\kappa,\cD)}
		 h^{1+\alpha}, 
	\end{align*}
	since $\Pi_h g \in V_h$ is the 
	$L_2(\cD)$-best approximation of $g\in H^\theta(\cD)$. 
	Furthermore,  
	$\norm{I-\Pi_h}{\cL(L_2(\cD))} = 1$, 
	and   
	by interpolation 
	\begin{equation*}
		\norm{I-\Pi_h}{\cL\left(\Hdot{\theta}; \Hdot{0} \right)} 
		\lesssim_{(\theta,\alpha,A,\kappa,\cD)}
		h^{\theta}, 
		\qquad 
		0 \leq \theta\leq 1 + \alpha. 
	\end{equation*}
	By exploiting the identity 
	\[
		\bigl( L^{\nicefrac{\sigma}{2} - \beta}
		\bigl( I - \Pi_h \bigr) 
		L^{-\nicefrac{\delta}{2}} \phi, \psi \bigr)_0
		= 
		\bigl(\bigl( I - \Pi_h \bigr) 
		L^{-\nicefrac{\delta}{2}} \phi,
		\bigl( I - \Pi_h \bigr)  
		L^{\nicefrac{\sigma}{2} - \beta} \psi \bigr)_0, 
	\]
	which holds for all $\phi,\psi\in L_2(\cD)$, 
	we thus obtain,
	for all $h>0$ sufficiently small, 
	\begin{align*} 
		\text{(A)} 
		&= 
		\sup_{ \phi\in L_2(\cD)\setminus\{0\}}  
		\,
		\sup_{ \psi\in L_2(\cD)\setminus\{0\}}  
		\tfrac{1}{\norm{\phi}{0}\norm{\psi}{0}}
		\bigl( L^{\nicefrac{\sigma}{2} - \beta}
		\bigl( I - \Pi_h \bigr) 
		L^{-\nicefrac{\delta}{2}} \phi, \psi \bigr)_0 		
		\\
		&\leq 
		\bigl\|  
		I - \Pi_h 
		\bigr\|_{
			\cL\left(\Hdot{\delta}; 
			\Hdot{0} \right)} 
		\, 
		\bigl\|  
		I - \Pi_h 
		\bigr\|_{
			\cL\left(\Hdot{\theta}; 
			\Hdot{0} \right)}
		\lesssim_{(\delta,\sigma,\alpha,\beta,A,\kappa,\cD)} 
		h^{\min\{2\beta+\delta-\sigma,\, 1+\alpha+\delta\}},  
	\end{align*} 
	where we set $\theta:=\min\{2\beta-\sigma, 1+\alpha\}$ 
	and, hence, $0\leq \theta,\delta \leq 1+\alpha$.
	
	For deriving a bound for (B),  
	we first note that by~\eqref{eq:Lh+L} 
	of Lemma~\ref{lem:Lh-L} 
	\[
		\text{(B)}
		\lesssim_{(\delta,A,\kappa,\cD)}
		\bigl\| L^{\nicefrac{\sigma}{2}} 
		\bigl(L^{-\beta} - L_h^{-\beta} \bigr) 
		L_h^{-\nicefrac{\delta}{2}} \Pi_h 
		\bigr\|_{\cL(L_2(\cD))}. 
	\]
	Next, we define the contour 
	\[
		C
		:= 
		\left\{ t e^{-i\omega} : r \leq t < \infty \right\} 
		\cup 
		\left\{ r e^{i\theta} : \theta \in (-\omega, \omega) \right\}
		\cup 
		\left\{ t e^{i\omega} : r \leq t < \infty \right\}, 
	\] 
	where $\omega\in(0,\pi)$ and 
	$r := \nicefrac{\lambda_1}{2}$. 
	By, e.g., 
	\cite[Ch.~2.6, Eq.~(6.3)]{Pazy:1983}  
	we have 
	\begin{align*}
		&L^{-\beta} 
		= 
		\frac{-1}{2\pi i} 
		e^{-i\omega(1-\beta)} 
		\int_{r}^{\infty}  
		t^{-\beta} \left( L - e^{-i\omega}t I \right)^{-1} \, \rd t \\
		&\quad  
		+ 
		\frac{r^{1-\beta}}{2\pi} 		 
		\int_{-\omega}^{\omega}   
		e^{i(1-\beta)\theta} 
		\left( L-re^{i\theta} I \right)^{-1} \, \rd \theta 
		+ 
		\frac{1}{2\pi i} 
		e^{i\omega(1-\beta)} 
		\int_{r}^{\infty}  
		t^{-\beta} \left( L - e^{i\omega}t I \right)^{-1} \, \rd t. 
	\end{align*} 
	From the limit $\omega \to \pi$,  
	we then 
	obtain the representation   
	\begin{align*}
		L^{-\beta} 
		&= 
		\frac{\sin(\pi\beta)}{\pi} 
		\int_{r}^{\infty}  
		t^{-\beta} \left( t I + L \right)^{-1} \, \rd t  
		+ 
		\frac{r^{1-\beta}}{2\pi} 	 
		\int_{-\pi}^{\pi}   
		e^{i(1-\beta)\theta} 
		\left( L - r e^{i\theta} I \right)^{-1} 
		\, \rd \theta, 
	\end{align*} 
	which 
	applied to $L^{-\beta}$ 
	and $L_h^{-\beta}$ 
	(recall that $\lambda_{1,h}\geq \lambda_1$, see Remark~\ref{rem:min-max})
	implies that  
	\begin{align*}
		\bigl(L^{-\beta} - L_h^{-\beta} \bigr) \Pi_h 
		&= 
		\frac{\sin(\pi\beta)}{\pi}
		\int_r^\infty 
		t^{-\beta} 
		\bigl( (tI + L)^{-1} - (tI + L_h)^{-1} \bigr) \Pi_h  
		\, \rd t \\
		&\quad 
		+ 
		\frac{r^{1-\beta}}{2\pi} 	 
		\int_{-\pi}^{\pi}   
		e^{i(1-\beta)\theta} 
		\left( \left( L - r e^{i\theta}I \right)^{-1} 
		- \left( L_h - r e^{i\theta}I \right)^{-1} 
		\right) \Pi_h 
		\, \rd \theta. 
	\end{align*} 
	We exploit this integral representation 
	as well as 
	the identity   
	\begin{align*} 
		\left( (L-zI)^{-1} - (L_h - zI)^{-1} \right) \Pi_h
		&= 
		(L - zI)^{-1} 
		L 
		\left( L^{-1} - L_h^{-1} \Pi_h \right) 
		L_h
		(L_h - zI)^{-1} \Pi_h, 
	\end{align*} 
	which holds 
	for any $z \in \bbC$, 
	and bound 
	term (B) as follows 
	\begin{align}
		&\text{(B)}
		\lesssim_{(\delta,A,\kappa,\cD)} 
		\left(\tfrac{\sin(\pi\beta)}{\pi} + \tfrac{r^{1-\beta}}{2\pi} \right) 
		\bigl\| L^{\nicefrac{(1-\eta)}{2}} 
		(L^{-1} - L_h^{-1}) 
		L_h^{\nicefrac{(1-\vartheta)}{2}} \Pi_h \bigr\|_{\cL(L_2(\cD))} 
		\notag \\
		&\,\,\, \times \Biggl( 
		\int_r^\infty 
		t^{-\beta} 
		\bigl\| (tI+L)^{-1} L^{\mu} \bigr\|_{\cL(L_2(\cD))}  
		\bigl\| L_h^{\nu} (tI+L_h)^{-1} \Pi_h \bigr\|_{\cL(L_2(\cD))} 
		\, \rd t \notag \\
		&\quad\,    
		+ 
		\int_{-\pi}^{\pi}   
		\Bigl\| \left( L - r e^{i\theta}I \right)^{-1} L^\mu 
		\Bigr\|_{\cL(L_2(\cD))} 
		\Bigl\| L_h^\nu \left( L_h - r e^{i\theta}I \right)^{-1} \Pi_h 
		\Bigr\|_{\cL(L_2(\cD))}   
		\, \rd \theta \Biggr) ,
		\label{eq:proof:fem-error}
	\end{align} 
	where $\mu := \nicefrac{(1+\eta+\sigma)}{2}$, 
	$\nu := \nicefrac{(1+\vartheta-\delta)}{2}$   
	and $0\leq\eta\leq\vartheta\leq\alpha$ are chosen 
	as follows 
	\begin{align*} 
		\eta &:= 0, &
		\vartheta &:= 2\beta+\delta-\sigma-\varepsilon, 
		&&\text{if}\quad  
		0<2\beta+\delta-\sigma\leq\alpha, \\ 
		\eta &:= \min\{2\beta+\delta-\alpha-\varepsilon,1\}-\sigma, &
		\vartheta &:= \alpha, 
		&&\text{if}\quad  
		\alpha < 2\beta+\delta-\sigma \leq 2\alpha, \\
		\eta &:= \min\{\alpha,1-\sigma\}, &
		\vartheta &:= \alpha,  
		&&\text{if}\quad   
		2\beta+\delta-\sigma > 2\alpha.   
	\end{align*} 
	By~\eqref{eq:Lh+L} and~\eqref{eq:fem-error-integer},
	we find for  
	the term outside of the integral,    
	\begin{align*} 
		\bigl\| L^{\nicefrac{(1-\eta)}{2}} 
		(L^{-1} &- L_h^{-1}) 
		L_h^{\nicefrac{(1-\vartheta)}{2}} \Pi_h \bigr\|_{\cL(L_2(\cD))} \\
		&\leq 
		\bigl\| 
		L^{-1} - L_h^{-1} \Pi_h \bigr\|_{\cL\left( \Hdot{\vartheta-1};\Hdot{1-\eta} \right)}  
		\bigl\| L^{-\nicefrac{(1-\vartheta)}{2}}
		L_h^{\nicefrac{(1-\vartheta)}{2}} \Pi_h 
		\bigr\|_{\cL(L_2(\cD))} \\
		&\lesssim_{(\eps,\delta,\sigma,\alpha,\beta,A,\kappa,\cD)} 
		\begin{cases}
			h^{2\beta+\delta-\sigma-\varepsilon} 
			& \text{if }  0 < 2\beta+\delta-\sigma \leq \alpha, \\
			h^{\min\{ 2\beta+\delta-\sigma-\varepsilon, \, 1+\alpha-\sigma\}} 
			& \text{if } \alpha < 2\beta+\delta-\sigma\leq 2\alpha, \\
			h^{\min\{ 2 \alpha, \, 1+\alpha-\sigma\}} 
			& \text{if } 2\beta+\delta-\sigma > 2\alpha, 
		\end{cases} 
	\end{align*} 
	for $h>0$ sufficiently small, 
	where these three cases can 
	be summarized as in~\eqref{eq:fem-error-fractional}, 
	since $2\beta+\delta-\sigma-\eps < \alpha \leq 1+\alpha-\sigma$ 
	for all $0\leq\sigma\leq 1$ if $2\beta+\delta-\sigma\leq \alpha$ 
	and $2\alpha < 2\beta+\delta-\sigma-\eps$ 
	for $\eps>0$ sufficiently small 
	if $2\beta+\delta-\sigma > 2\alpha$. 
	It remains to show that the two integrals  
	in~\eqref{eq:proof:fem-error}  
	converge, uniformly in $h$.  
	To this end, 
	we first note that 
	$0\leq \mu \leq 1$ and, thus, 
	for any $t>0$, 
	\begin{align*} 
		\bigl\| 
		(tI + L)^{-1} L^{\mu} \bigr\|_{\cL(L_2(\cD))} 
		&\leq 
		\sup_{x\in[\lambda_1,\infty)}
		\frac{ x^{\mu} }{
			t + x} 
		\leq  
		\sup_{x\in[\lambda_1,\infty)}
		(t + x)^{\mu-1}
		\leq 
		t^{\mu-1}. 
	\end{align*} 
	By the same argument
	we find that 
	%
		$\bigl\| L_h^{\nu} 
		(tI + L_h)^{-1} \Pi_h \bigr\|_{\cL(L_2(\cD))} 
		\leq 
		t^{\nu -1}$,  
	for $t>0$, 
	since also $0\leq\nu\leq 1$. 
	Thus, we can bound the first integral 
	arising in~\eqref{eq:proof:fem-error} by 
	\begin{align*}
		\int_{\lambda_1/2}^\infty 
		t^{\mu+\nu-2-\beta} 
		\,\rd t 
		= 
		\tfrac{2^{1+\beta-\mu-\nu}}{
			(1+\beta-\mu-\nu)\lambda_1^{1+\beta-\mu-\nu}} .
	\end{align*} 
	Here, we have used that  
	$r = \nicefrac{\lambda_1}{2}$,   
	$\mu+\nu-2-\beta 
	= 
	-1 + \nicefrac{(\eta+\vartheta+\sigma-\delta-2\beta)}{2}
	\leq -1 -\nicefrac{\varepsilon}{2} 
	< -1$ if 
	$2\beta+\delta-\sigma\leq 2\alpha$,  
	and $\mu+\nu-2-\beta 
	\leq -1 - (\beta+\nicefrac{\delta}{2}-\nicefrac{\sigma}{2}-\alpha)<-1$ if 
	$2\beta+\delta-\sigma>2\alpha$. 
	To estimate the second integral 
	in~\eqref{eq:proof:fem-error}, 
	we note that, for any $z\in\bbC$ 
	with $|z| = \nicefrac{\lambda_1}{2}$, 
	\begin{align*} 
		\bigl\| 
		(L - zI)^{-1} L^{\mu} \bigr\|_{\cL(L_2(\cD))} 
		&\leq 
		\sup_{x\in[\lambda_1,\infty)}
		\frac{ x^{\mu} }{
			x - |z|} 
		\leq 
		\sup_{x\in[\lambda_1,\infty)}
		\frac{(x - |z|)^{\mu} + |z|^\mu}{x - |z|} 
		\leq 
		\tfrac{2^{2-\mu}}{\lambda_1^{1-\mu}}, 
	\end{align*} 
	since $(x+y)^\mu \leq x^\mu + y^\mu$ 
	if $0\leq\mu\leq 1$ and $x,y\geq 0$. 
	Similarly, for $0\leq\nu\leq 1$, 
	\begin{align*} 
		\bigl\| L_h^{\nu} 
		(L_h - zI)^{-1} \Pi_h \bigr\|_{\cL(L_2(\cD))} 
		&\leq 
		\sup_{x\in[\lambda_{1,h},\infty)}
		\frac{ x^{\nu} }{
			x - |z|}
		\leq 
		\sup_{x\in[\lambda_{1},\infty)}
		\frac{ x^{\nu} }{
			x - |z|}
		\leq 
		\tfrac{2^{2-\nu}}{\lambda_1^{1-\nu}}.  
	\end{align*}  
	With these observations, 
	we finally can bound  
	the second integral 
	in~\eqref{eq:proof:fem-error}, 
	\[
		\int_{-\pi}^{\pi}   
		\Bigl\| \left( L - r e^{i\theta}I \right)^{-1} L^\mu 
		\Bigr\|_{\cL(L_2(\cD))} 
		\Bigl\| L_h^\nu \left( L_h - r e^{i\theta}I \right)^{-1} \Pi_h 
		\Bigr\|_{\cL(L_2(\cD))}   
		\, \rd \theta  
		\leq 
		\tfrac{\pi \, 2^{5-\mu-\nu}}{\lambda_1^{2-\mu-\nu}}, 
	\]
	which completes the proof 
	of~\eqref{eq:fem-error-fractional} 
	for the case that $0\leq\delta\leq 1+\alpha$.
	
	Assume now that $\delta = -\widetilde{\sigma}$ 
	for some $0< \widetilde{\sigma}\leq 1$. 
	Then, for $g\in\Hdot{\delta}$, 
	\begin{align*}
		\bigl\| 
		L^{-\beta} g - L_h^{-\beta} \Pi_h g 
		\bigr\|_{\sigma} 
		\leq 
		\bigl\| 
		L^{\nicefrac{\sigma}{2}} 
		(L^{-\beta} - L_h^{-\beta} \Pi_h) 
		L^{\nicefrac{\widetilde{\sigma}}{2}} 
		\bigr\|_{\cL(L_2(\cD))} 
		\norm{g}{\delta}. 
	\end{align*}
	After rewriting, 
	\begin{align*} 
		L^{\nicefrac{\sigma}{2}} 
		(L^{-\beta} - L_h^{-\beta} \Pi_h) 
		L^{\nicefrac{\widetilde{\sigma}}{2}} 
		&=
		L^{\nicefrac{\sigma}{2}} 
		\bigl(L^{-(\beta-\nicefrac{\widetilde{\sigma}}{2})} 
		- L_h^{-(\beta-\nicefrac{\widetilde{\sigma}}{2})} \Pi_h \bigr) 
		L_h^{-\nicefrac{\widetilde{\sigma}}{2}} \Pi_h 
		L^{\nicefrac{\widetilde{\sigma}}{2}} \\
		&\quad + 
		L^{-\nicefrac{(2\beta - 
			\widetilde{\sigma} -\sigma )}{2}} 
		\bigl( L^{-\nicefrac{\widetilde{\sigma}}{2}} 
		- 
		L_h^{-\nicefrac{\widetilde{\sigma}}{2}} \Pi_h \bigr) 
		L^{\nicefrac{\widetilde{\sigma}}{2}}, 
	\end{align*}  
	we may exploit~\eqref{eq:fem-error-fractional}, 
	which has already been proven   
	for $0\leq\delta\leq 1+\alpha$, 
	as follows,
	\begin{align*} 
		\bigl\| 
		L^{-(\beta-\nicefrac{\widetilde{\sigma}}{2})} 
		- L_h^{-(\beta-\nicefrac{\widetilde{\sigma}}{2})} \Pi_h 
		\bigr\|_{\cL\left(\Hdot{0}; \Hdot{\sigma}\right)}
		&\lesssim_{(\eps,\widetilde{\sigma},\sigma,\alpha,\beta,A,\kappa,\cD)} 
		h^{\min\left\{2\beta-\widetilde{\sigma}-\sigma-\eps, \, 
		1+\alpha-\sigma, \, 2\alpha \right\} },  
		\\
		\bigl\| 
		L^{-\nicefrac{\widetilde{\sigma}}{2}} 
		- 
		L_h^{-\nicefrac{\widetilde{\sigma}}{2}} \Pi_h
		\bigr\|_{\cL\left(\Hdot{\widetilde{\delta}}; \Hdot{\widetilde{\sigma}}\right)} 
		&\lesssim_{(\eps,\widetilde{\sigma},\sigma,\alpha,\beta,A,\kappa,\cD)} 
		h^{\min\left\{2\beta-\widetilde{\sigma}-\sigma-\eps, \, 
			1+\alpha-\widetilde{\sigma}, \, 2\alpha \right\} }, 
	\end{align*} 
	since $\widetilde{\delta} 
	:= 2\beta-\widetilde{\sigma}-\sigma 
	= 2\beta+\delta-\sigma > 0$ by assumption. 
	Furthermore, by~\eqref{eq:L+Lh} 
	of Lemma~\ref{lem:Lh-L} 
	we have 
	$\bigl\| L_h^{-\nicefrac{\widetilde{\sigma}}{2}} 
	\Pi_h L^{\nicefrac{\widetilde{\sigma}}{2}} \bigr\|_{\cL(L_2(\cD))} 
	\lesssim_{\widetilde{\sigma}} 1$. 
	We conclude that 
	\begin{align*} 
		\bigl\| 
		L^{-\beta} - L_h^{-\beta} \Pi_h 
		\bigr\|_{\cL\left(\Hdot{\delta};\Hdot{\sigma}\right)}
		\lesssim_{(\eps,\delta,\sigma,\alpha,\beta,A,\kappa,\cD)}
		h^{\min\left\{2\beta+\delta-\sigma-\eps, \, 
			1+\alpha-\sigma, \, 1+\alpha+\delta,\, 2\alpha \right\}},
	\end{align*} 
	for the whole range of parameters 
	$\sigma,\delta$ 
	as stated in the theorem.  
\end{proof} 

\subsubsection{Sinc quadrature and fully discrete scheme}

After the Galerkin discretization (in space), 
we need a second component  
to approximate the generalized Whittle--Mat\'ern  
field~$\GP^\beta$ in~\eqref{eq:gpbeta}. 
Namely, we have to numerically realize  
a fractional inverse of the Galerkin 
operator $L_h$ in~\eqref{eq:def:Lh}. 
To this end, 
as proposed in~\cite{bkk-strong}, 
we introduce, 
for $\beta\in(0,1)$ 
and $k>0$,  
the sinc quadrature approximation 
of~$L_h^{-\beta}$ from~\cite{BonitoPasciak:2015}, 
\begin{equation}\label{eq:def:Qhk}
	Q_{h,k}^\beta \from V_h \to V_h, 
	\qquad 
	Q^\beta_{h,k} 
	:= 
	\frac{2 k \sin(\pi\beta)}{\pi} 
	\sum_{\ell=-K^{-}}^{K^{+}} 
	e^{2\beta \ell k} 
	\left( \operatorname{Id}_{V_h} + e^{2 \ell k} L_h \right)^{-1}, 
\end{equation}
where  
$K^- := \bigl\lceil \tfrac{\pi^2}{4\beta k^2} \bigr\rceil$, 
$K^+ := \bigl\lceil \frac{\pi^2}{4(1-\beta)k^2} \bigr\rceil$. 
We also formally define this operator   
for the case $\beta=0$ 
by setting~$Q^0_{h,k} := \operatorname{Id}_{V_h}$. 

For a general $\beta = \nbeta + \betafrac > 0$ 
as in~\eqref{eq:def:beta}, 
we then consider the 
approximations~$\widetilde{\GP}^\beta_{h,k}, 
\GP^\beta_{h,k}\from \cD\times\Omega \to \bbR$ 
of the Whittle--Mat\'ern field~$\GP^\beta$ 
in~\eqref{eq:gpbeta} which are ($\bbP$-a.s.)\ defined 
by  
\begin{align} 
	\left( \GP^\beta_{h,k}, \psi \right)_{L_2(\cD)}
	= \white\left( \dual{\bigl(Q^{\betafrac}_{h,k} L_h^{-\nbeta} \Pi_h \bigr)} \psi \right)  
	\quad 
	\bbP\text{-a.s.}  
	\quad 
	\forall\psi\in L_2(\cD), 
	\label{eq:def:gphk-1}
	\\ 
	\left( \widetilde{\GP}^\beta_{h,k}, \psi \right)_{L_2(\cD)}
	= \white\left( \dual{\bigl(Q^{\betafrac}_{h,k} L_h^{-\nbeta} \widetilde{\Pi}_h \bigr)} \psi \right)  
	\quad 
	\bbP\text{-a.s.}  
	\quad 
	\forall\psi\in L_2(\cD),  
	\label{eq:def:gphk-2}
\end{align}
i.e., $\GP^\beta_{h,k}$, 
$\widetilde{\GP}^\beta_{h,k}$ 
are GRFs colored by 
$Q^{\betafrac}_{h,k} L_h^{-\nbeta} \Pi_h$
and 
$Q^{\betafrac}_{h,k} L_h^{-\nbeta} \widetilde{\Pi}_h$, 
respectively,  
cf.~Definition~\ref{def:colored}. 
Here, the finite-rank 
operator $\widetilde{\Pi}_h$ 
is given by 
\begin{equation}\label{eq:def:pitilde}
	\widetilde{\Pi}_h \from L_2(\cD) \to V_h 
	\subset L_2(\cD), 
	\qquad 
	\widetilde{\Pi}_h \psi 
	:= \sum_{j=1}^{N_h} 
	\scalar{\psi, e_j}{L_2(\cD)} 
	e_{j,h}.
\end{equation} 

For $\beta\in (0,1)$, 
the construction~\eqref{eq:def:gphk-2} 
of~$\widetilde{\GP}^\beta_{h,k}$ 
gives the same approximation as 
considered in~\cite{bkk-strong, bkk-weak}. 
Note furthermore that, in contrast to $\Pi_h$, 
the operator~$\widetilde{\Pi}_h$ in~\eqref{eq:def:pitilde}
is neither a projection 
nor self-adjoint, 
and its definition depends on the 
particular choice of the eigenbases 
$\{e_j\}_{j\in\bbN}\subset L_2(\cD)$ and 
$\{e_{j,h}\}_{j=1}^{N_h} \subset V_h$. 
The reason for why we consider 
both approximations 
$\GP^\beta_{h,k}$, 
$\widetilde{\GP}^\beta_{h,k}$
will become 
apparent in the error analysis 
of Subsection~\ref{subsec:matern:galerkin-error}. 
Although, in general, they do not coincide 
in $L_q(\Omega;L_2(\cD))$-sense, i.e., 
\[
	\bbE \Bigl[ 
	\bigl\| \GP^\beta_{h,k} - \widetilde{\GP}^\beta_{h,k}  \bigr\|_{L_2(\cD)}^q 
	\Bigr] \neq 0,
\] 
they have the same Gaussian distribution 
as shown in the following lemma. 

\begin{lemma}\label{lem:galerkin:equal-approx} 
	Suppose Assumptions~\ref{ass:coeff}.I--II.   
	Let $\Pi_h$ denote the 
	$L_2(\cD)$-orthogonal projection onto~$V_h$, 
	and $\widetilde{\Pi}_h$ be the operator 
	in~\eqref{eq:def:pitilde}. 
	Then, 
	if $T_h\in\cL(V_h)$, 
	\begin{equation}\label{eq:equalCov}
		(\dual{(T_h \Pi_h)} \phi, \dual{(T_h \Pi_h)}\psi)_{L_2(\cD)} 
		= 
		\bigl( 
			\dual{\bigl( T_h \widetilde{\Pi}_h \bigr)} \phi, 
			\dual{\bigl(T_h \widetilde{\Pi}_h \bigr)} \psi 
		\bigr)_{L_2(\cD)} 
	\end{equation}
	holds for all 
	$\phi,\psi\in L_2(\cD)$. 
	In particular, 
	$\GP^\beta_{h,k}
	\overset{d}{=} 
	\widetilde{\GP}^\beta_{h,k}$ 
	 as $L_2(\cD)$-valued random variables, where 
	$\GP^\beta_{h,k}$ 
	and~$\widetilde{\GP}^\beta_{h,k}$ are as defined  
	in~\eqref{eq:def:gphk-1}--\eqref{eq:def:gphk-2}. 
\end{lemma}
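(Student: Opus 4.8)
The plan is to reduce the distributional assertion to a single operator identity for the covariance operators. Recall from Proposition~\ref{prop:regularity:hdot} that a square-integrable GRF colored by $S\in\cL(L_2(\cD))$ has mean zero and covariance operator $S\dual{S}$, and that finite-rank colors (such as the ones here, which map into the finite-dimensional space $V_h$) automatically give square-integrable fields. Writing $T_h := Q^{\betafrac}_{h,k} L_h^{-\nbeta} \in \cL(V_h)$, the fields $\GP^\beta_{h,k}$ and $\widetilde{\GP}^\beta_{h,k}$ from~\eqref{eq:def:gphk-1}--\eqref{eq:def:gphk-2} are colored by $T_h\Pi_h$ and $T_h\widetilde{\Pi}_h$, respectively, so it suffices to prove~\eqref{eq:equalCov}, i.e.\ $(T_h\Pi_h)\dual{(T_h\Pi_h)} = (T_h\widetilde{\Pi}_h)\dual{(T_h\widetilde{\Pi}_h)}$ on $L_2(\cD)$. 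Since $\dual{(T_h S)} = \dual{S}\dual{T_h}$, the outer factors $T_h$ and $\dual{T_h}$ occur symmetrically on both sides, so the claim follows once I establish the core identity
\begin{equation*}
	\Pi_h \dual{\Pi_h} = \widetilde{\Pi}_h \dual{\widetilde{\Pi}_h}
	\qquad \text{on } L_2(\cD).
\end{equation*}

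First I would compute $\dual{\widetilde{\Pi}_h}$ explicitly. From~\eqref{eq:def:pitilde} one has $\scalar{\widetilde{\Pi}_h\psi, w}{L_2(\cD)} = \sum_{j=1}^{N_h} \scalar{\psi, e_j}{L_2(\cD)}\,\scalar{e_{j,h}, w}{L_2(\cD)}$ for all $\psi,w\in L_2(\cD)$, whence $\dual{\widetilde{\Pi}_h} w = \sum_{j=1}^{N_h} \scalar{w, e_{j,h}}{L_2(\cD)}\,e_j$; thus $\dual{\widetilde{\Pi}_h}$ interchanges the roles of the two families $\{e_j\}$ and $\{e_{j,h}\}$ relative to $\widetilde{\Pi}_h$. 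The decisive step is to evaluate the composition: using the $L_2(\cD)$-orthonormality of $\{e_j\}_{j\in\bbN}$ gives $\widetilde{\Pi}_h e_j = \sum_{i=1}^{N_h}\scalar{e_j,e_i}{L_2(\cD)}\,e_{i,h} = e_{j,h}$ for $j\leq N_h$, so that
\begin{equation*}
	\widetilde{\Pi}_h \dual{\widetilde{\Pi}_h} w
	= \sum_{j=1}^{N_h} \scalar{w, e_{j,h}}{L_2(\cD)}\, \widetilde{\Pi}_h e_j
	= \sum_{j=1}^{N_h} \scalar{w, e_{j,h}}{L_2(\cD)}\, e_{j,h}
	= \Pi_h w,
\end{equation*}
where the last equality holds because $\{e_{j,h}\}_{j=1}^{N_h}$ is an $L_2(\cD)$-orthonormal basis of $V_h$. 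Since $\Pi_h$ is a self-adjoint projection, one also has $\Pi_h\dual{\Pi_h} = \Pi_h^2 = \Pi_h$, so both sides of the core identity equal $\Pi_h$.

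Finally, I would assemble these facts: substituting $\Pi_h\dual{\Pi_h} = \widetilde{\Pi}_h\dual{\widetilde{\Pi}_h} = \Pi_h$ into the factorizations yields $(T_h\Pi_h)\dual{(T_h\Pi_h)} = T_h\Pi_h\dual{T_h} = (T_h\widetilde{\Pi}_h)\dual{(T_h\widetilde{\Pi}_h)}$, which is precisely~\eqref{eq:equalCov}. For the distributional conclusion I note that $\GP^\beta_{h,k}$ and $\widetilde{\GP}^\beta_{h,k}$ are centered $L_2(\cD)$-valued Gaussian random variables whose laws are determined by their covariance operators $(T_h\Pi_h)\dual{(T_h\Pi_h)}$ and $(T_h\widetilde{\Pi}_h)\dual{(T_h\widetilde{\Pi}_h)}$; equality~\eqref{eq:equalCov} therefore gives $\GP^\beta_{h,k}\overset{d}{=}\widetilde{\GP}^\beta_{h,k}$. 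The only genuinely delicate point—the main obstacle—is the bookkeeping of the various adjoints, since $\widetilde{\Pi}_h$ is neither self-adjoint nor a projection (as emphasized after~\eqref{eq:def:pitilde}); the saving grace is that its asymmetry between the two bases is exactly undone upon composition with $\dual{\widetilde{\Pi}_h}$, collapsing $\widetilde{\Pi}_h\dual{\widetilde{\Pi}_h}$ back to the genuine orthogonal projection $\Pi_h$.
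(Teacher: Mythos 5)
Your proposal is correct and follows essentially the same route as the paper: both arguments reduce \eqref{eq:equalCov} to adjoint bookkeeping together with the key identity $\widetilde{\Pi}_h\dual{\widetilde{\Pi}_h}=\operatorname{Id}_{V_h}$ (which you phrase equivalently as $\widetilde{\Pi}_h\dual{\widetilde{\Pi}_h}=\Pi_h$ on $L_2(\cD)$, and, unlike the paper, verify explicitly from the two orthonormal systems), and then deduce $\GP^\beta_{h,k}\overset{d}{=}\widetilde{\GP}^\beta_{h,k}$ from equality of the centered Gaussian covariance structures. The only cosmetic difference is that the paper checks equality of all finite-dimensional distributions, while you invoke the equivalent fact that a centered Gaussian law on $L_2(\cD)$ is determined by its covariance operator.
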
 

\begin{proof}
	Note that $\dual{(T_h \Pi_h)}$ 
	(resp.\ $\dual{(T_h \widetilde{\Pi}_h )}$) 
	denotes the adjoint of $T_h \Pi_h$ 
	(resp.\ of $T_h \widetilde{\Pi}_h $) 
	when interpreted as
	an operator in $\cL(L_2(\cD))$. 
	This means,
	we are identifying $T_h \Pi_h$ with 
	$I_h T_h \Pi_h$ 
	(resp.\ $T_h \widetilde{\Pi}_h$ with $I_h T_h \widetilde{\Pi}_h$),
	where $I_h$ denotes the canonical embedding of $V_h$ 
	into $L_2(\cD)$. 
	Since $\dual{I_h} = \Pi_h$ we thus find that   
	$\dual{(T_h \Pi_h)} = \dual{T_h} \Pi_h$ and 
	$\dual{(T_h  \widetilde{\Pi}_h)} 
	= \dual{\widetilde{\Pi}_h}\dual{ T_h} \Pi_h$, 
	which combined with 
	$\widetilde{\Pi}_h \dual{\widetilde{\Pi}_h} 
	= \operatorname{Id}_{V_h}$ 
	proves~\eqref{eq:equalCov}. 
	
	By definition of~$\GP_{h,k}^\beta$,~$\widetilde{\GP}_{h,k}^\beta$ 
	in~\eqref{eq:def:gphk-1}--\eqref{eq:def:gphk-2}, 
	for $M\in\bbN$  
	and $\psi_1, \ldots, \psi_M \in L_2(\cD)$, 
	the random vectors~$\zvec$,~$\widetilde{\zvec}$
	with entries 
	$z_j = \bigl( \GP_{h,k}^\beta, \psi_j \bigr)_{L_2(\cD)}$ 
	and   
	$\widetilde{z}_j = \bigl(  
		\widetilde{\GP}_{h,k}^\beta, \psi_j \bigr)_{L_2(\cD)}$, 
	$1\leq j \leq M$,   
	are multivariate Gaussian distributed. 
	Furthermore, both vanish in expectation and 
	their covariance matrices, 
	$\Cmat := \operatorname{Cov}(\zvec)$ and 
	$\widetilde{\Cmat} := \operatorname{Cov}\bigl(\widetilde{\zvec}\bigr)$, 
	coincide due to~\eqref{eq:equalCov} 
	applied to 
	$T_h := Q^{\betafrac}_{h,k} L_h^{-\nbeta}$. 
	This shows that 
	$\GP_{h,k}^\beta
	\overset{d}{=} 
	\widetilde{\GP}_{h,k}^\beta$ as 
	$L_2(\cD)$-valued random variables.
\end{proof}

\begin{remark}[Simulation in practice]\label{rem:simulation}
	To simulate samples 
	of the 
	in~\eqref{eq:def:gphk-1}--\eqref{eq:def:gphk-2} 
	abstractly defined  
	($\bbP$-a.s.)\ 
	$V_h$-valued Gaussian 
	random variables 
	$\GP_{h,k}^\beta$ and  
	$\widetilde{\GP}_{h,k}^\beta$
	in practice, 
	in both cases, 
	one first has to generate 
	a sample of a multivariate 
	Gaussian random vector~$\bvec$ 
	with mean~$\zerovec$ 
	and covariance matrix~$\Mmat$, 
	where~$\Mmat$ is the 
	Gramian with 
	respect to any fixed 
	basis~$\Phi_h = \{\phi_{j,h}\}_{j=1}^{N_h}$ of~$V_h$, i.e., 
	$M_{ij} := \scalar{\phi_{i,h}, \phi_{j,h}}{L_2(\cD)}$. 
	This follows from the identical distribution 
	of the GRFs $\GP^0_{h}$ 
	and $\widetilde{\GP}^0_{h}$
	colored by $\Pi_h$ and $\pitilde$, 
	respectively,  
	as well as from the 
	chain of equalities   
	\begin{align*}
		\bbE[ \white(\dual{\Pi_h} \phi_{i,h})
			\white(\dual{\Pi_h} \phi_{j,h}) ] 
		&= 
		\scalar{\dual{\Pi_h}\phi_{i,h}, \dual{\Pi_h}\phi_{j,h} }{L_2(\cD)}  
		=
		\scalar{\phi_{i,h}, \phi_{j,h} }{L_2(\cD)}  
		= M_{ij} \\
		&=
		\bigl( \dual{\pitilde}\phi_{i,h}, 
		\dual{\pitilde}\phi_{j,h} \bigr)_{L_2(\cD)}
		=
		\bbE\left[ \white\bigl(\dual{\pitilde} \phi_{i,h} \bigr) 
		\white\bigl(\dual{\pitilde} \phi_{j,h} \bigr) \right], 
	\end{align*}
	which we obtain 
	from Lemma~\ref{lem:galerkin:equal-approx}  
	with $T_h := \operatorname{Id}_{V_h}$. 
	Since 
	\[
		\GP_{h,k}^\beta 
		\overset{d}{=} 
		Q_{h,k}^{\betafrac} L_h^{-\nbeta} \GP^0_h
		\overset{d}{=}
		Q_{h,k}^{\betafrac} L_h^{-\nbeta} \widetilde{\GP}^0_h
		\overset{d}{=} 
		\widetilde{\GP}_{h,k}^\beta, 
	\]  
	the random vector $\GPvec_k^{\beta}$, given by 
	\begin{equation}\label{eq:Zbetavec} 
		\GPvec_k^{\beta}
		:= 
		\begin{cases}
		\Lmat^{-1} \bigl( \Mmat\Lmat^{-1} \bigr)^{\nbeta-1} \bvec, 
		& 
		\text{if } 
		\betafrac = 0, \\
		\Qmat_{k}^{\beta_\star} 
		\bigl( \Mmat\Lmat^{-1} \bigr)^{\nbeta} 
		\bvec, 
		& 
		\text{if } 
		\betafrac \in (0,1), 
		\end{cases} 
	\end{equation} 
	is then the vector of coefficients 
	when expressing the $V_h$-valued sample 
	of $\GP_{h,k}^\beta$ 
	(or of~$\widetilde{\GP}_{h,k}^\beta$)
	with respect to the basis $\Phi_h$. 
	Here, $\Lmat\in\bbR^{N_h \times N_h}$  
	represents the action of 
	the Galerkin operator $L_h$ 
	in~\eqref{eq:def:Lh}, i.e., 
	$L_{ij} := \scalar{L_h \phi_{j,h}, \phi_{i,h}}{L_2(\cD)}$, 
	and, for $\betafrac\in(0,1)$, 
	$\Qmat^{\betafrac}_{k}\in\bbR^{N_h \times N_h}$ is 
	the matrix analog  
	of the operator $Q_{h,k}^{\betafrac}$ 
	from~\eqref{eq:def:Qhk}, i.e., 
	\begin{equation}\label{eq:Qmat} 
		\Qmat^{\betafrac}_{k} 
		:= 
		\frac{2 k \sin(\pi\betafrac)}{\pi} 
		\sum_{\ell=-K^{-}}^{K^{+}} 
		e^{2\betafrac \ell k} 
		\left( \Mmat + e^{2 \ell k} \Lmat \right)^{-1}. 
	\end{equation}
\end{remark}

\subsection{Error analysis}\label{subsec:matern:galerkin-error}

The errors 
$\GP^\beta - \GP_{h,k}^\beta$ and 
$\GP^\beta - \widetilde{\GP}_{h,k}^\beta$
of the approximations 
in~\eqref{eq:def:gphk-1}--\eqref{eq:def:gphk-2} 
compared to the true Whittle--Mat\'ern 
field~$\GP^\beta$ from~\eqref{eq:gpbeta} 
are GRFs colored 
(see Definition~\ref{def:colored}) by 
\[ 
	E_{h,k}^\beta 
	:= 
	L^{-\beta} - Q_{h,k}^{\betafrac} L_h^{-\nbeta} \Pi_h 
	\qquad 
	\text{and}
	\qquad 
	\widetilde{E}_{h,k}^\beta  
	:= 
	L^{-\beta} - Q_{h,k}^{\betafrac} L_h^{-\nbeta} \pitilde,  
\]
respectively. 
In order to perform the error analysis  
for $\GP_{h,k}^\beta$ and $\widetilde{\GP}_{h,k}^\beta$, 
we split these operators  
as follows 
\[ 
	E_{h,k}^\beta 
	= 
	E_{V_h}^\beta + E_Q^\beta 
	\qquad 
	\text{and}
	\qquad  
	\widetilde{E}_{h,k}^\beta 
	= 
	\widetilde{E}_{N_h}^\beta 
	+ \widetilde{E}_{V_h}^\beta 
	+ \widetilde{E}_Q^\beta, 
\]
where $\widetilde{E}_{N_h}^\beta := 
L^{-\beta} - L_{N_h}^{-\beta}$ is a 
dimension truncation error 
(recall the finite-rank operator 
$L_{N_h}^{-\beta}$
from~\eqref{eq:def:LN})
which can be estimated 
with the results from 
Section~\ref{section:matern:spectral}
on spectral Galerkin approximations. 
Furthermore, we shall refer to 
\begin{align}  
	E_{V_h}^\beta 
	&:= 
	L^{-\beta} - L_{h}^{-\beta} \Pi_h,  
	&
	\widetilde{E}_{V_h}^\beta 
	&:= 
	L_{N_h}^{-\beta} - L_{h}^{-\beta} \pitilde, 
	\label{eq:def:err-galerkin}  
	\\
	E_Q^\beta 
	&:= 
	\bigl(L_{h}^{-\beta}  
	- 
	Q_{h,k}^{\betafrac} L_h^{-\nbeta} \bigr) \Pi_h, 
	&
	\widetilde{E}_Q^\beta 
	&:= 
	\bigl( L_{h}^{-\beta}   
	- 
	Q_{h,k}^{\betafrac} L_h^{-\nbeta} \bigr) \pitilde, 
	\label{eq:def:err-quad}
\end{align}
as the Galerkin errors  
and as the quadrature errors, respectively. 

In the following 
we provide 
error estimates   
for both approximations, 
$\GP_{h,k}^\beta$ 
and 
$\widetilde{\GP}_{h,k}^\beta$ 
in~\eqref{eq:def:gphk-1}--\eqref{eq:def:gphk-2},
with respect to the norm on 
$L_q(\Omega;H^\sigma(\cD))$    
as well as 
for its covariance 
functions~$\varrho_{h,k}^\beta$, 
$\widetilde{\varrho}_{h,k}^\beta$ 
in the mixed Sobolev 
norm, cf.~\eqref{eq:def:Hmixed}.   
By exploiting Theorem~\ref{thm:fem-error} 
the bounds for $\GP_{h,k}^\beta$ and $\varrho_{h,k}^\beta$ 
in Proposition~\ref{prop:galerkin:conv:sobolev-alpha} 
below will be sharp if 
a conforming finite element method with  
piecewise linear basis functions is used. 
However, to derive optimal rates for 
the case of finite elements of higher 
polynomial degree, a different approach 
will be necessary, cf.~Remark~\ref{rem:p-conv}. 
To this end, we perform an 
error analysis for  
$\widetilde{\GP}_{h,k}^\beta$ 
and $\widetilde{\varrho}_{h,k}^\beta$ 
based on spectral expansions, 
see Proposition~\ref{prop:galerkin:conv:sobolev-p}. 
Since these arguments work 
only if the differential operator $L$ 
in~\eqref{eq:def:L} is at least $H^2(\cD)$-regular, 
both approaches and results 
are needed for a complete 
discussion of smooth vs.\ $H^{1+\alpha}(\cD)$-regular 
problems in Subsection~\ref{subsec:matern:fem}.\label{p:gphk12discussion}  
Finally, in Proposition~\ref{prop:galerkin:conv:hoelder}, 
we use the approximation 
$\GP_{h,k}^\beta$ from~\eqref{eq:def:gphk-1}
to formulate a convergence result 
with respect to the H\"older 
norm~\eqref{eq:def:hoeldernorm}
in $L_q(\Omega)$-sense and 
with respect to the 
$L_\infty(\cD\times\cD)$-norm 
for its covariance 
function $\varrho^\beta_{h,k}$. 

We note that, 
at the cost of other 
assumptions (e.g., $\alpha>\nicefrac{1}{2}$) 
on the parameters involved, 
it is possible to 
circumvent the additional condition  
$\beta>1$ (instead of $\beta>\nicefrac{3}{4}$) 
needed in the following proposition for the 
$L_q(\Omega;H^\sigma(\cD))$-estimate if $d=3$. 

\begin{proposition}\label{prop:galerkin:conv:sobolev-alpha} 
	Suppose Assumptions~\ref{ass:coeff}.I--II,~\ref{ass:dom}.I, 
	\ref{ass:galerkin}.II--III, 
	and let Assumption~\ref{ass:galerkin}.I  
	be satisfied with parameters   
	$\theta_0\in(0,1)$ and $\theta_1 \geq 1+\alpha$, 
	where $0 < \alpha \leq 1$ 
	is as in~\eqref{eq:ass:1+alpha}. 
	Assume furthermore that 
	$\Pi_h$ is $H^1(\cD)$-stable, 
	see~\eqref{eq:PihH1stable},  
	and that $d\in\{1,2,3\}$, 
	$\beta>0$ and $0\leq\sigma\leq 1$ 
	are such that $2\beta-\sigma > \nicefrac{d}{2}$. 
	Let~$\GP^\beta$ be the Whittle--Mat\'ern 
	field in~\eqref{eq:gpbeta}
	and, for $h, k>0$, 
	let $\GP^\beta_{h,k}$ be the 
	sinc-Galerkin approximation 
	in~\eqref{eq:def:gphk-1}, 
	with covariance 
	functions $\varrho^\beta$ and 
	$\varrho_{h,k}^\beta$, respectively.
	Then, for 
	every $q,\eps > 0$ 
	and sufficiently small $h>0$, 
	\begin{align} 
		&\Bigl(\bbE\Bigl[
		\bigl\| \GP^{\beta} - \GP^\beta_{h,k} 
		\bigr\|_{H^\sigma(\cD)}^q
		\Bigr]\Bigr)^{\nicefrac{1}{q}} 
		\notag \\
		&\,\,\,\lesssim_{(q,\eps,\sigma,\alpha,\beta,A,\kappa,\cD)} 
		\Bigl( 
		h^{\min\left\{ 2\beta - \sigma - \nicefrac{d}{2} - \varepsilon, \, 
			1 + \alpha - \sigma, \, 
			2\alpha \right\} } 
		 + 
		e^{-\nicefrac{\pi^2}{(2k)}} 
		h^{-\sigma-\nicefrac{d}{2}\,\mathds{1}_{\{\beta < 1\}}} 
		\Bigr),  
		\label{eq:galerkin:conv:field-alpha} \\
		&\bigl\| \varrho^\beta 
		- \varrho^\beta_{h,k} 
		\bigr\|_{H^{\sigma,\sigma}(\cD\times\cD)} 
		\notag \\
		&\,\,\,\lesssim_{(\eps,\sigma,\alpha,\beta,A,\kappa,\cD)} 
		\Bigl( 
		h^{\min\left\{ 4\beta - 2\sigma - \nicefrac{d}{2} - \varepsilon, \, 
			1 + \alpha - \sigma, \, 
			2\alpha \right\} } 
		+ 
			e^{-\nicefrac{\pi^2}{(2k)}} 
			h^{-2\sigma-\nicefrac{d}{2}\,\mathds{1}_{\{\beta < 1\}}}  
		\Bigr),  
		\label{eq:galerkin:conv:cov-alpha}
	\end{align} 
	where, if $d=3$, for 
	\eqref{eq:galerkin:conv:field-alpha} to hold, 
	we also suppose 
	that $\beta>1$
	and $\alpha\geq\nicefrac{1}{2}-\sigma$. 
\end{proposition}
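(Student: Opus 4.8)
The plan is to reduce both errors to Hilbert--Schmidt norms of the coloring error operators and then feed in the deterministic bounds of Theorem~\ref{thm:fem-error} together with Weyl's law (Lemma~\ref{lem:spectral-behav}). Since $\GP^\beta - \GP^\beta_{h,k}$ is a GRF colored by $E_{h,k}^\beta = L^{-\beta} - Q_{h,k}^{\betafrac}L_h^{-\nbeta}\Pi_h$ (Definition~\ref{def:colored}), the Kahane--Khintchine bound \eqref{eq:field-err-p} of Proposition~\ref{prop:regularity:hdot} gives $\bigl(\bbE\bigl[\|\GP^\beta - \GP^\beta_{h,k}\|_\sigma^q\bigr]\bigr)^{\nicefrac1q}\eqsim_q \|E_{h,k}^\beta\|_{\cL_2^{0;\sigma}}$, and I would first pass from $\norm{\,\cdot\,}{\sigma}$ to $\norm{\,\cdot\,}{H^\sigma(\cD)}$ via Lemma~\ref{lem:hdot-sobolev} (legitimate for $0\le\sigma\le1$ under Assumption~\ref{ass:dom}.I). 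I then split $E_{h,k}^\beta = E_{V_h}^\beta + E_Q^\beta$ as in \eqref{eq:def:err-galerkin}--\eqref{eq:def:err-quad} and estimate the Galerkin error and the quadrature error separately.

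For the Galerkin part I would convert the operator-norm estimate \eqref{eq:fem-error-fractional} into a Hilbert--Schmidt estimate. The naive factorization $E_{V_h}^\beta = \bigl(E_{V_h}^\beta L^{\rho}\bigr)L^{-\rho}$ with $L^{-\rho}\in\cL_2(\Hdot 0)$, which requires $\rho > \nicefrac{d}{4}$ by Lemma~\ref{lem:spectral-behav}, fails for $d\ge2$, because Theorem~\ref{thm:fem-error} only admits data in $\Hdot\delta$ with $\delta = -2\rho \ge -1$. The remedy is to extract the trace-class smoothing from inside $L^{-\beta}$ rather than by external multiplication: writing $\beta = \beta_1 + \beta_2$ and using $\Pi_h L_h^{-\beta_2}\Pi_h = L_h^{-\beta_2}\Pi_h$ one obtains the telescoping identity
\[
E_{V_h}^\beta = E_{V_h}^{\beta_1} L^{-\beta_2} + L_h^{-\beta_1}\Pi_h\, E_{V_h}^{\beta_2}.
\]
Choosing $\beta_2 = \nicefrac{d}{4} + \eps'$ (slightly above the trace threshold) makes $L^{-\beta_2}\in\cL_2(\Hdot 0)$, so the first summand is bounded by $\|E_{V_h}^{\beta_1}\|_{\cL(\Hdot 0;\Hdot\sigma)}\|L^{-\beta_2}\|_{\cL_2(\Hdot 0)}$; Theorem~\ref{thm:fem-error} applied with $\delta=0$ gives the rate $h^{\min\{2\beta_1-\sigma-\eps,\,1+\alpha-\sigma,\,2\alpha\}}$, and since $2\beta_1 = 2\beta - \nicefrac{d}{2} - 2\eps'$ this is precisely the claimed Galerkin rate, the budget $2\beta - \sigma > \nicefrac{d}{2}$ ensuring $\beta_1\ge0$. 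The second summand I would treat by combining the uniform bound $\|L_h^{-\beta_1}\Pi_h\|_{\cL(\Hdot 0;\Hdot\sigma)}\lesssim1$ (from Lemma~\ref{lem:Lh-L} and $\sigma\le1$) with a Hilbert--Schmidt estimate of $E_{V_h}^{\beta_2}$, exploiting that $L_h^{-\beta_2}\Pi_h$ is finite rank. The extra hypotheses for $d=3$ ($\beta>1$ and $\alpha\ge\nicefrac12-\sigma$) enter precisely here, to keep the powers of $L$ and $L_h$ appearing in Lemma~\ref{lem:Lh-L} within the admissible range $\le\nicefrac{(1+\alpha)}{2}$.

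For the quadrature part I would write $E_Q^\beta = \bigl(L_h^{-\betafrac} - Q_{h,k}^{\betafrac}\bigr)L_h^{-\nbeta}\Pi_h$ and insert the exponential sinc bound $\|L_h^{-\betafrac} - Q_{h,k}^{\betafrac}\|_{\cL(V_h)}\lesssim e^{-\nicefrac{\pi^2}{(2k)}}$ from \cite{BonitoPasciak:2015,bkk-strong}. The Hilbert--Schmidt norm in $\cL_2^{0;\sigma}$ then produces the factor $h^{-\sigma}$ from $\|L^{\nicefrac\sigma2}L_h^{-\nicefrac\sigma2}\Pi_h\|\lesssim1$ combined with the inverse inequality (Assumption~\ref{ass:galerkin}.II), while the finite-rank factor contributes $\sqrt{\dim V_h}\eqsim h^{-\nicefrac d2}$ (Assumption~\ref{ass:galerkin}.III) only when $\nbeta=0$, i.e.\ $\beta<1$; for $\beta\ge1$ the smoothing $L_h^{-\nbeta}$ renders this factor bounded. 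This accounts for the term $e^{-\nicefrac{\pi^2}{(2k)}} h^{-\sigma - \nicefrac d2\,\mathds{1}_{\{\beta < 1\}}}$ in \eqref{eq:galerkin:conv:field-alpha}.

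Finally, the covariance bound \eqref{eq:galerkin:conv:cov-alpha} follows from \eqref{eq:cov-fct-err}, which identifies $\|\varrho^\beta - \varrho^\beta_{h,k}\|_{\sigma,\sigma}$ with $\|L^{-2\beta} - \widetilde{T}\dual{\widetilde{T}}\|_{\cL_2^{-\sigma;\sigma}}$ for $\widetilde{T} = Q_{h,k}^{\betafrac}L_h^{-\nbeta}\Pi_h$. Writing $L^{-2\beta} - \widetilde{T}\dual{\widetilde{T}} = E_{h,k}^\beta L^{-\beta} + \widetilde{T}\dual{(E_{h,k}^\beta)}$ and using the uniform boundedness of $\widetilde{T}$ and of $L^{-\beta}$, each term carries an extra smoothing factor of order $2\beta-\sigma$ relative to the field estimate (equivalently, the input index shifts from $\Hdot 0$ to $\Hdot{-\sigma}$ and picks up $L^{-\beta}$); repeating the telescoping and quadrature analysis with this enlarged budget doubles the leading exponent to $4\beta - 2\sigma - \nicefrac{d}{2} - \eps$ and doubles $h^{-\sigma}$ to $h^{-2\sigma}$ in the quadrature term, giving \eqref{eq:galerkin:conv:cov-alpha}. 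The main obstacle throughout is the Hilbert--Schmidt conversion for $d\in\{2,3\}$: because Theorem~\ref{thm:fem-error} forbids data rougher than $\Hdot{-1}$, the required trace-class smoothing cannot be supplied by an external power of $L$ and must instead be peeled off $L^{-\beta}$ through the telescoping identity, which is the technical heart of the argument.
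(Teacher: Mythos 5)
Your overall architecture---reducing both errors to Hilbert--Schmidt norms of the coloring errors via Proposition~\ref{prop:regularity:hdot}, splitting $E_{h,k}^\beta = E_{V_h}^\beta + E_Q^\beta$, peeling trace-class smoothing off $L^{-\beta}$ by a telescoping identity, and invoking Theorem~\ref{thm:fem-error} together with Lemma~\ref{lem:spectral-behav}---is exactly the paper's, and your first summand $E_{V_h}^{\beta_1}L^{-\beta_2}$, your quadrature estimate, and your covariance reduction all match its proof. The gap is your second summand $L_h^{-\beta_1}\Pi_h\,E_{V_h}^{\beta_2}$. You bound it by $\bigl\|L_h^{-\beta_1}\Pi_h\bigr\|_{\cL(\Hdot{0};\Hdot{\sigma})}\,\bigl\|E_{V_h}^{\beta_2}\bigr\|_{\cL_2(L_2(\cD))}$, but the Hilbert--Schmidt norm of $E_{V_h}^{\beta_2}$ \emph{alone} does not decay in $h$: since $(\operatorname{Id}_{L_2(\cD)}-\Pi_h)E_{V_h}^{\beta_2}e_j=\lambda_j^{-\beta_2}(e_j-\Pi_h e_j)$, one has
\[
	\bigl\|E_{V_h}^{\beta_2}\bigr\|_{\cL_2(L_2(\cD))}^2
	\;\geq\;
	\sum_{j\in\bbN}\lambda_j^{-2\beta_2}\,\norm{e_j-\Pi_h e_j}{L_2(\cD)}^2
	\;\gtrsim\;
	N_h^{1-\nicefrac{4\beta_2}{d}}
	\;\eqsim\;
	h^{4\eps'},
\]
because $\sum_{j}\norm{\Pi_h e_j}{L_2(\cD)}^2=N_h$ forces $\norm{e_j-\Pi_h e_j}{L_2(\cD)}^2\geq\nicefrac{1}{2}$ for a proportion $\eqsim N_h$ of the indices $j\eqsim N_h$, whose eigenvalues obey Weyl's law. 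So with $\beta_2=\nicefrac{d}{4}+\eps'$ your product bound cannot decay faster than $h^{2\eps'}$, i.e., it yields essentially no rate. Nor can this be repaired within your decomposition: the only smoothing reservoir available to the data side of $E_{V_h}^{\beta_2}$ is the \emph{discrete} operator $L_h^{-\beta_1}\Pi_h$ sitting on the output side, and exchanging it for a continuous power $L^{-\beta_1}$ via Lemma~\ref{lem:Lh-L} is admissible only for exponents up to $\nicefrac{(1+\alpha)}{2}$, whereas $\beta_1$ is arbitrary.

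The paper's telescoping \eqref{eq:A-GP-gamma} is arranged the other way around precisely to avoid this: the trace-class \emph{discrete} factor $L_h^{-\gamma}\Pi_h$ stands to the right of the error operator in the first term (estimated by Theorem~\ref{thm:fem-error} times \eqref{eq:sum-Pihm}), while in the second term the reservoir is the \emph{continuous} power $L^{-(\beta-\gamma)}$ on the left. Since all operators involved are self-adjoint and the Hilbert--Schmidt norm is adjoint-invariant, that continuous power can be transposed to the data side, so the second term is bounded by $\bigl\|E_{V_h}^{\gamma}L^{-(\beta-\nicefrac{\sigma}{2}-\nicefrac{d}{2}-2\eps_0)}\bigr\|_{\cL(L_2(\cD))}\bigl\|L^{-(\nicefrac{d}{4}+\eps_0)}\bigr\|_{\cL_2(L_2(\cD))}$, and Theorem~\ref{thm:fem-error} is applied to $E_{V_h}^\gamma$ with \emph{large positive} data smoothness $\delta''=\min\{2\beta-\sigma-d-4\eps_0,\,1+\alpha\}$, which restores the full rate through the exponent $2\beta''+\delta''-\sigma''$. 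This is also where the extra $d=3$ hypotheses actually enter: $\beta>1$ guarantees $\delta''>-1$ (the floor on data roughness in Theorem~\ref{thm:fem-error}), and $\alpha\geq\nicefrac{1}{2}-\sigma$ guarantees the competing exponent $1+\alpha+\delta''$ does not fall below the target rate---not, as you suggest, the admissible-power restriction in Lemma~\ref{lem:Lh-L}. As written, your proof does not close; replacing your treatment of the second summand by this transposition argument would fix it.
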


\begin{proof} 
	We start with splitting the errors   
	with respect to the norms   
	on $\Hdot{\sigma}$, cf.~\eqref{eq:def:Hdot}, 
	\begin{align*} 
		\left(\bbE\left[
		\bigl\| \GP^{\beta} - \GP^\beta_{h,k} 
		\bigr\|_{\sigma}^q 
		\right]\right)^{\nicefrac{1}{q}} 
		&\leq 
		\left(\bbE\left[
		\bigl\| \GP^{\beta} - \GP^{\beta}_h 
		\bigr\|_{\sigma}^q 
		\right]\right)^{\nicefrac{1}{q}} 
		+ 
		\left(\bbE\left[
		\bigl\| \GP^{\beta}_h 
		- \GP^\beta_{h,k} 
		\bigr\|_{\sigma}^q 
		\right]\right)^{\nicefrac{1}{q}}  \\
		& 
		=: \text{($\text{A}_\GP$)} + \text{($\text{B}_\GP$)},  
	\end{align*} 
	and on $\Hdot{\sigma,\sigma}$, 
	see~\eqref{eq:def:Hdotmixed},  
	respectively,  
	\begin{align} 
		\label{eq:A-rho-B-rho} 
		\bigl\| \varrho^\beta 
		- \varrho^\beta_{h,k} 
		\bigr\|_{\sigma,\sigma} 
		&\leq 
		\bigl\| \varrho^\beta 
		- \varrho^\beta_{h} 
		\bigr\|_{\sigma,\sigma} 
		+ 
		\bigl\| \varrho_h^\beta 
		- \varrho^\beta_{h,k} 
		\bigr\|_{\sigma,\sigma} 
		=: \text{($\text{A}_\varrho$)} + \text{($\text{B}_\varrho$)},  
	\end{align} 
	which by~\eqref{eq:hdot-sobolev1}
	of Lemma~\ref{lem:hdot-sobolev} 
	bound the 
	errors~\eqref{eq:galerkin:conv:field-alpha}--\eqref{eq:galerkin:conv:cov-alpha}
	in the Sobolev norms. 
	Here $\GP^\beta_{h}$
	denotes a GRF colored by 
	$L_h^{-\beta}\Pi_h$, 
	with covariance function~$\varrho^\beta_h$. 
	Furthermore, we note the 
	following: 
	For $m\geq 0$, we have 
	\begin{align*}
		\norm{L_h^{-m}\Pi_h}{\cL_2(L_2(\cD))}^2 
		&= 
		\sum_{\ell=1}^{N_h} 
		\lambda_{\ell,h}^{-2m}
		\leq 
		\sum_{\ell=1}^{N_h} 
		\lambda_{\ell}^{-2m}, 
	\end{align*} 
	where the observation 
	of Remark~\ref{rem:min-max} 
	was used in the last step. 
	Thus, by the spectral asymptotics from 
	Lemma~\ref{lem:spectral-behav} 
	and by Assumption~\ref{ass:galerkin}.III 
	we have for $m\geq 0$, $m\neq\nicefrac{d}{4}$, 
	\begin{align}\label{eq:sum-Pihm} 
	\norm{L_h^{-m}\Pi_h}{\cL_2(L_2(\cD))}  
	&\lesssim_{(m,A,\kappa,\cD)}
	\max\bigl\{ h^{2m-\nicefrac{d}{2}}, 1 \bigr\}. 
	\end{align} 

	For terms ($\text{A}_\GP$) and ($\text{B}_\GP$), 
	we obtain 
	with the definitions 
	of the Galerkin and 
	quadrature errors 
	${E}_{V_h}^\beta, 
	{E}_{Q}^\beta$ 
	from~\eqref{eq:def:err-galerkin}--\eqref{eq:def:err-quad}  
	by~\eqref{eq:field-err-p} 
	of Proposition~\ref{prop:regularity:hdot} 
	that 
	\begin{align*} 
		\text{($\text{A}_\GP$)}   
		\lesssim_{q} 
		\bigl\| E_{V_h}^\beta 
		\bigr\|_{\cL_2^{0;\sigma}}
		\quad 
		\text{and} 
		\quad  
		\text{($\text{B}_\GP$)} 
		\lesssim_{q} 
		\bigl\| E_{Q}^\beta 
		\bigr\|_{\cL_2^{0;\sigma}}, 
		\qquad 
		\cL_2^{\theta;\sigma} := 
		\cL_2\bigl(\Hdot{\theta};\Hdot{\sigma}\bigr). 
	\end{align*}

	For bounding term ($\text{A}_\GP$), 
	we let $\gamma\in(0,\beta)$ and 
	rewrite $E_{V_h}^\beta$ from~\eqref{eq:def:err-galerkin} 
	as follows,  
	\begin{align}\label{eq:A-GP-gamma}
		E_{V_h}^\beta
		&=
		\bigl( L^{-(\beta-\gamma )} 
		- L_h^{-(\beta-\gamma )}\Pi_h 
		\bigr) 
		L_h^{-\gamma} \Pi_h 
		+ 
		L^{-(\beta - \gamma)} 
		\bigl(L^{-\gamma} 
		- L_h^{-\gamma} \Pi_h \bigr).  
	\end{align}
	We first bound ($\text{A}_\GP$) 
	for $d\in\{1,2\}$. 
	To this end, 
	let $\eps_{0}>0$ be chosen sufficiently small 
	such that 
	$2\beta-\sigma-\nicefrac{d}{2} > 4\eps_0$ 
	and choose $\gamma:=\nicefrac{d}{4}+\eps_0$ 
	in~\eqref{eq:A-GP-gamma}. 
	We obtain thus 
	$\text{($\text{A}_\GP$)} 
	\lesssim_q 
	\text{($\text{A}'_\GP$)}
	+ 
	\text{($\text{A}''_\GP$)}$, 
	where 
	\begin{align*} 
		\text{($\text{A}'_\GP$)}
		&:= 
		\bigl\| 
		L^{\nicefrac{\sigma}{2}} 
		\bigl( L^{-(\beta-\nicefrac{d}{4}-\eps_0 )} 
		- L_h^{-(\beta-\nicefrac{d}{4}-\eps_0 )}\Pi_h 
		\bigr) 
		L_h^{-(\nicefrac{d}{4}+\eps_0 )} \Pi_h 
		\bigr\|_{\cL_2^{0;0}} , \\
		\text{($\text{A}''_\GP$)} 
		&:= 
		\bigl\| 
		L^{-(\beta -\nicefrac{\sigma}{2} - \nicefrac{d}{4} - \eps_0)} 
		\bigl(L^{-(\nicefrac{d}{4} + \eps_0)} 
		- L_h^{-(\nicefrac{d}{4} + \eps_0)} \Pi_h \bigr) 
		\bigr\|_{\cL_2^{0;0}}. 
	\end{align*}

	For ($\text{A}'_\GP$), 
	we find by~\eqref{eq:fem-error-fractional}  
	of Theorem~\ref{thm:fem-error} 
	and by \eqref{eq:sum-Pihm}, 
	applied for the parameters 
	$\beta' 
	:= \beta-\nicefrac{d}{4}-\eps_0$, 
	$\sigma'  := \sigma$, 
	$\delta' := 0$, and 
	$m=\nicefrac{d}{4} + \eps_0$, 
	respectively, 
	\begin{align*} 
		\text{($\text{A}'_\GP$)} 
		&\leq 
		\bigl\| 
		L^{-(\beta-\nicefrac{d}{4}-\eps_0 )} 
		- L_h^{-(\beta-\nicefrac{d}{4}-\eps_0 )}\Pi_h 
		\bigr\|_{\cL\left(\Hdot{0}; \Hdot{\sigma}\right)} 
		\bigl\| 
		L_h^{-(\nicefrac{d}{4}+\eps_0 )} \Pi_h 
		\bigr\|_{\cL_2^{0;0}}	\\
		&\lesssim_{(\eps_0,\eps',
			\sigma,\alpha,\beta,A,\kappa,\cD)} 
		h^{\min\{2\beta - \sigma -\nicefrac{d}{2} 
			- 2\eps_0 - \eps', \, 
			1+\alpha-\sigma,\, 2\alpha\}}, 
	\end{align*} 
	for any $\eps' > 0$ 
	and sufficiently small $h>0$. 

	After rewriting term ($\text{A}''_\GP$) 
	we again apply \eqref{eq:fem-error-fractional}
	of Theorem~\ref{thm:fem-error}, 
	this time for the 
	parameters 
	$\beta'' := \nicefrac{d}{4} + \eps_0 > 0$, 
	$\sigma'' := 0$, and 
	$\delta'' := \min\{2\beta-\sigma-d-4 \eps_0, 
	1+\alpha\}$. 
	Note that, due to the choice of 
	$\eps_0>0$ and since $d\in\{1,2\}$,
	we have $\delta''>-1$ and  
	\begin{gather*} 
		2\beta'' - \sigma'' + \delta'' 
		= 
		\min\{2\beta-\sigma-\nicefrac{d}{2}-2\eps_0, 
		1+\alpha+\nicefrac{d}{2} + 2\eps_0\} 
		> 
		2\eps_0 > 0.   
	\end{gather*} 
	We thus find 
	that, for any $\eps''>0$ and 
	sufficiently small $h>0$, 
	\begin{align*} 
		\text{($\text{A}''_\GP$)} 
		&\leq 
		\bigl\| 
		\bigl(L^{-(\nicefrac{d}{4} + \eps_0)} 
		- L_h^{-(\nicefrac{d}{4} + \eps_0)} \Pi_h \bigr) 
		L^{-(\beta -\nicefrac{\sigma}{2} - \nicefrac{d}{2} - 2\eps_0)} 
		\bigr\|_{\cL(L_2(\cD))} 
		\bigl\| 
		L^{-(\nicefrac{d}{4}+\eps_0)} 
		\bigr\|_{\cL_2^{0;0}} \\
		&\lesssim_{(\eps_0,\eps'',  
			\sigma,\alpha,\beta,A,\kappa,\cD)} 
		h^{\min\{2\beta - \sigma -\nicefrac{d}{2} 
			- 2 \eps_0 - \eps'', \, 
			1+\alpha+\delta'',\, 2\alpha\}} 
		\bigl\| 
		L^{-(\nicefrac{d}{4}+\eps_0)}
		\bigr\|_{\cL_2^{0;0}}. 
	\end{align*} 
	The Hilbert--Schmidt norm 
	$\| 
	L^{-(\nicefrac{d}{4}+\eps_0)}
	\|_{\cL_2^{0;0}}$ 
	converges 
	for any $\eps_0>0$ 
	due to the spectral asymptotics~\eqref{eq:lem:spectral-behav}
	of Lemma~\ref{lem:spectral-behav}. 
	In addition, since $1+\alpha > \nicefrac{d}{2}$ for $d\in\{1,2\}$, 
	we find that 
	$1+\alpha+\delta'' 
	> 
	\min\{2\beta-\sigma-\nicefrac{d}{2}-4\eps_0, 1+\alpha\}$, 
	and we conclude that
	\begin{align}\label{eq:A-GP-final}  
		\text{($\text{A}_\GP$)} 
		&\lesssim_{(\eps,  
			\sigma,\alpha,\beta,A,\kappa,\cD)} 
		h^{\min\{2\beta - \sigma -\nicefrac{d}{2} 
			- \eps, \, 
			1+\alpha-\sigma,\, 2\alpha\}}, 
	\end{align} 
	for sufficiently small $h>0$ 
	and any $\eps > 0$ 
	(by adjusting $\eps_0,\eps',\eps''>0$). 
	
	If $d=3$, let $\eps_0>0$ be such that 
	$2\eps_0 < \min\{2\beta-\sigma-\nicefrac{3}{2}, \beta - 1\}$, 
	and choose 
	$\gamma 
	:= 
	\nicefrac{3}{4} - \nicefrac{\sigma}{2} + \eps_0 \in(0,\beta)$ 
	in~\eqref{eq:A-GP-gamma}. 
	We thus need to bound the terms 
	\begin{align*} 
		\text{($\text{A}'_\GP$)}
		&:= 
		\bigl\| 
		L^{\nicefrac{\sigma}{2}} 
		\bigl( 
		L^{-(\beta+\nicefrac{\sigma}{2}-\nicefrac{3}{4}-\eps_0 )} 
		- L_h^{-(\beta+\nicefrac{\sigma}{2}-\nicefrac{3}{4}-\eps_0 )}\Pi_h 
		\bigr) 
		L_h^{\nicefrac{\sigma}{2}-(\nicefrac{3}{4}+\eps_0 )} \Pi_h 
		\bigr\|_{\cL_2^{0;0}} , \\
		\text{($\text{A}''_\GP$)} 
		&:= 
		\bigl\| 
		L^{-(\beta - \nicefrac{3}{4} - \eps_0)} 
		\bigl(L^{-(\nicefrac{3}{4}-\nicefrac{\sigma}{2} + \eps_0)} 
		- L_h^{-(\nicefrac{3}{4}-\nicefrac{\sigma}{2} + \eps_0)} \Pi_h \bigr) 
		\bigr\|_{\cL_2^{0;0}}. 
	\end{align*}
	This can be achieved similarly 
	as for $d\in\{1,2\}$ 
	by picking the parameters 
	\begin{align*}
		\beta' &:= \beta+\nicefrac{\sigma}{2}-\nicefrac{3}{4}-\eps_0,
		& 
		\sigma' &:= \sigma, 
		& 
		\delta' &:= -\sigma, \\
		\beta'' &:= \nicefrac{3}{4}-\nicefrac{\sigma}{2} + \eps_0,
		& 
		\sigma'' &:= 0, 
		& 
		\delta'' &:= \min\{2\beta-3-4\eps_0, 1+\alpha\},  
	\end{align*}
	(recall that $\beta>1$ if $d=3$ 
	and, thus, $\delta''>-1$). 
	These choices result, 
	for sufficiently small $h>0$, 
	in the estimates 
	\begin{align*} 
		\text{($\text{A}'_\GP$)}
		&\lesssim_{(A,\kappa,\cD)} 
		\bigl\| 
		L^{-\beta'} 
		- L_h^{-\beta'}\Pi_h 
		\bigr\|_{\cL\left(\Hdot{-\sigma};\Hdot{\sigma}\right)} 
		\bigl\|
		L_h^{-(\nicefrac{3}{4}+\eps_0)} \Pi_h 
		\bigr\|_{\cL_2^{0;0}} \\
		&\lesssim_{(\eps_0,\eps',\sigma,\alpha,\beta,A,\kappa,\cD)}
		h^{\min\{ 
			2\beta - \sigma - \nicefrac{3}{2} - 2\eps_0 - \eps', \,
			1+\alpha-\sigma, \, 
			2\alpha
			\}}, 
		\\
		\text{($\text{A}''_\GP$)} 
		&:= 
		\bigl\| 
		\bigl(L^{-\beta''} 
		- L_h^{-\beta''} \Pi_h \bigr) 
		L^{-(\beta - \nicefrac{3}{2} - 2\eps_0)} 
		\bigr\|_{\cL(L_2(\cD))} 
		\bigl\| 
		L^{-(\nicefrac{3}{4} + \eps_0)} 
		\bigr\|_{\cL_2^{0;0}} \\
		&\lesssim_{(\eps_0,\eps'',\sigma,\alpha,\beta,A,\kappa,\cD)}
		h^{\min\{ 
			2\beta - \sigma - \nicefrac{3}{2} - 2\eps_0 - \eps'', \,
			1+\alpha+\delta'', \, 
			2\alpha
			\}}, 
	\end{align*}
	for all $\eps',\eps'' > 0$, 
	where we also have used~\eqref{eq:Lh+L} 
	and~\eqref{eq:sum-Pihm} 
	for ($\text{A}'_\GP$). Finally, 
	since
	$\alpha\geq\nicefrac{1}{2}-\sigma$ 
	if $d=3$, 
	we again have 
	$1+\alpha+\delta'' 
	\geq 
	\min\{2\beta-\sigma-\nicefrac{d}{2}-4\eps_0, 1+\alpha\}$.  
	Thus, \eqref{eq:A-GP-final} also holds for $d=3$. 
	
	To estimate ($\text{B}_\GP$),  
	we recall the convergence 
	result of the sinc quadrature 
	from~\cite[Lem.~3.4, Rem.~3.1 \& Thm.~3.5]{BonitoPasciak:2015}. 
	For sufficiently small $k>0$, 
	we have 
	\[
		\bigl\| E_Q^\beta \psi \bigr\|_{L_2(\cD)} 
		\lesssim_{(\beta,A,\kappa,\cD)}
		e^{-\nicefrac{\pi^2}{(2k)}} 
		\bigl\| L_h^{-\nbeta} \Pi_h \psi \bigr\|_{L_2(\cD)}  
		\quad 
		\forall\psi\in L_2(\cD). 
	\]
	Next, by 
	equivalence of the norms 
	$\norm{\,\cdot\,}{\sigma}$, 
	$\norm{\,\cdot\,}{H^\sigma(\cD)}$ 
	for $\sigma\in\{0,1\}$, 
	see Lemma~\ref{lem:hdot-sobolev}, 
	and by the inverse 
	inequality~\eqref{eq:ass:galerkin:inverse}
	from Assumption~\ref{ass:galerkin}.II, 
	we find, for $\sigma\in\{0,1\}$,  
	\begin{align}
		\text{($\text{B}_\GP$)} 
		&\lesssim_{q} 
		\bigl\|E_Q^\beta \bigr\|_{\cL_2^{0;\sigma}} 
		= 
		\bigl\| L^{\nicefrac{\sigma}{2}} \Pi_h E_Q^\beta 
		\bigr\|_{\cL_2^{0;0}} 
		\lesssim_{(\sigma,A,\kappa,\cD)} 
		h^{-\sigma}
		\bigl\| E_Q^\beta \bigr\|_{\cL_2^{0;0}} 
		\label{eq:EQ-estimate} \\ 
		&\lesssim_{(q,\sigma,\beta,A,\kappa,\cD)} 
		e^{-\nicefrac{\pi^2}{(2k)}} 
		h^{-\sigma} 
		\bigl\| L_h^{-\nbeta} \Pi_h \bigr\|_{\cL_2^{0;0}}  
		\lesssim_{(\beta,A,\kappa,\cD)} 
		e^{-\nicefrac{\pi^2}{(2k)}} 
		h^{-\sigma-\nicefrac{d}{2}\,\mathds{1}_{\{\beta<1\}}}, 
		\notag 
	\end{align}
	where we have applied~\eqref{eq:sum-Pihm} 
	with $m=\nbeta\in\bbN_0$, $m\neq\nicefrac{d}{4}$ 
	for $d\in\{1,2,3\}$  
	in the last step. If $\sigma\in(0,1)$, 
	a respective bound  
	for ($\text{B}_\GP$) follows by interpolation. 
	
	We proceed with the derivation 
	of~\eqref{eq:galerkin:conv:cov-alpha} 
	by estimating ($\text{A}_\varrho$) 
	and ($\text{B}_\varrho$) 
	in~\eqref{eq:A-rho-B-rho}. 
	By~\eqref{eq:cov-fct-err} of 
	Proposition~\ref{prop:regularity:hdot} 
	we obtain  
	\begin{align*} 
		\text{($\text{A}_\varrho$)} 
		&= 
		\bigl\| L^{-2\beta} 
			- L_h^{-2\beta}\Pi_h \bigr\|_{\cL_2^{-\sigma; \sigma}}, 
		\quad 
		\text{($\text{B}_\varrho$)} 
		= 
		\bigl\| L_h^{-2\beta}\Pi_h  
			- Q_{h,k}^{\betafrac} L_h^{-2\nbeta} 
			Q_{h,k}^{\betafrac}
			\Pi_h
		\bigr\|_{\cL_2^{-\sigma; \sigma}}. 
	\end{align*} 
	To bound ($\text{A}_\varrho$), 
	we let $\eps_0>0$ be such that 
	$2\beta-\sigma-\nicefrac{d}{2}>2\eps_0$ and 
	write 
	\begin{align*}
		L^{-2\beta} - L_h^{-2\beta} \Pi_h 
		&=
		\bigl( L^{-(2\beta-\nicefrac{\sigma}{2}-\nicefrac{d}{4}-\eps_0 )} 
		- L_h^{-(2\beta-\nicefrac{\sigma}{2}-\nicefrac{d}{4}-\eps_0 )}\Pi_h 
		\bigr) 
		L_h^{-(\nicefrac{\sigma}{2}+\nicefrac{d}{4}+\eps_0 )} \Pi_h \\
		&\quad + 
		L^{-(2\beta-\nicefrac{\sigma}{2}-\nicefrac{d}{4}-\eps_0)} 
		\bigl(L^{-(\nicefrac{\sigma}{2} + \nicefrac{d}{4}+\eps_0)} 
		- L_h^{-(\nicefrac{\sigma}{2} + \nicefrac{d}{4}+\eps_0)} 
		\Pi_h \bigr), 	
	\end{align*} 
	and find therefore that 
	$\text{($\text{A}_\varrho$)}  
	\leq 
	\text{($\text{A}'_\varrho$)}  
	+ 
	\text{($\text{A}''_\varrho$)}$, 
	where 
	\begin{align*} 
		\text{($\text{A}'_\varrho$)} 
		&:= 
	 	\bigl\| 
	 	\bigl( L^{-(2\beta-\nicefrac{\sigma}{2}-\nicefrac{d}{4}-\eps_0 )} 
	 	- L_h^{-(2\beta-\nicefrac{\sigma}{2}-\nicefrac{d}{4}-\eps_0 )}\Pi_h 
	 	\bigr) 
	 	L_h^{-(\nicefrac{\sigma}{2}+\nicefrac{d}{4}+\eps_0 )} \Pi_h 
	 	\bigr\|_{\cL_2^{-\sigma; \sigma}}, \\
	 	\text{($\text{A}''_\varrho$)} 
	 	&:= 
	 	\bigl\| 
	 	L^{-(2\beta - \nicefrac{\sigma}{2} - \nicefrac{d}{4} - \eps_0)} 
	 	\bigl(L^{-(\nicefrac{\sigma}{2} + \nicefrac{d}{4}+\eps_0)} 
	 	- L_h^{-(\nicefrac{\sigma}{2} + \nicefrac{d}{4}+\eps_0)} 
	 	\Pi_h \bigr)
	 	\bigr\|_{\cL_2^{-\sigma; \sigma}}. 
	\end{align*}
	For term ($\text{A}'_\varrho$), 
	we apply~\eqref{eq:fem-error-fractional}
	of Theorem~\ref{thm:fem-error}, 
	for 
	$\beta' := 2\beta-\nicefrac{\sigma}{2}-\nicefrac{d}{4}-\eps_0$, 
	$\sigma' := \sigma$, and 
	$\delta' := 0$. We thus obtain 
	that, for any $\eps' > 0$ 
	and sufficiently small $h>0$, 
	\begin{align*} 
		\text{($\text{A}'_\varrho$)} 
		&\leq 
		\bigl\| 
		L^{-\beta'} 
		- L_h^{-\beta'}\Pi_h 
		\bigr\|_{\cL\left(\Hdot{0}; \Hdot{\sigma}\right)} 
		\bigl\| 
		L_h^{-(\nicefrac{\sigma}{2}+\nicefrac{d}{4}+\eps_0 )} \Pi_h 
		L^{\nicefrac{\sigma}{2}} 
		\bigr\|_{\cL_2^{0;0}} \\
		&\lesssim_{(\eps_0,\eps',\sigma,\alpha,\beta,A,\kappa,\cD)} 
		h^{\min\{ 
			4\beta-2\sigma-\nicefrac{d}{2} - 2\eps_0 - \eps', \, 
			1+\alpha-\sigma, \, 
			2\alpha 
			\}} 
		\bigl\| 
		L_h^{-(\nicefrac{\sigma}{2}+\nicefrac{d}{4}+\eps_0 )} \Pi_h 
		\bigr\|_{\cL_2^{0;\sigma}}. 
	\end{align*} 
	Here, the arising 
	Hilbert--Schmidt norm is 
	bounded by a constant, since 
	\begin{align*}
		\bigl\| 
		L_h^{-(\nicefrac{\sigma}{2}+\nicefrac{d}{4}+\eps_0 )} \Pi_h 
		\bigr\|_{\cL_2^{0;\sigma}}
		\leq 
		\bigl\| 
		L^{\nicefrac{\sigma}{2}} 
		L_h^{-\nicefrac{\sigma}{2}} \Pi_h 
		\bigr\|_{\cL(L_2(\cD))}
		\bigl\|
		L_h^{-(\nicefrac{d}{4}+\eps_0 )} \Pi_h 
		\bigr\|_{\cL_2(L_2(\cD))},  
	\end{align*} 
	and boundedness 
	follows from 
	\eqref{eq:L+Lh} and~\eqref{eq:sum-Pihm}. 
	
	For term ($\text{A}''_\varrho$), 
	we choose the parameters 
	in~\eqref{eq:fem-error-fractional}
	of Theorem~\ref{thm:fem-error}, 
	as follows: 
	$\beta'' := \nicefrac{\sigma}{2}+\nicefrac{d}{4}+\eps_0$, 
	$\sigma'' := \sigma$, and 
	$\delta'' := \min\{4\beta-2\sigma-d-4\eps_0, 1+\alpha\} > 0$. 
	This gives, for any $\eps'' > 0$ 
	and sufficiently small $h>0$, 
	\begin{align*} 
		\text{($\text{A}''_\varrho$)} 
		&:= 
		\bigl\| 
		L^{\nicefrac{\sigma}{2}} 
		\bigl(L^{-(\nicefrac{\sigma}{2} + \nicefrac{d}{4}+\eps_0)} 
		- L_h^{-(\nicefrac{\sigma}{2} + \nicefrac{d}{4}+\eps_0)} 
		\Pi_h \bigr) 
		L^{-(2\beta - \sigma - \nicefrac{d}{4} - \eps_0) } 
		\bigr\|_{\cL_2^{0;0}} \\
		&\leq
		\bigl \| 
		L^{-\beta''} 
		- L_h^{-\beta''} 
		\Pi_h 
		\bigr\|_{\cL\left(\Hdot{\delta''}; \Hdot{\sigma}\right)}
		\bigl\| 
		L^{-\nicefrac{d}{4} - \eps_0}
		\bigr\|_{\cL_2^{0;0}} \\
		&\lesssim_{(\eps_0,\eps'',\sigma,\alpha,\beta,A,\kappa,\cD)} 
		h^{\min\{ 
			4\beta-2\sigma-\nicefrac{d}{2} - 2\eps_0 - \eps'', \, 
			1+\alpha-\sigma, \, 
			2\alpha 
			\}} , 
	\end{align*}
	since $\bigl\| 
	L^{-\nicefrac{d}{4} - \eps_0}
	\bigr\|_{\cL_2^{0;0}}$ is bounded 
	due to the spectral asymptotics~\eqref{eq:lem:spectral-behav} 
	of Lemma~\ref{lem:spectral-behav}. 
	We conclude that 
	\[
		\text{($\text{A}_\varrho$)}  
		\lesssim_{(\eps,\sigma,\alpha,\beta,A,\kappa,\cD))}
		h^{\min\left\{ 
			4\beta-2\sigma-\nicefrac{d}{2} - \eps, \, 
			1+\alpha-\sigma, \, 
			2\alpha 
			\right\}}, 
	\]
	for every $\eps>0$ and 
	sufficiently small $h>0$. 
	
	Finally, we use the estimate 
	\begin{align}
		\bigl\| T\dual{T} - \widetilde{T}\dual{\widetilde{T}} 
		\bigr\|_{\cL_2^{-\sigma;\sigma}} 
		&= 
		\bigl\| \tfrac{1}{2} \bigl(T + \widetilde{T} \bigr) 
		\dual{\bigl( T - \widetilde{T} \bigr)} 
		+ \tfrac{1}{2}
		\bigl(T - \widetilde{T}\bigr) 
		\dual{\bigl(T + \widetilde{T} \bigr)} \bigr\|_{\cL_2^{-\sigma;\sigma}} 
		\notag \\
		&\leq \bigl\| \bigl( T + \widetilde{T} \bigr) 
		\dual{\bigl(T - \widetilde{T} \bigr)} \bigr\|_{\cL_2^{-\sigma;\sigma}},  
		\label{eq:HS-estimate}
	\end{align} 
	as well as the inverse inequality~\eqref{eq:ass:galerkin:inverse}
	to conclude for term ($\text{B}_\varrho$)
	for $\sigma\in\{0,1\}$ 
	that 
	\begin{align*} 
		\text{($\text{B}_\varrho$)}
		&\lesssim_{(\sigma,A,\kappa,\cD)} 
		h^{-\sigma}
		\bigl\| 
		\bigl(L_h^{-\beta} + Q_{h,k}^{\betafrac} L_h^{-\nbeta}\bigr) \Pi_h 
		\bigr\|_{\cL(L_2(\cD))} 
		\bigl\| 
		\dual{\bigl( E_Q^\beta \bigr)} 
		\bigr\|_{\cL_2^{-\sigma;0}} \\
		&\lesssim_{(\sigma,A,\kappa,\cD)} 
		h^{-\sigma}
		\left( \bigl\| 
		L_h^{-\beta} \Pi_h 
		\bigr\|_{\cL(L_2(\cD))}
		+ 
		\bigl\| 
		Q_{h,k}^{\betafrac} L_h^{-\nbeta} \Pi_h 
		\bigr\|_{\cL(L_2(\cD))} \right) 
		\bigl\| 
		E^\beta_Q \bigr\|_{\cL_2^{0;\sigma}}. 
	\end{align*} 
	Combining the above estimate with \eqref{eq:EQ-estimate} 
	and stability of the operators 
	\begin{equation}\label{eq:uniform-stab} 
		L_h^{-\beta}, \, 
		Q_{h,k}^{\betafrac} 
		\from 
		\left(V_h, \norm{\,\cdot\,}{L_2(\cD)}\right) 
		\to 
		\left(V_h, \norm{\,\cdot\,}{L_2(\cD)}\right)
	\end{equation}
	which is uniform in $h$ and $k$ 
	for sufficiently small $h,k>0$, 
	shows that 
	\[
		\text{($\text{B}_\varrho$)}
		\lesssim_{(\sigma,\beta,A,\kappa,\cD)}
		e^{-\nicefrac{\pi^2}{(2k)}} 
		h^{-2\sigma-\nicefrac{d}{2}\,\mathds{1}_{\{\beta<1\}}}.  
	\]
	Interpolation for $\sigma\in(0,1)$ 
	completes the proof 
	of~\eqref{eq:galerkin:conv:cov-alpha}. 
\end{proof}

Due to the similarity 
in the derivation with 
the proof 
of~\cite[Thm.~2.10]{bkk-strong}, 
we have moved the proof of the 
following proposition to 
Appendix~\ref{app:proofs}. 

\begin{proposition}\label{prop:galerkin:conv:sobolev-p}
	Suppose  
	Assumptions~\ref{ass:coeff}.I--II,~\ref{ass:dom}.I, 
	and~\ref{ass:galerkin}.II--III. 
	Let Assumption~\ref{ass:galerkin}.IV  
	be satisfied with parameters 
	$r,s_0,t>0$ such that 
	$\nicefrac{r}{2}\geq t-1$ 
	and $s_0\geq t$. 
	Let $d\in\bbN$, $\beta > 0$ and   
	$0 \leq \sigma\leq 1$  
	be such that 
	$2\beta-\sigma > \nicefrac{d}{2}$. 
	For $\tau\geq 0$, set   
	\begin{align} 
		\hspace*{-0.9cm} 
		\rho_0(\tau) 
		:= 
		\min\left\{ r, \,  
		s_0, \, 
		2\beta + \tau -\nicefrac{d}{2}   
		\right\}, 
		\    
		\rho_1(\tau) 
		:= 
		\min\left\{ \nicefrac{r}{2}, \,  
		s_0, \, 
		2\beta - 1 + \tau -\nicefrac{d}{2}  
		\right\}.
		\hspace*{-0.9cm} 
		\label{eq:def:rho-01} 
	\end{align}
	Furthermore, define, for $0\leq\sigma\leq 1$,  
	\begin{align} 
		\rho_\GP(\sigma) 
		&:= 
		(1-\sigma) \rho_0(0) 
		+ 
		\sigma \rho_1(0), 
		\quad 
		\rho_\varrho(\sigma) 
		:= 
		(1-\sigma) \rho_0(2\beta) 
		+ 
		\sigma \rho_1(2\beta-1). 
		\label{eq:def:rho-sigma}
	\end{align}
	Let~$\GP^\beta$ be the Whittle--Mat\'ern 
	field in~\eqref{eq:gpbeta}
	and, for $h, k>0$, 
	let $\widetilde{\GP}^\beta_{h,k}$ denote the 
	sinc-Galerkin approximation 
	in~\eqref{eq:def:gphk-2}, 
	with covariance functions 
	$\varrho^\beta$ and
	$\widetilde{\varrho}^\beta_{h,k}$, 
	respectively.  
	Then, for all $q>0$, 
	\begin{align}
		\hspace*{-0.2cm}
		\Bigl(\bbE\Bigl[
		\bigl\| \GP^{\beta} 
		- \widetilde{\GP}^\beta_{h,k} 
		\bigr\|_{H^\sigma(\cD)}^q
		\Bigr]\Bigr)^{\nicefrac{1}{q}} 
		&\lesssim_{(q,\cP)}
		C_{\beta,h}^{\GP}
		\Bigl(h^{ \rho_\GP(\sigma) }  
		+ 
		e^{-\nicefrac{\pi^2}{(2k)}} 
		h^{-\sigma - \nicefrac{d}{2}\,\mathds{1}_{\{\beta<1\}}} \Bigr),
		\hspace*{-0.2cm} 
		\label{eq:galerkin:conv:hdot} 
		\\
		\bigl\| \varrho^\beta 
		- \widetilde{\varrho}^\beta_{h,k} 
		\bigr\|_{H^{\sigma,\sigma}(\cD\times\cD)} 
		&\lesssim_{\cP}    
		C_{\beta,h}^{\varrho}
		\Bigl( 
		h^{ \rho_\varrho(\sigma) } 
		+ 
		e^{-\nicefrac{\pi^2}{(2k)}} 
		h^{-2\sigma - \nicefrac{d}{2}\,\mathds{1}_{\{\beta<1\}}} \Bigr)  
		\label{eq:galerkin:conv:cov} 
	\end{align} 
	hold for 
	sufficiently small $h,k>0$, where  
	\begin{align*}
		C_{\beta,h}^{\GP} 
		&:= 
		\begin{cases}
			\sqrt{\ln(1/h)} 
			& 
			\text{if } 
			2\beta \in 
			\left\{2(t-1)+\gamma+\nicefrac{d}{2}, \, 
				t+\gamma+\nicefrac{d}{2} 
				: \gamma\in\{0,1\} \right\}, \\
			1 
			& 
			\text{otherwise},
		\end{cases} \\
		C_{\beta,h}^{\varrho} 
		&:= 
		\begin{cases}
			\sqrt{\ln(1/h)} 
			& 
			\text{if } 
			4\beta \in 
			\left\{2(t-1)+\gamma+\nicefrac{d}{2}, \, 
				t+\gamma+\nicefrac{d}{2} 
				: \gamma\in\{0,1,2\} \right\}, \\
			1  
			& 
			\text{otherwise}, 
		\end{cases}
	\end{align*} 
	and $\cP:=\{C_0,C_\lambda,\sigma,\beta,A,\kappa,\cD\}$. 
\end{proposition}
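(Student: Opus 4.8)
The plan is to bound the coloring operator $\widetilde{E}_{h,k}^\beta = L^{-\beta} - Q_{h,k}^{\betafrac} L_h^{-\nbeta}\pitilde$ of the error $\GP^\beta - \widetilde{\GP}^\beta_{h,k}$ and then to pass to the field and its covariance through Proposition~\ref{prop:regularity:hdot}. Indeed, by~\eqref{eq:field-err-p} it suffices to estimate $\norm{\widetilde{E}_{h,k}^\beta}{\cL_2^{0;\sigma}}$ for~\eqref{eq:galerkin:conv:hdot}, while for~\eqref{eq:galerkin:conv:cov} the identity~\eqref{eq:HS-estimate}, applied with $T := L^{-\beta}$ and $\widetilde{T} := Q_{h,k}^{\betafrac} L_h^{-\nbeta}\pitilde$, reduces the task via~\eqref{eq:cov-fct-err} to bounding $\norm{(T + \widetilde{T})\dual{(T - \widetilde{T})}}{\cL_2^{-\sigma;\sigma}}$. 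Since the target exponents $\rho_\GP(\sigma),\rho_\varrho(\sigma)$ in~\eqref{eq:def:rho-sigma} are \emph{linear} in $\sigma$, it suffices to treat the endpoints $\sigma\in\{0,1\}$ and interpolate; for the operator $\widetilde{E}_{V_h}^\beta$ below this will follow from $\norm{v}{\sigma}\leq\norm{v}{0}^{1-\sigma}\norm{v}{1}^{\sigma}$ combined with Hölder's inequality on the spectral sums. Finally, Lemma~\ref{lem:hdot-sobolev} converts all resulting $\norm{\,\cdot\,}{\sigma}$- and $\norm{\,\cdot\,}{\sigma,\sigma}$-bounds into the asserted Sobolev-norm bounds.

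Following the decomposition $\widetilde{E}_{h,k}^\beta = \widetilde{E}_{N_h}^\beta + \widetilde{E}_{V_h}^\beta + \widetilde{E}_Q^\beta$, two of the three pieces are routine. The truncation error $\widetilde{E}_{N_h}^\beta = L^{-\beta} - L_{N_h}^{-\beta}$ is precisely the spectral Galerkin error, so Corollary~\ref{cor:spectral:conv:hdot} together with $N_h\eqsim_{\cD} h^{-d}$ from Assumption~\ref{ass:galerkin}.III contributes the summability exponent $2\beta + \tau - \nicefrac{d}{2}$ (with $\tau = 0$ for the field and $\tau = 2\beta$ for the covariance). The quadrature error $\widetilde{E}_Q^\beta$ is controlled by the sinc bound $\norm{E_Q^\beta\psi}{L_2(\cD)}\lesssim_{(\beta,A,\kappa,\cD)} e^{-\nicefrac{\pi^2}{(2k)}}\norm{L_h^{-\nbeta}\Pi_h\psi}{L_2(\cD)}$ of~\cite{BonitoPasciak:2015}, the inverse inequality~\eqref{eq:ass:galerkin:inverse}, and~\eqref{eq:sum-Pihm}, which yields the exponentially small terms $e^{-\nicefrac{\pi^2}{(2k)}} h^{-\sigma - \nicefrac{d}{2}\,\mathds{1}_{\{\beta<1\}}}$ together with its covariance analogue.

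The heart of the proof is the Galerkin error $\widetilde{E}_{V_h}^\beta = L_{N_h}^{-\beta} - L_h^{-\beta}\pitilde$ from~\eqref{eq:def:err-galerkin}. The decisive structural fact, coming from the choice of $\pitilde$ in~\eqref{eq:def:pitilde} built from \emph{both} eigenbases, is that $\widetilde{E}_{V_h}^\beta e_j = \lambda_j^{-\beta} e_j - \lambda_{j,h}^{-\beta} e_{j,h}$ for $j\leq N_h$ and $\widetilde{E}_{V_h}^\beta e_j = 0$ otherwise, so that $\norm{\widetilde{E}_{V_h}^\beta}{\cL_2^{0;\sigma}}^2 = \sum_{j=1}^{N_h}\norm{\lambda_j^{-\beta} e_j - \lambda_{j,h}^{-\beta} e_{j,h}}{\sigma}^2$. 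Splitting each summand into an eigenvalue and an eigenvector contribution and inserting~\eqref{eq:ass:galerkin:lambda},~\eqref{eq:ass:galerkin:e} of Assumption~\ref{ass:galerkin}.IV together with Weyl's law~\eqref{eq:lem:spectral-behav} reduces everything to explicit sums of the form $\sum_{j=1}^{N_h}\lambda_j^{p}$. At the endpoint $\sigma = 0$ this produces the exponents $r$ and $s_0$; at $\sigma = 1$ one expands $\norm{v_j}{1}^2$ for $v_j := \lambda_j^{-\beta} e_j - \lambda_{j,h}^{-\beta} e_{j,h}$ using $\norm{e_{j,h}}{1}^2 = \lambda_{j,h}$ and $\scalar{e_j, e_{j,h}}{L_2(\cD)} = 1 - \tfrac12\norm{e_j - e_{j,h}}{L_2(\cD)}^2$, and observes that after the leading terms cancel the eigenvalue error $\lambda_{j,h} - \lambda_j$ survives to \emph{first} order; this is exactly why $r$ is replaced by $\nicefrac{r}{2}$ in $\rho_1$. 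The hypotheses $\nicefrac{r}{2}\geq t-1$ and $s_0\geq t$ ensure that, even when these sums diverge, the resulting $h$-powers never fall below the stated $\min$.

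The main difficulty is the bookkeeping of these spectral sums: across the two endpoints and the two error sources one must identify which of $r$ (resp.\ $\nicefrac{r}{2}$), $s_0$, or the truncation exponent is smallest, and --- most delicately --- isolate the \emph{borderline} cases in which some $\sum_{j=1}^{N_h}\lambda_j^{p}$ is logarithmically divergent, i.e.\ $p \eqsim -\nicefrac{d}{2}$ so that $\sum_{j=1}^{N_h} j^{-1}\eqsim \ln N_h\eqsim\ln(\nicefrac{1}{h})$. These borderline values of $2\beta$ (for the field) and $4\beta$ (for the covariance) are precisely those listed in $C_{\beta,h}^{\GP}$ and $C_{\beta,h}^{\varrho}$ and account for the $\sqrt{\ln(\nicefrac{1}{h})}$ factors. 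For the covariance the very same sums recur after factoring $(T+\widetilde{T})\dual{(T-\widetilde{T})}$ through~\eqref{eq:HS-estimate}; here the extra factor $T + \widetilde{T}\approx 2L^{-\beta}$ supplies one additional $L^{-\beta}$-smoothing, which is what shifts $\tau$ from $0$ to $2\beta$ (resp.\ to $2\beta - 1$ at the $H^1$ endpoint) and thereby roughly doubles the effective rate, in line with~\eqref{eq:def:rho-sigma}.
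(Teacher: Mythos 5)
Your proposal is correct and takes essentially the same approach as the paper's proof in Appendix~\ref{app:proofs}: the same decomposition into truncation, Galerkin and quadrature errors, the same reduction of the Galerkin part to the weighted spectral sums $\sum_{j}\lambda_j^{-\tau}\bigl\|\lambda_j^{-\beta}e_j-\lambda_{j,h}^{-\beta}e_{j,h}\bigr\|_{\sigma}^2$ (the paper's Lemma~\ref{lem:galerkin:EVh-tilde}, applied with $\tau=0$ for the field and $\tau=2\beta-\sigma$ for the covariance), and the same identification of the $\sqrt{\ln(1/h)}$ factors with borderline-divergent Weyl sums. Your two small variations---H\"older's inequality on the spectral sums in place of operator interpolation for $\sigma\in(0,1)$, and a direct expansion of the $\Hdot{1}$-norm in place of the identity $\norm{e_j-e_{j,h}}{1}^2=\lambda_j\norm{e_j-e_{j,h}}{0}^2+(\lambda_{j,h}-\lambda_j)$---are equivalent to the paper's steps and yield the same exponents, including the $\nicefrac{r}{2}$ in $\rho_1$.
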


\begin{proposition}\label{prop:galerkin:conv:hoelder}   
	Suppose Assumptions~\ref{ass:coeff}.I--II, 
	\ref{ass:galerkin}.II--III, 
	and let Assumption~\ref{ass:galerkin}.I  
	be satisfied with parameters   
	$\theta_0\in(0,1)$ and $\theta_1 \geq 1+\alpha$, 
	where $0 < \alpha \leq 1$ 
	is as in~\eqref{eq:ass:1+alpha}. 
	Assume furthermore that 
	$\Pi_h$ is $H^1(\cD)$-stable, see~\eqref{eq:PihH1stable},  
	and that $d=1$, 
	$\beta>0$ and 
	$0<\gamma\leq \nicefrac{1}{2}$ 
	are such that $2\beta > \gamma + \nicefrac{1}{2}$. 
	Then, the 
	Whittle--Mat\'ern field~$\GP^\beta$
	in~\eqref{eq:gpbeta}
	and the sinc-Galerkin 
	approximation $\GP_{h,k}^\beta$ 
	in~\eqref{eq:def:gphk-1} 
	can be taken  
	as continuous random fields. 
	Moreover, 
	for every $\delta\in(0,\gamma)$, 
	all $\eps,q>0$ and     
	sufficiently small $h>0$, we have  
	\begin{align}
		&\Bigl( \bbE 
		\Bigl[ 
		\bigl\| \GP^\beta - \GP^{\beta}_{h,k} \bigr\|_{C^{\delta}(\clos{\cD})}^q 
		\Bigr] 
		\Bigr)^{\nicefrac{1}{q}} \notag \\ 
		&\hspace{1.25cm} 
		\lesssim_{(q,\gamma,\delta,\eps,\alpha,\beta,A,\kappa,\cD)} 
		h^{\min\{ 2\beta - \gamma - \nicefrac{1}{2} -\varepsilon, \, 
			\nicefrac{1}{2} + \alpha - \gamma, \, 
			2\alpha\} }  
		+ 
		e^{-\nicefrac{\pi^2}{(2k)}} h^{-\gamma - \nicefrac{1}{2} }, 
		\label{eq:galerkin:conv:hoelder} 
		\\
		&\sup_{x,y\in\clos{\cD}} 
		\bigl|  
		\varrho^\beta(x,y) - \varrho^\beta_{h,k}(x,y) 
		\bigr| \notag \\
		&\hspace{1.25cm}   
		\lesssim_{(\eps,\alpha,\beta,A,\kappa,\cD)}
		h^{\min\{ 4\beta - 1 - \varepsilon, \, 
			\nicefrac{1}{2} + \alpha - \varepsilon, \, 
			2\alpha\} } 
		+ 
		e^{-\nicefrac{\pi^2}{(2k)}} h^{-1-\eps}.  
		\label{eq:galerkin:conv:cov-Linf}
	\end{align} 
	Here, $\varrho^\beta, \varrho^\beta_{h,k}$ 
	denote the covariance functions 
	of $\GP^\beta$ and $\GP^\beta_{h,k}$, respectively.  
\end{proposition}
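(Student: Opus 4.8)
The plan is to derive both estimates from the abstract regularity results of Section~\ref{section:grfs}, applied to the error field $\GP^\beta-\GP^\beta_{h,k}$, which by Definition~\ref{def:colored} is a GRF colored by $E_{h,k}^\beta=L^{-\beta}-Q_{h,k}^{\betafrac}L_h^{-\nbeta}\Pi_h$. Since $d=1$, the bounded connected open set $\cD\subset\bbR$ is an interval, so Assumption~\ref{ass:dom}.I holds automatically and the Sobolev embedding $H^{\gamma+\nicefrac12}(\cD)\hookrightarrow C^\gamma(\clos{\cD})$ together with Lemma~\ref{lem:hdot-sobolev} is available. Continuity of $\GP^\beta$ follows from Lemma~\ref{lem:matern:hoelder}\ref{lem:matern:hoelder:ass-i}, while $\GP^\beta_{h,k}$ is colored by an operator with range in $V_h\subset H^1_0(\cD)\hookrightarrow C^{\nicefrac12}(\clos{\cD})$, so Corollary~\ref{cor:regularity:hoelder} furnishes a continuous version of it and hence of the difference.

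For the H\"older bound~\eqref{eq:galerkin:conv:hoelder} I would set $s:=\gamma+\nicefrac12\in(\nicefrac12,1]$ and apply Corollary~\ref{cor:regularity:hoelder} with $T=E_{h,k}^\beta$, which reduces the claim to estimating $\|E_{h,k}^\beta\|_{\cL(L_2(\cD);H^s(\cD))}$. Using the equivalence $\norm{\,\cdot\,}{s}\eqsim\norm{\,\cdot\,}{H^s(\cD)}$ (valid as $s\neq\nicefrac12$) I split $E_{h,k}^\beta=E_{V_h}^\beta+E_Q^\beta$ as in~\eqref{eq:def:err-galerkin}--\eqref{eq:def:err-quad}. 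The Galerkin part $\|E_{V_h}^\beta\|_{\cL(\Hdot{0};\Hdot{s})}$ is handled by Theorem~\ref{thm:fem-error} with $\sigma=s$, $\delta=0$, giving $h^{\min\{2\beta-\gamma-\nicefrac12-\eps,\,\nicefrac12+\alpha-\gamma,\,2\alpha\}}$ (the term $1+\alpha+\delta=1+\alpha$ being dominated by $\nicefrac12+\alpha-\gamma$). Since $E_Q^\beta$ has range in $V_h$, the inverse inequality~\eqref{eq:ass:galerkin:inverse} yields $\|E_Q^\beta\|_{\cL(\Hdot{0};\Hdot{s})}\lesssim h^{-s}\|E_Q^\beta\|_{\cL(L_2(\cD))}$, and the sinc bound of~\cite{BonitoPasciak:2015} together with $\|L_h^{-\nbeta}\Pi_h\|_{\cL(L_2(\cD))}\lesssim 1$ gives $e^{-\nicefrac{\pi^2}{(2k)}}h^{-\gamma-\nicefrac12}$. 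Working with operator norms rather than Hilbert--Schmidt norms is precisely what removes the $h^{-\nicefrac{d}{2}\mathds{1}_{\{\beta<1\}}}$ factor of Proposition~\ref{prop:galerkin:conv:sobolev-alpha}.

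For the covariance bound~\eqref{eq:galerkin:conv:cov-Linf} I would invoke Proposition~\ref{prop:regularity:Linf}\ref{prop:regularity:Linf-ii}, so the left-hand side is controlled by $\|\cC-\cC_{h,k}\|_{\cL(\dual{C(\clos{\cD})};C(\clos{\cD}))}$ with $\cC=L^{-2\beta}$ and $\cC_{h,k}$ the covariance operator of $\GP^\beta_{h,k}$. The decisive point is the \emph{doubled} rate $4\beta-1$: I would split $\cC-\cC_{h,k}=(\cC-\cC_h)+(\cC_h-\cC_{h,k})$ with $\cC_h=L_h^{-2\beta}\Pi_h$ and treat the Galerkin contribution $\cC-\cC_h=L^{-2\beta}-L_h^{-2\beta}\Pi_h$ as a fractional Galerkin error of the operator $L^{-2\beta}$, i.e.\ apply Theorem~\ref{thm:fem-error} with exponent $2\beta$ in place of $\beta$. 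To pass from $\cL(\dual{C(\clos{\cD})};C(\clos{\cD}))$ to the Hilbert-space norm I would use $H^{\nicefrac12+\eps}(\cD)\hookrightarrow C(\clos{\cD})$ and, dually, $\sup_{x\in\clos{\cD}}\norm{\dirac{x}}{\Hdot{-\nicefrac12-\eps}}<\infty$, so that $\|\cC-\cC_h\|_{\cL(\dual{C(\clos{\cD})};C(\clos{\cD}))}\lesssim\|\cC-\cC_h\|_{\cL(\Hdot{-\nicefrac12-\eps};\Hdot{\nicefrac12+\eps})}$; Theorem~\ref{thm:fem-error} with $\sigma=\nicefrac12+\eps$, $\delta=-\nicefrac12-\eps$ then produces $h^{\min\{4\beta-1-\eps,\,\nicefrac12+\alpha-\eps,\,2\alpha\}}$. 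For the quadrature contribution I would use the symmetrization identity~\eqref{eq:HS-estimate} from the proof of Proposition~\ref{prop:galerkin:conv:sobolev-alpha} (whose error factor is exactly $E_Q^\beta$), pair against Dirac masses to reduce to $L_2(\cD)$ inner products, bound each factor in $\cL(L_2(\cD);C(\clos{\cD}))$ by $h^{-\nicefrac12-\eps}$ times its $L_2$-operator norm via~\eqref{eq:ass:galerkin:inverse}, and combine this with the sinc bound and the uniform stability~\eqref{eq:uniform-stab} to obtain $e^{-\nicefrac{\pi^2}{(2k)}}h^{-1-\eps}$.

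I expect the main obstacle to be exactly this doubled rate: the naive factorization $\cC-\cC_{h,k}=\tfrac12(T+\widetilde T)\dual{(T-\widetilde T)}+\tfrac12(T-\widetilde T)\dual{(T+\widetilde T)}$ followed by bounding $\|T-\widetilde T\|_{\cL(L_2(\cD);C(\clos{\cD}))}$ only yields the single rate $\sim 2\beta$. Recovering $4\beta-1$ forces one to keep the Galerkin part as a genuine $L^{-2\beta}$-error and to check that all parameter constraints of Theorem~\ref{thm:fem-error}, namely $0\le\sigma\le1$, $-1\le\delta\le1+\alpha$ and $4\beta-1>0$, are satisfied; this is where the hypotheses $\gamma\le\nicefrac12$ and $2\beta>\gamma+\nicefrac12$ (hence $\beta>\nicefrac14$) enter. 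The remaining steps are routine once $\eps>0$ is fixed small enough.
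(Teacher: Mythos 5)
Your proposal is correct and follows essentially the same route as the paper's proof: continuity via Corollary~\ref{cor:regularity:hoelder} and Lemma~\ref{lem:matern:hoelder}, the H\"older bound via the splitting $E^\beta_{h,k}=E^\beta_{V_h}+E^\beta_Q$ with Theorem~\ref{thm:fem-error} (at $\sigma=\gamma+\nicefrac{1}{2}$, $\delta=0$) and the inverse inequality plus sinc bound, and the covariance bound via Proposition~\ref{prop:regularity:Linf}\ref{prop:regularity:Linf-ii}, the Sobolev embedding to pass to $\cL\bigl(\Hdot{-\sigma};\Hdot{\sigma}\bigr)$, Theorem~\ref{thm:fem-error} applied to $L^{-2\beta}-L_h^{-2\beta}\Pi_h$ with negative $\delta$, and the symmetrization~\eqref{eq:HS-estimate} for the quadrature term. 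Your parameter checks (in particular $4\beta-1>0$ from $2\beta>\gamma+\nicefrac{1}{2}$) match the paper's, so there is nothing to fix.
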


\begin{proof} 
	Clearly, $Q^{\betafrac}_{h,k} L_h^{-\nbeta} \Pi_h 
	\in 
	\cL\bigl( L_2(\cD); H^{\gamma + \nicefrac{1}{2}}(\cD) \bigr)$, 
	since $Q^{\betafrac}_{h,k} L_h^{-\nbeta} \Pi_h$ 
	is a finite-rank operator 
	and $V_h \subset H^1_0(\cD) 
	\subset H^{\gamma + \nicefrac{1}{2}}(\cD)$ 
	by assumption. 
	Thus, by Corollary~\ref{cor:regularity:hoelder} 
	$\GP^{\beta}_{h,k}$ can be taken as a continuous GRF; 
	and the same is true for the Whittle--Mat\'ern 
	field~$\GP^\beta$ by Corollary~\ref{lem:matern:hoelder}. 
	Then, $\GP^\beta - \GP^{\beta}_{h,k}$ is 
	a continuous random field, 
	colored by $E^\beta_{h,k} = E^\beta_{V_h} + E_Q^\beta$, 
	see~\eqref{eq:def:err-galerkin}--\eqref{eq:def:err-quad}. 
	Furthermore, by~\eqref{eq:cor:regularity:hoelder}
	and by Lemma~\ref{lem:hdot-sobolev}, 
	since $d=1$ and 
	$\nicefrac{1}{2}<\gamma+\nicefrac{1}{2}\leq 1$,
	we have, for $\delta\in(0,\gamma)$ 
	and $q\in(0,\infty)$, 
	\begin{align*}
		\left( \bbE 
		\Bigl[ 
		\bigl\| \GP^\beta - \GP^{\beta}_{h,k} \bigr\|_{C^{\delta}(\clos{\cD})}^q 
		\Bigr] 
		\right)^{\nicefrac{1}{q}} 
		\lesssim_{(q,\gamma,\delta,A,\kappa,\cD)} 
		\bigl\| E^\beta_{V_h} + E^\beta_{Q} \bigr\|_{
			\cL\left(\Hdot{0}; \Hdot{\gamma+ \nicefrac{1}{2}} \right)}.   
	\end{align*} 
	By~\eqref{eq:fem-error-fractional} 
	of Theorem~\ref{thm:fem-error} 
	we then find, for any $\eps>0$ and 
	sufficiently small $h>0$,  
	\begin{align*} 
		\bigl\| E^\beta_{V_h} \bigr\|_{
			\cL\left(\Hdot{0}; \Hdot{\gamma+\nicefrac{1}{2}} \right)} 
		\lesssim_{(\gamma,\eps,\alpha,\beta,A,\kappa,\cD)} 
		h^{\min\{ 2\beta - \gamma - \nicefrac{1}{2} -\varepsilon, \, 
				\nicefrac{1}{2}+\alpha-\gamma, \, 2\alpha \} }. 
	\end{align*}  
	For $E_Q^\beta\in\cL(V_h)$ 
	we use the inverse inequality~\eqref{eq:ass:galerkin:inverse}
	as well as 
	the quadrature error estimate from 
	\cite[Lem.~3.4, Rem.~3.1 \& Thm.~3.5]{BonitoPasciak:2015}
	and obtain 
	\begin{align}\label{eq:proof-EQ-est} 
		\bigl\| E_Q^\beta \bigr\|_{ 
			\cL\left(\Hdot{0};\Hdot{\gamma+\nicefrac{1}{2}}\right)} 
		\lesssim_{(\gamma,\beta,A,\kappa,\cD)}
		e^{-\nicefrac{\pi^2}{(2k)}} h^{-\gamma - \nicefrac{1}{2}}, 
	\end{align} 
	for sufficiently small $h>0$, 
	which completes the proof 
	of~\eqref{eq:galerkin:conv:hoelder}.  
	
	For the $L_{\infty}(\cD\times\cD)$-estimate    
	\eqref{eq:galerkin:conv:cov-Linf} of the covariance function, 
	fix $\eps\in(0,2)$. 
	First, we recall the Sobolev 
	embedding $H^{\nicefrac{\eps}{4}+\nicefrac{1}{2}}(\cD)
	\hookrightarrow 
	C^{\nicefrac{\eps}{4}}(\clos{\cD})$ 
	as well as the equivalence of 
	the spaces 
	$H^{\nicefrac{\eps}{4}+\nicefrac{1}{2}}(\cD) 
	\cong_{(A,\kappa,\cD)} \Hdot{\nicefrac{\eps}{4}+\nicefrac{1}{2}}$,  
	see Lemma~\ref{lem:hdot-sobolev}. 
	We then conclude with~\eqref{eq:prop:regularity:Linf-ii}  
	of Proposition~\ref{prop:regularity:Linf}\ref{prop:regularity:Linf-ii}
	that, for 
	$\sigma:=\nicefrac{1}{2}+\nicefrac{\eps}{4}\in(\nicefrac{1}{2},1)$,  
	\begin{align*} 
		&\sup_{x,y\in\clos{\cD}} 
		\bigl|  
		\varrho^\beta(x,y) - \varrho^\beta_{h,k}(x,y) 
		\bigr| 
		\leq 
		\bigl\| 
		L^{-2\beta} 
		- 
		Q_{h,k}^{\betafrac} L_h^{-\nbeta} \Pi_h 
		\dual{ \bigl(Q_{h,k}^{\betafrac} L_h^{-\nbeta} \Pi_h \bigr)} 
		\bigr\|_{\cL(\dual{C(\clos{\cD})};C(\clos{\cD}))} \\
		&\qquad\lesssim_{(\eps,A,\kappa,\cD)}  
		\bigl\| 
		\bigl( 
		L^{-2\beta} 
		- 
		L_h^{-2\beta}\Pi_h \bigr) 
		+ 
		\bigl(L_h^{-2\beta} 
		-
		Q_{h,k}^{\betafrac} L_h^{-2\nbeta} Q_{h,k}^{\betafrac} 
		\bigr) \Pi_h 
		\bigr\|_{\cL\left(\Hdot{-\sigma};\Hdot{\sigma}\right)}. 
	\end{align*} 
	By~\eqref{eq:fem-error-fractional}
	of Theorem~\ref{thm:fem-error} 
	we have 
	\begin{align*}
		\bigl\| 
		L^{-2\beta} 
		- 
		L_h^{-2\beta}\Pi_h 
		\bigr\|_{\cL\left(\Hdot{-\sigma};\Hdot{\sigma}\right)} 
		\lesssim_{(\eps,\alpha,\beta,A,\kappa,\cD)} 
		h^{\min\{
			4\beta-1-\eps,\,
			\nicefrac{1}{2}+\alpha-\nicefrac{\eps}{4}, \,
			2\alpha\}}. 
	\end{align*} 
	Furthermore, we find, similarly as in~\eqref{eq:HS-estimate}, 
	that 
	\begin{align*} 
		\bigl\| 
		\bigl( L_h^{-2\beta}  
		&-
		Q_{h,k}^{\betafrac} L_h^{-2\nbeta} Q_{h,k}^{\betafrac} 
		\bigr) \Pi_h 
		\bigr\|_{\cL\left(\Hdot{-\sigma};\Hdot{\sigma}\right)} \\
		&\leq 
		\bigl\| 
		(L_h^{-\beta}  
		+
		Q_{h,k}^{\betafrac} L_h^{-\nbeta} ) 
		\dual{(L_h^{-\beta} - 
		Q_{h,k}^{\betafrac} L_h^{-\nbeta} )} \Pi_h  
		\bigr\|_{\cL\left(\Hdot{-\sigma};\Hdot{\sigma}\right)} \\
		&\lesssim_{(\eps,A,\kappa,\cD)}
		h^{-\nicefrac{1}{2}-\nicefrac{\eps}{4}} 
		\bigl\| 
		\bigl(L_h^{-\beta} 
		+ 
		Q_{h,k}^{\betafrac} L_h^{-\nbeta}\bigr) \Pi_h 
		\bigr\|_{\cL(L_2(\cD))}  
		\bigl\| 
		\dual{\bigl(E^\beta_Q \bigr)}   
		\bigr\|_{\cL\left(\Hdot{-\sigma}; \Hdot{0}\right)}, 
	\end{align*} 
	where 
	we have used the inverse 
	inequality~\eqref{eq:ass:galerkin:inverse} 
	in the last step. 
	Finally, since  
	$\bigl\| 
	\dual{\bigl(E^\beta_Q \bigr)}   
	\bigr\|_{\cL\left(\Hdot{-\sigma}; \Hdot{0}\right)}
	= 
	\bigl\| 
	E^\beta_Q    
	\bigr\|_{\cL\left(\Hdot{0};
			\Hdot{\sigma}\right)}$, 
	the proof is completed by 
	\eqref{eq:proof-EQ-est}
	combined with the uniform 
	stability~\eqref{eq:uniform-stab} of 
	$L_h^{-\beta}$ and $Q_{h,k}^{\betafrac}$. 
\end{proof}

\subsection{Application to finite element approximations}
\label{subsec:matern:fem}

We now   
discuss different scenarios 
of 
\begin{enumerate} 
	\item regularity 
		of the second-order differential $L$ 
		in~\eqref{eq:def:L}, 
	\item  finite element (FE) discretizations
		satisfying Assumptions~\ref{ass:galerkin}.I--IV  
		for specific  
		values of $0 < \theta_0 < \theta_1$ 
		and of $r,s_0,t>0$. 
\end{enumerate}  
We then obtain 
explicit rates of convergence 
for the FE Galerkin approximations 
$\GP_{h,k}^\beta$, $\widetilde{\GP}_{h,k}^\beta$ 
in~\eqref{eq:def:gphk-1}--\eqref{eq:def:gphk-2}
from Propositions~\ref{prop:galerkin:conv:sobolev-alpha}, 
\ref{prop:galerkin:conv:sobolev-p} and \ref{prop:galerkin:conv:hoelder}. 

\begin{assumption}[FE discretization] 
\label{ass:fem} 	
	Throughout this subsection, 
	we suppose the following setting:  
	\begin{enumerate}[leftmargin=2em,label=$\bullet$]
	\item 
		the (minimal)
		Assumptions~\ref{ass:coeff}.I--II 
		on the coefficients $A$, $\kappa$  
		of the operator $L$; 
	\item 
		Assumptions~\ref{ass:dom}.I, i.e., 
		$\cD\subset\bbR^d$ 
		is a bounded Lipschitz domain; 
	\item   
		$(\cT_h)_{h>0}$ 
		is a quasi-uniform family of 
		triangulations on $\clos{\cD}$, 
		indexed by the mesh width $h>0$; 
	\item  
		the basis 
		functions of the 
		finite-dimensional space 
		$V_h\subset H^1_0(\cD)$ 
		are continuous on~$\clos{\cD}$ 
		and piecewise polynomial 
		with respect to $\cT_h$   
		of degree 
		at most $p\in\bbN$.
	\end{enumerate} 
\end{assumption} 
All further assumptions 
on the operator $L$, 
on the domain $\cD$, and 
on the FE spaces 
are explicitly specified for each case. 
Note that quasi-uniformity of $(\cT_h)_{h>0}$ 
already guarantees that 
Assumptions~\ref{ass:galerkin}.II and~\ref{ass:galerkin}.III 
are satisfied (\ref{ass:galerkin}.III is obvious, 
for the inverse inequality~\ref{ass:galerkin}.II 
see, e.g.,~\cite[Cor.~1.141]{ErnGuermond:2004}). 

In Subsection~\ref{subsubsec:fem:smooth} 
we briefly comment on the situation  
of smooth coefficients 
and apply Proposition~\ref{prop:galerkin:conv:sobolev-p} 
to derive optimal convergence 
rates when $p\geq 1$.  
Afterwards,  
in Subsection~\ref{subsubsec:fem:non-smooth}  
we focus on 
less regular problems and $p=1$ 
by using the results from 
Propositions~\ref{prop:galerkin:conv:sobolev-alpha}  
and~\ref{prop:galerkin:conv:hoelder}.

\subsubsection{The smooth case}\label{subsubsec:fem:smooth}

The remaining crucial ingredient 
in order to derive explicit rates 
of convergence from 
Proposition~\ref{prop:galerkin:conv:sobolev-p}  
is to prove validity of 
Assumption~\ref{ass:galerkin}.IV
for the finite element spaces $(V_h)_{h>0}$. 
For the case of a second-order elliptic 
differential operator $L$ 
with smooth coefficients, 
these results are 
well-known and 
we summarize them below.  

\begin{assumption}[smooth case]\label{ass:fem:smooth}
	The domain $\cD$ has a smooth 
	$C^\infty$-boundary $\partial\cD$,  
	and 
	the coefficients of $L$ in~\eqref{eq:def:L}
	are smooth, i.e., 
	$A\in C^\infty(\clos{\cD})^{d\times d}$  
	and 
	$\kappa\in C^\infty(\clos{\cD})$. 
	Furthermore, the Rayleigh--Ritz projection
	$R_h \from H^1_0(\cD) \to V_h$ 
	in~\eqref{eq:def:ritz}
	satisfies the a-priori estimates 
	\begin{align*} 
		\norm{v - R_h v}{H^1(\cD)} 
		&\lesssim_{(p,A,\kappa,\cD)} 
		h^{p} 
		\norm{v}{H^{p+1}(\cD)}, 
		\\
		\norm{v - R_h v}{L_2(\cD)} 
		&\lesssim_{(p,A,\kappa,\cD)} 
		h^{p+1} 
		\norm{v}{H^{p+1}(\cD)}. 
	\end{align*}
\end{assumption}

\begin{lemma}\label{lem:fem:smooth:rst} 
	Suppose Assumptions~\ref{ass:fem} and~\ref{ass:fem:smooth}. 
	Then, 
	Assumption~\ref{ass:galerkin}.IV 
	is satisfied for 
	$r = 2p$ and  
	$s_0 = t = p+1$. 
\end{lemma}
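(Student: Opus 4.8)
The plan is to reduce both estimates of Assumption~\ref{ass:galerkin}.IV to two ingredients: (a) an a~priori regularity bound on the eigenfunctions of $L$, expressed in powers of $\lambda_j$, and (b) the classical finite element approximation theory for self-adjoint eigenvalue problems, fed by the Rayleigh--Ritz estimates already postulated in Assumption~\ref{ass:fem:smooth}. The lower bound $\lambda_j \leq \lambda_{j,h}$ in~\eqref{eq:ass:galerkin:lambda} comes for free: it is precisely the min--max observation recorded in Remark~\ref{rem:min-max}, valid for any conforming $V_h \subset \Hdot{1}$.

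First I would control the smoothness of the eigenvectors. Since $L e_j = \lambda_j e_j$, we have $e_j \in \scrD(L^s) = \Hdot{2s}$ for every $s \geq 0$, with $\norm{e_j}{2s} = \lambda_j^{s}\norm{e_j}{L_2(\cD)} = \lambda_j^{s}$. Under the smoothness hypotheses of Assumption~\ref{ass:fem:smooth} ($C^\infty$-boundary and $C^\infty$-coefficients), iterated elliptic regularity bootstraps the embedding of Lemma~\ref{lem:hdot-sobolev} to higher order, yielding $\bigl(\Hdot{p+1},\norm{\,\cdot\,}{p+1}\bigr) \hookrightarrow \bigl(H^{p+1}(\cD),\norm{\,\cdot\,}{H^{p+1}(\cD)}\bigr)$, so that $\norm{e_j}{H^{p+1}(\cD)} \lesssim_{(A,\kappa,\cD)} \norm{e_j}{p+1} = \lambda_j^{\nicefrac{(p+1)}{2}}$. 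Inserting $v = e_j$ into the two a~priori bounds of Assumption~\ref{ass:fem:smooth} then gives, for sufficiently small $h>0$,
\begin{align*}
\norm{e_j - R_h e_j}{H^1(\cD)} &\lesssim_{(p,A,\kappa,\cD)} h^{p}\,\lambda_j^{\nicefrac{(p+1)}{2}}, \\
\norm{e_j - R_h e_j}{L_2(\cD)} &\lesssim_{(p,A,\kappa,\cD)} h^{p+1}\,\lambda_j^{\nicefrac{(p+1)}{2}}.
\end{align*}

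For the eigenvalue estimate~\eqref{eq:ass:galerkin:lambda} I would invoke the classical gap-free upper bound from Rayleigh--Ritz theory (Courant--Fischer applied to the trial space $R_h\,\spn\{e_1,\dots,e_j\}$, which is $j$-dimensional for $h$ small). This controls $\lambda_{j,h}-\lambda_j$ by the squared energy-norm best approximation of $e_1,\dots,e_j$; since $\lambda_i\leq\lambda_j$ for $i\leq j$ and $\norm{\,\cdot\,}{1}\eqsim\norm{\,\cdot\,}{H^1(\cD)}$ by Lemma~\ref{lem:hdot-sobolev}, the worst contribution is that of $e_j$,
\[
0 \leq \lambda_{j,h} - \lambda_j \lesssim_{(A,\kappa,\cD)} \max_{1\leq i\leq j}\norm{e_i - R_h e_i}{1}^2 \lesssim_{(p,A,\kappa,\cD)} h^{2p}\,\lambda_j^{p+1},
\]
which is~\eqref{eq:ass:galerkin:lambda} with $r = 2p$ and $t = p+1$.

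For the eigenvector estimate~\eqref{eq:ass:galerkin:e} I would appeal to the standard $L_2$-error bound for finite element eigenvectors (Strang--Fix, Babu\v{s}ka--Osborn), $\norm{e_j - e_{j,h}}{L_2(\cD)} \lesssim \norm{e_j - R_h e_j}{L_2(\cD)} + (\text{higher-order terms}) \lesssim h^{p+1}\lambda_j^{\nicefrac{(p+1)}{2}}$; squaring yields~\eqref{eq:ass:galerkin:e} with $s_0 = t = p+1$. The hard part will be the uniformity of the constant $C_0$ in $j$: the textbook eigenvector estimates carry eigenvalue-gap factors, and for general domains eigenvalues may cluster or be multiple, so one must use a cluster-robust formulation (choosing $e_{j,h}$, and if necessary $e_j$, adapted to the relevant spectral subspace) to guarantee that all $j$-dependence is absorbed into the single factor $\lambda_j^{p+1}$, exactly as in the cited references. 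This is the one point that requires care beyond a mechanical application of the a~priori bounds, whereas the eigenvalue half is genuinely gap-free.
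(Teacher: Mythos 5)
Your proposal is correct in substance and, as it turns out, essentially \emph{is} the paper's proof: the paper's entire argument for this lemma is the citation ``See, e.g., \cite[Thm.~6.1 \& Thm.~6.2]{StrangFix:2008}'', which are precisely the eigenvalue and eigenvector estimates you reconstruct from eigenfunction regularity ($\norm{e_j}{H^{p+1}(\cD)} \lesssim \lambda_j^{\nicefrac{(p+1)}{2}}$, valid by bootstrapped elliptic regularity in the smooth case) combined with the Rayleigh--Ritz bounds postulated in Assumption~\ref{ass:fem:smooth} and the min--max principle of Remark~\ref{rem:min-max}. Your closing caveat about uniformity of $C_0$ in $j$ (eigenvalue clustering, multiplicity, adapted choice of $e_{j,h}$) is exactly the delicate point that the bare citation hides; the paper does not address it either, so you are not behind the paper on this score. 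One technical slip worth correcting in your eigenvalue step: the min--max argument controls $\lambda_{j,h}-\lambda_j$ by the squared energy-norm approximation error over the \emph{whole unit ball} of $E_j := \spn\{e_1,\dots,e_j\}$, not merely over the basis vectors, and passing from the ball to $\max_{1\le i\le j}\norm{e_i-R_he_i}{1}^2$ can in principle cost a factor $j$. The fix is immediate from your ingredient (a): every $u\in E_j$ with $\norm{u}{L_2(\cD)}=1$ satisfies $\norm{u}{H^{p+1}(\cD)} \lesssim \norm{L^{\nicefrac{(p+1)}{2}}u}{L_2(\cD)} \le \lambda_j^{\nicefrac{(p+1)}{2}}$, so the ball estimate carries the same right-hand side. (A second refinement, needed to get $t=p+1$ rather than $p+2$: the cross term $(u, u-R_hu)_{L_2(\cD)}$ appearing in the denominator $\norm{R_hu}{L_2(\cD)}^2$ must be handled by a duality argument inside $E_j$, exploiting the improved regularity of $L^{-1}u$; this is how the classical proof avoids an extra factor of $\lambda_j$.) Neither remark changes your conclusion or the values $r=2p$, $s_0=t=p+1$.
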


\begin{proof} 
	See, e.g., \cite[Thm.~6.1 \& Thm.~6.2]{StrangFix:2008}. 
\end{proof} 

\begin{theorem}\label{thm:fem:smooth:sobolev} 
	Suppose Assumptions~\ref{ass:fem} 
	and~\ref{ass:fem:smooth}.  
	Let 
	$d\in\bbN$, $\beta > 0$, 
	and $0 \leq \sigma\leq 1$
	be such that 
	$2\beta-\sigma > \nicefrac{d}{2}$,  
	let~$\GP^\beta$ be the Whittle--Mat\'ern 
	field in~\eqref{eq:gpbeta}
	and, for $h, k>0$, 
	let $\widetilde{\GP}^\beta_{h,k}$ be the 
	sinc-Galerkin approximation 
	in~\eqref{eq:def:gphk-2}, 
	and let $\varrho^\beta$, $\widetilde{\varrho}^\beta_{h,k}$ 
	denote their covariance functions. 
	Then we have, 
	for sufficiently small $h>0$, 
	sufficiently small 	
	$k=k(h)>0$, 
	and all $q>0$,  
	\begin{align*}
		\left(\bbE\left[
		\bigl\| \GP^{\beta} - \widetilde{\GP}^\beta_{h,k} 
		\bigr\|_{H^\sigma(\cD)}^q
		\right]\right)^{\nicefrac{1}{q}} 
		&\lesssim_{(q,\cP)} 
		C_{\beta,h}^{\GP} \, 
		h^{ \min \left\{ 
			2\beta - \sigma - \nicefrac{d}{2}, \, 
			p + 1 - \sigma \right\}}, 
		\\
		\bigl\| \varrho^\beta 
		- \widetilde{\varrho}^\beta_{h,k} 
		\bigr\|_{H^{\sigma,\sigma}(\cD\times\cD)}
		&\lesssim_{\cP} 
		C_{\beta,h}^{\varrho} \,
		h^{ (1-\sigma )
			\min\left\{ 4\beta - \nicefrac{d}{2}, \, p+1 \right\}  
			+\sigma 
			\min\left\{ 4\beta - 2 - \nicefrac{d}{2}, \, p\right\} },   
	\end{align*}
	where $C_{\beta,h}^{\varrho}, C_{\beta,h}^{\GP}$ and $\cP$ 
	are as in Proposition~\ref{prop:galerkin:conv:sobolev-p}. 
\end{theorem}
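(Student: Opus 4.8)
The plan is to reduce the statement entirely to Proposition~\ref{prop:galerkin:conv:sobolev-p} together with Lemma~\ref{lem:fem:smooth:rst}; no fresh analysis is needed, only a verification of hypotheses and a simplification of exponents. First I would check that the standing assumptions of Proposition~\ref{prop:galerkin:conv:sobolev-p} hold in the present setting: Assumptions~\ref{ass:coeff}.I--II and~\ref{ass:dom}.I are part of Assumption~\ref{ass:fem}; Assumptions~\ref{ass:galerkin}.II--III follow from quasi-uniformity of $(\cT_h)_{h>0}$ (as recorded below Assumption~\ref{ass:fem}); and Assumption~\ref{ass:galerkin}.IV holds with $r=2p$, $s_0=t=p+1$ by Lemma~\ref{lem:fem:smooth:rst}. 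For these values the parameter constraints required by the proposition are met, since $\nicefrac{r}{2}=p=t-1$ and $s_0=p+1=t$ (so $s_0\geq t$ with equality).

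Next I would substitute $r=2p$, $s_0=t=p+1$ into the definitions~\eqref{eq:def:rho-01} of $\rho_0$ and $\rho_1$. Since $p\geq 1$ forces $2p\geq p+1$, the first two entries of each minimum collapse, leaving $\rho_0(\tau)=\min\{p+1,\,2\beta+\tau-\nicefrac{d}{2}\}$ and $\rho_1(\tau)=\min\{p,\,2\beta-1+\tau-\nicefrac{d}{2}\}$. For the field exponent~\eqref{eq:def:rho-sigma} I would then invoke the elementary identity that, whenever $f_1=f_0-1$ and $g_1=g_0-1$,
\[
  (1-\sigma)\min\{f_0,g_0\}+\sigma\min\{f_1,g_1\}
  = \min\{f_0,g_0\}-\sigma
  = \min\{f_0-\sigma,\,g_0-\sigma\},
\]
which rests only on translation-equivariance of $\min$. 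Applying it with $f_0=p+1$ and $g_0=2\beta-\nicefrac{d}{2}$, so that $\rho_0(0)=\min\{f_0,g_0\}$ and $\rho_1(0)=\min\{f_0-1,g_0-1\}$, yields $\rho_\GP(\sigma)=\min\{2\beta-\sigma-\nicefrac{d}{2},\,p+1-\sigma\}$, which is exactly the claimed field rate.

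For the covariance exponent the same substitution gives $\rho_0(2\beta)=\min\{p+1,\,4\beta-\nicefrac{d}{2}\}$ and $\rho_1(2\beta-1)=\min\{p,\,4\beta-2-\nicefrac{d}{2}\}$. Here the two arguments of the minimum decrease by $1$ and by $2$ respectively as one passes from $\rho_0$ to $\rho_1$, so the collapse above no longer applies; the convex combination in~\eqref{eq:def:rho-sigma} therefore stays in the interpolated form $(1-\sigma)\min\{4\beta-\nicefrac{d}{2},\,p+1\}+\sigma\min\{4\beta-2-\nicefrac{d}{2},\,p\}$, matching the stated covariance rate. The prefactors $C_{\beta,h}^{\GP}$ and $C_{\beta,h}^{\varrho}$ and the parameter set $\cP$ are carried over verbatim from Proposition~\ref{prop:galerkin:conv:sobolev-p}.

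Finally I would absorb the sinc-quadrature contributions $e^{-\nicefrac{\pi^2}{(2k)}}h^{-\sigma-\nicefrac{d}{2}\mathds{1}_{\{\beta<1\}}}$ and $e^{-\nicefrac{\pi^2}{(2k)}}h^{-2\sigma-\nicefrac{d}{2}\mathds{1}_{\{\beta<1\}}}$ that appear in~\eqref{eq:galerkin:conv:hdot}--\eqref{eq:galerkin:conv:cov}. Coupling the quadrature step to the mesh width, e.g.\ choosing $k=k(h)$ with $k(h)\eqsim 1/\ln(1/h)$ and a sufficiently large implied constant, gives $e^{-\nicefrac{\pi^2}{(2k)}}\lesssim h^{M}$ for any prescribed $M>0$, so that for $h$ small the quadrature terms are dominated by the Galerkin rates $h^{\rho_\GP(\sigma)}$ and $h^{\rho_\varrho(\sigma)}$ and may be discarded. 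The only genuinely computational part is the exponent simplification of the two middle paragraphs; I expect no real obstacle there, the single point demanding care being the use of $p\geq 1$ to remove the $r=2p$ and $s_0=p+1$ entries before the interpolation identity is applied.
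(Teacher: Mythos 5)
Your proposal is correct and takes essentially the same route as the paper: verify the hypotheses of Proposition~\ref{prop:galerkin:conv:sobolev-p} (with Lemma~\ref{lem:fem:smooth:rst} supplying $r=2p$, $s_0=t=p+1$), substitute into \eqref{eq:def:rho-01}--\eqref{eq:def:rho-sigma}, and simplify the exponents --- your min-translation identity for the field rate and the retained interpolated form for the covariance rate are exactly the computation the paper performs. The only slip is in your quadrature step: with $k(h)=c/\ln(1/h)$ one has $e^{-\nicefrac{\pi^2}{(2k)}}=h^{\pi^2/(2c)}$, so the constant $c$ must be taken sufficiently \emph{small} (equivalently, $\nicefrac{1}{k}$ a sufficiently large multiple of $\ln(1/h)$), not large; the paper sidesteps this by simply stipulating $k=k(h)$ sufficiently small.
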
 

\begin{proof} 
	By Lemma~\ref{lem:fem:smooth:rst} 
	we have $r=2p$, $s_0 = t = p+1$  
	and, thus, for $\gamma\in\{0,1\}$, 
	$\rho_\gamma(\tau) 
	= \min\left\{p+1-\gamma, \,  
	2\beta + \tau - \gamma - \nicefrac{d}{2}\right\}$ 
	in \eqref{eq:def:rho-01}. 
	Finally, 
	\begin{align*}
		\rho_\GP(\sigma) 
		&= 
		\min\left\{ 2\beta - \nicefrac{d}{2}, \, p+1\right\} 
		- 
		\sigma, \\
		\rho_\varrho(\sigma) 
		&= 
		(1-\sigma )
		\min\left\{ 4\beta - \nicefrac{d}{2}, \, p+1 \right\}  
		+\sigma 
		\min\left\{ 4\beta - 2 - \nicefrac{d}{2},\, p \right\} 
	\end{align*} 
	in~\eqref{eq:def:rho-sigma}, 
	for any $0\leq\sigma\leq 1$, 
	and the assertion holds 
	by Proposition~\ref{prop:galerkin:conv:sobolev-p}. 
\end{proof} 

\begin{remark}
	The convergence rates  
	with respect to the $L_2(\cD)$-norms ($\sigma=0$) 
	\[
		\min\{ 2\beta - \nicefrac{d}{2}, \, p+1\} 
		\quad 
		\text{and} 
		\quad
		\min\{ 4\beta - \nicefrac{d}{2}, \, p+1 \}
	\]
	of the sinc-Galerkin FE approximation 
	$\widetilde{\GP}_{h,k}^\beta$ and its covariance 
	function~$\widetilde{\varrho}_{h,k}^\beta$
	reflect the higher regularity 
	of the Whittle--Mat\'ern field $\GP^\beta$ 
	in~\eqref{eq:gpbeta} for large 
	$\beta > 0$ in~\eqref{eq:def:beta}.  
	In particular, when the integer part 
	does not vanish, $\nbeta\in\bbN$,  
	a polynomial degree 
	$p>1$ is meaningful, 
	since thus higher order 
	convergence rates can be achieved,   
	cf.\ the numerical experiments 
	in Section~\ref{section:numexp}. 
\end{remark}

\subsubsection{Less regularity}
\label{subsubsec:fem:non-smooth} 

We now discuss convergence of FE discretizations 
when the operator $L$ in~\eqref{eq:def:L}
has a coefficient $A$ which is not necessarily 
Lipschitz continuous 
or the domain $\cD$ is not convex, i.e., 
the general case that 
$L$ is only $H^{1+\alpha}(\cD)$-regular. 
In the following definition 
we specify what we mean by this. 

\begin{definition}\label{def:H1alpha-regular} 
	Suppose Assumptions~\ref{ass:coeff}.I--II, 
	\ref{ass:dom}.I, let 
	$0<\alpha\leq 1$ and 
	$L$ be the second-order differential 
	operator in~\eqref{eq:def:L}. 
	We say that the elliptic problem 
	associated with $L$ 
	is $H^{1+\alpha}(\cD)$-regular 
	if the restriction of $L\from H^1_0(\cD) \to \dual{H^1_0(\cD)}$ 
	to $H^1_0(\cD) \cap H^{1+\alpha}(\cD)$ 
	is a continuous 
	map to 
	$\Hdot{-1+\alpha}=\dual{(\Hdot{1-\alpha})}$,  
	see~\eqref{eq:def:Hdot}, 
	and if additionally 	
	the data-to-solution map 
	$L^{-1} \from f \mapsto L^{-1}f$  
	is a bounded linear operator from 
	$\Hdot{-1+\alpha}$
	to $H^1_0(\cD) \cap H^{1+\alpha}(\cD)$. 
\end{definition}

We quote the following extension of the equivalence 
in~\eqref{eq:hdot-sobolev1} of Lemma~\ref{lem:hdot-sobolev}
from~\cite[Prop.~4.1]{BonitoPasciak:2015}
to values $1\leq\sigma\leq1+\alpha$ 
which holds if the 
elliptic problem 
associated with $L$ 
be $H^{1+\alpha}(\cD)$-regular.

\begin{lemma}\label{lem:hdot-sobolev-alpha}
	Let the elliptic problem 
	associated with $L$ 
	be $H^{1+\alpha}(\cD)$-regular, 
	see Definition~\ref{def:H1alpha-regular}. 
	Then the equivalence 
	in~\eqref{eq:ass:1+alpha} 
	holds for this parameter $0<\alpha\leq 1$. 
\end{lemma}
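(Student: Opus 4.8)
The plan is to reduce the asserted space equivalence \eqref{eq:ass:1+alpha} to its two endpoints $\delta=0$ and $\delta=\alpha$ and to recover the intermediate range $0<\delta<\alpha$ by complex interpolation along the fractional-power scale $\bigl(\Hdot{s}\bigr)_{s\in\bbR}$. The endpoint $\delta=0$ is immediate from Lemma~\ref{lem:hdot-sobolev}: by \eqref{eq:hdot-inter}--\eqref{eq:hdot-sobolev1} the norms $\norm{\,\cdot\,}{1}$ and $\norm{\,\cdot\,}{H^1(\cD)}$ are equivalent on $\Hdot{1}$, and since $\Hdot{1}=\scrD(L^{\nicefrac{1}{2}})\subseteq H^1_0(\cD)$ and $H^{1}(\cD)\cap H^1_0(\cD)=H^1_0(\cD)$, this is exactly \eqref{eq:ass:1+alpha} for $\delta=0$.

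For the endpoint $\delta=\alpha$ I would first record that, by the spectral definition of the fractional powers, $L$ restricts for every $\sigma$ to an isometric isomorphism $L\from\Hdot{\sigma+2}\to\Hdot{\sigma}$ (indeed $\norm{Lu}{\sigma}=\norm{L^{\nicefrac{\sigma}{2}+1}u}{L_2(\cD)}=\norm{u}{\sigma+2}$) with inverse $L^{-1}$; taking $\sigma=-1+\alpha$ gives an isometry $L\from\Hdot{1+\alpha}\to\Hdot{-1+\alpha}$. Combining this with Definition~\ref{def:H1alpha-regular} yields both embeddings. For $u\in\Hdot{1+\alpha}$ put $f:=Lu\in\Hdot{-1+\alpha}$, so that $u=L^{-1}f$; by the boundedness of $L^{-1}\from\Hdot{-1+\alpha}\to H^{1+\alpha}(\cD)\cap H^1_0(\cD)$ we get $u\in H^{1+\alpha}(\cD)\cap H^1_0(\cD)$ with $\norm{u}{H^{1+\alpha}(\cD)}\lesssim\norm{f}{-1+\alpha}=\norm{u}{1+\alpha}$. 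Conversely, for $v\in H^{1+\alpha}(\cD)\cap H^1_0(\cD)$ the first part of Definition~\ref{def:H1alpha-regular} gives $Lv\in\Hdot{-1+\alpha}$ with $\norm{Lv}{-1+\alpha}\lesssim\norm{v}{H^{1+\alpha}(\cD)}$, and applying the inverse isometry together with $L^{-1}(Lv)=v$ (which holds since $L^{-1}$ is the Lax--Milgram solution operator and $v\in H^1_0(\cD)$) yields $v\in\Hdot{1+\alpha}$ with $\norm{v}{1+\alpha}=\norm{Lv}{-1+\alpha}\lesssim\norm{v}{H^{1+\alpha}(\cD)}$. This proves \eqref{eq:ass:1+alpha} at $\delta=\alpha$.

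For $0<\delta<\alpha$ I would interpolate. Since $L$ is a positive self-adjoint operator with bounded inverse, complex interpolation of its fractional-power domains gives $[\Hdot{1},\Hdot{1+\alpha}]_{\vartheta}\cong\Hdot{1+\vartheta\alpha}$ for $\vartheta\in[0,1]$ (e.g.\ by the results collected in~\cite[Thm.~1.35]{Yagi:2010}; see also~\cite{Lunardi:2018}). Setting $\vartheta:=\delta/\alpha$ and invoking functoriality of the complex interpolation functor on the two endpoint equivalences just established, we obtain $\Hdot{1+\delta}\cong[H^1_0(\cD),\,H^{1+\alpha}(\cD)\cap H^1_0(\cD)]_{\vartheta}$. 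It then remains to identify the right-hand side with $H^{1+\delta}(\cD)\cap H^1_0(\cD)$, and this interpolation identity for the Dirichlet--Sobolev scale is the main obstacle. My approach would be to transfer the plain-Sobolev identity $[H^1(\cD),H^{1+\alpha}(\cD)]_{\vartheta}\cong H^{1+\delta}(\cD)$ (valid on Lipschitz domains, cf.~\cite{Triebel:1978}) to the closed subspaces of functions with vanishing zeroth-order trace: since $1<1+\delta<1+\alpha\leq 2$, all exponents exceed $\nicefrac{1}{2}$, the trace $\gamma_0$ is a bounded retraction that is consistent along the whole scale, and $H^{s}(\cD)\cap H^1_0(\cD)=\{v\in H^{s}(\cD):\gamma_0 v=0\}$; a retraction argument (in the spirit of the results used for \eqref{eq:hdot-sobolev2}, e.g.~\cite[Thm.~4.36]{Lunardi:2018} and~\cite[Lem.~A2]{Guermond:2009}) then yields the desired identification. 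The only delicate point is the value $1+\delta=\nicefrac{3}{2}$, where one must verify that the interpolation space is $H^{3/2}(\cD)\cap H^1_0(\cD)$ and not the smaller Lions--Magenes space; this causes no trouble here because only the zeroth-order trace is imposed, not a normal-derivative condition.
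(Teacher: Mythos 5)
Your endpoint arguments are correct and complete: the case $\delta=0$ follows from Lemma~\ref{lem:hdot-sobolev} as you say, and your treatment of $\delta=\alpha$ --- combining the spectral isometry $L\from\Hdot{1+\alpha}\to\Hdot{-1+\alpha}$ with the two halves of Definition~\ref{def:H1alpha-regular} and the consistency $L^{-1}(Lv)=v$ --- is exactly the argument the definition is built for. The reduction of the intermediate range to $[\Hdot{1},\Hdot{1+\alpha}]_{\vartheta}\cong\Hdot{1+\vartheta\alpha}$ plus functoriality of complex interpolation is also sound. (For comparison: the paper offers no proof here at all; it quotes the statement from \cite[Prop.~4.1]{BonitoPasciak:2015}, so yours is necessarily a from-scratch attempt.)

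The gap is in your final step, which is in fact the entire content of the lemma beyond the endpoints: the identification $[H^1_0(\cD),\,H^{1+\alpha}(\cD)\cap H^1_0(\cD)]_{\vartheta}\cong H^{1+\vartheta\alpha}(\cD)\cap H^1_0(\cD)$. Your retraction argument needs the trace $\gamma_0$ to be bounded, with a bounded right inverse consistent across the couple, on $H^s(\cD)$ up to $s=1+\alpha$. On a merely Lipschitz domain (which is all that Assumption~\ref{ass:dom}.I provides) this machinery is available only for $\nicefrac{1}{2}<s<\nicefrac{3}{2}$: for $s>\nicefrac{3}{2}$ the target space $H^{s-\nicefrac{1}{2}}(\partial\cD)$ is not even intrinsically defined on a Lipschitz boundary (bi-Lipschitz charts preserve $H^t$ only for $|t|\leq 1$), and the trace theorem with bounded coretraction fails in general already at $s=\nicefrac{3}{2}$. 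Consequently your argument covers only $\alpha<\nicefrac{1}{2}$, whereas the lemma is invoked in the paper precisely for larger $\alpha$: non-convex polygons with largest interior angle $\omega\in(\pi,2\pi)$ give the maximal $\alpha<\nicefrac{\pi}{\omega}\in(\nicefrac{1}{2},1)$, and the convex/smooth situation corresponds to $\alpha=1$. Relatedly, your closing remark locates the delicacy at the intermediate value $1+\delta=\nicefrac{3}{2}$, but that is a non-issue for this couple (both endpoint spaces impose the same zeroth-order trace condition, which never degenerates on the range $(1,2)$); the true obstruction sits at the upper endpoint $1+\alpha>\nicefrac{3}{2}$. To close the gap one needs a subspace-interpolation result valid up to smoothness $2$ on Lipschitz or polytopal domains --- e.g., Grisvard-type characterizations of interpolation spaces with Dirichlet boundary conditions, or simply the quoted \cite[Prop.~4.1]{BonitoPasciak:2015} itself --- or else a different proof of the hard embedding $H^{1+\delta}(\cD)\cap H^1_0(\cD)\hookrightarrow\Hdot{1+\delta}$ that avoids boundary traces altogether.
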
 

\begin{lemma}\label{lem:scott-zhang} 
	Suppose Assumptions~\ref{ass:fem}, 
	\ref{ass:dom}.III (i.e., $\cD$ is a Lipschitz polytope)
	and let $p=1$. 
	Then, Assumption~\ref{ass:galerkin}.I  
	is satisfied for 
	$\theta_0 = \nicefrac{1}{2}$ 
	and $\theta_1 = 2$. 
\end{lemma}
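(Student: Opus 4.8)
The plan is to take $\cI_h$ to be the Scott--Zhang quasi-interpolation operator associated with the space $V_h$ of continuous piecewise affine functions, constructed so as to respect the homogeneous Dirichlet boundary condition; the name of the lemma already points to this choice. Recall its construction: to each Lagrange node $z$ one assigns a $(d-1)$- or $d$-dimensional simplex $\sigma_z$ of the triangulation containing $z$, chosen to lie in $\partial\cD$ whenever $z\in\partial\cD$, and defines the degree of freedom at $z$ by an $L_2(\sigma_z)$-duality pairing of $v|_{\sigma_z}$ against a dual basis function supported on $\sigma_z$. Because Assumption~\ref{ass:dom}.III guarantees that $\cD$ is a polytope whose boundary is exactly triangulated by flat faces, the boundary simplices $\sigma_z$ genuinely lie in $\partial\cD$; hence $\cI_h v$ vanishes on $\partial\cD$ for every $v$ with vanishing trace, so that $\cI_h$ respects the boundary condition and maps into $V_h\subset H^1_0(\cD)$, which is the setting in which the operator is invoked throughout the paper. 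Since the defining functionals only involve the trace of $v$ on the faces $\sigma_z$, which is a well-defined $L_2(\sigma_z)$-function precisely when $v\in H^\theta(\cD)$ with $\theta>\nicefrac12$, the operator is simultaneously and consistently defined on all $H^\theta(\cD)$ with $\theta>\nicefrac12$; this is exactly the source of the threshold $\theta_0=\nicefrac12$, and the consistency furnishes the required continuous-extension property for free. Moreover $\cI_h$ is a projection onto $V_h$ that reproduces, elementwise, all polynomials $\mathbb{P}_1$ of degree at most $p=1$, which forces $\theta_1=p+1=2$.

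First I would record the two local properties on which everything rests: for every element $T\in\cT_h$ with (shape-regularly bounded) vertex patch $S_T$, the local $L_2$-stability $\norm{\cI_h v}{L_2(T)}\lesssim \norm{v}{L_2(S_T)}$ (valid for $v\in H^\theta(S_T)$, $\theta>\nicefrac12$, via the trace theorem and scaling to a reference configuration) together with the local $\mathbb{P}_1$-reproduction. Writing the error as $v-\cI_h v=(v-\bar v)-\cI_h(v-\bar v)$ with $\bar v$ the mean of $v$ over $S_T$, combining stability with a fractional Poincaré/Bramble--Hilbert estimate and a scaling argument yields the local bounds $\norm{v-\cI_h v}{L_2(T)}\lesssim h_T^{\theta}\,\seminorm{v}{H^\theta(S_T)}$ for $\nicefrac12<\theta\le 2$ and $\seminorm{v-\cI_h v}{H^1(T)}\lesssim h_T^{\theta-1}\,\seminorm{v}{H^\theta(S_T)}$ for $1\le\theta\le2$. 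Squaring, summing over $T$, and invoking quasi-uniformity ($h_T\eqsim h$) together with the finite overlap of the patches $S_T$ gives the global integer-target estimates
\[
	\norm{v-\cI_h v}{L_2(\cD)}\lesssim h^{\theta}\norm{v}{H^\theta(\cD)},
	\qquad
	\norm{v-\cI_h v}{H^1(\cD)}\lesssim h^{\theta-1}\norm{v}{H^\theta(\cD)},
\]
the first for $\nicefrac12<\theta\le2$ and the second for $1\le\theta\le2$; together with the $H^1$-stability of $\cI_h$ these are precisely the endpoints $\sigma\in\{0,1\}$ of~\eqref{eq:err-interpol}.

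It then remains to fill in the fractional target orders $0\le\sigma\le\min\{1,\theta\}$, which I would do by interpolation in the target space applied to the error map $v\mapsto v-\cI_h v$. For $\theta\in[1,2]$ one interpolates between $H^\theta(\cD)\to L_2(\cD)$ (norm $\lesssim h^{\theta}$) and $H^\theta(\cD)\to H^1(\cD)$ (norm $\lesssim h^{\theta-1}$); since $[L_2(\cD),H^1(\cD)]_{\sigma}\cong H^\sigma(\cD)$ on the Lipschitz polytope $\cD$, the interpolated norm is $\lesssim (h^{\theta})^{1-\sigma}(h^{\theta-1})^{\sigma}=h^{\theta-\sigma}$, giving~\eqref{eq:err-interpol} for all $\sigma\in[0,1]$ and covering the fractional $\theta\in(1,2)$ as well. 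For $\theta\in(\nicefrac12,1)$, where one needs $\sigma\in[0,\theta]$, I would instead interpolate between the $L_2$-bound $\lesssim h^{\theta}$ and the $H^\theta$-stability bound $\norm{v-\cI_h v}{H^\theta(\cD)}\lesssim\norm{v}{H^\theta(\cD)}$; by reiteration $[L_2(\cD),H^\theta(\cD)]_{s}\cong H^{s\theta}(\cD)$, so choosing $s=\sigma/\theta$ again produces the rate $h^{\theta(1-s)}=h^{\theta-\sigma}$. This establishes~\eqref{eq:err-interpol} on the full parameter range and hence Assumption~\ref{ass:galerkin}.I with $\theta_0=\nicefrac12$ and $\theta_1=2$.

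The main obstacle I expect is the fractional-order bookkeeping rather than any single difficult estimate: for non-integer $\sigma\in(0,1)$ the target $H^\sigma$-norm is the nonlocal Gagliardo norm, so one cannot simply sum elementwise contributions of $\norm{v-\cI_h v}{H^\sigma(T)}$; the device that circumvents this is to prove only the integer-target local estimates ($\sigma\in\{0,1\}$) and to recover every fractional $\sigma$ (and every fractional $\theta$) by operator interpolation, as above. A secondary delicate point is the endpoint $\theta\to\nicefrac12^{+}$: the construction is genuinely unavailable at $\theta=\nicefrac12$ because the face traces cease to be $L_2$-functions, so one must work on the open range $\theta>\nicefrac12$ and exploit that the \emph{face-averaging} in the Scott--Zhang functionals (as opposed to pointwise nodal evaluation, which would require $\theta>\nicefrac{d}{2}$) keeps $\cI_h$ bounded all the way down to, but not including, $\theta=\nicefrac12$.
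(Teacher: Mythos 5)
The paper's own proof of this lemma is a one-line citation: it takes $\cI_h$ to be the Scott--Zhang interpolant and invokes \cite[Lem.~1.130]{ErnGuermond:2004}, which already packages the stability and approximation properties for the full range $\nicefrac{1}{2}<\theta\leq p+1=2$, $0\leq\sigma\leq\min\{1,\theta\}$. You instead rederive that cited result from scratch. Your architecture -- boundary-face choice of the Scott--Zhang functionals so that the homogeneous Dirichlet condition is preserved, local stability plus elementwise $\mathbb{P}_1$-reproduction plus a fractional Bramble--Hilbert argument, summation over patches with finite overlap, and operator interpolation to fill in the fractional source and target indices -- is the standard route behind that reference, and it identifies correctly where $\theta_0=\nicefrac{1}{2}$ and $\theta_1=2$ come from.

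Two steps, however, are not right as written. First, the local stability estimate $\norm{\cI_h v}{L_2(T)}\lesssim\norm{v}{L_2(S_T)}$ is false: the Scott--Zhang functionals involve traces of $v$ on faces, and these cannot be controlled by an $L_2$-norm alone (the operator is famously \emph{not} $L_2$-stable). The correct local bound, obtained from the scaled trace inequality, is $\norm{\cI_h v}{L_2(T)}\lesssim\norm{v}{L_2(S_T)}+h_T^{\theta}\seminorm{v}{H^{\theta}(S_T)}$ for $\theta>\nicefrac{1}{2}$; the additional term has the right scaling, so your error estimates survive the correction, but the inequality you rest them on does not. Second, and more substantively, for $\theta\in(\nicefrac{1}{2},1)$ your interpolation argument requires the fractional stability bound $\norm{v-\cI_h v}{H^{\theta}(\cD)}\lesssim\norm{v}{H^{\theta}(\cD)}$, which you assert without proof. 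This is exactly the ingredient that \emph{cannot} be produced by the interpolation device you use everywhere else: since $\cI_h$ is not $L_2$-stable, one cannot interpolate between $L_2$- and $H^1$-stability, and a direct proof must handle the nonlocal Gagliardo seminorm of $v-\cI_h v$ -- the very difficulty you say the method circumvents. The claim is true, but it is a nontrivial known result (fractional-order stability of Scott--Zhang, due to P.~Ciarlet~Jr.), so at this point your argument needs either that citation or a genuine proof; as it stands, the case $\theta\in(\nicefrac{1}{2},1)$ of Assumption~\ref{ass:galerkin}.I is not established by your sketch.
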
 

\begin{proof}
	The operator $\cI_h \from H^\theta(\cD) \to V_h$ in  
	Assumption~\ref{ass:galerkin}.I  
	can be taken as the Scott--Zhang interpolant, 
	see~\cite[Lem.~1.130]{ErnGuermond:2004}.  
\end{proof}

\begin{theorem}\label{thm:fem:non-smooth} 
	In addition to Assumptions~\ref{ass:fem}, \ref{ass:dom}.III, 
	suppose that the elliptic  
	problem associated with $L$ 
	is $H^{1+\alpha}(\cD)$-regular 
	for some $0<\alpha\leq 1$ 
	(see Definition~\ref{def:H1alpha-regular})
	and let $p=1$. 
	Assume further that $d\in\{1,2,3\}$, 
	$\beta>0$ and $0\leq\sigma\leq 1$ 
	are such that $2\beta-\sigma > \nicefrac{d}{2}$. 
	Let~$\GP^\beta$ be the Whittle--Mat\'ern 
	field in~\eqref{eq:gpbeta}
	and, for $h, k>0$, 
	let $\GP^\beta_{h,k}$ be the 
	sinc-Galerkin approximation 
	in~\eqref{eq:def:gphk-1}, 
	with covariance 
	functions $\varrho^\beta$ and 
	$\varrho_{h,k}^\beta$.
	Then, for 
	every $q,\eps > 0$ 
	and sufficiently small  
	$h>0$, $k=k(h)>0$,  
	\begin{align*} 
		\Bigl(\bbE\Bigl[
		\bigl\| \GP^{\beta} - \GP^\beta_{h,k} 
		\bigr\|_{H^\sigma(\cD)}^q
		\Bigr]\Bigr)^{\nicefrac{1}{q}} 
		&\lesssim_{(q,\eps,\sigma,\alpha,\beta,A,\kappa,\cD)} 
		h^{\min\left\{ 2\beta - \sigma - \nicefrac{d}{2} - \varepsilon, \, 
			1 + \alpha - \sigma, \, 
			2\alpha \right\} },  
		\\
		\bigl\| \varrho^\beta 
		- \varrho^\beta_{h,k} 
		\bigr\|_{H^{\sigma,\sigma}(\cD\times\cD)} 
		&\lesssim_{(\eps,\sigma,\alpha,\beta,A,\kappa,\cD)} 
		h^{\min\left\{ 4\beta - 2\sigma - \nicefrac{d}{2} - \varepsilon, \, 
			1 + \alpha - \sigma, \, 
			2\alpha \right\} },  
	\end{align*} 
	where, if $d=3$, for 
	\eqref{eq:galerkin:conv:field-alpha} to hold, 
	we also suppose 
	that $\beta>1$
	and $\alpha\geq\nicefrac{1}{2}-\sigma$. 
	
	In addition, if $d=1$ and 
	$0< \gamma \leq \nicefrac{1}{2}$ is such that   
	$2\beta> \gamma + \nicefrac{1}{2}$, 
	then 
	\begin{align*} 
		\left( \bbE 
		\Bigl[ 
		\bigl\| \GP^\beta - \GP^{\beta}_{h,k} \bigr\|_{C^{\delta}(\clos{\cD})}^q 
		\Bigr] 
		\right)^{\nicefrac{1}{q}} 
		&\lesssim_{(q,\gamma,\delta,\eps,\alpha,\beta,A,\kappa,\cD)} 
		h^{\min\left\{ 2\beta-\gamma-\nicefrac{1}{2}-\eps, \, 
			\nicefrac{1}{2}+\alpha-\gamma, \, 
			2\alpha \right\}},   \\
		\sup_{x,y\in\clos{\cD}} 
		\bigl| 
		\varrho^\beta(x,y) 
		-
		\varrho^\beta_{h,k} 
		\bigr| 
		&\lesssim_{(\eps,\alpha,\beta,A,\kappa,\cD)}  
		h^{\min\left\{ 4\beta-1-\eps, \, 
			\nicefrac{1}{2}+\alpha-\eps, \, 
			2\alpha \right\}},  
	\end{align*} 
	for sufficiently small $h>0$, $k=k(h)>0$, 
	every $\delta\in(0,\gamma)$ 
	and $\eps,q > 0$.  
\end{theorem}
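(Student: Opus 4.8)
The plan is to recognize that this theorem is purely an \emph{application} of Propositions~\ref{prop:galerkin:conv:sobolev-alpha} and~\ref{prop:galerkin:conv:hoelder} to the concrete finite element setting of Assumption~\ref{ass:fem}, so that essentially all the analytic work has already been done. Accordingly, the proof reduces to checking that, under the present hypotheses (Assumptions~\ref{ass:fem} and~\ref{ass:dom}.III, $H^{1+\alpha}(\cD)$-regularity, and $p=1$), every assumption required by those two propositions is in force, and then quoting their conclusions verbatim.

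First I would verify Assumptions~\ref{ass:galerkin}.II and~\ref{ass:galerkin}.III: as noted in the discussion following Assumption~\ref{ass:fem}, both are immediate consequences of the quasi-uniformity of $(\cT_h)_{h>0}$, with \ref{ass:galerkin}.III being trivial and the inverse inequality \ref{ass:galerkin}.II following from, e.g., \cite[Cor.~1.141]{ErnGuermond:2004}. Next, by Lemma~\ref{lem:scott-zhang} (using $p=1$ on the Lipschitz polytope $\cD$ and taking $\cI_h$ to be the Scott--Zhang interpolant), Assumption~\ref{ass:galerkin}.I holds with $\theta_0 = \nicefrac{1}{2} \in (0,1)$ and $\theta_1 = 2$; since $0 < \alpha \leq 1$ gives $1+\alpha \leq 2 = \theta_1$, the required condition $\theta_1 \geq 1+\alpha$ is met. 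Finally, the $H^{1+\alpha}(\cD)$-regularity assumption together with Lemma~\ref{lem:hdot-sobolev-alpha} yields the equivalence~\eqref{eq:ass:1+alpha} for precisely this value of $\alpha$, which is the hypothesis on $\alpha$ demanded by both propositions and by Theorem~\ref{thm:fem-error}.

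With these verifications in place, I would apply Proposition~\ref{prop:galerkin:conv:sobolev-alpha} to obtain the first two displayed estimates (the $L_q(\Omega;H^\sigma(\cD))$-bound for $\GP^\beta - \GP^\beta_{h,k}$ and the $H^{\sigma,\sigma}(\cD\times\cD)$-bound for $\varrho^\beta - \varrho^\beta_{h,k}$), carrying over the additional hypotheses $\beta > 1$ and $\alpha \geq \nicefrac{1}{2}-\sigma$ in the case $d=3$ exactly as stated there. For the remaining two estimates, valid when $d=1$, I would invoke Proposition~\ref{prop:galerkin:conv:hoelder} with the stated range $0 < \gamma \leq \nicefrac{1}{2}$ and $2\beta > \gamma + \nicefrac{1}{2}$, which directly produces the $L_q(\Omega;C^\delta(\clos{\cD}))$-bound and the $L_\infty(\cD\times\cD)$-bound for the covariance functions. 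Choosing $k = k(h)$ small enough so that the sinc-quadrature term $e^{-\nicefrac{\pi^2}{(2k)}} h^{-\sigma - \nicefrac{d}{2}\,\mathds{1}_{\{\beta<1\}}}$ is dominated by the Galerkin rate (which is always possible since the exponential factor can be driven below any polynomial power of $h$) then absorbs the second summand in each of~\eqref{eq:galerkin:conv:field-alpha}--\eqref{eq:galerkin:conv:cov-alpha} and~\eqref{eq:galerkin:conv:hoelder}--\eqref{eq:galerkin:conv:cov-Linf} into the constant, yielding the clean single-term rates displayed in the theorem.

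The one hypothesis that is not immediate from quasi-uniformity alone, and hence the main point requiring care, is the $H^1(\cD)$-stability of the $L_2(\cD)$-orthogonal projection $\Pi_h$, cf.~\eqref{eq:PihH1stable}, which both propositions and Theorem~\ref{thm:fem-error} invoke. I would supply this via the standard result that the $L_2$-projection onto conforming Lagrange finite element spaces is $H^1$-stable on quasi-uniform families (see, e.g., the references to Bramble--Pasciak--Steinbach-type arguments), noting that on a Lipschitz polytope with a globally quasi-uniform mesh the stability constant $C_\Pi$ is uniform in $h$. Everything else is a matter of matching parameter constraints ($2\beta - \sigma > \nicefrac{d}{2}$, the $d=3$ side conditions, the range of $\gamma$), all of which are inherited unchanged from the cited propositions.
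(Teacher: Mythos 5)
Your proposal is correct and follows essentially the same route as the paper's own proof: verify the equivalence~\eqref{eq:ass:1+alpha} via Lemma~\ref{lem:hdot-sobolev-alpha}, obtain Assumption~\ref{ass:galerkin}.I from Lemma~\ref{lem:scott-zhang} with $\theta_0=\nicefrac{1}{2}$, $\theta_1=2\geq 1+\alpha$, invoke the $H^1(\cD)$-stability of $\Pi_h$ on quasi-uniform meshes (the paper cites \cite{CrouzeixThomee:1987} for $d\in\{1,2\}$ and \cite{BramblePasciakSteinbach:2002} for general $d$), and then apply Propositions~\ref{prop:galerkin:conv:sobolev-alpha} and~\ref{prop:galerkin:conv:hoelder}. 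Your explicit remark about choosing $k=k(h)$ so that the sinc-quadrature term is absorbed into the Galerkin rate is a correct (and slightly more careful) spelling-out of what the paper leaves implicit in the phrase ``sufficiently small $k=k(h)>0$.''
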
 

\begin{proof}
	By Lemma~\ref{lem:hdot-sobolev-alpha}
	the equivalence 
	in~\eqref{eq:ass:1+alpha} 
	holds. Furthermore,  
	by Lemma~\ref{lem:scott-zhang}  
	Assumption~\ref{ass:galerkin}.I 
	is satisfied   
	for 
	$\theta_0=\nicefrac{1}{2} < 1$ 
	and $\theta_1=2\geq 1+\alpha$. 
	Finally, since we assume that the 
	family of triangulations $(\cT_h)_{h>0}$ 
	of $\clos{\cD}\subset\bbR^d$ 
	is quasi-uniform, 
	the $L_2(\cD)$-orthogonal 
	projection $\Pi_h$ is $H^1(\cD)$-stable, 
	see \cite{CrouzeixThomee:1987} 
	for $d\in\{1,2\}$
	and \cite{BramblePasciakSteinbach:2002} 
	for arbitrary $d\in\bbN$. 
	Thus, Propositions~\ref{prop:galerkin:conv:sobolev-alpha} 
	and~\ref{prop:galerkin:conv:hoelder} 
	are applicable 
	and yield the assertions of this theorem. 
\end{proof}

\section{Numerical experiments}\label{section:numexp}

In the following numerical experiments 
we consider the original  
Whittle--Mat\'ern field from \eqref{eq:intro:matern} 
in Subsection~\ref{subsec:intro:background}, 
i.e., $L:=-\Delta + \kappa^2$, 
on the unit interval $\cD=(0,1)$, 
augmented with homogeneous Dirichlet boundary conditions. 
We choose $\kappa:=0.5$ and 
apply a finite element discretization   
with continuous, 
piecewise polynomial basis functions 
of degree at most $p\in\{1,2\}$
to compute the sinc-Galerkin approximation 
$\GP^{\beta}_{h,k}$ (or $\widetilde{\GP}^{\beta}_{h,k}$)
in \eqref{eq:def:gphk-1}/\eqref{eq:def:gphk-2}. 
More precisely, we investigate   
\begin{enumerate}
	\item the empirical convergence 
		to the Whittle--Mat\'ern field 
		$\GP^\beta$, see \eqref{eq:gpbeta}, 
		with respect to 
		the norms on $L_2(\Omega; L_2(\cD))$, 
		$L_1(\Omega; L_\infty(\cD))$, 
		and $L_2(\Omega; H^1_0(\cD))$  
		for $\beta\in\{0.5, 0.8, 1.1, 1.4, 1.7\}$; 
	\item the empirical convergence 
		of the covariance function 
		with respect to 
		the norms on $L_2(\cD\times\cD)$  
		and $L_\infty(\cD\times\cD)$  
		for $\beta\in\{0.5, 0.6, 0.7, 0.8, 0.9, 1\}$. 
\end{enumerate} 

To this end, we generate an 
equidistant initial 
mesh on $\clos{\cD}=[0,1]$ with 
$N_0 := 9$ nodes 
(resp.\ $N_0 := 17$ for the $L_\infty$-studies), 
of mesh size $h_0 := 2^{-3}$ 
(resp.\ $h_0 := 2^{-4}$). 
This initial mesh is $4$ times 
uniformly refined, so that 
on level $\ell\in\{0,\ldots,4\}$ 
the mesh is of width  
$h_\ell=h_0 2^{-\ell}$. 
For $p\in\{1,2\}$, 
we use the {\sc MATLAB}-based package 
\texttt{ppfem}~\cite{ppfem}
to assemble the matrices 
$\Mmat$ and $\Lmat$ 
in~\eqref{eq:Zbetavec} and~\eqref{eq:Qmat}
with respect to the Babu\v{s}ka--Shen 
nodal basis $\{\phi_{j,h}\}_{j=1}^{N_h}$. 
On level $\ell$, the step size $k=k_\ell>0$ of the 
sinc quadrature is calibrated with the 
finite element mesh width via 
$k_\ell = -1/(\beta\ln h_\ell)$.  

The reference solutions for the field 
and the covariance function 
are generated based on an 
overkill Karhunen--Lo\`eve expansion 
of $\GP^\beta$ with $N_{\mathrm{KL}}=1000$ terms, 
\[
	\GP_{\mathrm{ref}}^\beta 
	:= 
	\sum_{j=1}^{N_{\mathrm{KL}}} \xi_j \lambda_j^{-\beta} e_j  
	\qquad
	\text{and} 
	\qquad  
	\varrho_{\mathrm{ref}}^\beta(x,y) 
	:= 
	\sum_{j=1}^{N_{\mathrm{KL}}}  
	\lambda_j^{-2\beta} e_j(x)e_j(y), 
\]
where $\lambda_j= j^2 \pi^2 + \kappa^2$ 
and $e_j(x)=\sqrt{2} \sin(j\pi x)$ 
are the eigenvalues and eigenfunctions 
of $L=-\Delta+\kappa^2$ on $\cD=(0,1)$. 
Here, for each of $100$ Monte Carlo runs,  
the same realization 
of the set of random variables  
$\{\xi_1,\ldots,\xi_{N_{\mathrm{KL}}}\}$ 
is used to generate~$\GP_{\mathrm{ref}}^\beta$ 
and the load vector 
$\bvec\sim\cN(\zerovec,\Mmat)$ via 
\[
	\bvec := \mathbf{R}  
		\left( \xi_1, \ldots, \xi_{N_h} \right)^\T , 
	\qquad
	\text{where} 
	\qquad  
	R_{ij} := \scalar{\phi_{i,h}, e_{j,h}}{L_2(\cD)}. 
\] 
For $d=1$, the operator $L$ does not have multiple eigenvalues 
and we can assemble the matrix~$\mathbf{R}$, 
for each $h\in\{h_0,\ldots,h_4\}$,   
by computing the discrete 
eigenfunctions~$\{e_{j,h}\}_{j=1}^{N_h}$ 
and by adjusting their sign so that $e_{j,h}$   
indeed approximates $e_j$ for each $j\in\{1,\ldots,N_h\}$. 
Note that we only have to assemble this matrix $\mathbf{R}$ 
to have comparable 
samples of the sinc-Galerkin approximation 
and the reference solution 
needed for the strong error studies. 
For the simulation practice, one could  
compute the Cholesky factor of the Gramian $\Mmat$ 
or approximate the matrix 
square root~$\sqrt{\Mmat}$, e.g., 
as proposed in~\cite{Hale:2008}, 
in order to sample from $\bvec$. 
Since furthermore the dimension of 
the finite element spaces, even 
at the highest level $\ell=4$,  
is relatively small, we can assemble the covariance 
matrices of the sinc-Galerkin approximation 
directly, without Monte Carlo sampling, as 
\[
	\operatorname{Cov}\bigl(\GPvec^\beta_k \bigr)
	= 
	\begin{cases}
	\Lmat^{-1} \bigl( \Mmat\Lmat^{-1} \bigr)^{\nbeta-1} 
	\Mmat 
	\bigl( \Mmat\Lmat^{-1} \bigr)^{\nbeta-1} \Lmat^{-1} , 
	& 
	\text{if } 
	\betafrac = 0, \\
	\Qmat_{k}^{\beta_\star} 
	\bigl( \Mmat\Lmat^{-1} \bigr)^{\nbeta} 
	\Mmat
	\bigl( \Mmat\Lmat^{-1} \bigr)^{\nbeta} 
	\Qmat_{k}^{\beta_\star}, 
	& 
	\text{if } 
	\betafrac \in (0,1),   
	\end{cases} 
\]
cf.~\eqref{eq:Zbetavec}--\eqref{eq:Qmat}.  

Note that the operator $L:=-\Delta+0.25$ has constant 
(and, thus, smooth) coefficients. 
Therefore, Theorem~\ref{thm:fem:smooth:sobolev} 
provides (essentially) 
optimal convergence rates for 
the error of $\widetilde{\GP}_{h,k}^\beta$ in 
$L_2(\Omega;L_2(\cD))$, $L_2(\Omega;H^1_0(\cD))$ and 
of $\widetilde{\varrho}_{h,k}^\beta$ in 
$L_2(\cD\times\cD)$. 
Furthermore, the convergence results 
of Theorem~\ref{thm:fem:non-smooth} 
on the $L_1(\Omega;L_{\infty}(\cD))$-error  
are (essentially) sharp 
if $\beta\in(\nicefrac{1}{4},1)$ 
(resp.\ if $\beta\in(\nicefrac{1}{4},\nicefrac{5}{8})$ 
for the $L_{\infty}$-error of the covariance). 
For this smooth case, we have $\alpha > p+1$ 
in \eqref{eq:ass:1+alpha}.    
For this reason, we 
expect the convergence rates listed in 
Table~\ref{tab:rates-expected}. 
The expected rates corresponding to the values 
of $\beta>\nicefrac{1}{4}$ used in our experiments 
are shown in parentheses in Table~\ref{tab:rates}.

\begin{table}[t]
	\centering
	\caption{\label{tab:rates-expected}Expected 
		rates of convergence, cf.~Theorems~\ref{thm:fem:smooth:sobolev} 
		and~\ref{thm:fem:non-smooth}}
	\begin{tabular}{l|ccc}
		\toprule[1pt] 
		                      & $L_2$ & $L_\infty$ & $H^1_0$ \\ 
		\cmidrule(r){2-4}
		$\GP_{h,k}^\beta$     & $\min\left\{ 2\beta - \nicefrac{1}{2},\, p+1 \right\}$ & 
								$\min\left\{ 2\beta - \nicefrac{1}{2},\, p+1 \right\}$ &
								$\min\left\{ 2\beta - \nicefrac{3}{2},\, p \right\}$ \\
		$\varrho_{h,k}^\beta$ & $\min\left\{ 4\beta - \nicefrac{1}{2},\, p+1 \right\}$ & 
								$\min\left\{ 4\beta - 1 ,\, p+1 \right\}$ & 
								$\min\left\{ 4\beta - \nicefrac{5}{2},\, p \right\}$ \\
		\bottomrule[1pt] 
	\end{tabular}
\end{table}

\begin{table}[t]
	\centering
	\caption{\label{tab:rates}Observed (resp.~theoretical) 
		rates of convergence for the errors of the 
		field/covariance  
		shown in Figures~\ref{fig:errors:field}--\ref{fig:errors:cov}}
	\begin{tabular}{lc|cccccc}
		\toprule[1pt] 
		&    & \multicolumn{6}{c}{$\beta$ for field error studies}\\
		& $p$ & $0.5$ & $0.8$ & $1.1$ & $1.4$ & $1.7$ &  \\
		\cmidrule(r){2-8}
		\multirow{2}{*}{$L_2$} 
		& $1$ & 0.54 (0.5) & 1.10 (1.1) & 1.67 (1.7) & 1.94 (2)   & 1.96 (2) & \\
		& $2$ & 0.56 (0.5) & 1.10 (1.1) & 1.68 (1.7) & 2.27 (2.3) & 2.85 (2.9) & \\
		\cmidrule(r){1-8}
		\multirow{2}{*}{$L_\infty$}	
		& $1$ & 0.55 (0.5) & 1.05 (1.1) & 1.60 (1.7) & 1.93 (2)   & 1.99 (2) & \\
		& $2$ & 0.68 (0.5) & 1.14 (1.1) & 1.67 (1.7) & 2.25 (2.3) & 2.79 (2.9) &\\
		\cmidrule(r){1-8}
		\multirow{2}{*}{$H^1_0$}	
		& $1$ & -- & 0.22 (0.1) & 0.70 (0.7) & 1.00 (1)   & 1.05 (1)   & \\
		& $2$ & -- & 0.27 (0.1) & 0.73 (0.7) & 1.30 (1.3) & 1.87 (1.9) & \\
		\midrule[0.75pt] 
		&     & \multicolumn{6}{c}{$\beta$ for covariance error studies}\\
		& $p$ & $0.5$ & $0.6$ & $0.7$ & $0.8$ & $0.9$ & $1$ \\
		\cmidrule(r){2-8}
		\multirow{2}{*}{$L_2$} 
		& $1$ & 1.53 (1.5) & 1.85 (1.9) & 1.98 (2)   & 2.00 (2)   & 2.00 (2) & 2.00 (2) \\
		& $2$ & 1.57 (1.5) & 1.94 (1.9) & 2.32 (2.3) & 2.69 (2.7) & 2.94 (3) & 3.00 (3) \\
		\cmidrule(r){1-8}
		\multirow{2}{*}{$L_\infty$}	
		& $1$ & 1.07 (1) & 1.41 (1.4) & 1.72 (1.8) & 1.91 (2)   & 1.98 (2)   & 1.99 (2) \\
		& $2$ & 1.23 (1) & 1.52 (1.4) & 1.86 (1.8) & 2.23 (2.2) & 2.61 (2.6) & 2.99 (3) \\
		\bottomrule[1pt] 
	\end{tabular}
\end{table}

For every of the $100$ Monte Carlo 
samples, we approximate the integrals 
needed for computing the $L_2(\cD)$ and 
$H^1_0(\cD)$-errors 
by using {\sc MATLAB}'s built-in 
function \texttt{integral} 
with tolerance 1e-6. 
For the $L_\infty$-studies 
we consider the largest error with respect to 
an equidistant mesh on on $\clos{\cD}=[0,1]$ 
with $N_{\operatorname{ok}} = 1001$ nodes, i.e., 
\begin{align*} 
	\sup_{x\in\clos{\cD}} 
	\bigl| 
	\GP^\beta_{h,k}(x) - \GP^\beta_{\operatorname{ref}}(x) 
	\bigr| 
	&\approx 
	\sup_{1\leq j\leq N_{\operatorname{ok}}} 
	\bigl| 
	\GP^\beta_{h,k}(x_j) - \GP^\beta_{\operatorname{ref}}(x_j) 
	\bigr|,    
	\\ 
	\sup_{x,y\in\clos{\cD}} 
	\bigl| 
	\varrho_{h,k}^\beta(x,y) - \varrho^\beta_{\operatorname{ref}}(x,y)
	\bigr| 
	&\approx 
	\sup_{1\leq i,j\leq N_{\operatorname{ok}}}  
	\bigl| 
	\varrho_{h,k}^\beta(x_i,x_j) - \varrho^\beta_{\operatorname{ref}}(x_i,x_j) 
	\bigr|,  
\end{align*}
where $x_j := (j-1) 10^{-3}$.  
%
Furthermore, to compute the 
$L_2(\cD\times\cD)$-error, we approximate 
the distance of the covariances 
by a function which is piecewise 
constant on a regular lattice with 
$N_{\operatorname{ok}}^2$ nodes. 
Finally, the empirical convergence rates, also shown 
in Table~\ref{tab:rates}, are obtained 
via a least-squares affine fit with respect to 
the data set 
$\{(\ln h_\ell, \ln \operatorname{err}_\ell) : 2\leq\ell\leq 4\}$. 
Here, $\operatorname{err}_\ell$ denotes the error 
on level $\ell$ with respect to the norm used in the study 
and for the respective value of $\beta$ and $p$. 

\begin{figure}[t]
	\begin{center}
		\begin{minipage}[t]{0.49\linewidth}
			\begin{center}
				\includegraphics[width=\linewidth]{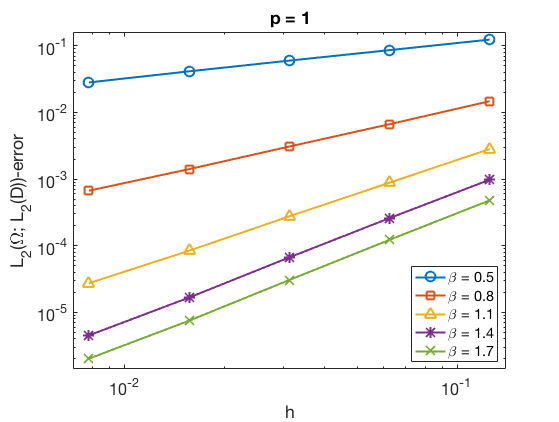}
			\end{center}
		\end{minipage}
		\hfill
		\begin{minipage}[t]{0.49\linewidth}
			\begin{center}
				\includegraphics[width=\linewidth]{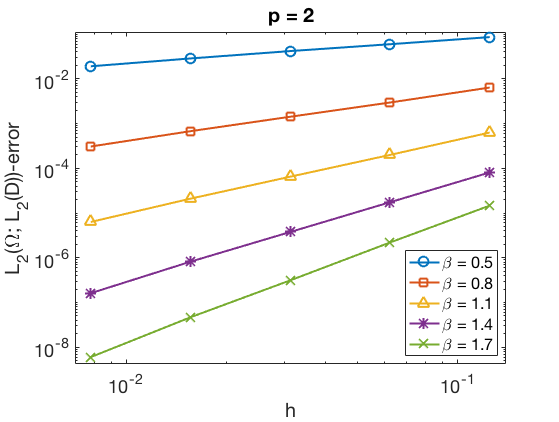}
			\end{center}
		\end{minipage}
		\begin{minipage}[t]{0.49\linewidth}
			\begin{center}
				\includegraphics[width=\linewidth]{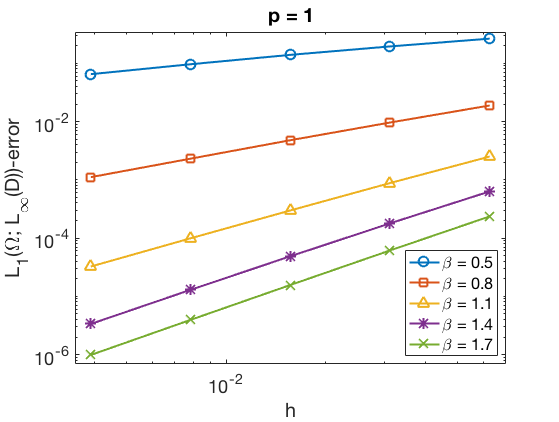}
			\end{center}
		\end{minipage}
		\hfill 
		\begin{minipage}[t]{0.49\linewidth}
			\begin{center}
				\includegraphics[width=\linewidth]{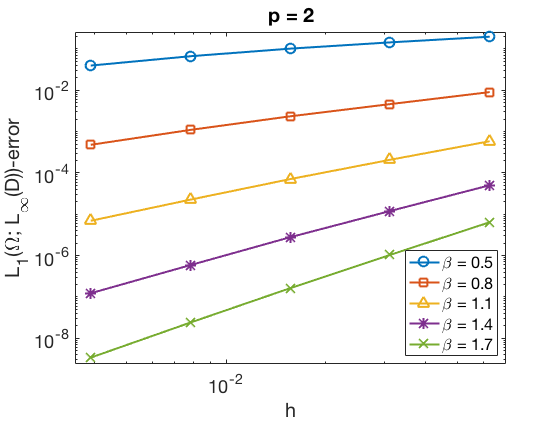}
			\end{center}
		\end{minipage}
		\begin{minipage}[t]{0.49\linewidth}
			\begin{center}
				\includegraphics[width=\linewidth]{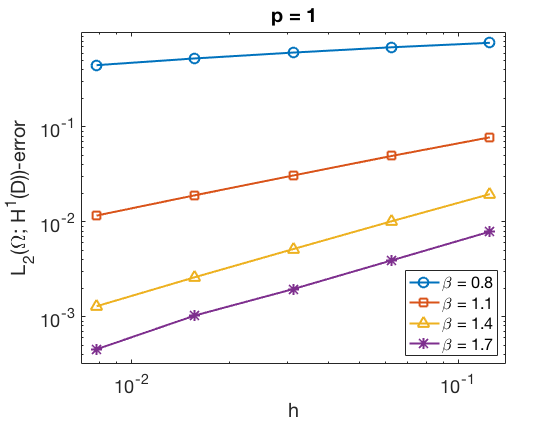}
			\end{center}
		\end{minipage}
		\hfill 
		\begin{minipage}[t]{0.49\linewidth}
			\begin{center}
				\includegraphics[width=\linewidth]{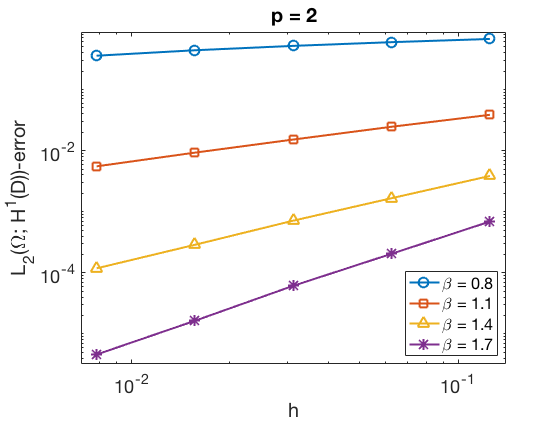}
			\end{center}
		\end{minipage}
	\end{center}
	\caption{Observed errors of the field 
		in $L_2(\Omega; L_2(\cD))$ (top),
		$L_1(\Omega; L_\infty(\cD))$ (middle)
		and $L_2(\Omega; H^1_0(\cD))$ (bottom)
		for polynomial degree $p\in\{1,2\}$ (left, right),
		and different values of $\beta$, 
		shown in a log-log scale as
		a function of the mesh width $h$.
		The corresponding observed convergence rates 
		are shown in Table~\ref{tab:rates}.}
	\label{fig:errors:field}
\end{figure}

\begin{figure}[t]
	\begin{center}
		\begin{minipage}[t]{0.49\linewidth}
			\begin{center}
				\includegraphics[width=\linewidth]{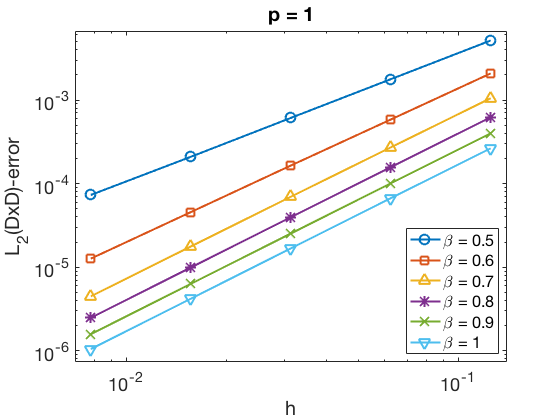}
			\end{center}
		\end{minipage}
		\hfill
		\begin{minipage}[t]{0.49\linewidth}
			\begin{center}
				\includegraphics[width=\linewidth]{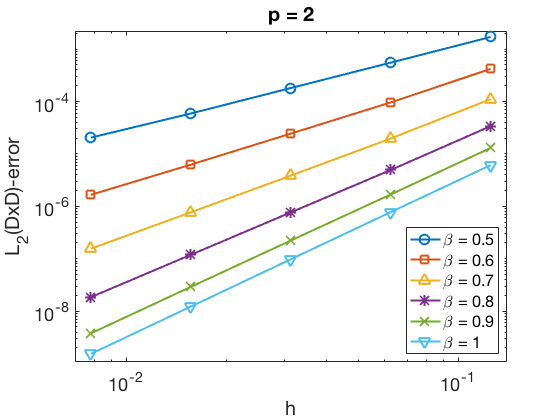}
			\end{center}
		\end{minipage}
		\begin{minipage}[t]{0.49\linewidth}
			\begin{center}
				\includegraphics[width=\linewidth]{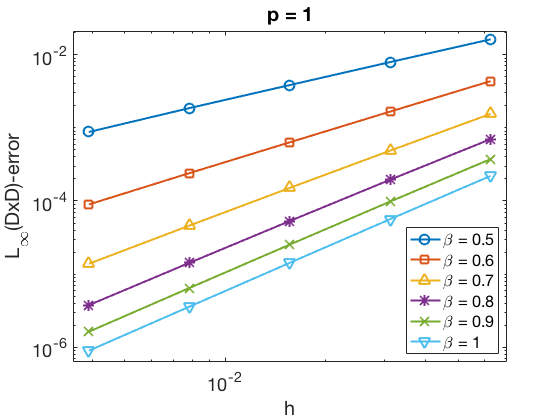}
			\end{center}
		\end{minipage}
		\hfill 
		\begin{minipage}[t]{0.49\linewidth}
			\begin{center}
				\includegraphics[width=\linewidth]{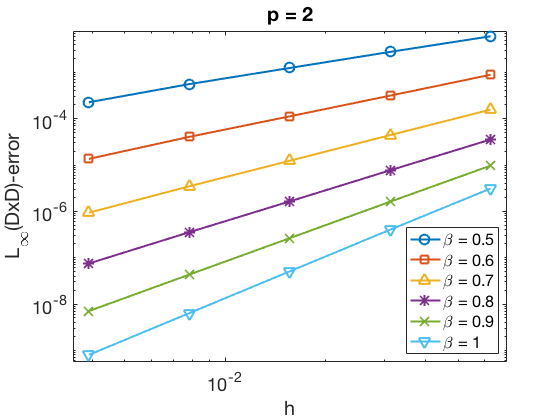}
			\end{center}
		\end{minipage}
	\end{center}
	\caption{Observed $L_q(\cD\times\cD)$-error 
		of the covariance 
		function for $q\in\{2,\infty\}$ (top, bottom),
		polynomial degree $p\in\{1,2\}$ (left, right),
		and different values of $\beta$, 
		shown in a log-log scale as
		a function of the mesh width $h$. 
		The corresponding observed convergence rates 
		are shown in Table~\ref{tab:rates}.}
	\label{fig:errors:cov}
\end{figure}

The resulting observed errors 
are displayed in Figure~\ref{fig:errors:field} 
for the fields and in Figure~\ref{fig:errors:cov}  
for the covariances. 
Overall, the empirical results 
validate our theoretical outcomes 
fairly well, with a slight deviation 
for the $L_{\infty}$-studies 
which may be caused by a larger 
pre-asymptotic range. 

\section{Conclusion and discussion}\label{section:conclusion}

We have identified necessary and sufficient 
conditions for  
square-integrability,  
Sobolev regularity, and H\"older continuity 
(in $L_q(\Omega)$-sense) 
for GRFs in terms of their \emph{color}, 
as well as square-integrability, 
mixed Sobolev regularity, 
and continuity of their  
covariance functions, 
see Propositions~\ref{prop:regularity:hoelder}, 
\ref{prop:regularity:Linf} 
and~\ref{prop:regularity:hdot}. 
Subsequently, we have applied these 
findings to \emph{generalized Whittle--Mat\'ern}  
fields, see~$\GP^\beta$ in \eqref{eq:gpbeta}, 
where these conditions 
become assumptions on the smoothness 
parameter $\beta>0$, corresponding 
to the fractional exponent 
of the color $L^{-\beta}$, 
see Lemmata~\ref{lem:matern:hdot}--\ref{lem:matern:Linf}. 

While these regularity results readily  
implied convergence of \emph{spectral Galerkin}  
approximations, 
see 
Corollaries~\ref{cor:spectral:conv:hdot}--\ref{cor:spectral:conv:hoelder},
significantly more work was needed 
to derive convergence 
for general Galerkin 
(such as finite element) 
approximations, 
for the following reason: 
It was unknown, how
the deterministic fractional   
Galerkin error 
$L^{-\beta}g - L_h^{-\beta}g$ behaves 
in the Sobolev space $H^\sigma(\cD)$,   
for $0 \leq\sigma\leq 1$,  
all possible 
exponents $\beta > 0$, and 
sources $g\in H^{\delta}(\cD)$ 
of possibly negative regularity $\delta<0$. 
We have identified this behavior 
in Theorem~\ref{thm:fem-error} 
for the general situation that the 
second-order elliptic differential operator $L$ 
is $H^{1+\alpha}(\cD)$-regular for some 
$0<\alpha\leq 1$.  
This result could be exploited 
to show convergence of the 
sinc-Galerkin approximations 
and their covariances  
to the Whittle--Mat\'ern field $\GP^\beta$ 
and to its covariance function $\varrho^\beta$, respectively, 
see Theorems~\ref{thm:fem:smooth:sobolev} and~\ref{thm:fem:non-smooth}. 

The fact that 
the Rayleigh--Ritz projection 
and, thus, the deterministic Galerkin 
error $L^{-1} g - L_h^{-1}g$  
converges at the rate  
$\min\{1+\alpha-\sigma, \, 2\alpha\}$ 
in $H^\sigma(\cD)$, $0\leq\sigma\leq 1$, 
if $L$ is $H^{1+\alpha}(\cD)$-regular,  
cf.~Lemma~\ref{lem:fem-error-integer},  
and at the rate 
$p+1-\sigma$ if the problem is ``smooth'' 
and a conforming finite element discretization 
with piecewise polynomial basis 
functions of degree at most $p\in\bbN$ is used, 
combined with the low regularity 
of white noise in $\Hdot{-\nicefrac{d}{2}-\eps}$,  
show that the Sobolev 
convergence rates of
Theorems~\ref{thm:fem:smooth:sobolev} 
and~\ref{thm:fem:non-smooth}
are (essentially, up to $\eps>0$) 
optimal. 
In addition, we believe that our 
results on H\"older convergence of the field 
and on $L_\infty$-convergence 
of the covariance function 
for $d=1$  
in Theorem~\ref{thm:fem:non-smooth}
are optimal 
\begin{enumerate*} 
\item if the problem is only 
	$H^{1+\alpha}(\cD)$-regular for 
	$\alpha\in(0,1)$ maximal, 
	or  
\item if the problem is smooth 
	and 
	$\beta\in(\nicefrac{1}{4}, 1)$ 
	(resp.\ $\beta\in(\nicefrac{1}{4}, \nicefrac{5}{8})$ 
	for the covariance). 
\end{enumerate*} 
However, the 
deterministic 
$p$-FEM 
$L_\infty$-rate 
for $d=1$ 
is known to be $p+1$
if the problem is smooth, 
see~\cite{Douglas:1975}.  
Thus, 
our results will not be sharp 
in this case, 
see also our numerical experiments 
in Section~\ref{section:numexp}. 

Since the approach  
on deriving optimal 
$L_\infty$-rates   
involves non-Hilbertian 
regularity of the solution 
in $W^{p+1,\infty}(\cD)$, 
such a discussion was beyond 
the scope of this article 
and we leave this problem as well 
as the 
$C^{\delta}(\clos{\cD})\,$/$\,L_{\infty}(\cD\times\cD)$ 
error analysis 
of sinc-Galerkin approximations 
in dimension $d\in\{2,3\}$ 
as topics for future research. 

\begin{appendix} 

\section{Proof of Proposition~\ref{prop:galerkin:conv:sobolev-p}} 
\label{app:proofs}

The following lemma will be 
the main tool 
for the derivation 
of Proposition~\ref{prop:galerkin:conv:sobolev-p}. 

\begin{lemma}\label{lem:galerkin:EVh-tilde} 
	Suppose Assumptions~\ref{ass:coeff}.I--II 
	and~\ref{ass:galerkin}.III. 
	Let Assumption~\ref{ass:galerkin}.IV be  
	fulfilled with parameters 
	$r,s_0,t>0$ such that $\nicefrac{r}{2} \geq t-1$ 
	and $s_0 \geq t$. 
	Let $d\in\bbN$, $\beta>0$, and $\rho_0(\,\cdot)$, 
	$\rho_1(\,\cdot\,)$ be as in 
	\eqref{eq:def:rho-01}, i.e., 	 
	\begin{align*}
	\rho_0(\tau) 
	&:= 
	\min\left\{ r, \,  
	s_0, \, 
	2\beta + \tau -\nicefrac{d}{2}  
	\right\}, 
	\quad 
	\rho_1(\tau) 
	:= 
	\min\left\{ \nicefrac{r}{2}, \,  
	s_0, \, 
	2\beta  - 1 + \tau -\nicefrac{d}{2}  
	\right\},  
	\end{align*} 
	and define the exception set 
	\[
	\cE_\tau  
	:= 
	\left\{ 
	2(t-1) - 2\beta + \sigma+\nicefrac{d}{2}, \,
	t - 2\beta + \sigma + \nicefrac{d}{2} 
	: \sigma\in\{0,1\}
	\right\}.
	\]
	Then, for $\sigma\in\{0,1\}$, 
	the Galerkin error 
	$\widetilde{E}_{V_h}^\beta$ 
	in~\eqref{eq:def:err-galerkin} 
	satisfies 
	\begin{align}\label{eq:lem:galerkin:EVh-tilde} 
	\sum_{j\in\bbN} \lambda_j^{-\tau} 
	\bigl\| \widetilde{E}_{V_h}^\beta e_j \bigr\|_{\sigma}^2 
	\lesssim_{(C_0,C_\lambda,\sigma,\tau,\beta,A,\kappa,\cD)}
	C_{\tau, h} 
	h^{2 \rho_\sigma(\tau)} 
	\qquad 
	\forall\tau\geq 0, 
	\end{align}
	for sufficiently small $h>0$.  
	Here, 
	$\{(\lambda_j, e_j)\}_{j\in\bbN}$
	are the $L_2(\cD)$-orthonormal, ordered eigenpairs of 
	$L$ in~\eqref{eq:def:L} and 
	we set $C_{\tau,h} := 1$ if $\tau\not\in\cE_\tau$ and 
	$C_{\tau,h} := \ln(1/h)$ if $\tau\in\cE_\tau$.  
\end{lemma}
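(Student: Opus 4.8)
The plan is to collapse the infinite sum in~\eqref{eq:lem:galerkin:EVh-tilde} to a finite one and then estimate each summand $\bigl\|\widetilde{E}_{V_h}^\beta e_j\bigr\|_\sigma$ explicitly through the eigenvalue and eigenvector errors furnished by Assumption~\ref{ass:galerkin}.IV. First I would note that, by the definitions of $L_{N_h}^{-\beta}$ in~\eqref{eq:def:LN} and of $\pitilde$ in~\eqref{eq:def:pitilde}, one has $\pitilde e_j = e_{j,h}$ and $L_{N_h}^{-\beta}e_j = \lambda_j^{-\beta}e_j$ for $j\leq N_h$, while both operators annihilate $e_j$ for $j>N_h$; hence $\widetilde{E}_{V_h}^\beta e_j = 0$ for $j>N_h$ and the left-hand side of~\eqref{eq:lem:galerkin:EVh-tilde} reduces to $\sum_{j=1}^{N_h}\lambda_j^{-\tau}\bigl\|\lambda_j^{-\beta}e_j - \lambda_{j,h}^{-\beta}e_{j,h}\bigr\|_\sigma^2$. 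I would then split each summand via
\[
	\lambda_j^{-\beta}e_j - \lambda_{j,h}^{-\beta}e_{j,h}
	= \lambda_j^{-\beta}(e_j - e_{j,h})
	+ \bigl(\lambda_j^{-\beta} - \lambda_{j,h}^{-\beta}\bigr) e_{j,h},
\]
so that the triangle inequality separates an \emph{eigenvector-error} term from an \emph{eigenvalue-error} term. For the latter, the bound $\lambda_j\leq\lambda_{j,h}$ together with the mean value theorem and~\eqref{eq:ass:galerkin:lambda} gives $\bigl|\lambda_j^{-\beta} - \lambda_{j,h}^{-\beta}\bigr|\lesssim h^r\lambda_j^{t-\beta-1}$.

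The crux is controlling the two norms $\sigma\in\{0,1\}$ of these pieces. For $\sigma=0$ everything is immediate from Assumption~\ref{ass:galerkin}.IV, using $\|e_{j,h}\|_{L_2(\cD)}=1$ and $\|e_j - e_{j,h}\|_{L_2(\cD)}\leq\sqrt{C_0}\,h^{s_0}\lambda_j^{\nicefrac{t}{2}}$. For $\sigma=1$ I would avoid postulating a separate $\Hdot{1}$-eigenvector estimate and instead derive one from the $L_2(\cD)$-data: since $\|v_h\|_1^2 = \scalar{L_h v_h, v_h}{L_2(\cD)}$ for $v_h\in V_h$ one has $\|e_{j,h}\|_1 = \lambda_{j,h}^{\nicefrac{1}{2}}$, and expanding the energy inner product while using $\scalar{e_j, e_{j,h}}{1}=\lambda_j\scalar{e_j, e_{j,h}}{L_2(\cD)}$ yields the identity
\[
	\|e_j - e_{j,h}\|_1^2
	= (\lambda_{j,h}-\lambda_j) + \lambda_j\,\|e_j - e_{j,h}\|_{L_2(\cD)}^2
	\lesssim h^r\lambda_j^t + h^{2s_0}\lambda_j^{t+1}.
\]
Combining these, I expect the pointwise bounds $\bigl\|\widetilde{E}_{V_h}^\beta e_j\bigr\|_0^2\lesssim h^{2s_0}\lambda_j^{t-2\beta} + h^{2r}\lambda_j^{2t-2\beta-2}$ and $\bigl\|\widetilde{E}_{V_h}^\beta e_j\bigr\|_1^2\lesssim h^r\lambda_j^{t-2\beta} + h^{2s_0}\lambda_j^{t+1-2\beta} + h^{2r}\lambda_j^{2t-2\beta-1}$, the last term relying on $\lambda_{j,h}\lesssim\lambda_j$ for small $h$ (which follows from $\lambda_j\lesssim\lambda_{N_h}\eqsim h^{-2}$ and $\nicefrac{r}{2}\geq t-1$).

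Finally I would multiply by $\lambda_j^{-\tau}$, sum over $j\leq N_h$, and convert each $\sum_{j=1}^{N_h}\lambda_j^{a}$ into a power of $h$ by Weyl's law (Lemma~\ref{lem:spectral-behav}) together with $N_h\eqsim h^{-d}$ (Assumption~\ref{ass:galerkin}.III) and $\lambda_{N_h}\eqsim h^{-2}$: the sum stays bounded when $a<-\nicefrac{d}{2}$, grows like $\lambda_{N_h}^{a+\nicefrac{d}{2}}\eqsim h^{-2a-d}$ when $a>-\nicefrac{d}{2}$, and is logarithmic when $a=-\nicefrac{d}{2}$. A short case analysis, invoking $s_0\geq t$ and $\nicefrac{r}{2}\geq t-1$, then shows that every resulting exponent is at least $2\rho_\sigma(\tau)$, with $\rho_0,\rho_1$ as in~\eqref{eq:def:rho-01}. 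The main obstacle, and the only delicate bookkeeping, will be tracking the borderline exponents $a=-\nicefrac{d}{2}$: these occur exactly at the four threshold values of $\tau$ collected in $\cE_\tau$, and there the relevant sum contributes the extra factor $\ln(1/h)=C_{\tau,h}$, which accounts for the stated logarithmic correction (at the thresholds not active for the given $\sigma$, including the log merely weakens the bound and is harmless).
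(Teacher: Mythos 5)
Your proposal is correct and takes essentially the same route as the paper's own proof: reduce the sum to $j\leq N_h$ using $\widetilde{E}_{V_h}^\beta e_j=\lambda_j^{-\beta}e_j-\lambda_{j,h}^{-\beta}e_{j,h}$, split via the triangle inequality into eigenvalue- and eigenvector-error parts controlled by Assumption~\ref{ass:galerkin}.IV (mean value theorem for the former, the identity $\norm{e_j-e_{j,h}}{1}^2=\lambda_j\norm{e_j-e_{j,h}}{0}^2+\lambda_{j,h}-\lambda_j$ for the latter), and then sum using Weyl's law and Assumption~\ref{ass:galerkin}.III, with the logarithm appearing exactly at the borderline exponents collected in $\cE_\tau$. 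The only cosmetic difference is that you attach the eigenvalue difference to $e_{j,h}$ rather than to $e_j$, which costs the extra step $\norm{e_{j,h}}{1}^2=\lambda_{j,h}\lesssim\lambda_j$ (which you justify correctly from $\lambda_j\lesssim h^{-2}$ and $\nicefrac{r}{2}\geq t-1$), whereas the paper's arrangement uses $\norm{e_j}{\sigma}^2=\lambda_j^\sigma$ and avoids it.
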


\begin{proof}
	Fix $\tau\geq 0$. 
	The definitions  
	of~$\widetilde{E}_{V_h}^\beta$ 
	in~\eqref{eq:def:err-galerkin}
	and of~$\pitilde$ in~\eqref{eq:def:pitilde} 
	yield 
	\begin{align} 
	\sum_{j\in\bbN} \lambda_j^{-\tau} 
	\bigl\| \widetilde{E}_{V_h}^\beta e_j \bigr\|_{\sigma}^2 
	&=
	\sum_{j=1}^{N_h} 
	\lambda_j^{-\tau} 
	\bigl\| \lambda_j^{-\beta} e_j 
	- \lambda_{j,h}^{-\beta} e_{j,h} \bigr\|_{\sigma}^2 
	\notag \\
	&\lesssim 
	\sum_{j=1}^{N_h} 
	\lambda_j^{-\tau + \sigma}
	\bigl| \lambda_j^{-\beta}  
	- \lambda_{j,h}^{-\beta} \bigr|^2	
	+ 
	\sum_{j=1}^{N_h} 
	\lambda_j^{-\tau} \lambda_{j,h}^{-2\beta} 
	\norm{e_j - e_{j,h} }{\sigma}^2. 
	\label{eq:proof:lem:galerkin:EVh-tilde-1} 
	\end{align} 
	By the mean value 
	theorem, 
	$ \lambda_j^{-\beta}  
	- \lambda_{j,h}^{-\beta} 	
	=
	\widetilde{\lambda}_j^{-\beta-1} 
	( \lambda_{j,h} 
	- \lambda_{j} )$  
	for some 
	$\widetilde{\lambda}_j \in (\lambda_j, \lambda_{j,h})$.
	Thus, we can use~\eqref{eq:ass:galerkin:lambda} 
	from Assumption~\ref{ass:galerkin}.IV 
	and the spectral behavior~\eqref{eq:lem:spectral-behav} 
	from Lemma~\ref{lem:spectral-behav}  
	combined with Assumption~\ref{ass:galerkin}.III 
	to bound the first sum 
	in~\eqref{eq:proof:lem:galerkin:EVh-tilde-1}, 
	\begin{align}
	\sum_{j=1}^{N_h} 
	\lambda_j^{-\tau + \sigma}
	\bigl| \lambda_j^{-\beta}  
	- \lambda_{j,h}^{-\beta} \bigr|^2	
	&\leq C_\lambda^2 
	h^{2r} 	\sum_{j=1}^{N_h} 
	\lambda_j^{-2\beta - \tau + \sigma + 2(t-1)} 
	\notag \\
	&\lesssim_{(C_\lambda,\sigma,\tau,\beta,A,\kappa,\cD)} 
	C_{\tau,h} 
	h^{2 \min\left\{r, \, 
		2\beta - \sigma + \tau - \nicefrac{d}{2} 
		\right\} }. 
	\label{eq:proof:lem:galerkin:EVh-tilde-2} 
	\end{align} 
	where we also have used that 
	$r\geq 2(t-1)$ by assumption. 
	For the second sum 
	in \eqref{eq:proof:lem:galerkin:EVh-tilde-1} 
	we distinguish the cases $\sigma=0$ and $\sigma=1$. 
	If $\sigma=0$, we can apply~\eqref{eq:ass:galerkin:e} 
	of Assumption~\ref{ass:galerkin}.IV 
	and obtain 
	\begin{align} 
	\sum_{j=1}^{N_h} 
	\lambda_j^{-\tau} \lambda_{j,h}^{-2\beta} 
	\norm{e_j - e_{j,h} }{0}^2 
	&\leq C_0   
	h^{2s_0} 
	\sum_{j=1}^{N_h} 
	\lambda_j^{-2\beta - \tau + t} 
	\notag \\
	&\lesssim_{(C_0,\sigma,\tau,\beta,A,\kappa,\cD)}  
	C_{\tau,h} 
	h^{2 \min\left\{ s_0, \,  
		2\beta + \tau - \nicefrac{d}{2} 
		\right\} }, 
	\label{eq:proof:lem:galerkin:EVh-tilde-3} 
	\end{align}
	since $s_0\geq t$. 
	For $\sigma=1$, we first note that 
	\eqref{eq:ass:galerkin:lambda}--\eqref{eq:ass:galerkin:e} 
	of Assumption~\ref{ass:galerkin}.IV 
	imply the following estimate 
	with respect to the norm 
	on~$\Hdot{1}$, 
	\[
	\norm{e_j-e_{j,h}}{1}^2 
	= 
	\lambda_j \norm{e_j - e_{j,h}}{0}^2 
	+ 
	\lambda_{j,h} - \lambda_j 
	\leq 
	C_0 h^{2s_0} \lambda_j^{t+1} 
	+ 
	C_\lambda h^r \lambda_j^t.
	\]
	Here, we have used the identity 
	$\scalar{e_j, e_{j,h}}{1} = \lambda_j\scalar{e_j, e_{j,h}}{0}$. 
	Thus, if $\sigma=1$, we can 
	bound the second sum 
	in~\eqref{eq:proof:lem:galerkin:EVh-tilde-1}
	as follows, 
	\begin{align} 
	\sum_{j=1}^{N_h} 
	\lambda_j^{-\tau} \lambda_{j,h}^{-2\beta} 
	&\norm{e_j - e_{j,h} }{1}^2 
	\leq C_0   
	h^{2s_0} 
	\sum_{j=1}^{N_h} 
	\lambda_j^{-2\beta - \tau + t + 1} 
	+ 
	C_\lambda 
	h^{r} 
	\sum_{j=1}^{N_h} 
	\lambda_j^{-2\beta - \tau + t}
	\notag \\
	&\lesssim_{(C_0,C_\lambda,\sigma,\tau,\beta,A,\kappa,\cD)}  
	C_{\tau,h} 
	h^{2 \min\left\{ \nicefrac{r}{2}, \, s_0, \, 
		2\beta - 1 + \tau - \nicefrac{d}{2} 
		\right\} }, 
	\label{eq:proof:lem:galerkin:EVh-tilde-4} 
	\end{align}
	since $s_0\geq t$ and $\nicefrac{r}{2}\geq t-1$ 
	by assumption.  
	Combining~\eqref{eq:proof:lem:galerkin:EVh-tilde-1}, 
	\eqref{eq:proof:lem:galerkin:EVh-tilde-2}, 
	\eqref{eq:proof:lem:galerkin:EVh-tilde-3}
	and~\eqref{eq:proof:lem:galerkin:EVh-tilde-4}
	completes the proof.  
\end{proof}

\begin{proof}[Proof of Proposition~\ref{prop:galerkin:conv:sobolev-p}] 
	In order to derive~\eqref{eq:galerkin:conv:hdot}, 
	we start with splitting the error 
	in the norm $\norm{\,\cdot\,}{\sigma}$ 
	on $\Hdot{\sigma}$, cf.~\eqref{eq:def:Hdot}, 
	which by~\eqref{eq:hdot-sobolev1}
	of Lemma~\ref{lem:hdot-sobolev} 
	implies an upper bound for the 
	Sobolev norm: 
	\begin{align*} 
	\left(\bbE\left[
	\bigl\| \GP^{\beta} - \widetilde{\GP}^\beta_{h,k} 
	\bigr\|_{\sigma}^q 
	\right]\right)^{\nicefrac{1}{q}} 
	&\leq 
	\left(\bbE\left[
	\bigl\| \GP^{\beta} - \GP^\beta_{N_h} 
	\bigr\|_{\sigma}^q 
	\right]\right)^{\nicefrac{1}{q}} 
	+ 
	\left(\bbE\left[
	\bigl\| \GP^{\beta}_{N_h} - \widetilde{\GP}^\beta_{h} 
	\bigr\|_{\sigma}^q 
	\right]\right)^{\nicefrac{1}{q}} \\
	&\quad + 
	\left(\bbE\left[
	\bigl\| \widetilde{\GP}^{\beta}_h 
	- \widetilde{\GP}^\beta_{h,k} 
	\bigr\|_{\sigma}^q 
	\right]\right)^{\nicefrac{1}{q}} 
	=: \text{($\text{A}_{\GP}$)} + \text{($\text{B}_{\GP}$)} + \text{($\text{C}_{\GP}$)}. 
	\end{align*} 
	Here, $\GP^\beta_{N_h}$ is the spectral Galerkin 
	approximation from~\eqref{eq:def:gpbetaN} and 
	$\widetilde{\GP}^\beta_{h}$ denotes a GRF colored by 
	$L_h^{-\beta}\pitilde$. 
	We readily obtain a bound for ($\text{A}_{\GP}$) 
	from~\eqref{eq:spectral:conv:hdot} 
	of Corollary~\ref{cor:spectral:conv:hdot}, 
	combined with Assumption~\ref{ass:galerkin}.III.  
	This gives  
	\[
	\text{($\text{A}_{\GP}$)} 
	\lesssim_{(q,\sigma,\beta,A,\kappa,\cD)} 
	N_h^{-\nicefrac{1}{d} \, \left( 2\beta-\sigma-\nicefrac{d}{2} \right)} 
	\lesssim_{(q,\sigma,\beta,A,\kappa,\cD)} 
	h^{2\beta-\sigma-\nicefrac{d}{2}}. 
	\]
	Note that 
	it suffices to 
	estimate the terms ($\text{B}_{\GP}$) and ($\text{C}_{\GP}$)   
	for $\sigma\in\{0,1\}$. 
	The respective bounds for $\sigma\in(0,1)$ 
	then follow by interpolation. 
	By definition  
	of the Galerkin and the 
	quadrature error, 
	$\widetilde{E}_{V_h}^\beta$, $\widetilde{E}_{Q}^\beta$,  
	in~\eqref{eq:def:err-galerkin}--\eqref{eq:def:err-quad} 
	and by Proposition~\ref{prop:regularity:hdot}, 
	\begin{align*} 
	\text{($\text{B}_{\GP}$)}  
	\lesssim_{q} 
	\bigl\| 
	\widetilde{E}_{V_h}^\beta 
	\bigr\|_{\cL_2^{0;\sigma}}
	\quad 
	\text{and} 
	\quad  
	\text{($\text{C}_{\GP}$)} 
	\lesssim_{q} 
	\bigl\|
	\widetilde{E}_{Q}^\beta 
	\bigr\|_{\cL_2^{0;\sigma}}, 
	\qquad 
	\cL_2^{\theta;\sigma}
	:= 
	\cL_2\bigl(\Hdot{\theta}; \Hdot{\sigma}\bigr). 
	\end{align*}
	Since we have to consider these terms 
	only for $\sigma\in\{0,1\}$, 
	the first 
	term can be bounded 
	by~\eqref{eq:lem:galerkin:EVh-tilde} 
	of Lemma~\ref{lem:galerkin:EVh-tilde}
	(with $\tau := 0$),  
	\begin{align*} 
	\text{($\text{B}_{\GP}$)}^2 
	&\lesssim_q 
	\bigl\| 
	\widetilde{E}_{V_h}^\beta 
	\bigr\|_{\cL_2^{0;\sigma}}^2
	= 
	\sum_{j\in\bbN} 
	\bigl\| \widetilde{E}_{V_h}^\beta e_j \bigr\|_{\sigma}^2 
	\lesssim_{(C_0,C_\lambda,\sigma,\beta,A,\kappa,\cD)} 
	\left( C_{\beta,h}^{\GP} \right)^2
	h^{ 2 \rho_\sigma(0) }. 
	\end{align*} 
	where $C_{\beta,h}^{\GP}>0$ 
	is defined as in the statement 
	of Proposition~\ref{prop:galerkin:conv:sobolev-p}. 
	To estimate ($\text{C}_{\GP}$),  
	we first apply 
	the convergence 
	result of the sinc quadrature 
	from~\cite[Lem.~3.4, Rem.~3.1, Thm.~3.5]{BonitoPasciak:2015}. 
	Thus, for sufficiently small $k>0$ and 
	all $1\leq j\leq N_h$, 
	\[
	\bigl\| \widetilde{E}_Q^\beta e_j \bigr\|_{L_2(\cD)} 
	= 
	\bigl\| \bigl(L_h^{-\betafrac} - Q_{h,k}^{\betafrac} \bigr) 
	L_h^{-\nbeta} e_{j,h} \bigr\|_{L_2(\cD)} 
	\lesssim_{(\beta,A,\kappa,\cD)}
	e^{-\nicefrac{\pi^2}{(2k)}} 
	\lambda_{j,h}^{-\nbeta}.  
	\]
	Again by 
	equivalence of the norms 
	$\norm{\,\cdot\,}{\sigma}$, 
	$\norm{\,\cdot\,}{H^\sigma(\cD)}$ 
	for $\sigma\in\{0,1\}$, 
	see Lemma~\ref{lem:hdot-sobolev}, 
	and by the inverse 
	inequality~\eqref{eq:ass:galerkin:inverse}
	from Assumption~\ref{ass:galerkin}.II, 
	we then find 
	\begin{align*} 
	\text{($\text{C}_{\GP}$)}^2 
	&\lesssim_{q} 
	\bigl\|
	\widetilde{E}_{Q}^\beta 
	\bigr\|_{\cL_2^{0;\sigma}}^2 
	= 
	\sum_{j=1}^{N_h} 
	\bigl\| \widetilde{E}_{Q}^\beta e_j \bigr\|_{\sigma}^2 
	\lesssim_{(\sigma,A,\kappa,\cD)} 
	h^{-2\sigma}
	\sum_{j=1}^{N_h} 
	\bigl\| \widetilde{E}_{Q}^\beta e_j \bigr\|_{L_2(\cD)}^2 \\ 
	&\lesssim_{(q,\sigma,\beta,A,\kappa,\cD)} 
	e^{-\nicefrac{\pi^2}{k}} 
	h^{-2\sigma} 
	\sum_{j=1}^{N_h} 
	\lambda_{j,h}^{-2\nbeta} 
	\lesssim_{(q,\sigma,\beta,A,\kappa,\cD)} 
	e^{-\nicefrac{\pi^2}{k}} 
	h^{-2\sigma - 
		d\,\mathds{1}_{\{\beta<1\}} }, 
	\end{align*}
	where we have used the spectral 
	behavior~\eqref{eq:lem:spectral-behav} 
	from Lemma~\ref{lem:spectral-behav} 
	and Assumptions~\ref{ass:galerkin}.III-IV 
	in the last step. 
	This completes the proof of~\eqref{eq:galerkin:conv:hdot}.  
	
	We now proceed with 
	the derivation of~\eqref{eq:galerkin:conv:cov}. 
	To this end, 
	we consider the error 
	with respect to the norm 
	$\norm{\,\cdot\,}{\sigma,\sigma}$, 
	see~\eqref{eq:def:Hdotmixed}, 
	since the embedding in~\eqref{eq:hdot-sobolev1} 
	implies that 
	$\Hdot{\sigma,\sigma} 
	\hookrightarrow 
	H^{\sigma,\sigma}(\cD\times\cD)$. 
	We again partition the error 	
	in three terms, 
	\begin{align*} 
	\bigl\| \varrho^\beta 
	- \widetilde{\varrho}^\beta_{h,k} 
	\bigr\|_{\sigma,\sigma} 
	&\leq 
	\bigl\| \varrho^\beta 
	- \varrho^\beta_{N_h} 
	\bigr\|_{\sigma,\sigma} 
	+ 
	\bigl\| \varrho^\beta_{N_h} 
	-  \widetilde{\varrho}^\beta_{h} 
	\bigr\|_{\sigma,\sigma} 
	+ 
	\bigl\| \widetilde{\varrho}^\beta_{h} 
	- \widetilde{\varrho}^\beta_{h,k} 
	\bigr\|_{\sigma,\sigma} \\
	&=: 
	\text{($\text{A}_{\varrho}$)} 
	+ \text{($\text{B}_{\varrho}$)} + \text{($\text{C}_{\varrho}$)},  	
	\end{align*}
	where~$\widetilde{\varrho}^\beta_{h}$ 
	denotes the covariance function 
	of the above-introduced 
	GRF~$\widetilde{\GP}^\beta_h$ 
	colored by~$L_h^{-\beta}\pitilde$. 
	A bound for the truncation error 
	is given by~\eqref{eq:spectral:conv:cov} 
	in Proposition~\ref{cor:spectral:conv:hdot}, 
	\[
	\text{($\text{A}_{\varrho}$)}  
	\lesssim_{(\sigma,\beta,A,\kappa,\cD)} 
	N_h^{\nicefrac{1}{d} \, \left(4\beta - 2\sigma - \nicefrac{d}{2} \right)}
	\lesssim_{(\sigma,\beta,A,\kappa,\cD)}
	h^{4\beta - 2\sigma - \nicefrac{d}{2}},  
	\]  
	where we also used 
	Assumption~\ref{ass:galerkin}.\ref{ass:galerkin-iii} 
	We bound the remaining terms 
	($\text{B}_{\varrho}$) and ($\text{C}_{\varrho}$) 
	for $\sigma\in\{0,1\}$. 
	Since  
	$\bigl[\Hdot{0,0}, \Hdot{1,1}\bigr]_{\sigma} = \Hdot{\sigma,\sigma}$, 
	see~\cite[Thm.~16.1]{Yagi:2010}, 
	we may again interpolate 
	these results 
	for $\sigma\in(0,1)$. 
	To this end, we first 
	exploit~\eqref{eq:cov-fct-err} from Proposition~\ref{prop:regularity:hdot} 
	and~\eqref{eq:HS-estimate}
	to derive for ($\text{B}_{\varrho}$) that 
	\begin{align*} 
	\text{($\text{B}_{\varrho}$)}  
	&= 
	\bigl\| L_{N_h}^{-2\beta} - L_h^{-\beta} \pitilde 
	\dual{\bigl(L_h^{-\beta} \pitilde\bigr)} 
	\bigr\|_{\cL_2^{-\sigma;\sigma}} 
	\leq 	
	\bigl\| \widetilde{E}_{V_h}^\beta   
	\dual{\bigl(L_{N_h}^{-\beta} + L_h^{-\beta} \pitilde\bigr)} 
	\bigr\|_{\cL_2^{-\sigma;\sigma}} \\
	&\leq 
	\bigl\| \widetilde{E}_{V_h}^\beta L_{N_h}^{-\beta} 
	\bigr\|_{\cL_2^{-\sigma;\sigma}} 
	+ 
	\bigl\| \widetilde{E}_{V_h}^\beta 
	\dual{\bigl(L_h^{-\beta} \pitilde \bigr)} 
	\bigr\|_{\cL_2^{-\sigma;\sigma}} 
	=: 
	\text{($\text{B}'_{\varrho}$)} + \text{($\text{B}''_{\varrho}$)}. 
	\end{align*}
	By Lemma~\ref{lem:galerkin:EVh-tilde} 
	(for~$\tau := 2\beta - \sigma > \nicefrac{d}{2} > 0$ 
	in~\eqref{eq:lem:galerkin:EVh-tilde}) 
	we have, for $\sigma\in\{0,1\}$ 
	and for $C_{\beta,h}^\varrho>0$ as in 
	the statement of 
	Proposition~\ref{prop:galerkin:conv:sobolev-p},  
	\begin{align*}
	\text{($\text{B}'_{\varrho}$)}^2  
	&= 
	\sum_{j\in\bbN} 
	\lambda_j^{-(2\beta-\sigma)} 
	\bigl\| \widetilde{E}_{V_h}^\beta e_j \bigr\|_{\sigma}^2 
	\lesssim_{(C_0,C_\lambda,\sigma,\beta,A,\kappa,\cD)}
	\bigl( C_{\beta,h}^\varrho \bigr)^2  
	h^{2 \rho_\sigma(2\beta - \sigma)}.     
	\end{align*}
	Next, we use the identity 
	$
	\dual{\bigl( L_h^{-\beta}\pitilde \bigr)} e_j 
	= 
	\sum_{\ell=1}^{N_h} 
	\lambda_{\ell,h}^{-\beta} 
	\scalar{e_j,e_{\ell,h}}{L_2(\cD)} \, e_{\ell} 
	$, the orthogonality 
	$\scalar{e_{k,h},e_{\ell,h}}{\sigma} 
	=\delta_{k\ell} \lambda_{k,h}^\sigma$ 
	(here, $\delta_{k\ell}$ denotes the Kronecker delta), 
	which holds for $\sigma\in\{0,1\}$, 
	and the relation 
	$\lambda_j \leq \lambda_{j,h}$ 
	from Assumption~\ref{ass:galerkin}.IV. 
	With these steps, we obtain 
	again by \eqref{eq:lem:galerkin:EVh-tilde} of 
	Lemma~\ref{lem:galerkin:EVh-tilde} 
	(with~$\tau := 2\beta - \sigma$) 
	a bound for ($\text{B}''_{\varrho}$),   
	\begin{align*}
	\text{($\text{B}''_{\varrho}$)}^2  
	&= 
	\sum_{j\in\bbN}  
	\sum_{i=1}^{N_h}  
	\sum_{\ell=1}^{N_h} 
	\lambda_j^{\sigma} 
	\lambda_{i,h}^{-\beta} 
	\lambda_{\ell,h}^{-\beta}
	\scalar{e_j,e_{i,h}}{L_2(\cD)} 
	\scalar{e_j,e_{\ell,h}}{L_2(\cD)} 
	\bigl( \widetilde{E}_{V_h}^\beta e_{i}, 
	\widetilde{E}_{V_h}^\beta e_{\ell} \bigr)_{\sigma} \\
	&= 
	\sum_{i=1}^{N_h} 
	\sum_{\ell=1}^{N_h} 
	\lambda_{i,h}^{-\beta} 
	\lambda_{\ell,h}^{-\beta}
	\scalar{e_{i,h}, e_{\ell,h}}{\sigma} 
	\bigl( \widetilde{E}_{V_h}^\beta e_{i}
	\widetilde{E}_{V_h}^\beta e_{\ell} \bigr)_{\sigma} 
	= 
	\sum_{\ell=1}^{N_h} 
	\lambda_{\ell,h}^{-(2\beta-\sigma)} 
	\bigl\| \widetilde{E}_{V_h}^\beta e_{\ell} \bigr\|_{\sigma}^2 
	\\ 
	&\leq 
	\sum_{\ell\in\bbN} 
	\lambda_{\ell}^{-(2\beta-\sigma)} 
	\bigl\| \widetilde{E}_{V_h}^\beta e_{\ell} \bigr\|_{\sigma}^2
	\lesssim_{(C_0,C_\lambda,\sigma,\beta,A,\kappa,\cD)}
	\bigl( C_{\beta,h}^\varrho \bigr)^2 
	h^{2 \rho_\sigma(2\beta-\sigma)}. 
	\end{align*}
	In conclusion, 
	$\bigl\| 
	\varrho_{N_h}^\beta - \widetilde{\varrho}_h^\beta 
	\bigr\|_{\sigma,\sigma} 
	\leq \text{($\text{B}'_{\varrho}$)} + \text{($\text{B}''_{\varrho}$)}  
	\lesssim_{(C_0,C_\lambda,\sigma,\beta,A,\kappa,\cD)} 
	\bigl( C_{\beta,h}^\varrho \bigr)^2 
	h^{ \rho_\sigma(2\beta-\sigma)}$ 
	for $\sigma\in\{0,1\}$. 
	For~($\text{C}_{\varrho}$), we derive with the 
	equivalence of the norms 
	$\norm{\,\cdot\,}{\sigma}$, 
	$\norm{\,\cdot\,}{H^\sigma(\cD)}$, 
	the inverse inequality~\eqref{eq:ass:galerkin:inverse} 
	from Assumption~\ref{ass:galerkin}.II, 
	and 
	the convergence 
	result for the sinc quadrature 
	\cite[Lem.~3.4, Rem.~3.1, Thm.~3.5]{BonitoPasciak:2015} 
	the following, if $\sigma\in\{0,1\}$, 
	\begin{align*}
	\text{($\text{C}_{\varrho}$)}^2  
	&\lesssim_{(\sigma,A,\kappa,\cD)} 
	h^{-2\sigma}
	\sum_{j\in\bbN} 
	\lambda_j^\sigma 
	\bigl\| 
	\widetilde{E}_{Q}^\beta   
	\dual{\bigl(L_h^{-\beta} \pitilde + 
		Q_{h,k}^{\betafrac} 
		L_h^{-\nbeta} \pitilde\bigr)} e_j
	\bigr\|_{L_2(\cD)}^2 \\
	&\lesssim_{(\sigma,\beta,A,\kappa,\cD)}  
	e^{-\nicefrac{\pi^2}{k}}
	h^{-2\sigma} 
	\sum_{j\in\bbN} 
	\lambda_j^\sigma 
	\bigl\| 
	L_h^{-\nbeta} \pitilde  
	\dual{\pitilde} 
	\dual{(L_h^{-\nbeta})}  
	\dual{\bigl(L_h^{-\betafrac}  + 
		Q_{h,k}^{\betafrac} 
		\bigr)} e_j 
	\bigr\|_{L_2(\cD)}^2 . 
	\end{align*} 
	Since $L_h^{-\nbeta} \pitilde  
	\dual{\pitilde} 
	\dual{(L_h^{-\nbeta})} e_{\ell,h} 
	= \lambda_{\ell,h}^{-2\nbeta} e_{\ell,h}$ 
	for all $\ell\in\{1,\ldots,N_h\}$,  
	this shows that 
	\begin{align*} 
	\text{($\text{C}_{\varrho}$)}^2  
	&\lesssim_{(\sigma,\beta,A,\kappa,\cD)}  
	e^{-\nicefrac{\pi^2}{k}}
	h^{-2\sigma}  
	\sum_{\ell=1}^{N_h} 
	\sum_{j\in\bbN}
	\lambda_j^\sigma  
	\lambda_{\ell,h}^{-4\nbeta}  
	\bigl( e_j, \bigl(L_h^{-\betafrac}  + 
	Q_{h,k}^{\betafrac} 
	\bigr) e_{\ell,h} \bigr)_{L_2(\cD)}^2 .  
	\end{align*} 
	Next, again by the inverse 
	inequality~\eqref{eq:ass:galerkin:inverse} we find   
	\begin{align*}
	\text{($\text{C}_{\varrho}$)}^2
	&\lesssim_{(\sigma,\beta,A,\kappa,\cD)}   
	e^{-\nicefrac{\pi^2}{k}}
	h^{-4\sigma} 
	\sum_{\ell=1}^{N_h} 
	\lambda_{\ell,h}^{-4\nbeta}  
	\bigl\| \bigl(L_h^{-\betafrac}  + 
	Q_{h,k}^{\betafrac} 
	\bigr) e_{\ell,h} \bigr\|_{L_2(\cD)}^2 \\
	&\lesssim_{(\sigma,\beta,A,\kappa,\cD)}      
	e^{-\nicefrac{\pi^2}{k}} 
	h^{-4\sigma} 
	\sum_{\ell=1}^{N_h} 
	\lambda_{\ell}^{-4\nbeta} 
	\lesssim_{(\sigma,\beta,A,\kappa,\cD)}  
	e^{-\pi^2/k}
	h^{-4\sigma 
		-d\,\mathds{1}_{\{\beta<1\}}}. 
	\end{align*} 
	Here, we have used the 
	uniform stability of $L^{-\betafrac}$, 
	$Q_{h,k}^{\betafrac}$ 
	with respect to $h$ and $k$, 
	see \eqref{eq:uniform-stab}, 
	as well as~\eqref{eq:lem:spectral-behav} 
	from Lemma~\ref{lem:spectral-behav} 
	and Assumption~\ref{ass:galerkin}.\ref{ass:galerkin-i}  
	Combining the bounds for 
	($\text{A}_{\varrho}$), ($\text{B}_{\varrho}$), ($\text{C}_{\varrho}$) 
	completes the proof.  
\end{proof} 

\end{appendix}  

\bibliographystyle{siam}
\bibliography{ck-bib}

\end{document}